\documentclass[11pt, letterpaper]{amsart}

\usepackage[utf8]{inputenc}
\usepackage[english]{babel}
\usepackage{amsmath,amssymb,amsthm,mathrsfs}
\usepackage{esint}
\usepackage[colorlinks,linktocpage]{hyperref}
\usepackage[all]{xy}
\usepackage{import}
\usepackage{pdfpages}
\usepackage{transparent}
\usepackage{xcolor}
\usepackage{comment}
\usepackage{upgreek}

\newtheorem{thm}{Theorem}[section]
\newtheorem{lem}[thm]{Lemma}

\newtheorem{prop}[thm]{Proposition}
\newtheorem{conj}[thm]{Conjecture}

\theoremstyle{definition}
\newtheorem{defn}{Definition}[section]

\newtheorem{exmp}{Example}[section]

\theoremstyle{remark}
\newtheorem{rmk}{Remark}[section]
\newtheorem*{rmk*}{Remark}

\newtheorem*{fact*}{Fact}

\def\WL{\overset{\ast}{\rightharpoonup}}

\title[The $\sigma $-IMCF and the generalized Penrose conjecture]{The $\sigma $-inverse mean curvature flow and the generalized Penrose conjecture}
\author{Conghan Dong}
\address{Department of Mathematics, Duke University, 120 Science Dr, Durham, NC 27710, USA}
\email{conghan.dong@duke.edu}
\begin{document}
\date{\today}

\maketitle

\begin{abstract}
	Let $(M^3, g, \mathbf{k})$ be a complete asymptotically flat initial data set satisfying the dominant energy condition, and let $m$ denote its ADM mass. The generalized Penrose conjecture asserts that the area of an outermost generalized apparent horizon $N\subset M$ satisfies $|N| \leq 16 \pi m^2$. In this paper, we establish this inequality for each connected component of $N$ in the case where $\mathbf{k}$ is proportional to the metric $g$. Our approach is based on a new geometric evolution, which we call the $\sigma $-inverse mean curvature flow, together with a novel monotonicity formula that may be of independent interest.
\end{abstract}

\section{Introduction}

An initial data set for a spacetime consists of a triple $(M^3, g, \mathbf{k})$, where $M$ is a three-dimensional manifold, $g$ is a Riemannian metric, and $\mathbf{k}$ is a symmetric $(0,2)$-tensor. The associated energy density function $\mu $ and momentum density covector $J$ are defined by
\begin{align*}
	(8 \pi ) \mu &= \frac{1}{2}\left( R + (\mathrm{tr}_g \mathbf{k})^2 - \|\mathbf{k}\|^2_g \right) ,\\
	(8 \pi ) J &= \mathrm{div}_g \left( \mathbf{k} - (\mathrm{tr}_g \mathbf{k} ) g \right) ,
\end{align*} 
where $R$ denotes the scalar curvature of $g$. The initial data set is said to satisfy the dominant energy condition if
\begin{align*}
	\mu \geq |J|_g.
\end{align*} 

Following \cite{SchoenYau81}, an initial data set $(M, g, \mathbf{k})$ is called asymptotically flat if, outside a compact set, $M$ decomposes into finitely many ends $M_1, \ldots, M_{p}$, each diffeomorphic to the complement of a compact subset of $\mathbb{R}^{3}$, and under such diffeomorphisms, the tensors $g$ and $\mathbf{k}$ satisfy the decay conditions
\begin{align}\label{AF-condition-0}
	\begin{split}
		&|g_{ij} - \delta _{ij}| + |x| \cdot |\partial g_{ij}| + |x|^2 \cdot |\partial ^2 g_{ij}| \leq C |x| ^{-1},\\
	& |R| + |x| \cdot |\partial R| \leq C |x|^{-4},\\
	& |\mathbf{k}_{ij}| + |x|\cdot  |\partial \mathbf{k}_{ij}|  + |x|\cdot  \left| \textstyle \sum_{i} \mathbf{k}_{ii}\right|  \leq C |x|^{-2},
	\end{split}
\end{align} 
as $|x|\to \infty$, where norms and derivatives are taken with respect to the Euclidean metric. 

With each end $M_k$ we associate the ADM mass $m_k$ defined by the flux integral
\begin{align*}
	m_k = \frac{1}{16 \pi } \lim_{r\to \infty}   \int_{S_r} \sum_{i,j}^{}\left( g_{ij,j} - g_{jj,i} \right) n ^{i} dA,
\end{align*} 
where $S_r= \{x: |x|=r\} $ is the Euclidean $2$-sphere at infinity. 

For convenience, we modify the topology of $M$ by compactifying all of the ends of $M$ except for one chosen end. We denote by $m= m(g)$ the ADM mass of the chosen end.

Following \cite{BrayKhuri11}, let $\mathcal{S}$ denote the collection of surfaces which are $C^2$-smooth compact boundaries of open sets $U$ in $M$, where $U$ is bounded in the chosen end and contains all other ends. A $C^{2}$-smooth surface $\Sigma \in \mathcal{S}$ is called a generalized apparent horizon if its mean curvature satisfies
\begin{align*}
	H_{\Sigma } = |\mathrm{tr}_{\Sigma }\mathbf{k}|,
\end{align*} 
and a generalized trapped surface if
\begin{align*}
	H_{\Sigma } \leq |\mathrm{tr}_{\Sigma }\mathbf{k}|.
\end{align*} 
It was proved by Eichmair \cite{Eichmair10} that any generalized trapped surface $\Sigma \in \mathcal{S}$ is enclosed by a unique outermost generalized apparent horizon $\tilde{\Sigma }$, which is $C^{2,\alpha }$-smooth and strictly outer area minimizing.  

The generalized Penrose conjecture was formulated by Bray-Khuri as follows.
\begin{conj}[{\cite[Conjecture 4]{BrayKhuri11}}]
	Suppose that $(M^3, g, \mathbf{k})$ is a complete connected asymptotically flat initial data set satisfying the dominant energy condition. For a chosen end with mass $m$, if $\Sigma \in \mathcal{S}$ is a generalized trapped surface, then
	\begin{align}
		m \geq \sqrt{\frac{A}{16 \pi }} ,
	\end{align}
	where $A$ is the area of the outermost generalized apparent horizon $\tilde{\Sigma }$ enclosing $\Sigma $. 
\end{conj}

\begin{rmk}
	If one replaces generalized apparent horizons by apparent horizons, that is, surfaces satisfying $H_{\Sigma }= \mathrm{tr}_{\Sigma }\mathbf{k}$ or $H_{\Sigma }= - \mathrm{tr}_{\Sigma } \mathbf{k}$, one obtains the standard Penrose conjecture, which remains widely open. See \cite{Mars09} for an excellent survey of this conjecture and the references therein.  

An important consequence of the Penrose conjecture is the (generalized) positive mass theorem for initial data sets, which asserts the weaker conclusion $m \geq 0$. This result was established by Schoen-Yau \cite{SchoenYau81} and independently by Witten \cite{Witten81}.

\end{rmk}

When $\mathbf{k}=0$, the generalized Penrose conjecture reduces to the Riemannian Penrose inequality, which was proved by Huisken-Ilmanen \cite{HI01} in the case of a connected horizon and by Bray \cite{Bray01} in full generality. Beyond the time-symmetric setting, several special cases are also known. In particular, the spherically symmetric case has been established, see \cite{Hayward96, IMM96, BrayKhuri10Jang}. In contrast, in the fully general setting, a counterexample was constructed by Carrasco-Mars \cite{CarrascoMars10}.

In this paper, we investigate a natural intermediate class of initial data sets. Recall that any symmetric $2$-tensor admits the decomposition 
\begin{align*}
	\mathbf{k} = \frac{\tau }{3} g + \hat{\mathbf{k}},
\end{align*} 
where $\tau = \mathrm{tr}_{g} \mathbf{k}$ is a smooth function and $\hat{\mathbf{k}}$ is trace free. The time-symmetric case ($\mathbf{k}\equiv 0$) corresponds to the vanishing of both components, whereas the general case allows both the trace and trace-free parts to be nontrivial. Here we focus on the intermediate regime in which the trace-free part vanishes, or equivalently, 
\begin{align}\label{k-g-relation}
	\mathbf{k} = \frac{\tau }{3} g,
\end{align}
so that $\mathbf{k}$ is pointwise proportional to the metric $g$.

Although the condition (\ref{k-g-relation}) imposes a strong algebraic restriction on the initial data set, it still allows substantial geometric flexibility and includes a large class of nontrivial examples.

In this setting, the energy density and the momentum density become
\begin{align*}
	(8 \pi) \mu &= \frac{1}{2}\left( R + \tau ^2 - \frac{1}{3} \tau ^2 \right) = \frac{1}{2}R + \frac{1}{3} \tau ^2,\\
	(8 \pi) J &=  \mathrm{div}\left( \frac{\tau }{3} g - \tau g\right)  = -\frac{2}{3} d \tau,
\end{align*} 
and the dominant energy condition becomes 
\begin{align}\label{DEC-tau}
	|\nabla \tau |_g \leq \frac{3}{4} R + \frac{1}{2} \tau ^2.
\end{align} 

Define
\begin{align*}
	h= \mathrm{tr}_{\Sigma } \mathbf{k} = \frac{2}{3}\tau,
\end{align*}
which is independent of $\Sigma $.
Thus the generalized apparent horizons correspond to surfaces of prescribed mean curvature $|h|$. 
The dominant energy condition (\ref{DEC-tau}) is equivalent to
\begin{align}\label{DEC-h}
	R + \frac{3}{2} h^2 - 2 |\nabla h | \geq 0.
\end{align} 
\begin{rmk}
	This condition coincides with the positivity condition arising in the second variation of $\mu $-bubbles with weight $|h|$. As an application, we can provide an alternative proof of the generalized positive mass theorem; see Section \ref{sect-DEC} for further details. This observation also motivates the flow introduced below, which can be considered as a parabolic version of $\mu $-bubbles.
\end{rmk}

We call $(M, g, h)$ an asymptotically flat triple if (\ref{AF-condition-0}) is satisfied with $\mathbf{k} = \frac{h}{2} g$. Actually, we only need the following weaker asymptotic decay of $h$ : 
\begin{align}
	|h| + |x|\cdot |\partial  h| \leq C|x|^{-2} \text{ as }|x| \to \infty.
\end{align}

The following theorem is the main result of this paper.
\begin{thm}\label{thm-main}
	Suppose that $(M^3, g, h)$ is a complete connected asymptotically flat triple for a smooth function $h$ satisfying the dominant energy condition (\ref{DEC-h}). Assume that $M$ has one end with mass $m$, and the boundary of $M$ is compact and consists of $C^{2,\alpha }$-smooth outermost generalized apparent horizons. Then 
	\begin{align}
		m \geq \sqrt{\frac{|\Sigma |}{16 \pi }} ,
	\end{align}
	where $|\Sigma |$ is the area of any connected component of $\partial M$. Equality holds if and only if $h \equiv 0$ and $M$ is isometric to the spatial Schwarzschild manifold.
\end{thm}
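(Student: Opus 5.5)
We argue through a weak (level-set) formulation of a modified inverse mean curvature flow, in the spirit of Huisken--Ilmanen: the $\sigma$-inverse mean curvature flow. We evolve surfaces $\Sigma_t$ starting from a connected component $\Sigma$ of $\partial M$ with normal speed
\begin{align*}
	\frac{1}{H - \sigma |h|},
\end{align*}
where $\sigma$ is a constant parameter. This is the parabolic analogue of a $\mu$-bubble with weight $|h|$, and the generalized apparent horizon condition $H=|h|$ is exactly the degenerate case $\sigma=1$. We then show that a suitably modified Hawking mass is monotone along the flow and compare its initial and asymptotic values.

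\textbf{Step 1: existence.} We set up the level-set formulation: $\Sigma_t=\partial\{u<t\}$, where $u$ is a weak solution of
\begin{align*}
	\mathrm{div}\Big(\frac{\nabla u}{|\nabla u|}\Big) = |\nabla u| + \sigma |h|.
\end{align*}
This is obtained by minimizing the functional
\begin{align*}
	J_u(v)=\int_K |\nabla v| + v|\nabla u| + \sigma |h|\, v .
\end{align*}
Existence follows by elliptic regularization, as in Huisken--Ilmanen. The outermost and strictly outer-minimizing properties of $\Sigma$ (Eichmair) provide the initial condition and barriers. The decay $|h|\le C|x|^{-2}$ lets us use subsolutions of the form $\mathrm{const}+2\log|x|$ near infinity. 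The jumps of the weak flow replace $\Sigma_t$ by its strictly outer $\sigma|h|$-minimizing hull, and along these jumps the modified mass can only increase.

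\textbf{Step 2: monotonicity.} For a connected $\Sigma_t$ we consider the modified Hawking mass
\begin{align*}
	m_\sigma(\Sigma_t)=\sqrt{\frac{|\Sigma_t|}{16\pi}}\Big(1-\frac{1}{16\pi}\int_{\Sigma_t}(H-\sigma|h|)^2\Big),
\end{align*}
possibly with an extra lower-order term. We use $\partial_t|\Sigma_t|=|\Sigma_t|$, which holds only up to an error term because the speed is not $1/H$. We then differentiate using the evolution of $H$ together with Gauss--Bonnet, $\chi(\Sigma_t)\le 2$, and the Gauss equation. The resulting integrand contains $R+\tfrac32h^2-2|\nabla h|$, and we want to absorb the remaining terms via \eqref{DEC-h}, using $|\nabla|h||\le|\nabla h|$ and completing squares. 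The appropriate choice of $\sigma$ (presumably $\sigma=1$ at the start, or $\sigma\to1$) is fixed by requiring that the cross terms $\sigma|h|H$ and $\sigma^2h^2$ combine with $\tfrac32h^2$ correctly.

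\textbf{Main obstacle.} This step is where I expect the difficulty. The area no longer grows exactly exponentially, so the normalization factor must be adjusted. One option is to replace $\sqrt{|\Sigma_t|}$ by $e^{t/2}$ times a constant. The other is to include the term $\int|h|(H-\sigma|h|)$ in the mass and verify that its time derivative is controlled by the $|\nabla h|$ term through the divergence theorem on $\Sigma_t$. I would first try the $e^{t/2}$ normalization and redo the Geroch computation, hoping the weak formulation turns every error into a nonnegative term.

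\textbf{Step 3: endpoints and rigidity.} At $t=0$ we have $H=|h|$. If $\sigma=1$, then $m_\sigma(\Sigma_0)=\sqrt{|\Sigma|/16\pi}$. As $t\to\infty$, the decay of $h$ makes the $h$-terms $o(1)$, and the standard Huisken--Ilmanen asymptotic argument bounds $\liminf m_\sigma(\Sigma_t)$ by the ADM mass $m$. Connectedness of $\Sigma_t$ is preserved because $M$ has one end and the flow is outward minimizing. This gives the inequality for each component of $\partial M$. In the equality case, all discarded terms vanish: $|\nabla h|$ and $h^2$ integrate to zero, forcing $h\equiv0$. The remaining conditions ($\mathring A=0$, $R=0$, $|\nabla H|=0$) then reduce to the Huisken--Ilmanen rigidity argument, which yields the spatial Schwarzschild manifold.
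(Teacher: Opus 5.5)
The flow you choose (with $\sigma=1$) and your modified mass are exactly the paper's $(|h|g)$-IMCF and generalized Hawking mass $m_h$. However, Step~2 is a list of candidate fixes rather than a monotonicity argument, and the candidates you float (an extra lower-order term, tuning $\sigma$) are not what is needed. The missing observation is that the area error has a sign. Along $\partial_t F=\nu/(H-|h|)$,
\begin{align*}
\frac{d}{dt}|\Sigma_t|=\int_{\Sigma_t}\frac{H}{H-|h|}=|\Sigma_t|+\int_{\Sigma_t}\frac{|h|}{H-|h|}\ \ge\ |\Sigma_t|,
\end{align*}
so $A(t)=e^{-t}|\Sigma_t|$ is nondecreasing. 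One then writes $m_h=\sqrt{A/16\pi}\,B$ with $B(t)=e^{t/2}\bigl(1-\frac{1}{16\pi}\int_{\Sigma_t}(H-|h|)^2\bigr)$ and treats $B$ separately. Your ``$e^{t/2}$ times a constant'' idea is essentially $B$, but it only reconnects to $m_h(\Sigma_t)$, and hence to the ADM mass, through $|\Sigma_t|\ge e^t|\Sigma_0|$.

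For $B$, use the evolution of $(H-|h|)^2$; the terms $\sigma(\nu,\nabla^N\cdot)$ drop out because $\sigma=|h|g$. Together with the Gauss equation, $|\mathrm{II}|^2\ge\frac12H^2$ and $\chi(\Sigma_t)\le 2$, this gives
\begin{align*}
16\pi e^{-t/2}B'(t)\ \ge\ \int_{\Sigma_t}\Bigl(R+\tfrac32H^2-2|\nabla h|-\tfrac12(3H-|h|)(H-|h|)\Bigr)=\int_{\Sigma_t}\Bigl(R+\tfrac32h^2-2|\nabla h|\Bigr)+2\int_{\Sigma_t}|h|(H-|h|)\ \ge\ 0 .
\end{align*}
Since $B(0)=1$, we get $B\ge 1>0$, so the product is nondecreasing. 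This identity is what fixes $\sigma=1$. With weight $\sigma|h|$ the same computation produces $R+\frac32\sigma^2h^2-2\sigma|\nabla h|+2\sigma|h|(H-\sigma|h|)$, which the dominant energy condition does not control pointwise when $\sigma\neq 1$. Your rigidity step also rests on this identity. There, $h\equiv 0$ comes from the leftover term $2\int|h|(H-|h|)$ vanishing while $H-|h|>0$ a.e. It does not come from the DEC integrand vanishing, which is compatible with $h\neq 0$.

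Two further steps are asserted rather than handled.

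First, the claim for \emph{any} connected component requires dealing with the other horizons. Connectedness of $\Sigma_t$ via the single end says nothing about what happens when the flow reaches another boundary component, inside which no DEC is available. Following Huisken--Ilmanen's Section 6, the paper proceeds as follows:
\begin{itemize}
\item it fills the other components with balls $W$ and stops just before $E_t$ meets $W$;
\item it jumps to the strictly outward optimizing hull $F=(E_{t_1}\cup W)'$ with weight $|h|$;
\item it checks that $|\partial F|\ge|\partial E_{t_1}|$ and $\int_{\partial F}(H-|h|)^2\le\int_{\partial E_{t_1}}(H-|h|)^2$, so $m_h$ does not drop, and then restarts the flow.
\end{itemize}

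Second, in the weak setting the monotonicity must be pushed down to $t=0$. There the flow is degenerate ($H=|h|$), and, unlike for standard IMCF, uniqueness of weak solutions is not known, so you cannot simply cite Huisken--Ilmanen's initial-time argument. The paper proves the growth formula for a specifically constructed solution:
\begin{itemize}
\item it smooths $|h|$ to $h_\delta$;
\item it approximates the initial surface from inside by a modified mean curvature flow;
\item it shows that the constructed weak solution agrees with the short-time classical one;
\item it uses a conformal perturbation when $H\equiv h_\delta$.
\end{itemize}

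Relatedly, the property needed at the initial surface is strict outward optimality with weight $|h|$ (area minus $\int|h|$), which yields $E_0^+=E_0$; mere outer area minimization is not enough. In the equality case you must also rule out jump times.
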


In particular, this establishes the generalized Penrose conjecture for each connected component of the outermost generalized apparent horizon in the case where $\mathbf{k}$ is proportional to $g$.

When $h \equiv 0$, the theorem reduces to the time-symmetric case proved by Huisken-Ilmanen \cite{HI01}, where the authors developed a theory of weak solutions to the inverse mean curvature flow, established Geroch's monotonicity formula, and derived the Riemannian Penrose inequality as a consequence.

The non-time-symmetric setting considered here contains a large class of genuinely new cases of the Penrose conjecture beyond both the time-symmetric theory and the known spherically symmetric results. Indeed, one can construct nonspherically symmetric asymptotically flat triples satisfying the dominant energy condition for which the generalized apparent horizon strictly encloses the classical minimal horizon. In such situations, Theorem \ref{thm-main} yields a strictly stronger lower bound for the ADM mass than the classical Riemannian Penrose inequality. See Example \ref{exmp-stronger-lower} below for an explicit construction.

We now describe the strategy of the proof. The argument follows the general philosophy of Huisken-Ilmanen's proof of the Riemannian Penrose inequality, but requires two new ingredients adapted to the presence of $h$: a modified inverse mean curvature flow, which we call the $\sigma $-inverse mean curvature flow, and a corresponding generalized Hawking mass that is monotone along the flow. To the best of our knowledge, both this flow and the associated monotonicity formula are new. We now describe the construction of the flow and the main steps of the argument in more detail.

Let $\sigma $ be a Lipschitz nonnegative symmetric $2$-tensor. We introduce the $\sigma $-inverse mean curvature flow (or $\sigma $-IMCF):
\begin{align}\label{sigma-IMCF}
	\frac{\partial F}{\partial t} (x, t) = \frac{\nu }{H- |\nu |^2_{\sigma }}(x,t),\quad x \in N	,\  0 \leq t \leq T,
\end{align}
where $F: N ^{n-1} \times [0, T] \to M^{n}$ is a family of hypersurfaces $N_t := F(N, t)$, $H$ is the mean curvature of $N_t$, and $\nu $ is the outward unit normal. Here, $H- |\nu |^2_{\sigma }= H- \sigma (\nu , \nu )$ is assumed  positive for classical solutions, and $\frac{\partial F}{\partial t}$ denotes the normal velocity along the hypersurface $N_t$. 

In the case when $\sigma = |h|\cdot g$, (\ref{sigma-IMCF}) reduces to
\begin{align}\label{h-IMCF}
	\frac{\partial F}{\partial t}(x,t) = \frac{\nu }{H- |h|} (x,t), \quad x \in N,\  0 \leq t \leq T.
\end{align}
A key structural feature of this flow is the existence of natural monotonicity formulas for several associated geometric quantities. From now on we specialize to $n=3$. 

Define 
\begin{align}\label{defn-A-B}
	 A(t):= e ^{-t}|N_t|, \quad B(t):= e ^{\frac{t}{2}} \left( 1- \frac{1}{16 \pi } \int_{N_t}(H- |h|)^2 \right),
\end{align}
and introduce the generalized Hawking mass
\begin{align}\label{defn-H-mass0}
	m_h(N_t) := \sqrt{\frac{A(t)}{16 \pi }} \cdot B(t).
\end{align}

Under the dominant energy condition (\ref{DEC-h}), one shows that  $A(t)$, $B(t)$, and hence $m_h(N_t)$, are monotone nondecreasing along classical solutions of (\ref{h-IMCF}) as long as the evolving surfaces $N_t$ remain connected; see Section \ref{sect-monotone-classical}. Furthermore, it can be proved that $\lim_{t\to \infty}  m_h(N_t) $ is bounded above by the ADM mass $m$ (cf. Lemma \ref{lem-asymp-comp}). Consequently, the generalized Penrose inequality would follow from the existence of a classical solution of the $(|h|g)$-IMCF emanating from the generalized apparent horizon.

The central task is therefore to construct a weak solution to (\ref{h-IMCF}) and to establish the associated monotonicity formula in this weak setting. To develop such a weak formulation, it is necessary to work with the more general $\sigma $-IMCF (\ref{sigma-IMCF}). There have been extensive studies of weak solutions for several related formulations of generalized inverse mean curvature flows. In particular, Moore \cite{Moore12} considered the flow $\frac{\partial F}{\partial t} = \frac{\nu }{H + \mathrm{tr}_{\Sigma } \mathbf{k}}$ under the assumption $\mathrm{tr}_g \mathbf{k} \geq 0$, while Huisken–Wolff \cite{HuiskenWolff22} studied $\frac{\partial F}{\partial t} = \frac{\nu }{\sqrt{H^2 - (\mathrm{tr}_{\Sigma }\mathbf{k})^2} }$ in the maximal case $\mathrm{tr}_{g} \mathbf{k} =0$. By contrast, the $\sigma $-IMCF studied here possesses a fundamentally different structure and cannot be reduced to either of these flows, even when $\mathbf{k}$ is proportional to $g$. Nevertheless, by adapting techniques developed in \cite{HI01, Moore12, HuiskenWolff22} together with several necessary modifications, we establish a weak theory for $\sigma $-IMCF; see Theorem \ref{sgm-IMCF-existence} and Theorem \ref{h-IMCF-existence}. After constructing weak solutions, the monotonicity formula can be justified through arguments analogous to those in \cite{HI01}. One important difference, however, is that uniqueness is not known a priori in our setting, requiring a more delicate analysis. This ultimately yields a more general growth formula of $B(t)$, stated in Theorem \ref{thm-growth}, which requires only the dominant energy condition rather than nonnegative scalar curvature.

\textbf{Outline of the paper.} In Section \ref{sect-DEC}, we include some observations of the dominant energy condition, and briefly introduce an alternative proof of the positive mass theorem. In Section \ref{sect-classical}, we discuss basic evolution equations and interior estimates of the classical  $\sigma $-IMCF. In Section \ref{sect-weak}, we develop the weak theory for the $\sigma $-IMCF and prove the existence theorem. For the reader’s convenience and clarity of exposition, we only include main results in this section and leave most technical details in the Appendix \ref{appen-weak}. In Section \ref{sect-hg-IMCF}, we establish several properties of the $(|h|g)$-IMCF. We also prove a monotonicity formula in the classical sense.  In Section \ref{sect-monotonicity}, we establish the monotonicity formula for the weak $(|h|g)$-IMCF. In Section \ref{sect-proof-main}, we finish the proof of the main theorem.

\textbf{Acknowledgement.}
I would like to thank Hubert Bray for his supervision and encouragement.
I also thank Demetre Kazaras, Marcus Khuri and Kai Xu for their interest and discussions.

\section{Dominant energy condition and the generalized PMT}\label{sect-DEC}
In this section, we make some observations about the dominant energy condition, which can be viewed as a generalization of the nonnegative scalar curvature condition, and as  motivation for the $\sigma $-IMCF that we will study later. 

Given a complete connected asymptotically flat manifold $(M,g)$, define
\begin{align*}
	\mathcal{H}_0(M,g) &:= \left\{ h : R + \frac{3}{2} h ^2 - 2 |\nabla h| \geq 0,  (M,g, h) \text{ is an asymptotically flat triple} \right\}.
\end{align*} 
Notice that 
\begin{align*}
	0 \in \mathcal{H}_0(M,g) \iff R \geq 0.
\end{align*}

The following proposition shows that the condition $\mathcal{H}_{0}(M,g) \neq \emptyset$ reflects an underlying nonnegative scalar curvature phenomenon.

\begin{prop}
	If $R(x) \leq  0$ for all $x \in M$, then either $\mathcal{H}_0(M,g) = \emptyset$ or $R \equiv 0$ and $\mathcal{H}_0(M,g) = \{0\} $.
\end{prop}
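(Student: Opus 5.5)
The idea is to show that any $h \in \mathcal{H}_0(M,g)$ must vanish identically when $R \le 0$. Once that holds, the dominant energy condition (\ref{DEC-h}) with $h=0$ reads $R \ge 0$, which combined with $R\le 0$ forces $R\equiv 0$. Conversely, if $R\equiv 0$ then $0\in\mathcal{H}_0(M,g)$ by the remark just before the proposition. So $\mathcal{H}_0(M,g)$ is either empty (when $R\not\equiv 0$) or equal to $\{0\}$, which is the claimed dichotomy.

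Fix $h\in \mathcal{H}_0(M,g)$. Since $R\le 0$, (\ref{DEC-h}) gives the differential inequality
\begin{align*}
	|\nabla h| \le \frac{3}{4} h^2 + \frac{1}{2}R \le \frac{3}{4} h^2 .
\end{align*}
This is a Riccati-type bound. On the open set $\{h\neq 0\}$ it is equivalent to
\begin{align*}
	\left|\nabla (1/h)\right| = \frac{|\nabla h|}{h^2} \le \frac{3}{4}.
\end{align*}
Thus $1/|h|$ is $\tfrac34$-Lipschitz wherever $h\ne 0$, so $|h|$ can decay at most like the reciprocal of distance.

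Suppose, for contradiction, that $h(x_0)\neq 0$ for some $x_0$. Take any point $x$ and a piecewise smooth curve $\gamma:[0,L]\to M$ from $x_0$ to $x$ of length $L\le d(x_0,x)+1$; such curves exist since $M$ is connected and complete. Let $s^*=\sup\{s : h\neq 0 \text{ on } \gamma([0,s])\}$. On $[0,s^*)$, integrating the gradient bound along $\gamma$ gives
\begin{align*}
	\frac{1}{|h(\gamma(s))|} \le \frac{1}{|h(x_0)|} + \frac34 s .
\end{align*}
Hence $|h|$ is bounded below by a positive constant on $\gamma([0,s^*))$. By continuity $h(\gamma(s^*))\neq 0$, and by openness of $\{h\ne0\}$ this forces $s^*=L$. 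Therefore
\begin{align*}
	|h(x)| \ge \bigl(|h(x_0)|^{-1} + \tfrac34 (d(x_0,x)+1)\bigr)^{-1}
\end{align*}
for every $x\in M$.

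In the asymptotically flat end, $g$ is uniformly equivalent to the Euclidean metric, so $d(x_0,x)\le C'|x|$ for large $|x|$. The lower bound then gives $|h(x)|\ge c/|x|$ as $|x|\to\infty$. This contradicts the assumed decay $|h|\le C|x|^{-2}$. Hence $h\equiv 0$, which completes the argument.

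The only mildly delicate point is the continuation argument along curves, which ensures $h$ cannot reach zero in finite length. This is handled by the a priori lower bound on $|h|$ above, as in standard ODE comparison. The decay comparison itself is straightforward. Boundary components, if present, cause no issue because curves can be chosen within $M$.
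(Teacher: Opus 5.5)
Your proof is correct and uses the same mechanism as the paper. With $R\le 0$, the dominant energy condition gives the Riccati bound $|\nabla h|\le \frac34 h^2$, so a nonzero $h$ cannot decay faster than the reciprocal of distance along a path into the asymptotically flat end, which contradicts $|h|\le C|x|^{-2}$. The paper runs the argument along a geodesic ray, propagating smallness inward from infinity through a bootstrap on $\log|\tau|$ with an auxiliary parameter $\beta$; your observation that $1/|h|$ is $\frac34$-Lipschitz on $\{h\neq 0\}$ instead propagates non-vanishing outward from a point and reaches the contradiction more directly, without needing rays.
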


\begin{proof}
	Assume that $R \leq 0$ and $h \in \mathcal{H}_0(M,g)$. Set $\tau = \frac{3}{2} h$. Choose one base point and a unit-length geodesic $\gamma (t), t \in [0, \infty),$ from the base point to one end. Consider $\tau (t) = \tau (\gamma (t) )$. Then
	\begin{align*}
		|\tau '(t)| \leq |\nabla \tau | (\gamma (t) ) \leq \frac{1}{2} \tau ^2.
	\end{align*} 
	 As $t\to \infty$, we know
	\begin{align*}
		|\tau (t)| \leq C (1+ t ^2)^{-1} \ll  t ^{-1}.
	\end{align*} 
	 Fix $\beta > \frac{1}{4}$. For large enough $t_0 \gg 1$, we have $\beta |\tau (t)| \leq (1+t) ^{-1}$. Let's assume that $t_1$ is the outermost $t$ so that $\beta |\tau (t)| \leq (1+t) ^{-1}$. Assume that $t_1 > 0$, then we have
	 \begin{align*}
	 	\beta |\tau (t)| \leq (1+t)^{-1},\ \forall t \geq t_1,\ \beta |\tau (t_1) | = (1+ t_1) ^{-1}>0.
	 \end{align*} 
	 Consider the inverval $(t_1, t_2)$ where $|\tau (t)| >0$. Such $t_2>t_1$ always exist. For any $t \in (t_1, t_2)$, 
	 \begin{align*}
		 \left|\frac{d}{d t} \log \frac{|\tau (t)|}{|\tau (t_0)|} \right| &= \frac{|\tau '(t)|}{|\tau (t)|} \leq \frac{1}{2} |\tau (t)| \leq \frac{1}{2 \beta } (1+t)^{-1}.
	 \end{align*} 
	 So
	 \begin{align*}
	 	|\tau (t)| \geq |\tau (t_1)| \cdot \left( \frac{1+t}{1+ t_1} \right) ^{- \frac{1}{2 \beta }} >0.
	 \end{align*} 
	 This also shows that $|\tau (t)| >0 $ and above estimate holds for all $t \geq t_1$. 

	 As $t\to \infty$, we have the estimate that
\begin{align*}
	 t ^{-\frac{1}{2 \beta }} \leq C\cdot t ^{-2},
\end{align*}
a contradiction with our choice of $\beta $ that $2- \frac{1}{2 \beta }>0$. Thus $t_1 = 0$, and we have 
\begin{align*}
	|\tau (t)| \leq \frac{1}{\beta (1+t)},\ \  \forall \beta > \frac{1}{4}.
\end{align*} 
Taking $\beta \to \infty$ implies that $|\tau | \equiv 0$, i.e. $ \mathcal{H}_0(M,g) = \{0\} $. Together with the assumption, we also have $R \equiv 0$.
\end{proof}

On the other hand, when $(M,g)$ possesses positive scalar curvature, one has $\mathcal{H}_{0}(M, g) \neq \emptyset$. More precisely, assume $(M,g)$ satisfies $R \geq 0$ everywhere and $R>0$ on some open subset. Then, by choosing a smooth function $h$ supported in the region where $R>0$ and with sufficiently small $C^1$-norm, one obtains $h \in \mathcal{H}_{0}(M,g)$.

The following example constructs an asymptotically flat triple satisfying the dominant energy condition and shows that Theorem \ref{thm-main} applies to new cases of the Penrose inequality beyond the classical Riemannian setting and the previously known spherically symmetric settings.

\begin{exmp}\label{exmp-stronger-lower}
Start with the doubled Schwarzschild metric
\begin{align*}
	(M, g_m) = (\mathbb{R}\times S^2,  d s ^2 + r(s) ^2 g_{S^2} ),
\end{align*} 
where
\begin{align*}
	r(s) \geq 2m,\ r(0) = 2m,\ r'(0)=0,
\end{align*} 
and
\begin{align*}
	(r'(s) )^2 = 1- \frac{2m}{r(s)}.
\end{align*} 
The scalar curvature is $R \equiv 0$ and the mean curvature of $s$-slice is $H(s) = \frac{2 r'(s)}{r(s)}$. 

Choose a smooth positive even function $h$ such that 
$$h(s) = \frac{\delta }{(1+ |s|)^3} $$
for sufficiently large $|s|$, where $\delta >0$ is a small constant. Note that $h$ need not be rotationally symmetric. 
Define 
$$F:= 2|\nabla h|_{g_m} - \frac{3}{2} h ^2.$$ 
As $|s| \to \infty$, the leading term of $F$ is $\frac{6 \delta }{(1+ |s|) ^{4}}>0$, hence $F$ is positive for all sufficiently large $|s|$. 

Let $F_{+} = \max(F, 0)$, $F_{-}= \max (-F, 0)$, and solve 
\begin{align*}
	- \Delta _{g_m} v = F_{+} ,\quad v\to 0 \text{ as }|s| \to \infty.
\end{align*} 
 By the maximum principle, $v>0$. Define 
 $$u = 1+ v, \quad g_{u} = u ^{4} g_{m}. $$
 Then $g_{u}$ is asymptotically flat, and the conformal scalar curvature formula together with $u>1$ yields
\begin{align*}
	R_{g_u} + \frac{3}{2} h^2 - 2 u ^{-2}|\nabla h|_{g_m}  \geq   8 u ^{-5} F_{+} + \frac{3}{2} h^2 -2 |\nabla h|_{g_m} = (8 u ^{-5} -1 ) F_{+}+ F_{-}.
\end{align*}
Moreover, by taking $\delta $ sufficiently small, the solution $v$ becomes uniformly small, so that $u$ is sufficiently close to $1$. In particular, $8 u ^{-5} - 1>0$, and hence
\begin{align*}
	R_{g_u} + \frac{3}{2} h^2 - 2|\nabla h|_{g_u} \geq 0.
\end{align*} 
This shows that $(M, g_{u}, \frac{h}{2} g_{u})$ defines an asymptotically flat initial data set satisfying the dominant energy condition. 

Since $F_{+}$ is even, $u$ is symmetric with respect to the middle sphere $\Sigma _0 = \{s=0\} $. Consequently, $\Sigma _0$ remains minimal with respect to $g_{u}$. Since $h>0 $ on $\Sigma _0$, the surface $\Sigma _0$ is generalized trapped. Therefore, by Eichmair's theorem, there exists a unique outermost generalized apparent horizon $\tilde{\Sigma }$, satisfying $H = h>0$. In particular, $\tilde{\Sigma }$ strictly encloses any minimal surface. 

Finally, one may further perturb $g_{m}$ so that it's no longer spherically symmetric and obtain constructions with similar properties. For such examples, Theorem \ref{thm-main} gives a strictly stronger conclusion. 

\end{exmp}

Now we scratch an alternative proof of the positive mass theorem, using $\mu $-bubbles and the dominant energy condition.
\begin{thm}
	Assume that $(M^3, g, \mathbf{k})$ is a complete asymptotically flat Cauchy data with $\mathbf{k} = \frac{\tau }{3}g$ for some smooth function $\tau $. Assume that $\mathbf{k}$ satisfies the dominant energy condition.
Let $m$ be the ADM mass.
Then
\begin{align*}
	m \geq 0.
\end{align*}  

\end{thm}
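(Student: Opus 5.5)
Under the assumption $\mathbf{k}=\frac{\tau}{3}g$, the dominant energy condition is (\ref{DEC-h}) with $h=\frac{2}{3}\tau$. The plan is to argue by contradiction along the lines of the Schoen--Yau / Lohkamp reduction, with minimal surfaces replaced by $\mu$-bubbles of weight $|h|$.

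\textbf{Step 1: reduction to a harmonically flat end.} Suppose $m<0$. First I would perturb the data to a triple $(M,\tilde g,\tilde h)$ that is harmonically flat near infinity, still has negative mass, and still satisfies (\ref{DEC-h}). Since (\ref{DEC-h}) need not be strict, I would first arrange strict inequality on compact sets and then conformally correct, adapting the usual density argument. Because $\tilde h$ can be cut off to vanish outside a compact set, the end becomes exactly $u^4\delta$ with $u=1+\frac{m}{2|x|}+O(|x|^{-2})$. With $m<0$, following Lohkamp one then deforms the metric to be exactly Euclidean outside a large cube while keeping $R\ge 0$ there, with $R>0$ somewhere in the deformed region. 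Identifying opposite faces yields a closed manifold $T^3\# N$ carrying a function $\tilde h$ that satisfies (\ref{DEC-h}), with strict inequality somewhere. Alternatively, one can keep the noncompact end and work in a large slab $\{-L\le x_3\le L\}$, using the coordinate planes as barriers.

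\textbf{Step 2: the $\mu$-bubble.} Consider the functional
\[
\mathcal{A}(\Omega)=|\partial^*\Omega|-\int_{\Omega}\tilde h\,dV
\]
(or, more symmetrically, with weight $|\tilde h|$ and suitable signs) over Caccioppoli sets in the class of a nontrivial homology class, for example a slab region homologous to $\{x_3<0\}$. Near the Euclidean boundaries of the slab, $\tilde h=0$, and the flat planes act as barriers, so a minimizer $\Sigma=\partial\Omega$ exists, is smooth in dimension 3, and has mean curvature $H=\tilde h$. Stability gives, for all $\varphi$,
\[
\int_\Sigma|\nabla\varphi|^2-(\mathrm{Ric}(\nu,\nu)+|A|^2+\partial_\nu \tilde h)\varphi^2\ge 0.
\]
Applying the Gauss equation, $\mathrm{Ric}(\nu,\nu)+|A|^2=\frac12(R-R_\Sigma+|A|^2+H^2)$, together with $|A|^2\ge\frac12H^2=\frac12\tilde h^2$, we get
\[
\mathrm{Ric}(\nu,\nu)+|A|^2+\partial_\nu\tilde h\ge \tfrac12\bigl(R+\tfrac32\tilde h^2-2|\nabla\tilde h|\bigr)-\tfrac12R_\Sigma .
\]
This is exactly where (\ref{DEC-h}) enters, as the remark in the introduction indicates. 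Taking $\varphi\equiv1$ on a component gives $\int_\Sigma R_\Sigma\ge\int_\Sigma(\text{DEC term})\ge0$.

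\textbf{Step 3: contradiction.} In the torical setting, $\Sigma$ separates and carries a nonzero class mapping onto $T^2$, so no component can be a sphere; this forces $\int R_\Sigma\le0$. Combined with Step 2, equality must hold throughout, which gives $\Sigma$ totally umbilic with $\tilde h=0$ and $R=0$ along $\Sigma$. By choosing the homology class (or the slab height) so that $\Sigma$ must cross the region where the dominant energy condition is strict, this yields a contradiction. For the torus argument one could instead use a Fischer-Colbrie--Schoen conformal change to produce a metric of positive scalar curvature on a $T^2$-type surface.

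I expect the main obstacle to be Step 1, namely the density and compactification argument preserving (\ref{DEC-h}). The condition is nonlinear in $h$ and involves $|\nabla h|$, so a conformal change $u^4g$ rescales $|\nabla h|$ by $u^{-2}$ while $R$ transforms differently, as in Example \ref{exmp-stronger-lower}. I would handle this by controlling $u$ close to $1$ and cutting $h$ off at large radii, where $R$ can be made slightly positive beforehand to absorb the error terms.
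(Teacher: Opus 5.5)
Your main route, a Lohkamp-type compactification to $T^3\# N$ followed by a closed $\mu$-bubble, has a genuine gap in Step 3: nothing forces strictness along $\Sigma$. Write $D:=R+\frac{3}{2}\tilde h^2-2|\nabla\tilde h|$. Your stability computation gives a contradiction only if $\int D>0$ over the component of $\Sigma$ carrying the class, that is, only if the $\mu$-bubble actually meets the region where (\ref{DEC-h}) is strict.

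On the closed manifold a surface in a nonzero class bounds no region, so $|\partial^*\Omega|-\int_\Omega\tilde h$ only makes sense in the cyclic cover. There, consider the tori $\{x_3=c\}$ in the Euclidean part of the cube, away from the support of $\tilde h$ and from the bending region where $R>0$. They are stationary ($H=0=\tilde h$), stable, and saturate every inequality you use, and nothing prevents the minimizer from being one of them. ``Choosing the homology class or the slab height'' does not rule this out.

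In Lohkamp's Riemannian argument, this is exactly where one conformally changes the closed metric to have $R>0$ everywhere. That step does not carry over to (\ref{DEC-h}): under $g\mapsto u^4g$ the terms $R$ and $|\nabla h|$ scale like $u^{-4}$ and $u^{-2}$, which is the mismatch you yourself flag in Step 1. You would need either a perturbation making $D>0$ everywhere on the closed manifold while keeping $\mathbf{k}$ proportional to $g$, or a rigidity/splitting argument for $\mu$-bubbles that saturate the inequality. Neither is supplied.

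Two smaller errors. First, your equality analysis is off: equality gives $D=0$, umbilicity and $\partial_\nu\tilde h=-|\nabla\tilde h|$ along that component, not $\tilde h=0$ and $R=0$ separately. Second, other components of $\Sigma$ may well be spheres. Only the component carrying the class must have genus $\geq 1$, which suffices if you take $\varphi$ supported on it.

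The paper avoids the strictness issue by staying noncompact, the route you mention only in passing. Following Schoen's argument, it minimizes $|\partial^*\Omega|-\int_\Omega|h|$ with boundary on large cylinders $C_\sigma$ inside a slab whose planes are barriers because $m<0$. Letting $\sigma\to\infty$ gives a complete stable $\mu$-bubble asymptotic to a plane, and the contradiction is with Gauss--Bonnet for that surface ($\int_\Sigma K\leq 0$), via the log-cutoff version of your stability inequality. Because the limit surface is noncompact and runs out into the asymptotically flat region, that is where the equality case is excluded, rather than by topology.

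If you take that route, note three things:
\begin{itemize}
\item $\varphi\equiv1$ and compact Gauss--Bonnet are no longer available.
\item The planes $\{x_3=\pm L\}$ have mean curvature of order $|m|L|x|^{-3}$ while $|h|$ is only $O(|x|^{-2})$, so they are barriers only once $h$ is cut off or decays faster.
\item Hence a DEC-preserving version of your Step 1 is needed there too. The paper's sketch also takes this reduction over from Schoen without adapting it to (\ref{DEC-h}).
\end{itemize}
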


\begin{proof}
	By the dominant energy condition, for $h= \frac{2}{3} \tau $, we have
		\begin{align*}
		R + \frac{3}{2}|h|^2 - 2|\nabla |h| | &\geq 0.
	\end{align*} 
	We follow the same argument of \cite[Theorem 4.2]{Schoen87}, and only indicate the necessary modifications. The proof is done by contradiction and assuming $m<0$. 	
	Instead of minimal surface, one can consider $\mu $-bubbles, which by definition are minimizers of 
		\begin{align*}
		\mathcal{F}(\Omega ) = |\partial ^* \Omega | - \int_{\Omega } |h| ,
	\end{align*} 
	with appropriate obstacles. See also definition of the outward optimizing hull (\ref{defn-optimizing-hull}).

Let $\Sigma _{\sigma } = \partial ^* \Omega $ be the minimizer with boundary in a large cylinder $C_{\sigma }$ with radius $\sigma $. By the first variation, we have 
\begin{align*}
	H_{\Sigma _{\sigma }} = |h| ,
\end{align*} 
with respect to outer normal $\nu $. By the second variation,  for any compactly supported function $f$ on $\Sigma _{\sigma }$, we have
\begin{align*}
	\int_{\Sigma _{\sigma }} |\nabla ^{\Sigma } f|^2 - \int_{\Sigma _{\sigma }}\left( |\mathrm{II}|^2 + \mathrm{Ric}(\nu ,\nu ) ) + \langle \nabla |h| , \nu  \rangle \right) f^2  \geq 0.
\end{align*} 
Using the Gauss equation
\begin{align*}
	|\mathrm{II}|^2 + \mathrm{Ric}(\nu ,\nu ) = \frac{1}{2}\left( R +|\mathrm{II}|^2+  H^2 - 2 K \right) ,
\end{align*} 
we have
\begin{align*}
	\int_{\Sigma _{\sigma }} \left( |\nabla ^{\Sigma }f|^2 + K f^2\right) &\geq \frac{1}{2} \int_{\Sigma _{\sigma }} \left( R + \frac{3}{2} h^2 - 2|\nabla |h| | \right)  f^2 >0,
\end{align*} 
unless $R+ \frac{3}{2} h^2 - 2|\nabla |h| | \equiv 0$ and $\mathrm{II} = \mathrm{const}\cdot  \mathrm{Id}$, which is impossible on $C_{\sigma }$. Taking $\sigma \to \infty$, we obtain a contradiction with the Gauss-Bonnet theorem.

\end{proof}

\section{Classical solution of the $\sigma $-IMCF}\label{sect-classical}

Let $(M^n, g)$ be a smooth Riemannian manifold with $n \geq 3$, and $\sigma $ be a smooth nonnegative symmetric $2$-tensor.  Let $F: N^{n-1} \times [0, T] \to M^{n}$ be a family of hypersurfaces $N_t:= F(N, t)$. We say $F$ is a classical solution of the \textit{$\sigma $-inverse mean curvature flow} (or $\sigma $-IMCF) if the following parabolic evolution equation holds in the classical sense:
\begin{align}\label{*-eq}
	\frac{\partial F}{\partial t}(x,t) = \frac{\nu }{H- |\nu |^2_{\sigma }}(x,t),\quad x \in N,\ 0 \leq t \leq T, \tag*{$(*)$}
\end{align} 
where $H$ is the mean curvature of $N_t$ at the point $F(x, t)$, $\nu $ is the outward unit normal, $|\nu |^2_{\sigma }= \sigma _{ij} \nu ^{i} \nu ^{j} \geq 0$, $H-|\nu |^2_{\sigma }$ is assumed to be positive, $\frac{\partial F}{\partial t}$ denotes the normal velocity along the surface $N_t$, and all derivatives and geometric quantities are well defined.

\begin{lem}\label{evolution-eq-classical}
	Classical solutions of \ref{*-eq} satisfy the following evolution equations:
	\begin{itemize}
		\item[1)]
			$$\frac{d}{d t}|N_t| = \int_{N_t} \frac{H}{H- |\nu |^2_{\sigma }} dA;$$
		\item[2)]
			$$\frac{\partial }{\partial  t} \nu =   \frac{  \nabla ^{N}( H- |\nu |^2_{\sigma } )}{(H- |\nu |^2_{\sigma })^2}  ;$$
		\item[3)]
			$$\frac{\partial }{\partial t}H =   \frac{\Delta ^{N} (H-|\nu |^2_{\sigma }) }{(H-|\nu |^2_{\sigma })^2} - \frac{2|\nabla ^{N} (H- |\nu |^2_{\sigma })|^2}{(H-|\nu |^2_{\sigma })^3} - \frac{\mathrm{Ric}(\nu ,\nu ) + |\mathrm{II}|^2}{H-|\nu |^2_{\sigma }};$$
		\item[4)]
			$$\frac{\partial }{\partial  t} |\nu |_{\sigma }^2 = \frac{(\nabla _{\nu } \sigma ) (\nu , \nu )}{H- |\nu |^2_{\sigma }} + \frac{2 \sigma(\nu ,\nabla ^{N}(H- |\nu |^2_{\sigma }) )}{(H- |\nu |^2_{\sigma })^2};$$
		\item[5)]
			\begin{align*}
				&\frac{\partial }{\partial t}\frac{(H - |\nu |^2_{\sigma })^2}{2} \\
				&= \frac{\Delta ^{N} (H- |\nu |^2_{\sigma })}{H- |\nu |^2_{\sigma }} - \frac{2|\nabla ^{N}(H-|\nu |^2_{\sigma })|^2}{(H- |\nu |^2_{\sigma })^2} - \left( \mathrm{Ric}(\nu ,\nu ) + |\mathrm{II}|^2 \right)\\
				&\ \ - (\nabla _{\nu }\sigma )(\nu ,\nu ) - \frac{2\sigma ( \nu , \nabla ^{N}(H-|\nu |^2_{\sigma }) )}{H-|\nu |^2_{\sigma }}.
			\end{align*} 
	\end{itemize}
\end{lem}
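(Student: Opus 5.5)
The plan is to regard \ref{*-eq} as a general normal flow $\partial_t F = f\nu$ with speed $f = (H-|\nu|^2_\sigma)^{-1}>0$. First I will record the standard first-variation formulas for such a flow:
\begin{align*}
\partial_t (dA) = fH\, dA,\qquad \partial_t \nu = -\nabla^N f,\qquad \partial_t H = -\Delta^N f - \left(\mathrm{Ric}(\nu,\nu)+|\mathrm{II}|^2\right) f.
\end{align*}
I will derive these in local coordinates on $N$. The metric evolves by $\partial_t g_{ij} = 2f h_{ij}$. Differentiating $\langle \nu, \partial_i F\rangle = 0$ gives $\langle \partial_t\nu,\partial_iF\rangle = -\partial_i f$, while $|\nu|=1$ shows $\partial_t \nu$ is tangential. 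The formula for $H$ is the usual one, obtained from the evolution of the second fundamental form; the Ricci term comes from commuting covariant derivatives. With the convention $H>0$ for outward expanding spheres, the signs come out as displayed. Item 1) then follows by integrating $fH$ over $N_t$.

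Next I will substitute $f = u^{-1}$, where $u = H-|\nu|^2_\sigma$. This gives $\nabla^N f = -u^{-2}\nabla^N u$, which yields item 2) directly. It also gives
\begin{align*}
\Delta^N f = -u^{-2}\Delta^N u + 2u^{-3}|\nabla^N u|^2,
\end{align*}
and plugging this into the $H$-equation yields item 3).

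For item 4), I will differentiate $\sigma(\nu,\nu)$ at the moving point $F(x,t)$. This produces two contributions:
\begin{align*}
\partial_t\, \sigma(\nu,\nu) = (\nabla_{\partial_tF}\sigma)(\nu,\nu) + 2\sigma(\nu, D_t\nu) = f(\nabla_\nu\sigma)(\nu,\nu) + 2\sigma(\nu, u^{-2}\nabla^N u),
\end{align*}
using item 2) for $D_t\nu$. This is exactly item 4).

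Finally, item 5) is $u\,\partial_t u = u(\partial_t H - \partial_t|\nu|^2_\sigma)$. Multiplying 3) and 4) by $u$ and subtracting gives the stated expression. No further manipulation is needed beyond cancelling one power of $u$ in each term.

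The only real care point is in item 4). One must interpret $\partial_t \nu$ as the covariant derivative along the curve $t\mapsto F(x,t)$, so that the chain rule for the tensor $\sigma$ correctly separates into the $\nabla_\nu\sigma$ term and the $\sigma(\nu,D_t\nu)$ term. Besides that, the main check is keeping the sign conventions for $H$ and $\nu$ consistent with the standard IMCF formulas of \cite{HI01}, which is the special case $\sigma=0$. Everything else is routine.
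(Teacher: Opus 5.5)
Your proposal is correct and follows essentially the same route as the paper's appendix computation: view the flow as a normal variation with speed $f=(H-|\nu|^2_\sigma)^{-1}$, derive $\partial_t\, dA = fH\,dA$, $\partial_t\nu=-\nabla^N f$ and $\partial_t H=-\Delta^N f - f\left(\mathrm{Ric}(\nu,\nu)+|\mathrm{II}|^2\right)$, then substitute $f=1/u$ with $u=H-|\nu|^2_\sigma$. Item 4) follows from the covariant chain rule for $\sigma(\nu,\nu)$ along $t\mapsto F(x,t)$, and item 5) is $u(\partial_t H-\partial_t|\nu|^2_\sigma)$, exactly as in the paper.
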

\begin{proof}
	The proof is included in Section \ref{appen-sub-classical}. 
\end{proof}

We have the following local estimates of mean curvature for classical solutions of \ref{*-eq}.
\begin{lem}\label{local-estimate-mean-curv}
Let $(N_t)_{0 \leq t \leq T}$ be a classical solution of \ref{*-eq} on $M^{n}$, where $N_t$ may have boundary. Then there exists a positive function $\eta (x) >0$ on $M$, depending on $g$, so that for each $x \in N_t$ and each $r< \eta(x)$, we have 
	\begin{align}
		(H - |\nu |^2_{\sigma })(x, t) \leq \max \left( (H-|\nu |^2_{\sigma })_r, \frac{C(n, r \|\sigma \|_{C^0}, r ^2 \|\nabla \sigma \|_{C^0})}{r} \right) ,
	\end{align}
	where $ (H-|\nu |^2_{\sigma })_r = \sup_{(y,s) \in P_r} (H-|\nu |^2_{\sigma })(y,s)$, and $P_r = (B_r(x) \cap N_0) \times \{0\} \cup \left( \cup _{0 \leq s \leq t}(B_r(x)\cap \partial N_s) \times \{ s\}  \right) $ is the parabolic boundary. 
\end{lem}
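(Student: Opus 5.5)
This follows the Huisken--Ilmanen local mean curvature estimate (Lemma 2.2 of \cite{HI01}): a maximum principle argument for the speed function multiplied by a spatial cutoff.

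\textbf{Setup.} Write $\phi := H-|\nu|^2_\sigma>0$. For $r<\eta(x)$, with $\eta$ chosen below the injectivity/convexity radius, the function $\rho(y)=d(x,y)^2$ is smooth on $B_r(x)$ and satisfies $|\nabla\rho|^2\le 4\rho$ and $\nabla^2\rho\le C g$. We use the cutoff $\psi=(r^2-\rho)_+$, or equivalently the Huisken--Ilmanen choice $\psi=r^2-\rho$ restricted to $B_r(x)$, and study $u:=\psi\,\phi$ on the space-time region $\bigcup_s (B_r(x)\cap N_s)\times\{s\}$. The goal is a bound of the form $u\le \max(\sup_{P_r} u,\ C r)$. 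Since $\psi\le r^2$ and $\psi(x)=r^2$, evaluating at $x$ gives
\[
\phi(x,t)\le \max\Bigl(\sup_{P_r}\phi,\ \frac{C}{r}\Bigr),
\]
which is the claimed estimate.

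\textbf{Evolution of $u$.} From Lemma~\ref{evolution-eq-classical}, items 3) and 4), we get
\[
\partial_t\phi=\frac{\Delta^N\phi}{\phi^2}-\frac{2|\nabla^N\phi|^2}{\phi^3}-\frac{\mathrm{Ric}(\nu,\nu)+|\mathrm{II}|^2}{\phi}-\frac{(\nabla_\nu\sigma)(\nu,\nu)}{\phi}-\frac{2\sigma(\nu,\nabla^N\phi)}{\phi^2}.
\]
Along the flow the cutoff evolves by $\partial_t\psi=-\langle\nabla\rho,\nu\rangle/\phi$, and on the surface
\[
\Delta^N\psi=-\mathrm{tr}_N\nabla^2\rho-H\langle\nabla\rho,\nu\rangle .
\]
Write $H=\phi+|\nu|^2_\sigma$. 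At an interior space-time maximum of $u$ with $u$ large, we have $\nabla^N u=0$, which gives $\nabla^N\phi=-\phi\nabla^N\psi/\psi$, together with $\Delta^N u\le0$ and $\partial_t u\ge0$. Substituting these, the good term $-\psi|\mathrm{II}|^2/\phi\le -\psi H^2/((n-1)\phi)$ should dominate. The remaining terms are bounded by $C\phi$ from $\mathrm{Ric}$ and $\nabla^2\rho$, by $\|\sigma\|\,|\nabla\rho|$ and $\|\nabla\sigma\|\psi$ from $\sigma$, and by $|\nabla\psi|^2/\psi\le 4$ from the gradient term, after multiplying through by $\phi^2$ or $\phi^3$. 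The outcome is a quadratic inequality in $u$ at the maximum point, of the form
\[
u^2\lesssim C\bigl(r^2+r\cdot r\|\sigma\|+r^2\cdot r^2\|\nabla\sigma\|\bigr)+C r\,u ,
\]
which forces $u\le C(n,r\|\sigma\|_{C^0},r^2\|\nabla\sigma\|_{C^0})\,r$. This is the scaling-invariant form: $\sigma$ has units of curvature and so enters only through $r\|\sigma\|$ and $r^2\|\nabla\sigma\|$.

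\textbf{Main obstacle.} The difficulty is the mixed terms involving $\sigma$ and $\nabla^N\phi$. The term $2\sigma(\nu,\nabla^N\phi)/\phi^2$ is only first order, so replacing $\nabla^N\phi$ via the critical point condition gives a contribution $\sim \|\sigma\|\,|\nabla\psi|\,\phi/\psi$. Multiplied by $\psi$, this is of size $\|\sigma\| r\,\phi$, which is lower order than $\psi\phi^2$ and can be absorbed by Young's inequality. The second source of trouble is $H\langle\nabla\rho,\nu\rangle$, which mixes $\phi$ and $|\nu|_\sigma^2$; we split it as $(\phi+|\nu|^2_\sigma)\langle\nabla\rho,\nu\rangle$ and absorb the $\phi$ piece using the negative $H^2$ term. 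Carrying the signs carefully through this absorption is the step most likely to need adjustment, in particular deciding whether $\psi=r^2-\rho$ or a power such as $(r^2-\rho)^2$ is required. If the interior maximum does not exist, the maximum of $u$ lies on $P_r$ or on $\partial B_r$, where $\psi=0$; in either case the bound holds trivially.
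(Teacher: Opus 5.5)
Your argument is correct and is essentially the paper's proof in different packaging. The paper shows, by a first-touching argument, that $f=1/(H-|\nu|^2_\sigma)$ stays above the static barrier $\frac{A}{r}(r^2-p)_+$, and $f\ge\frac{A}{r}(r^2-p)$ is exactly your bound $\psi\,(H-|\nu|^2_\sigma)\le r/A$; both use the same evolution equations, the same absorption via $|\mathrm{II}|^2\ge H^2/(n-1)\ge (H-|\nu|^2_\sigma)^2/(n-1)$, and the same choice of $\eta(x)$ making $r^2\mathrm{Ric}\ge -c(n)$ and $\nabla^2\rho\le 3g$ on $B_r(x)$.

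Two slips, neither of which affects the conclusion:
\begin{itemize}
\item The correct formula is $\Delta^N\psi=-\mathrm{tr}_N\nabla^2\rho+H\langle\nabla\rho,\nu\rangle$, so the surviving cross term is $-(2\phi+|\nu|^2_\sigma)\langle\nabla\rho,\nu\rangle$ with your $\phi=H-|\nu|^2_\sigma$. This is exactly your $Cru$ term.
\item The bound $|\nabla\psi|^2/\psi\le 4$ is false near $\partial B_r(x)$, but it is never needed. At a critical point of $u$ the two gradient contributions cancel exactly, since there $\Delta^N(1/\phi)=\Delta^N\psi/u-\psi\,\Delta^N u/u^2$. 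Hence $\psi=r^2-\rho$ works and no power of the cutoff is required.
\end{itemize}
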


\begin{proof}
	The proof is based on elliptic estimates of evolution equations, and is included in Section \ref{appen-sub-classical}.
\end{proof}
\begin{rmk}\label{rmk-choice-eta}
	If $(M^n, g, \sigma  )$ is asymptotically flat, we may take $\eta(x) \geq c |x| $ for some constant $c>0$, where $x$ is the asymptotically flat coordinate.
\end{rmk}

For later use, we derive the following identities. For any smooth cutoff function $\phi $ so that $\mathrm{supp} \phi \cap \partial N_{t} = \emptyset$, using the evolution equations, we have
\begin{align}\label{ddt-phi-area}
	\begin{split}
		\frac{d}{d t} \int_{N_t} \phi &= \int_{N_t} \frac{\nabla _{\nu } \phi + \phi H}{H-|\nu |^2_{\sigma }}.
	\end{split}
\end{align}

Using the evolution equations and integration by parts, we obtain
\begin{align}\label{dt-int-H-nu}
	\begin{split}
		&\frac{d}{d t} \int_{N_t} \phi (H- |\nu |^2_{\sigma })^2 \\
		&= \int_{N_t} (H- |\nu |^2_{\sigma }) \nabla _{\nu } \phi + \phi  \frac{\partial }{\partial t}(H-|\nu |^2_{\sigma })^2 + \phi H (H-|\nu |^2_{\sigma })\\
									&= \int_{N_t} (H-|\nu |^2_{\sigma }) \nabla _{\nu }\phi -2 \langle \nabla ^{N} \phi , \frac{\nabla ^{N}(H-|\nu |^2_{\sigma })}{H- |\nu |^2_{\sigma }} \rangle \\
									&\ \ - 2 \int_{N_t}\phi \left(   \frac{|\nabla ^{N} (H-|\nu |^2_{\sigma })|^2}{(H-|\nu |^2_{\sigma })^2} + \left( \mathrm{Ric}(\nu ,\nu ) +|\mathrm{II}|^2 \right) \right) \\
									&\ \ -2 \int_{N_t} \phi (\nabla _{\nu } \sigma) (\nu , \nu ) - 4 \int_{N_t} \phi \frac{\sigma ( \nu , \nabla ^{N}(H- |\nu |^2_{\sigma }) )}{H- |\nu |^2_{\sigma }} + \int_{N_t}\phi H(H-|\nu |^2_{\sigma }).
	\end{split}
\end{align}

\section{Weak theory of the $\sigma $-IMCF}\label{sect-weak}

\subsection{Level-set description and elliptic regularization}
The level-set description of the evolution by $\sigma $-IMCF can be formulated as follows. We assume that the evolving surfaces are given by the level-sets of a function $u: M \to \mathbb{R}$ via
\begin{align*}
	E_t:= \{ x: u(x) <t\} ,\quad N_t := \partial E_t. 
\end{align*}
When $u$ is smooth with $\nabla u \neq 0$, equation \ref{*-eq} is equivalent to 
\begin{align}\label{*2-eq}
	\mathrm{div}\left( \frac{\nabla u}{|\nabla u|} \right) = |\nabla u| + \frac{\sigma _{ij} \nabla ^{i} u \nabla ^{j}u}{|\nabla u|^2}, \tag*{$(* *)$}
\end{align}
where the left side is the mean curvature of $\{ u = t\} $ and $|\nabla u|$ is the inverse speed.

In the following, we assume that $\sigma $ is a Lipschitz nonnegative symmetric $2$-tensor and for any $0<\epsilon <1$, there exists smooth nonnegative symmetric $2$-tensor $\sigma ^{\epsilon }$, such that $\sigma ^{\epsilon }$ has uniform $C^1$-bound depending on $\|\sigma \|_{C^{0,1}}$ and independent of $\epsilon $, and $\|\sigma ^{\epsilon} - \sigma\|_{C^0} \leq \Psi (\epsilon ) $ for a function $\Psi $ satisfying $\lim_{\epsilon \to 0} \Psi (\epsilon ) =0 $. We call such $\sigma $ an admissible nonnegative symmetric $2$-tensor.

In order to solve the degenerate elliptic problem \ref{*2-eq} with Dirichlet boundary condition, we employ the following approximate equation, known as elliptic regularization:
\begin{align}\label{epsilon-eq}
	\begin{cases}
		\mathcal{E}^{\epsilon } (u^{\epsilon }) := \mathrm{div}\left( \frac{\nabla u ^{\epsilon }}{ \sqrt{|\nabla u^{\epsilon }|^2 + \epsilon ^2} } \right) - \sqrt{|\nabla u^{\epsilon }|^2+ \epsilon ^2} -  \frac{\sigma^{\epsilon } _{ij}\nabla ^{i} u^{\epsilon } \nabla ^{j} u ^{\epsilon }}{|\nabla u^{\epsilon }|^2+ \epsilon ^2} =0 &\text{ in } \Omega _{L},\\
		u ^{\epsilon }=0 &\text{ on } \partial E_0,\\
		u ^{\epsilon }= L-2 & \text{ on } \partial F_{L},
	\end{cases}
	\tag*{ $(*)_\epsilon $}
\end{align}
where $F_{L} := \{ v < L\} $ for an appropriate comparison function $v$, which is a well defined proper function outside of a compact subset $\Omega \subset M$, $\Omega _{L}:= F_{L} \setminus \bar{E}_0$ is precompact, and the relation between $\epsilon $ and $L$ will be revealed below. 

In the following, we will assume $(M^{n},g)$ is a complete connected Riemannian $n$-manifold without boundary, and there exists a proper smooth function $v$ so that $v$ is a smooth subsolution of the level-set equation \ref{*2-eq} on $M \setminus \Omega $ for some compact subset $\Omega $, i.e. with precompact initial condition. 
\begin{rmk}\label{rmk-subsolution}
	For an asymptotically flat metric and a Lipschitz $2$-tensor $\sigma $ with faster decay than $|x| ^{-1}$ at infinity, by a slight modification of the function $\alpha \log|x|$ in an asymptotically flat coordinate chart at infinity for a proper $\alpha >0$, one can construct a smooth function $v$ so that it is a subsolution of \ref{*2-eq} with precompact initial condition.
\end{rmk}

We have the following existence result for the regularised problem.
\begin{lem}\label{smooth-sol-*-epsilon}
	For any nonempty, precompact, smooth open set $E_0$ in $M$, there exists a smooth solution $u ^{\epsilon }$ of \ref{epsilon-eq} for all $0<\epsilon \leq \epsilon (L)$, where $ \epsilon (L) \to 0$ as $L\to \infty$. Moreover, we have the uniform estimates 
	\begin{align*}
		u ^{\epsilon } \geq - \epsilon \text{ in } \bar{\Omega }_{L},\ u ^{\epsilon } \geq v - 2 \text{ in } \bar{F}_{L} \setminus F_0,
	\end{align*} 
	\begin{align*}
		u ^{\epsilon } \leq L-2 \text{ in } \bar{\Omega }_{L},
	\end{align*} 
	\begin{align*}
		|\nabla u ^{\epsilon }| \leq \max(0, H_{\partial E_0}) + \epsilon \text{ on } \partial E_0,\ |\nabla u^{\epsilon }| \leq C(L) \text{ on } \partial F_L,
	\end{align*}
	\begin{align*}
		|\nabla u^{\epsilon }(x)| \leq \max_{\partial \Omega _{L} \cap B_r(x)} |\nabla u^{\epsilon }| + \epsilon + C(n, \|\sigma^{\epsilon } \|_{C^1})\cdot r ^{-1},\ x \in \bar{\Omega }_{L},
	\end{align*} 
	for any $0<r \leq \eta (x)$.
\end{lem}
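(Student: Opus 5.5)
This follows Huisken--Ilmanen's elliptic regularization scheme (their Lemma 3.4): establish a priori $C^0$ and gradient estimates for solutions of \ref{epsilon-eq}, then apply the Leray--Schauder method of continuity. Writing $\mathcal{E}^\epsilon(u)=0$ in nondivergence form gives
\[
\left(\delta_{ij}-\frac{u_iu_j}{|\nabla u|^2+\epsilon^2}\right)u_{ij}=(|\nabla u|^2+\epsilon^2)^{1/2}\bigl(\sqrt{|\nabla u|^2+\epsilon^2}+\cdots\bigr).
\]
This is uniformly elliptic for fixed $\epsilon>0$ once $|\nabla u|$ is bounded. So once the $C^0$ and $C^1$ bounds are in hand, De Giorgi--Nash/Schauder theory gives $C^{1,\alpha}$ and then $C^{2,\alpha}$ bounds. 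The continuity method then applies to the family in which $\sigma^\epsilon$ and the boundary values are scaled by $s\in[0,1]$.

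For the $C^0$ bounds I use comparison functions. Since $\sigma^\epsilon\ge0$, constants are subsolutions in the right direction: $\mathcal{E}^\epsilon(\text{const})=-\epsilon-0<0$. For the upper bound $u^\epsilon\le L-2$, the weak maximum principle applied to $u-(L-2)$ works, because at an interior maximum $\nabla u=0$ and $\mathcal{E}^\epsilon=\Delta u-\epsilon\cdot(\dots)$, which forces $\Delta u\ge\epsilon>0$, a contradiction. For the lower bound I compare with $-\epsilon+\text{small}$ on a neighborhood of $\partial E_0$, and with $v-2$ on $F_L\setminus F_0$. Here I use that $v$ is a subsolution of \ref{*2-eq}: with $\epsilon$ small relative to the lower bound of $|\nabla v|$ on $\bar F_L\setminus F_0$ (which depends on $L$; this is where $\epsilon(L)$ comes from), $v-2$ remains a strict subsolution of \ref{epsilon-eq}. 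Because $\sigma^\epsilon\to\sigma$ in $C^0$ only up to $\Psi(\epsilon)$, I first modify $v$ slightly, e.g.\ $(1+\delta)v$, to get strict subsolution room.

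For the boundary gradient bounds I use barriers. On $\partial E_0$ I use an upper barrier built from the distance function, as in HI01, which gives $|\nabla u^\epsilon|\le \max(0,H_{\partial E_0})+\epsilon$. The extra $\sigma$ term has a favorable sign: it is subtracted and $\sigma^\epsilon\ge0$, so it only helps the barrier inequality. On $\partial F_L$ the lower barrier $v-2$ together with the upper barrier $L-2$ gives $|\nabla u|\le C(L)$.

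The interior gradient estimate is the main step. I would view the graph of $U(x,z)=u^\epsilon(x)-\epsilon z$ as a translating solution, so that the level sets of $U$ in $M\times\mathbb{R}$ solve the $\hat\sigma$-IMCF \ref{*-eq} with $\hat\sigma$ the pullback of $\sigma^\epsilon$. Here $\sqrt{|\nabla u|^2+\epsilon^2}=|\hat\nabla U|$ is the quantity $H-|\nu|^2_{\hat\sigma}$. Applying Lemma \ref{local-estimate-mean-curv} (through the rescaled translating flow, on balls of radius $r\le\eta(x)$) then gives
\[
|\nabla u^\epsilon|\le \max_{\partial\Omega_L\cap B_r}|\nabla u|+\epsilon+C(n,\|\sigma^\epsilon\|_{C^1})r^{-1}.
\]
The constant depends only on $\|\sigma^\epsilon\|_{C^1}$, which is uniform in $\epsilon$. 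Two points need care: the identity $|\nu|^2_{\hat\sigma}=\sigma^\epsilon(\nabla u,\nabla u)/(|\nabla u|^2+\epsilon^2)$, and the fact that the product metric on $M\times\mathbb{R}$ keeps $\eta$ unchanged.
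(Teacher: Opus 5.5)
Your architecture matches the paper's: barriers for the $C^0$ and boundary-gradient bounds, the translating graph plus Lemma~\ref{local-estimate-mean-curv} for the interior gradient, then Schauder theory and continuity in $s$. However, the lower bound near $\partial E_0$ is missing, and it cannot be obtained the way you indicate.

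\textbf{The missing lower barrier.} Since $\mathcal{E}^\epsilon(c)=-\epsilon<0$, constants are strict \emph{super}solutions, so they bound $u^\epsilon$ only from above. There is also no minimum principle: at an interior critical point the equation reads $\Delta u^\epsilon=\epsilon^2>0$, which is compatible with a local minimum. So $u^\epsilon\ge-\epsilon$ needs a genuine subsolution on the whole region between $\partial E_0$ and $F_0$, where $v$ is unavailable. The paper (as in Huisken--Ilmanen) uses
\[
v_1=\frac{\epsilon}{A}\bigl(e^{-A\,d(x,E_0)}-1\bigr),\qquad A=2\bigl(C_2(L)+\|\sigma^\epsilon\|_{C^0}+2\bigr).
\]
Here $C_2(L)$ bounds from above the mean curvature of the parallel surfaces $\partial G_t$, $t\le t_L$, and $\epsilon\le e^{-At_L}$ guarantees $\epsilon^2\le|f'|\le\epsilon$. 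The term $\epsilon^2f''/(|f'|^2+\epsilon^2)$ with $f''=A|f'|$ then absorbs $C_2|f'|$, $|f'|^2+\epsilon^2$, and the $\sigma$-term. Your observation that the $\sigma$-term only helps is right for upper barriers, but for this subsolution it has the unfavorable sign. One must also check that $v_1$ is a viscosity subsolution across the cut locus of $E_0$.

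This barrier does two jobs your proposal leaves undone. First, it gives $u^\epsilon\ge-\epsilon$ on $\partial F_0$. Your comparison with $v-2$ on $\bar F_L\setminus F_0$ needs this, because $\partial F_0$ lies in the interior of $\Omega_L$, where $u^\epsilon$ is not prescribed. Second, it gives $\partial_\nu u^\epsilon\ge-\epsilon$ on $\partial E_0$, the lower half of $|\nabla u^\epsilon|\le\max(0,H_{\partial E_0})+\epsilon$; an upper barrier alone controls $\partial_\nu u^\epsilon$ only from above. It is also one source of the restriction $\epsilon\le\epsilon(L)$.

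\textbf{A sign error.} Replacing $v$ by $(1+\delta)v$ goes the wrong way. The level-set mean curvature and the $\sigma$-term are invariant under $v\mapsto cv$ for $c>0$, so $\mathcal{E}^0(cv)=\mathcal{E}^0(v)+(1-c)|\nabla v|$, and $c>1$ destroys subsolution room. You need $c<1$. The paper takes $v_2=\frac{L-1}{L}v-1$ (for $s=1$), which has three properties:
\begin{itemize}
\item it gains $\frac1L|\nabla v|$, which is bounded below by a positive constant on the compact set $\bar F_L\setminus F_0$, enough to absorb both the $\epsilon$-regularization and the $\sigma^\epsilon$-versus-$\sigma$ error for small $\epsilon$;
\item it equals $L-2$ on $\partial F_L$ and $-1\le u^\epsilon$ on $\partial F_0$;
\item it satisfies $v_2\ge v-2$ on $F_L$.
\end{itemize}

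\textbf{A smaller omission.} In your continuity family only $\sigma^\epsilon$ and the boundary data are scaled by $s$. So the $s=0$ problem, $\mathrm{div}\bigl(\nabla u/\sqrt{|\nabla u|^2+\epsilon^2}\bigr)=\sqrt{|\nabla u|^2+\epsilon^2}$ with $u=0$ on $\partial\Omega_L$, is still nonlinear and not trivially solvable. The paper solves it for small $\epsilon$ by the implicit function theorem, applied to $w=u/\epsilon$ at $\epsilon=0$.
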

\begin{proof}
	The proof is based on a barrier function argument by constructing appropriate subsolutions and supersolutions, and is included in Section \ref{appen-elliptic-reg}.
\end{proof}

\subsection{Variational formulations}

By freezing the $|\nabla u| + \sigma (\nu ,\nu )$ term on the right-hand side of \ref{*2-eq}, for compact subset $K \subset M$, we consider equation \ref{*2-eq} as the Euler-Lagrange equation of the functional
\begin{align*}
	J_{u, \nu }^{\sigma }(v) = J_{u, \nu }^{\sigma ,K}(v) := \int_{K}\left( |\nabla v| + v (|\nabla u| + \sigma (\nu ,\nu )) \right).
\end{align*} 
Given an open subset $\Omega \subset M$, a locally Lipschitz function $u$,  and a measurable vector field $\nu $ on $T\Omega $, we say $(u, \nu )$ is a \textit{weak $J^{\sigma }$-solution} in $\Omega $ provided 
\begin{align}\label{weak-J}
	J_{u, \nu }^{\sigma ,K}(u) \leq J_{u, \nu }^{\sigma ,K}(v)
\end{align}
for every locally Lipschitz function $v$ such that $\{ v \neq u\} \subset \subset \Omega $, where the integration is performed over any compact set $K$ containing $\{v \neq u\} $.

Alternatively, we define the functional
\begin{align*}
	J_{u, \nu }^{\sigma }(F) = J_{u, \nu }^{\sigma ,K}(F):= |\partial ^* F \cap K| - \int_{F \cap K} \left( |\nabla u| + \sigma (\nu ,\nu ) \right) ,
\end{align*} 
for sets $F$ of locally finite perimeter. We say that $E$ \textit{minimizes $J_{u, \nu }^{\sigma }$} in $\Omega $ if 
\begin{align}\label{mini-J}
	J_{u, \nu }^{\sigma ,K}(E) \leq J^{\sigma ,K}_{u, \nu }(F),
\end{align}
for each $F$ such that $F \Delta E:= (E \setminus F) \cup (F \setminus E) \subset \subset \Omega $, and any compact set $K$ containing $F \Delta E$.

As in \cite[Lemma 1.1]{HI01}, those two variational formulations are equivalent.
\begin{lem}\label{equivalent-J-E}
	Let $u$ be a locally Lipschitz function in the open set $\Omega $, $\nu $ a measurable vector field on $T \Omega $, and $\sigma $ a Lipschitz nonnegative symmetric $2$-tensor. Then $(u, \nu )$ is a weak $J ^{\sigma }$-solution in $\Omega $ if and only if for each $t$, $E_t:= \{ u< t\} $ minimizes $J_{u, \nu }^{\sigma }$ in $\Omega $.
\end{lem}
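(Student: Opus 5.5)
The plan is to adapt the proof of \cite[Lemma 1.1]{HI01}. Throughout, the weight $f := |\nabla u| + \sigma(\nu,\nu)$ is a fixed nonnegative locally bounded measurable function, since $u$ is locally Lipschitz and $\sigma$ is Lipschitz. The functional $J^{\sigma}_{u,\nu}$ is therefore just the Huisken--Ilmanen functional with $|\nabla u|$ replaced by $f$. The key tools are the coarea formula and the layer-cake identity
\begin{align*}
	\int_K |\nabla w| = \int_{-\infty}^{\infty} |\partial^*\{w<t\}\cap K|\,dt,
	\qquad
	\int_K w f = \int_{-\infty}^{\infty}\Big(\int_{K}\chi_{\{w\ge t\}}f - \int_K \chi_{\{t>0\}}\, f\Big)\,dt + \text{const}.
\end{align*}
The constant depends only on $K$ and $f$. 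Normalizing over a compact $K$ containing the region where the competitor differs gives
\begin{align*}
	J^{\sigma,K}_{u,\nu}(w) = \int_{a}^{b} J^{\sigma,K}_{u,\nu}(\{w<t\})\,dt + C(K,f,a,b)
\end{align*}
for locally Lipschitz $w$ with $a\le w\le b$ on $K$.

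\emph{Minimizing sets imply a weak solution.} Take $v$ with $\{v\neq u\}\subset\subset\Omega$ and a compact $K\supset\{v\ne u\}$. For each $t$, the sets $F_t=\{v<t\}$ and $E_t=\{u<t\}$ satisfy $F_t\Delta E_t\subset\{v\ne u\}\subset K$. The hypothesis gives $J^{\sigma,K}(E_t)\le J^{\sigma,K}(F_t)$ for every $t$. Integrating in $t$ via the identity above, where the constants agree because $u=v$ on $K\setminus\{u\neq v\}$ (and by choosing the same bounds $a,b$), yields $J^{\sigma,K}(u)\le J^{\sigma,K}(v)$.

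\emph{Weak solution implies minimizing sets.} Fix $t$ and a set $F$ with $F\Delta E_t\subset\subset\Omega$. First I will show that $E_t$ minimizes among competitors of the specific form $\{v<t\}$. For this I will use the competitors $v=\min(u,t)$ or $\max(u,t)$ modified on $F\Delta E_t$, as in \cite{HI01}: set $v=\min(u,t)$ on $F$ and $\max(u,t)$ off $F$, which is not Lipschitz. This is the main obstacle: $F$ is only a finite-perimeter set, so the competitor $v$ is not locally Lipschitz and the weak $J^\sigma$-inequality does not directly apply. Following Huisken--Ilmanen, I would first approximate $\chi_F$ near $K$ by smooth functions. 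The inequality $J(u)\le J(v)$ extends to $v\in BV_{loc}$ with $\{v\ne u\}\subset\subset\Omega$ by lower semicontinuity of total variation and strict approximation (Anzellotti–Giaquinta). The lower-order term $\int vf$ passes to the limit by $L^1$ convergence since $f\in L^\infty_{loc}$. Then the identity splits $J(v)-J(u)$ into the contribution of levels near $t$. More precisely, use $v_\delta$ equal to $u$ outside a thin band and jumping across $\partial^*F$ at height $t$ inside the band $\{t-\delta<u<t+\delta\}$. Dividing by $2\delta$ and letting $\delta\to0$, one gets $J(E_s)\le J(F\text{-modified }E_s)$ for a.e. $s$ near $t$, in particular along $s\downarrow t$ or $s\uparrow t$.

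Finally, I pass to every $t$ by approximation. We have $E_s\to E_t$ in $L^1_{loc}$ as $s\uparrow t$, and $\{u\le t\}$ arises as $s\downarrow t$. Minimizing is preserved under $L^1_{loc}$ limits by lower semicontinuity of perimeter together with continuity of $\int_{F\cap K} f$, using a standard cut-and-paste comparison so that the competitors converge as well. This gives the minimizing property of $E_t$ for all $t$, exactly as in the final step of \cite[Lemma 1.1]{HI01}. The term $\sigma(\nu,\nu)$ plays no special role beyond being a bounded frozen weight, so the argument carries over verbatim.
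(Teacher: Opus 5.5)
Your first direction (minimizing level sets $\Rightarrow$ weak $J^{\sigma}$-solution) is correct. It is exactly the coarea/layer-cake computation of \cite[Lemma 1.1]{HI01} with $|\nabla u|$ replaced by the frozen weight $f=|\nabla u|+\sigma(\nu,\nu)$, and the paper offers nothing beyond that citation.

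The converse has a genuine gap: none of your competitors isolates the single level $t$. Write $J=J^{\sigma}_{u,\nu}$. For $v=\min(u,t)\chi_F+\max(u,t)\chi_{F^c}$ one has $\{v<s\}=E_s\cap F$ for $s\le t$ and $\{v<s\}=E_s\cup F$ for $s>t$. So the coarea identity only gives $\int_a^t[J(E_s\cap F)-J(E_s)]\,ds+\int_t^b[J(E_s\cup F)-J(E_s)]\,ds\ge 0$, which is spread over all levels, and $F$ itself is not a level set of $v$. The band version does not repair this. If $v_\delta=u$ outside $\{t-\delta<u<t+\delta\}$, then $v_\delta$ cannot jump across $\partial^*F$ on $F\cap\{u\ge t+\delta\}$ or on $(E_t\setminus F)\cap\{u\le t-\delta\}$. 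These sets have positive measure for all small $\delta$ unless $E_t\subset F\subset\{u\le t\}$. Hence the modified level sets do not approach $F$, and in the limit you only compare $E_t$ with sets squeezed between $E_t$ and $\{u\le t\}$. Also, dividing an integrated inequality by $2\delta$ gives an averaged statement, not one for a.e.\ level.

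The missing idea is that minimizers of the frozen functional are stable under truncation. Put $\bar u:=\max(\min(u,t),t-\varepsilon)$. Then $|\nabla u|=|\nabla\bar u|+|\nabla(u-\bar u)|$ a.e., and $\int wf$ is linear in $w$. So any competitor $w$ for $\bar u$ (in $BV$, via your strict-approximation remark) yields the competitor $w+u-\bar u$ for $u$, with $J(w+u-\bar u)\le J(w)+J(u)-J(\bar u)$. Hence $J(\bar u)\le J(w)$.

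Now take a generic smooth $U$ with $F\Delta E_t\subset\subset U\subset\subset\Omega$. Set $w=(t-\varepsilon)\chi_F+t\chi_{F^c}$ in $U$ and $w=\bar u$ outside $U$, and write $J_U(G)=|\partial^*G\cap U|-\int_{G\cap U}f$. Since $\{\bar u<s\}=E_s$ for $s\in(t-\varepsilon,t]$, the layer-cake formula gives, on a neighbourhood of $\bar U$,
\[
J(w)-J(\bar u)\le \varepsilon J_U(F)-\int_{t-\varepsilon}^{t}J_U(E_s)\,ds+\varepsilon\,\mathcal{H}^{n-1}\bigl(\partial U\cap\{t-\varepsilon<u<t\}\bigr).
\]
The last term bounds the jump of $w$ across $\partial U$: there $F=E_t$, while $\bar u-(t-\varepsilon)=(u-t+\varepsilon)_+$ on $E_t$ and $\bar u=t$ off $E_t$. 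This term is $o(\varepsilon)$. Dividing by $\varepsilon$ and using lower semicontinuity of $J_U(E_s)$ as $E_s\to E_t$ for $s\uparrow t$ gives $J_U(E_t)\le J_U(F)$ for every $t$ directly, with no further approximation in $t$.

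The point is that the jump of height $\varepsilon$ across $\partial^*F$ is inserted at every point of $F\Delta E_t$, whatever the value of $u$ there, which your band construction cannot do. Equivalently, the rescaled truncations $\varepsilon^{-1}(\bar u-t+\varepsilon)$ are minimizers converging to $\chi_{\{u\ge t\}}$, and you can conclude by closure of minimizers under $L^1_{loc}$ convergence.
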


One can combine the above definition with an initial condition consisting of an open set $E_0$ with a boundary that is at least $C^1$. We say that $(u, \nu )$ is a \textit{weak $J^{\sigma }$-solution with initial condition $E_0$} if
\begin{align}\label{ddag}
	\begin{split}
		u &\in C^{0,1}_{loc}(M), \nu \text{ measurable on } T(M\setminus E_0),\\
	  &E_0= \{u < 0\}, \text{ and } (u, \nu ) \text{ satisfies } (\ref{weak-J}) \text{ in } M\setminus E_0. 
	\end{split}
\tag*{$(\ddag)$}
	\end{align}
	Let $E_t$ be a nested family of open sets in $M$, closed under ascending union. Define $u$ by the characterization $E_t = \{u<t\} $. Then \ref{ddag} is equivalent to 
\begin{align}\label{dag}
	\begin{split}
		u &\in C^{0,1}_{loc}(M), \nu \text{ measurable on } T(M\setminus E_0),\\
	  &E_t \text{ minimizes } J^{\sigma }_{u, \nu } \text{ in } M\setminus E_0 \text{ for each } t>0. 
	\end{split}
\tag*{$(\dag)$}
	\end{align}
	By approximating $s \searrow t$, we see that \ref{ddag} and \ref{dag} are equivalent to
	\begin{align}\label{leq-t-weak}
		 \begin{split}
			 u & \in C^{0,1}_{loc}(M), \nu \text{ measurable on } T(M\setminus E_0),\\
			   & E_t ^{+}:=\{u \leq t\} \text{ minimizes } J^{\sigma }_{u, \nu } \text{ in } M\setminus E_0 \text{ for each } t\geq 0 
		 \end{split}
	\end{align}

	In order to define weak solutions of \ref{*2-eq} and \ref{*-eq} using above variational formulations, one needs to define the normal vector field $\nu $ in an appropriate way. For this purpose, we will formulate the weak solution $(u, \nu )$ of \ref{*2-eq} one dimension higher, in terms of a translation invariant function $U$ which extends $u$ and a translation invariant vector field $\tilde{\nu }$ which extends $\frac{\tilde{\nabla }U}{|\tilde{\nabla }U|}$, so that $(U, \tilde{\nu })$ minimizes the variation functional $J_{U, \tilde{\nu }}^{\sigma }$, as considered in \cite[Definition 15]{Moore12} and \cite[Definition 6.6]{HuiskenWolff22}. 

	\begin{defn}\label{defn-weak}
	Let $E_0 \subset M$ be a precompact open set with $C^2$-boundary $N_0= \partial E_0$, and $\sigma $ be a Lipschitz nonnegative symmetric $2$-tensor. We call the pair $(U, \tilde{\nu })$ a \textit{weak solution} of \ref{*2-eq} with initial condition $E_0$ if $U \in C^{0,1}_{loc}(M \times \mathbb{R})$ and $\tilde{\nu } $ is a measurable unit vector field which satisfy
	\begin{itemize}
		\item [(i)] $U$ is translation invariant in the vertical direction in the sense that $U(x,z) = u(x)$ for a locally Lipschitz function $u: M \to \mathbb{R}$ satisfying $u \geq 0$ on $M \setminus E_0$, $u|_{\partial E_0} =0$, $u<0$ in $E_0$, and $u(x) \to \infty$ as $d(x, E_0) \to \infty$.
		\item [(ii)] $\tilde{E}_t := \{ U< t\} $ minimizes $J_{U, \tilde{\nu }} ^{\sigma }$ in $(M\setminus \bar{E}_0) \times \mathbb{R}$ for each $t>0$, and $\sigma $ the translation invariant extension. At each jump time $t_0$, each point $\tilde{x}_0=(x_0, z_0)$ in the interior $\tilde{\mathcal{K}}_{t_0}$ of the jump region $\{U = t_0\} $ lies in the boundary $\partial \tilde{E}_{\tilde{x}_0} \in C^{1,\alpha }_{loc}$ of a Caccioppoli set $\tilde{E}_{\tilde{x}_0}$ that minimizes $J_{U, \tilde{\nu }}$ in $\tilde{\mathcal{K}}_{t_0}$.
		\item [(iii)] $\tilde{\nu }$ is a translation invariant in the sense that $\tilde{\nu }(x, z_1) = \tilde{\nu} (x, z_2)$ for any $z_1, z_2 \in \mathbb{R}$, $\tilde{\nu }(\tilde{x}) \in C^{\alpha }_{loc}$ away from jump times and is the unit normal vector to $\partial \tilde{E}_t$ at each point $\tilde{x} \in \partial \tilde{E}_t$, and $\tilde{\nu }(\tilde{x}) \in C^{1,\alpha }_{loc}(\tilde{\mathcal{K}}_{t_0}) $ is the unit normal vector to $\partial \tilde{E}_{\tilde{x}_0}$ at each point $\tilde{x} \in \partial \tilde{E}_{\tilde{x}_0}$ and each jump time $t_0$.
	\end{itemize}
\end{defn}
\begin{rmk}\label{rmk-defn-weak}
	By this definition, as in \cite[Lemma 16]{Moore12}, one can show that if $(U, \tilde{\nu })$ is a weak solution of \ref{*2-eq} in the sense of above definition, then $(u, \nu:= \tilde{\nu }|_{TM} )$ is a weak $J^{\sigma }$-solution on $M \setminus E_0$ in the sense of (\ref{weak-J}), and $E_t= \{ u<t\} $ minimizes $J_{u, \nu } ^{\sigma }$ for each $t>0$. So $E_t^+$ minimizes $J^{\sigma }_{u, \nu }$ for each $t \geq 0$.
\end{rmk}

In the following, we will construct a weak solution $(U, \tilde{\nu })$ of the $\sigma $-IMCF as a limit of the $\epsilon $-translating graphs, which are equivalent to solutions of \ref{epsilon-eq} as in the proof of Lemma \ref{smooth-sol-*-epsilon}.

\subsection{Limit of $\epsilon $-translating graphs}\label{section-limit-translating}
 The $\epsilon $-translating graph is a geometric interpretation of the equation \ref{epsilon-eq}  which can be described as follows.

 For any $L \gg 1$, by Lemma \ref{smooth-sol-*-epsilon}, there exists a smooth solution $u ^{\epsilon }$ of \ref{epsilon-eq} on $\Omega _{L}$, where $\epsilon = \epsilon (L) \to 0$ as $L\to \infty$. Define
\begin{align}
	U ^{\epsilon }(x,z):= u ^{\epsilon }(x) - \epsilon z,\quad (x,z) \in \Omega _{L} \times \mathbb{R}.
\end{align}
Let $\tilde{g}= g + dz ^2$ be the product metric. Then \ref{epsilon-eq} is equivalent to 
\begin{align}
	\tilde{\mathcal{E}}(U^{\epsilon }):= \mathrm{div}_{\tilde{g}}\left( \frac{\tilde{\nabla } U^{\epsilon }}{|\tilde{\nabla } U^{\epsilon }|} \right) - |\tilde{\nabla }U ^{\epsilon }| - \frac{\sigma ^{\epsilon } _{ij} \tilde{\nabla }^{i} U^{\epsilon } \tilde{\nabla }^{j} U^{\epsilon }}{|\tilde{\nabla }U^{\epsilon }|^2} = 0 \text{ in } \Omega _{L} \times \mathbb{R},
\end{align}
where $\sigma ^{\epsilon } (x,z) = \sigma ^{\epsilon }(x)$ is the constant extension along the $z$-direction. So $U^{\epsilon }$ is a smooth solution of \ref{*2-eq} in $\Omega _{L} \times \mathbb{R}$ for $\sigma = \sigma ^{\epsilon }$.  For any $-\infty< t< \infty$, set
\begin{align*}
	\tilde{N}^{\epsilon }_t := \{ U^{\epsilon }= t\} = \mathrm{graph} \left( \frac{u^{\epsilon }}{\epsilon } - \frac{t}{\epsilon } \right) .
\end{align*} 
Then $\tilde{N}^{\epsilon }_{t}$ is a smooth solution of the $\sigma ^{\epsilon } $-IMCF \ref{*-eq} in $\Omega _{L} \times \mathbb{R}$:
\begin{align*}
	\frac{\partial }{\partial t} \tilde{F} = \frac{\tilde{\nu }_{\epsilon }}{\tilde{H} - |\tilde{\nu }_{\epsilon }|^2_{\sigma ^{\epsilon } }},
\end{align*}
where $\tilde{\nu }_{\epsilon } = \frac{\tilde{\nabla }U^{\epsilon }}{|\tilde{\nabla }U^{\epsilon }|}$ is the smooth downward unit normal vector. 
Notice that $\tilde{N}^{\epsilon }_{t}$ has boundary consisting of translations of $\partial \Omega _{L}$. 

Using the local uniform Lipschitz estimates of $u^{\epsilon }$ given in Lemma \ref{smooth-sol-*-epsilon}, by the Arzela-Ascoli theorem, there exists $L_i \to \infty, \epsilon _i \to 0$, a subsequence $u_i(x)$, and a locally Lipschitz function $u$ such that
\begin{align*}
	u_i \to u
\end{align*} 
locally uniformly on $M \setminus E_0$, and $u$ satisfies 
\begin{align}\label{u-Lip-bound}
	|\nabla u|(x) \leq \sup_{\partial E_0 \cap B_r(x)} \max(0, H_{\partial E_0}) + C(n, \|\sigma \|_{C^{0,1}}) r ^{-1},\quad \text{a.e. } x \in M\setminus E_0,
\end{align}
for each $0< r< \eta (x)$, and
\begin{align}\label{u-boundary-outer}
	u \geq 0 \text{ in } M \setminus E_0,\quad u(x) \to \infty \text{ as } d(x, E_0) \to \infty.
\end{align} 
We also extend $u$ negatively to $E_0$ so that
\begin{align}\label{u-boundary-inner}
	E_0 = \{ u< 0\} .
\end{align}

Let $U_i(x,z) = u_i(x) - \epsilon _i z$, $\tilde{N}^{i}_t = \tilde{N}^{\epsilon _i}_t, \tilde{\nu }_i = \frac{\tilde{\nabla }U_i}{|\tilde{\nabla }U_i|}$ be defined as above and $\sigma ^{i}= \sigma ^{\epsilon _i}$. Then $\tilde{N}^{i}_t$ are smooth solutions of the $\sigma ^{i}$-IMCF \ref{*-eq} in $\Omega _{L_i} \times \mathbb{R}$.
  Set $U(x, z) := u(x)$. Then
  \begin{align}\label{U_i-U}
	U_i \to U
\end{align} 
locally uniformly on $(M\setminus E_0) \times \mathbb{R}$ with local Lipschitz bounds. 

To get a good convergence of normal vectors, one applies the regularity theorem for minimizers of $J^{\sigma }$-functional. 
As in \cite[Lemma 2.3]{HI01}, we have the Smooth Flow Lemma which shows that smooth flows satisfy the weak variational formulation in the domain they foliate.
\begin{lem}\label{smooth-flow-lem}
	Let $(N_t)_{a \leq t < b}$ be a classical solution of the $\sigma $-IMCF \ref{*-eq}. Let $u=t$ on $N_t$, $u < a$ in the region bounded by $N_a$, and $E_t:= \{ u< t\} $. Then for $a \leq t< b$, $E_t$ minimizes $J_{u, \nu } ^{\sigma }$ in $E_{b} \setminus E_a$, where $\nu = \frac{\nabla u}{|\nabla u|}$ is a smooth unit vector field.	
\end{lem}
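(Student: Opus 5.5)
We follow the calibration argument of \cite[Lemma 2.3]{HI01}, using the divergence theorem with the unit vector field $\nu = \nabla u/|\nabla u|$ as a calibration. On $E_b\setminus \bar E_a$ the surfaces $N_t$ foliate the region smoothly with $\nabla u \neq 0$. The flow equation \ref{*-eq} says that $|\nabla u| = 1/(H-|\nu|^2_\sigma)$. Since $H = \mathrm{div}\,\nu$ is the mean curvature of the level sets, this gives
\begin{align*}
	\mathrm{div}\,\nu = H = |\nabla u| + \sigma(\nu,\nu) \quad \text{in } E_b\setminus \bar E_a,
\end{align*}
which is exactly \ref{*2-eq}. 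The vector field $\nu$ is smooth there and frozen, so the zeroth-order term in $J^{\sigma}_{u,\nu}$ is a fixed integrable function $f := |\nabla u| + \sigma(\nu,\nu)$.

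Fix $t\in[a,b)$ and a competitor $F$ of locally finite perimeter with $F\Delta E_t \subset\subset E_b\setminus E_a$, and take a compact $K \supset F\Delta E_t$ inside $E_b\setminus E_a$ (enlarging $K$ does not change the difference of the functionals). Apply the divergence theorem for sets of finite perimeter (Gauss--Green in the BV sense) to $\nu$ on $F\setminus E_t$ and on $E_t\setminus F$. This gives
\begin{align*}
	\int_{F\setminus E_t} f - \int_{E_t\setminus F} f = \int_{F\setminus E_t}\mathrm{div}\,\nu - \int_{E_t\setminus F}\mathrm{div}\,\nu = \int_{\partial^*F \cap K}\langle \nu, \nu_F\rangle - \int_{\partial E_t\cap K} \langle \nu,\nu_{E_t}\rangle.
\end{align*}
The boundary pieces of $\partial E_t$ and $\partial^* F$ that coincide cancel with consistent orientation. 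On $\partial E_t = N_t$ we have $\nu_{E_t} = \nu$, so $\langle\nu,\nu_{E_t}\rangle=1$, while $\langle \nu,\nu_F\rangle \le 1$ on $\partial^*F$. Therefore
\begin{align*}
	|\partial E_t\cap K| - |\partial^* F\cap K| \le \int_{E_t\cap K} f - \int_{F\cap K} f,
\end{align*}
that is, $J^{\sigma,K}_{u,\nu}(E_t)\le J^{\sigma,K}_{u,\nu}(F)$.

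The main technical care is at the endpoints. For $t=a$, the set $E_a$ sits at the inner edge of the domain. Since competitors only differ from $E_a$ inside the open region $E_b\setminus \bar E_a$, the boundary $N_a$ of $E_a$ plays no role inside $K$. A cleaner way to handle this is to note that differences are supported in the open foliated region, so the Gauss--Green computation only involves $\partial^*F\cap K$ and $N_t\cap K$. One must also check orientation conventions (outward normal of $E_t$ equals $\nabla u/|\nabla u|$), and that $f$ is bounded on $K$ because $H-|\nu|^2_\sigma>0$ is bounded below on compact time intervals. With these points, the inequality holds for every $t\in[a,b)$, which is the claim.
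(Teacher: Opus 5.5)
Your proof is correct and is the same calibration argument the paper uses: the divergence theorem with $\nu=\nabla u/|\nabla u|$, the identity $\mathrm{div}\,\nu = |\nabla u|+\sigma(\nu,\nu)$, and $\langle \nu,\nu_F\rangle\le 1$ on $\partial^*F$, which you write out more carefully by localizing to $K$ and splitting into $F\setminus E_t$ and $E_t\setminus F$. One slip: the flow gives $|\nabla u| = H-|\nu|^2_\sigma$ (the inverse speed), not $1/(H-|\nu|^2_\sigma)$; your displayed identity $\mathrm{div}\,\nu=|\nabla u|+\sigma(\nu,\nu)$ is the correct consequence, and boundedness of $f=H$ on $K$ then follows simply from smoothness of the classical solution.
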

\begin{proof}
	Notice that $\mathrm{div}( \nu) = H_{N_t} = |\nabla u| + \sigma (\nu , \nu )$. Using $\nu $ as a calibration, by the divergence theorem
	\begin{align*}
		J_{u, \nu }(E_t) & = |\partial E_t|  - \int_{E_t} \left( |\nabla u| + \sigma (\nu ,\nu ) \right)\\
		&= \int_{\partial E_t} \nu _{\partial E_t}\cdot \nu - \int_{E_t} \left( |\nabla u| + \sigma (\nu ,\nu ) \right) \\
											   &= \int_{\partial ^* F} \nu _{\partial ^* F}\cdot \nu - \int_{ F} \left( |\nabla u| + \sigma (\nu ,\nu ) \right)\\
											   &\leq |\partial ^* F| - \int_{ F} \left( |\nabla u| + \sigma (\nu ,\nu ) \right),
	\end{align*} 
	for any finite perimeter set $F$ differing compactly from $E_t$.
\end{proof}

Applying this Lemma to $\tilde{N}^{i}_t$, we know that for each $i$, $\tilde{E}^{i}_t:=\{ U_{i} < t\} $ minimizes $J_{U_i, \tilde{\nu }_i}^{\sigma _i}$ on $\tilde{E}^{i}_{b} \setminus \tilde{E}^{i}_{a}$ for all $a \leq t< b$.  In the following, we can assume $n+1<8$ to avoid regularity issues of minimizing problems.

\begin{lem}\label{lem-local-C1alpha}
	For each $t$, the level sets $\tilde{N}^{i}_t $ are locally uniformly bounded in $C^{1, \alpha }$, independent of $i$ and $t$.
\end{lem}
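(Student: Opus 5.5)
The argument follows \cite[Lemma 2.4 and Section 2]{HI01}. The key point is that each level set $\tilde{N}^{i}_t$ is an almost-minimizer of area, with a constant controlled only by the local Lipschitz bound and by $\|\sigma\|_{C^0}$. Once this is in place, the regularity theory for almost-minimizing boundaries in dimension $n+1<8$ gives uniform $C^{1,\alpha}$ estimates.

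\textbf{Step 1 (one-sided minimization).} By Lemma \ref{smooth-flow-lem} applied to the smooth $\sigma^i$-IMCF $\tilde{N}^i_t$, the set $\tilde{E}^i_t=\{U_i<t\}$ minimizes $J^{\sigma_i}_{U_i,\tilde{\nu}_i}$ in $\Omega_{L_i}\times\mathbb{R}$. Thus for any Caccioppoli set $F$ with $F\Delta \tilde{E}^i_t\subset\subset B_r(\tilde{x})$ we have
\begin{align*}
|\partial \tilde{E}^i_t\cap B_r(\tilde x)| \leq |\partial^* F\cap B_r(\tilde x)| + \int_{(\tilde{E}^i_t\Delta F)\cap B_r(\tilde x)} \left(|\tilde\nabla U_i| + \sigma_i(\tilde\nu_i,\tilde\nu_i)\right).
\end{align*}

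\textbf{Step 2 (uniform bound on the error term).} The integrand is bounded on compact sets uniformly in $i$. The gradient bound comes from Lemma \ref{smooth-sol-*-epsilon}: on a fixed compact $K\subset (M\setminus E_0)\times\mathbb{R}$, taking $r$ comparable to $\eta$, we get $|\tilde\nabla U_i|\le |\nabla u_i|+\epsilon_i\le C(K)$. The $\sigma_i$ term is controlled because $\|\sigma_i\|_{C^0}$ is uniformly bounded by admissibility. Hence the error is at most $C(K)\,|B_r|\le C r^{n+1}$, which gives
\begin{align*}
|\partial \tilde{E}^i_t\cap B_r| \le |\partial^*F\cap B_r| + C r^{n+1}.
\end{align*}
So $\partial\tilde E^i_t$ is $(C,\beta)$-almost minimizing with $\beta=1$ (area excess of order $r^{n}\cdot r$), with $C$ independent of $i$ and $t$.

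\textbf{Step 3 (regularity).} For $n+1<8$, Tamanini's regularity theorem for almost minimal boundaries gives local $C^{1,\alpha}$ bounds with $\alpha=1/2$ (or any $\alpha<1$ allowed by the excess decay), depending only on $C$ and the ambient geometry. A subtle point is that near $\partial\Omega_{L_i}\times\mathbb{R}$ the minimization only holds inside $\Omega_{L_i}\times\mathbb{R}$. Near $\partial E_0$, I would handle this as \cite{HI01} does: compare with $E_0$ and use its $C^2$ boundary, which makes the level set one-sidedly minimizing up to the smooth obstacle. Regularity for such obstacle problems (Tamanini and Miranda) still yields $C^{1,\alpha}$ bounds. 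For any fixed compact set, the outer boundary $\partial F_{L_i}$ moves away to infinity and plays no role.

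\textbf{Main obstacle.} The delicate part is getting the uniform Lipschitz bound near $\partial E_0$, including at points where $H_{\partial E_0}\le0$, and making the obstacle-type regularity uniform there. I would first try to use the boundary gradient estimate $|\nabla u^\epsilon|\le \max(0,H_{\partial E_0})+\epsilon$ from Lemma \ref{smooth-sol-*-epsilon} to bound the error term up to $\partial E_0$. I would then invoke regularity for almost-minimizers with a $C^{1,1}$ obstacle.
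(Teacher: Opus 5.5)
Your core argument, Steps 1–3 in the interior, is the paper's proof. The paper also uses that $\tilde E^i_t$ minimizes $J^{\sigma^i}_{U_i,\tilde\nu_i}$ by the Smooth Flow Lemma. It also uses that the bulk term $|\tilde\nabla U_i|+\sigma^i(\tilde\nu_i,\tilde\nu_i)$ is locally uniformly bounded, by the gradient estimate of Lemma~\ref{smooth-sol-*-epsilon} and admissibility of $\sigma$. It then applies \cite[Theorem 1.3]{HI01}, which is the Tamanini almost-minimizer regularity you invoke, for $n+1<8$. As in the paper, the resulting constants depend on the distance to $\partial E_0\times\mathbb{R}$. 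So ``locally uniformly'' means on compact subsets of $(M\setminus\bar E_0)\times\mathbb{R}$, and your interior argument already gives this uniformly in $i$ and $t$.

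You should drop the proposed extension up to $\partial E_0\times\mathbb{R}$ via obstacle regularity, because it does not hold for the approximants.

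\begin{itemize}
\item The surface $\tilde N^i_t$ has boundary $\partial E_0\times\{-t/\epsilon_i\}$ on the cylinder. It meets the cylinder at a nonzero angle, since the graph slope there is $\partial_\nu u_i/\epsilon_i\le(\max(0,H_{\partial E_0})+\epsilon_i)/\epsilon_i<\infty$.
\item Hence $\tilde E^i_t\cup(E_0\times\mathbb{R})$ has an edge at which its complement is a wedge of opening angle $<\pi$.
\item Filling in the tip of that wedge is an admissible outward variation. It saves area of order $r^n$ in $B_r$, while the bulk term changes only by $O(r^{n+1})$.
\item Equivalently, the calibration $\tilde\nu_i$ is not the unit normal on the cylindrical part of the boundary, since $\tilde\nu_i\cdot\nu_{E_0}<1$ there.
\end{itemize}

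So the union is not a one-sided almost-minimizer, and obstacle regularity gives no uniform estimate for $\tilde N^i_t$. Obstacle-type regularity matters only for limit surfaces such as $\tilde N^+_0$.

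Finally, the ``main obstacle'' you flag is not a real difficulty. The bound $|\nabla u^\epsilon|\le\max(0,H_{\partial E_0})+\epsilon$ on $\partial E_0$ from Lemma~\ref{smooth-sol-*-epsilon} already covers points where $H_{\partial E_0}\le 0$.
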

\begin{proof}
	For each $t$, we have proved that $\tilde{E}^{i}_t:=\{U_i < t\} $ minimizes the functional 
	\begin{align*}
		|\partial ^* F| - \int_{F} \left( |\tilde{\nabla }U_i| + \sigma ^{i} (\tilde{\nu }_i, \tilde{\nu }_i) \right)
	\end{align*} 
	with respect to competitors $F$ such that $F \supset E_0 \times \mathbb{R}$ and $F \Delta \tilde{E}^{i}_t \subset \subset \Omega := \tilde{E}^{i}_{b} \setminus \tilde{E}^{i}_{a} $ for all $a \leq t < b$.
	Applying the regularity theorem \cite[Theorem 1.3]{HI01}, since $|\tilde{\nabla }U_i| + \sigma^{i} (\tilde{\nu }_i, \tilde{\nu }_i) $ is locally uniformly bounded, $\tilde{N}^{i}_{t} = \partial \tilde{E}^{i}_t$ is locally uniformly bounded in $C^{1,\alpha }$, with $C^{1,\alpha }$-estimates depend also on the distance to $\partial E_0$ and $C^{1,\alpha }$-bound of $\partial E_0$. 
\end{proof}

Thus, locally, one can represent $\tilde{N}^{i}_t$ as the graph of a function with a uniform $C^{1,\alpha }$-bound, whose unit normal vector then has uniform $C^{\alpha }$-control. In this way, one is able to construct $C^{1,\alpha }$-limiting hypersurfaces with well defined $C^{\alpha }$-normal vector field, which extends $\frac{\tilde{\nabla }U}{|\tilde{\nabla }U|}$ as a calibration across the gulfs. More precisely, we have the following lemmas. 

Set 
\begin{align*}
	\tilde{E}_{t}:= \{U<t\} ,\ \tilde{E}^{+}_{t}:= \mathrm{int} \{U \leq t\} ,\ \tilde{N}_t:= \partial \tilde{E}_t,\ \tilde{N}^{+}_t:= \partial \tilde{E}^{+}_{t} . 
\end{align*} 
We call $t$ a jump time if $\tilde{N}_t \neq \tilde{N}^{+}_t$. Note that $\tilde{N}_{0} = \partial E_0 \times \mathbb{R}$ is $C^2$ by assumption.

\begin{lem}\label{lem-limit-surface-C1alpha}
	Fix $t_0\geq 0$. 
	\begin{itemize}
		\item[(1)] If $t_0$ is not a jump time, then $\tilde{N}_{t_0} = \{U =t_0\} $ is a complete hypersurface that is locally uniformly bounded in $C^{1,\alpha }$. 
		\item[(2)] If $t_0$ is a jump time, then $\tilde{N}_{t_0}, \tilde{N}^{+}_{t_0}$ are complete hypersurfaces that are locally uniformly bounded in $C^{1,\alpha }$. 
		\item[(3)] If $t_0$ is a jump time, let $\tilde{\mathcal{K}}_{t_0}$ denote the interior of $\{U= t_0\} $, then each point $\tilde{x}_0 \in \tilde{\mathcal{K}}_{t_0}$ lies in a complete hypersurface $\tilde{N}_{\tilde{x}_0} \subset \{ U=t_0\} $ that is locally uniformly $C^{1,\alpha }$-bounded and is the limit of a subsequence $\tilde{N}^{i_j}_{t_{i_j}}$.
	\end{itemize}
\end{lem}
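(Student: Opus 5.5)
The plan follows the compactness argument of \cite[Lemma 2.4 and Theorem 2.1]{HI01}, adapted as in \cite{Moore12, HuiskenWolff22}. The basic input is Lemma \ref{lem-local-C1alpha}: the level sets $\tilde{N}^{i}_t$ are uniformly $C^{1,\alpha}$ on compact subsets of $(M\setminus \bar E_0)\times\mathbb{R}$. Near $\tilde N_0=\partial E_0\times\mathbb{R}$ we also have uniform control from the $C^2$ bound on $\partial E_0$. Each $\tilde N^i_t$ is a smooth hypersurface whose boundary is a translate of $\partial\Omega_{L_i}$, and that boundary escapes to infinity as $L_i\to\infty$. So, on any fixed compact set and for $i$ large, these are boundaryless $C^{1,\alpha}$ graphs over balls of a fixed radius, and their normals have uniform $C^\alpha$ bounds.

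The key step is to produce limits along sequences of times. Let $t_i\to t_0$. Arzelà–Ascoli and a diagonal argument give a subsequence along which $\tilde N^{i}_{t_i}$ converges locally in $C^{1,\beta}$, for $\beta<\alpha$, to a hypersurface $\tilde N'$. This limit inherits the $C^{1,\alpha}$ bounds. It is complete because the limit of the local graph representations has no boundary in the interior, and closed because it is a Hausdorff limit of closed sets. Since $U_i\to U$ locally uniformly by (\ref{U_i-U}), $\tilde N'\subset\{U=t_0\}$. For (3), take $\tilde x_0\in\tilde{\mathcal K}_{t_0}$. Since $U_i\to U$ uniformly near $\tilde x_0$, the values $t_i:=U_i(\tilde x_0)\to t_0$, so $\tilde x_0\in\tilde N^i_{t_i}$. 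The subsequential limit $\tilde N_{\tilde x_0}$ then passes through $\tilde x_0$ and lies in $\{U=t_0\}$, which proves (3).

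For (1) and (2), the main obstacle is to identify $\partial\{U<t_0\}$, and separately $\partial\,\mathrm{int}\{U\le t_0\}$, with such a limit. The danger is that without this the limit is only some subset of a fat level set. I would choose $t_i\nearrow t_0$ so that $\tilde E^i_{t_i}\to \tilde E_{t_0}$ in $L^1_{loc}$, and $t_i\searrow t_0$ for $\tilde E^+_{t_0}$. This uses the locally uniform convergence $U_i\to U$ and the fact that $\{U=s\}$ has measure zero for all but countably many $s$: for $s<t_0$ with $s$ close to $t_0$, we have $\{U<s-\delta\}\subset\{U_i<s\}\subset\{U<s+\delta\}$ for large $i$. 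Next, the minimizing property of $\tilde E^i_t$ for $J^{\sigma_i}_{U_i,\tilde\nu_i}$, obtained from Lemma \ref{smooth-flow-lem}, gives uniform density bounds: $|\partial\tilde E^i_{t_i}\cap B_r|\ge c r^n$, together with perimeter upper bounds. These rule out parts of $\tilde N^i_{t_i}$ disappearing or extra sheets appearing in the limit. Hence the $C^{1,\beta}$ limit of $\partial\tilde E^i_{t_i}$ equals $\partial\tilde E_{t_0}$, respectively $\partial\tilde E^+_{t_0}$. When $t_0$ is not a jump time the two coincide, giving (1). The jump-time case gives both surfaces in (2). The $C^{1,\alpha}$ bounds pass to the limit in each case.
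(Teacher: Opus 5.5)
Your proposal is correct. Part (3) is exactly the paper's argument: take $t_i=U_i(\tilde x_0)\to t_0$, extract a $C^{1,\alpha}$ graph limit of $\tilde N^i_{t_i}$ through $\tilde x_0$, and observe that it lies in $\{U=t_0\}$. For (1) and (2), however, you take a different route.

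\textbf{The paper's route.} It works point by point. At a non-jump time it takes, for $\tilde x_0\in\tilde N_{t_0}$, the leaves $\tilde N^i_{t_i}\ni\tilde x_0$ and extracts a $C^{1,\alpha}$ limit inside $\{U=t_0\}$. It then uses that $t_0$ is not a jump time to identify this limit with $\tilde N_{t_0}$ near $\tilde x_0$. At a jump time it approximates a point of $\tilde N^+_{t_0}$ (resp.\ $\tilde N_{t_0}$) by points of the already regular limit surfaces $\tilde N_{t_j}$ at non-jump times $t_j\searrow t_0$ (resp.\ $t_j\nearrow t_0$), and takes a $C^{1,\alpha}$ limit. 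A purely topological argument then shows this limit lies in $\partial\{U\le t_0\}$: a point of it in $\mathrm{int}\{U\le t_0\}$ would be a limit of points where $U=t_j>t_0$.

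\textbf{Your route.} You choose $t_i\to t_0^{\mp}$ so that $\tilde E^i_{t_i}\to\tilde E_{t_0}$ (resp.\ $\tilde E^+_{t_0}$) in $L^1_{loc}$. You then use the density estimates of the uniform almost-minimizers $\tilde E^i_{t_i}$ to identify the entire limit with the entire boundary.

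\textbf{What each buys.} Your route supplies what the paper leaves implicit: that $\tilde N_{t_0}$ and $\tilde N^+_{t_0}$ are each a single embedded sheet equal to the limit. The paper only shows that every point lies on some $C^{1,\alpha}$ leaf contained in the level set. Since your limit is determined, you also get convergence of the whole sequence. The price is measure-theoretic input that you should state precisely:
\begin{itemize}
\item A lower \emph{area} density bound does not prevent a thin slab of the complement from collapsing onto a sheet inside the limit set. You need two-sided volume density bounds, or convergence of perimeter measures (i.e.\ almost-minimality of the limit). Together with the regularity theorem, this also excludes two limit sheets touching tangentially.
\item For $t_i\searrow t_0$ the $L^1$ limit is $\{U\le t_0\}$. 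You need a short check, using that $\{U>t_0\}$ is open and the limit boundary is $C^1$, that its boundary is $\partial\,\mathrm{int}\{U\le t_0\}$.
\end{itemize}
The paper's route is more elementary. It also directly produces the leaf through a prescribed point along $t_i=U_i(\tilde x_0)$, which is the form used next to construct $\tilde\nu$.
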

\begin{proof}
	The proof is based on the local graph representation of $\tilde{N}^{i}_t$, and is included in Section \ref{appen-limit-regularized}.
\end{proof}

\begin{lem}\label{limit-nu-construction}
	In Case (1) or (2) of Lemma \ref{lem-limit-surface-C1alpha}, let $\tilde{\nu }$ be the unit normal $C^{\alpha }$-vector field to $\tilde{N}_{t_0}$ or $\tilde{N}_{t_0}^{+}$, then $\tilde{\nu }_i \to \tilde{\nu }$ locally uniformly for the whole sequence. In Case (3), there exists a H\"older continuous unit vector field $\tilde{\nu }$ on $\tilde{\mathcal{K}}_{t_0}$ such that for a subsequence $i_j$, $\tilde{\nu }_{i_j} \to \tilde{\nu }$ locally uniformly, and $\tilde{\nu }$ is translation invariant and normal to $\tilde{N}_{\tilde{x}_0}$ for any $\tilde{x}_0 \in \tilde{\mathcal{K}}_{t_0}$.
\end{lem}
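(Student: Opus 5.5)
The argument follows \cite[Lemma 2.5]{HI01} and \cite[Lemma 6.x]{HuiskenWolff22}. Everything rests on the uniform local $C^{1,\alpha}$ graph representation from Lemma \ref{lem-local-C1alpha}, together with the identification of the limit surfaces in Lemma \ref{lem-limit-surface-C1alpha}.

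\textbf{Cases (1) and (2).} Fix $\tilde{x}\in\tilde{N}_{t_0}$, respectively $\tilde{N}^{+}_{t_0}$, and a small ball $B$ around it.
\begin{itemize}
\item[(a)] For each $i$, the level set of $U_i$ through a point of $B$ is, near that point, a graph over a fixed hyperplane with uniform $C^{1,\alpha}$ norm, so $\tilde{\nu}_i$ is uniformly $C^{\alpha}$ on $B$. By Arzel\`a--Ascoli, every subsequence has a further subsequence with $\tilde{\nu}_{i_j}\to\bar{\nu}$ locally uniformly.
\item[(b)] Identify the limit. Take $\tilde{y}\in\tilde{N}_{t_0}$ and choose $y_i\to \tilde{y}$ with $U_i(y_i)=t_0$; this is possible since $U_i\to U$ locally uniformly and $t_0$ is attained near $\tilde{y}$. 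The level sets $\{U_i=t_0\}$ through $y_i$ then subconverge in $C^{1,\beta}$ to a $C^{1,\alpha}$ hypersurface through $\tilde{y}$ contained in $\{U=t_0\}$.
\item[(c)] When $t_0$ is not a jump time, this hypersurface is $\tilde{N}_{t_0}$ near $\tilde{y}$. When $t_0$ is a jump time, use approximating levels $t_i\nearrow t_0$, respectively $t_i\searrow t_0$, so that the limit lies on the boundary $\partial\{U<t_0\}$, respectively $\partial\,\mathrm{int}\{U\le t_0\}$. A one-sided comparison argument gives this: the $\tilde{E}^i_{t_i}$ converge in $L^1_{loc}$ to $\tilde{E}_{t_0}$ or $\tilde{E}^+_{t_0}$, and the boundaries converge in Hausdorff distance by the uniform density bounds for $J$-minimizers.
\item[(d)] Hence $\bar{\nu}(\tilde{y})=\tilde{\nu}(\tilde{y})$, which is the unit normal of the limiting hypersurface. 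The orientation is fixed because $\tilde{\nu}_i$ points toward increasing $U_i$, and $U\ge t_0$ on the outer side.
\item[(e)] Since the limit $\bar{\nu}$ is independent of the subsequence, the whole sequence converges.
\end{itemize}

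\textbf{Case (3).} Here $\tilde{\mathcal{K}}_{t_0}$ is open and the relevant surfaces are the $\tilde{N}_{\tilde{x}_0}$ of Lemma \ref{lem-limit-surface-C1alpha}(3).
\begin{itemize}
\item[(a)] On $\tilde{\mathcal{K}}_{t_0}$ the $\tilde{\nu}_i$ are uniformly $C^{\alpha}$ on compact sets. A diagonal argument over a countable exhaustion by compact sets gives a subsequence $\tilde{\nu}_{i_j}\to\tilde{\nu}$ locally uniformly, with $\tilde{\nu}$ H\"older continuous.
\item[(b)] Translation invariance holds because each $U_i(x,z)=u_i(x)-\epsilon_i z$ has $\tilde{\nu}_i(x,z+s)=\tilde{\nu}_i(x,z)$ exactly, and this identity passes to the limit.
\item[(c)] For normality, fix $\tilde{x}_0\in\tilde{\mathcal{K}}_{t_0}$. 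Pass to a further subsequence of the $i_j$ along which the level sets $\tilde{N}^{i_j}_{t_{i_j}}$ through $\tilde{x}_0$ converge in $C^{1,\beta}$ to $\tilde{N}_{\tilde{x}_0}$, as in Lemma \ref{lem-limit-surface-C1alpha}(3).
\item[(d)] On these level sets $\tilde{\nu}_{i_j}$ is exactly the unit normal. $C^1$ convergence of the local graphs then gives convergence of the normals at corresponding points, so $\tilde{\nu}$ restricted to $\tilde{N}_{\tilde{x}_0}$ is its unit normal.
\item[(e)] Passing to a further subsequence does not change the already determined limit $\tilde{\nu}$, so $\tilde{\nu}$ is normal to every $\tilde{N}_{\tilde{x}_0}$ simultaneously.
\end{itemize}

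\textbf{Main obstacle.} The hardest step is (c) of Cases (1)--(2): showing that the limit of level sets at a jump time lands on $\tilde{N}_{t_0}$ or $\tilde{N}^+_{t_0}$ rather than inside the jump region. Only then is the limit normal field uniquely determined, which is what makes convergence of the whole sequence work.

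I would handle this with the minimizing property from Lemma \ref{smooth-flow-lem}, passed to the limit via lower semicontinuity of perimeter and local uniform convergence of the lower-order integrand. This shows that $L^1$ limits of $\tilde{E}^i_{t_i}$ are $J$-minimizers. The uniform $C^{1,\alpha}$ bounds then upgrade $L^1$ convergence to graphical convergence, which pins down the boundary.
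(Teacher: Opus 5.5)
Your proposal is correct, and Case (3) is essentially the paper's argument. The paper runs Arzel\`a--Ascoli by hand, using a dense countable set $\{x_k\}\subset\mathcal{K}_{t_0}$, a diagonal subsequence and a uniform H\"older bound. It gets translation invariance from vertical translates of the level sets. Your observation that $\tilde\nu_i=(\nabla u_i,-\epsilon_i)/\sqrt{|\nabla u_i|^2+\epsilon_i^2}$ is exactly $z$-independent is cleaner. As in the paper, the non-unique surfaces $\tilde N_{\tilde x_0}$ must be (re)selected along further subsequences of the $i_j$, and your step (c) does this.

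The real difference is in Cases (1)--(2). The paper calls them clear from the proof of Lemma \ref{lem-limit-surface-C1alpha}. There, the limit surface through a point is subsequence-independent at non-jump times, and at jump times $\tilde N_{t_0}$, $\tilde N^+_{t_0}$ are reached through non-jump surfaces $\tilde N_{t_j}$. You instead identify each subsequential limit directly, using one-sided levels $t_i\to t_0$, $L^1_{loc}$ convergence of $\tilde E^i_{t_i}$ and density bounds for $J$-minimizers. This replaces the paper's double limit by a single diagonal sequence. It does require $|t_i-t_0|>\sup_K|U_i-U|$; otherwise $\{U_i<t_i\}$ can absorb part of the jump region and converge to the wrong set.

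Your route is also heavier than necessary. Any $C^1$ limit $S$ of level sets of $U_i$ through points converging to $\tilde y$ lies in $\{U=t_0\}$. Hence $S$ lies in the closed set $\{U\ge t_0\}$ (resp.\ $\{U\le t_0\}$), whose boundary near $\tilde y$ is the $C^{1,\alpha}$ surface $\tilde N_{t_0}$ (resp.\ $\tilde N^+_{t_0}$). So $S$ is tangent to that surface at $\tilde y$, and this alone fixes the limit normal. The orientation comes from $\tilde\nu_i$ pointing toward increasing $U_i$. You never need $S$ to coincide with $\tilde N_{t_0}$ locally. Repeating the argument with moving base points $p_i\to p$ gives local uniform convergence along the surface by contradiction, so your ``main obstacle'' disappears.

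Last, both you and the paper pass silently from leafwise $C^{1,\alpha}$ bounds to a uniform ambient $C^{\alpha}$ bound on $\tilde\nu_i$. This step needs the disjointness of the leaves: nearby disjoint leaves with uniform $C^{1,\alpha}$ graph bounds have normals within $C d^{\alpha/(1+\alpha)}$ at points a distance $d$ apart. The tangency argument lets you avoid this in Cases (1)--(2). Your Arzel\`a--Ascoli step in Case (3), however, rests on it, so you should state it.
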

\begin{proof}
	The proof is based on a choice of subsequence, and is included in Section \ref{appen-limit-regularized}.
\end{proof}

We conclude this subsection by the following foliation result with improved regularity of the jump region.

\begin{lem}\label{lem-foliation-jump}
	The interior $\tilde{\mathcal{K}}_{t_0}$ of the jump region is foliated by $C^{2,\alpha }$-hypersurfaces, where each such hypersurface is either a vertical cylinder or a graph over an open subset of $\{u=t_0\} $. Furthermore, each hypersurface bounds a Caccioppoli set that minimizes $J_{U, \tilde{\nu }}$ in $\tilde{\mathcal{K}}_{t_0}$, where $\tilde{\nu }$ denotes the $C^{1,\alpha }$-unit normal vector field to the hypersurface foliation.
\end{lem}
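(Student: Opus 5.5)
The plan follows the proof of the analogous statement in \cite{Moore12} and \cite{HuiskenWolff22}, with the curvature term replaced by $|\tilde{\nabla }U| + \sigma (\tilde{\nu },\tilde{\nu })$.

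\textbf{Step 1: structure of the leaves.} By Lemma \ref{lem-limit-surface-C1alpha}(3), each $\tilde{x}_0 \in \tilde{\mathcal{K}}_{t_0}$ lies on a $C^{1,\alpha }$ limit hypersurface $\tilde{N}_{\tilde{x}_0}$ of a subsequence $\tilde{N}^{i_j}_{t_{i_j}}$. By Lemma \ref{limit-nu-construction}, these hypersurfaces have normals given by a single translation invariant H\"older field $\tilde{\nu }$. Since $\tilde{\nu }$ is translation invariant, each leaf falls into one of two cases. Where $\tilde{\nu }$ is horizontal ($\tilde{\nu }\cdot \partial _z=0$), the leaf is invariant under vertical translation, hence a vertical cylinder over a hypersurface in $\{u=t_0\}$. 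Where $\tilde{\nu }\cdot \partial _z\neq 0$, the leaf is locally a graph over an open subset of $\{u=t_0\}$; this is the limit of the graphs $\mathrm{graph}(u_i/\epsilon _i - t/\epsilon _i)$.

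Distinct leaves cannot cross, for two reasons:
\begin{itemize}
\item they arise as uniform limits of level sets of the smooth functions $U_i$, which are disjoint;
\item they share the continuous normal field $\tilde{\nu }$.
\end{itemize}
A strong maximum principle argument then shows that two leaves touching at a point coincide. Together these give a foliation of $\tilde{\mathcal{K}}_{t_0}$ by $C^{1,\alpha }$ hypersurfaces.

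\textbf{Step 2: minimization.} By Lemma \ref{smooth-flow-lem}, $\tilde{E}^i_t$ minimizes $J^{\sigma _i}_{U_i,\tilde{\nu }_i}$. Using $\tilde{\nu }_i\to \tilde{\nu }$ locally uniformly, $\sigma ^i\to \sigma $ in $C^0$, and the lower semicontinuity of perimeter together with the compactness argument of \cite[Theorem 2.1]{HI01}, the limiting Caccioppoli sets bounded by the leaves minimize $J_{U,\tilde{\nu }}$ in $\tilde{\mathcal{K}}_{t_0}$.

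In the interior of the jump region $\tilde{\nabla }U=0$, so the functional reduces to $|\partial ^*F| - \int_F \sigma (\tilde{\nu },\tilde{\nu })$. Alternatively, $\tilde{\nu }$ acts as a calibration: on the flat region the limit of $\mathrm{div}\,\tilde{\nu }_i = |\tilde{\nabla }U_i| + \sigma ^i(\tilde{\nu }_i,\tilde{\nu }_i)$ holds weakly, giving $\mathrm{div}\,\tilde{\nu } = \sigma (\tilde{\nu },\tilde{\nu })$ in the distributional sense. The divergence argument of Lemma \ref{smooth-flow-lem} then yields minimality directly.

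\textbf{Step 3: regularity.} This is the main obstacle: upgrading $C^{1,\alpha }$ to $C^{2,\alpha }$ when $\sigma $ is only Lipschitz and the prescribed mean curvature $\sigma (\tilde{\nu },\tilde{\nu })$ depends on the unknown normal itself. Each leaf is a $C^{1,\alpha }$ weak solution of the prescribed mean curvature equation $H = \sigma (x,\tilde{\nu })$. Write the leaf locally as a graph $w$ in normal coordinates, so the equation is
\begin{equation*}
\mathrm{div}\Bigl(\frac{\nabla w}{\sqrt{1+|\nabla w|^2}}\Bigr) + \text{(metric terms)} = \sigma (x,w,\nu (\nabla w)).
\end{equation*}
Since $\nabla w\in C^{\alpha }$ and $\sigma $ is Lipschitz, the right side is $C^{\alpha }$. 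Schauder theory for divergence-form quasilinear equations then gives $w\in C^{2,\alpha }$. Hence the leaves are $C^{2,\alpha }$, and $\tilde{\nu }$, being their unit normal field, is $C^{1,\alpha }$ on $\tilde{\mathcal{K}}_{t_0}$. For the vertical cylinders the same argument applies one dimension lower. The key point I would check is that the jump region's right side genuinely contains no $|\tilde{\nabla }U|$ term, so that no further regularity of $U$ is needed.
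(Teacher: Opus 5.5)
Your route matches the paper's. You take the $C^{1,\alpha}$ leaves as limits of $\tilde{N}^{i}_{t_i}$, pass minimality to the limit, and write each leaf as a local graph solving $\mathrm{div}\bigl(\nabla w/\sqrt{1+|\nabla w|^2}\bigr)=\sigma(\tilde{\nu},\tilde{\nu})$ with $C^{\alpha}$ right-hand side, so Schauder theory applies. You then use the strong maximum principle against touching and the limit-of-graphs dichotomy.

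\textbf{The gap is in Step 2.} It omits the one ingredient the paper proves separately: $|\tilde{\nabla}U_i|\to 0$ in $L^1_{loc}(\tilde{\mathcal{K}}_{t_0})$ (Lemma~\ref{appen-lem-L1-jump}). From $\tilde{\nabla}U=0$ on the jump region you only get $\tilde{\nabla}U_i\rightharpoonup 0$ weakly. That does not prevent $|\tilde{\nabla}U_i|$ from oscillating and converging weakly to a nonzero function.

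The $L^1$ convergence is genuinely needed. If a competitor $F$ removes part of $\tilde{E}$, compare $\tilde{E}^{i}_{t_i}$ with a gluing of $F$. The inequality for $J^{\sigma^{i}}_{U_i,\tilde{\nu}_i}$ then carries $+\int_{\tilde{E}^{i}_{t_i}\setminus F}\bigl(|\tilde{\nabla}U_i|+\sigma^{i}(\tilde{\nu}_i,\tilde{\nu}_i)\bigr)$ on the right, and lower semicontinuity of perimeter says nothing about that term. Your calibration variant assumes the same fact when it passes $\mathrm{div}\,\tilde{\nu}_i=|\tilde{\nabla}U_i|+\sigma^{i}(\tilde{\nu}_i,\tilde{\nu}_i)$ to $\mathrm{div}\,\tilde{\nu}=\sigma(\tilde{\nu},\tilde{\nu})$. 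The concern you raise at the end of Step 3 is exactly this point; it belongs in Step 2, not in the regularity step.

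\textbf{The fix is short.} For $0\le\varphi\in C^\infty_c(\tilde{\mathcal{K}}_{t_0})$ the paper writes
\[
\int\varphi|\tilde{\nabla}U_i|=\int\varphi\langle\tilde{\nabla}U_i,\tilde{\nu}\rangle+\int\varphi\langle\tilde{\nabla}U_i,\tilde{\nu}_i-\tilde{\nu}\rangle\to 0 .
\]
The first term tends to zero because $\tilde{\nabla}U_i\rightharpoonup\tilde{\nabla}U=0$ against the continuous field $\varphi\tilde{\nu}$. The second tends to zero by the uniform convergence $\tilde{\nu}_i\to\tilde{\nu}$ of Lemma~\ref{limit-nu-construction} together with the uniform gradient bound. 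The paper then applies the set compactness Lemma~\ref{lem-compactness}, which takes this convergence as a hypothesis.

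Alternatively, the cutoff competitor $\phi U+(1-\phi)U_i$ from the proof of \cite[Theorem 2.1]{HI01} gives $\limsup_i\int\phi|\tilde{\nabla}U_i|\le\int\phi|\tilde{\nabla}U|=0$. If that is what you meant by ``the compactness argument'', you need to say so explicitly.

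Once this is in place, your calibration argument is a correct and somewhat more direct alternative to Lemma~\ref{lem-compactness}, since it gives minimality for every leaf at once.

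\textbf{Ordering.} The strong maximum principle you invoke in Step~1 requires the Euler--Lagrange equation. So the non-touching conclusion must come after Steps~2 and~3, which is the order the paper uses.
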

\begin{proof}
	The proof is based on a boot-strapping argument of uniformly elliptic PDE and a compactness result, and is included in Section \ref{appen-limit-regularized}.
\end{proof}

\subsection{Existence of weak solutions}
We first introduce the following compactness theorem, whose proof is the same as \cite[Theorem 2.1]{HI01}, by applying the bounded convergence theorem to the additional term $\sigma ^{i}(\tilde{\nu }_i, \tilde{\nu }_i)$. 

\begin{lem}\label{lemma-compactness}
	Let $\tilde{\Omega }_i \subset M \times \mathbb{R}$ be open sets, let $U_i \in C^{0,1}_{loc}(\tilde{\Omega }_i)$ satisfy $\sup_{K}|\tilde{\nabla }U_i| \leq C(K)$ for each $K \subset \subset \tilde{\Omega }$ and all large $i$, let $\tilde{\nu }_i \in C^{\alpha }_{loc}(T \tilde{\Omega }_i)$ be a sequence of unit vector fields, and let $\sigma ^{i}$ be a sequence of Lipschitz nonnegative symmetric $2$-tensors satisfying $\|\sigma ^{i}\|_{C^1} \leq C$ uniformly. Assume that $\tilde{\Omega }_i \to \tilde{\Omega }$ locally uniformly, $U \in C^{0,1}_{loc}(\tilde{\Omega })$, $\tilde{\nu }$ is a measurable unit vector field on $\tilde{\Omega }$, and $\sigma $ is a Lipschitz nonnegative symmetric $2$-tensor, such that 
\begin{align*}
	U_i \to U, \ \sigma ^{i} \to \sigma \text{ locally uniformly, }\quad  \tilde{\nu }_i \to \tilde{\nu } \text{ a.e.}
\end{align*} 
If $(U_i, \tilde{\nu }_i)$ minimizes $J^{\sigma ^{i}}_{U_i, \tilde{\nu }_i}$ on $\tilde{\Omega }_i$, then $(U, \tilde{\nu })$ minimizes $J^{\sigma }_{U, \tilde{\nu }}$ on $\tilde{\Omega }$.
\end{lem}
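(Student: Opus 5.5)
We follow the proof of \cite[Theorem 2.1]{HI01}. Fix a compact set $K \subset \tilde{\Omega}$ and a locally Lipschitz competitor $V$ with $\{V \neq U\} \subset\subset \tilde{\Omega}$, and choose $K$ to contain a neighborhood of $\{V\neq U\}$. By local uniform convergence $\tilde{\Omega}_i \to \tilde{\Omega}$, we have $K \subset \tilde{\Omega}_i$ for large $i$, so the integrands below are defined on $K$ and $|\tilde{\nabla}U_i|\le C(K)$ there. The goal is to show $J^{\sigma}_{U,\tilde{\nu}}(U) \leq J^{\sigma}_{U,\tilde{\nu}}(V)$ on $K$.

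\textbf{Step 1: competitors for $U_i$.} Set $V_i := V + (U_i - U)$. Then $V_i$ is locally Lipschitz, $\{V_i \neq U_i\} = \{V\neq U\}\subset\subset K$, and $V_i \to V$ locally uniformly. Minimality of $U_i$ gives
\begin{align*}
\int_K |\tilde{\nabla} U_i| + U_i f_i \leq \int_K |\tilde{\nabla} V_i| + V_i f_i,\qquad f_i := |\tilde{\nabla}U_i| + \sigma^i(\tilde{\nu}_i,\tilde{\nu}_i).
\end{align*}
Since $U_i - V_i = U - V$ is fixed and compactly supported in $K$, this rearranges to
\begin{align*}
\int_K |\tilde{\nabla} U_i| - |\tilde{\nabla} V_i| \leq \int_K (V-U)\bigl(|\tilde{\nabla}U_i| + \sigma^i(\tilde{\nu}_i,\tilde{\nu}_i)\bigr).
\end{align*}
Next we rewrite $|\tilde{\nabla}V_i| \le |\tilde{\nabla}(V-U)| + |\tilde{\nabla}U_i|$ only on $\{V\ne U\}$; more precisely, following HI01 it suffices to pass to the limit in the sharper form $\int_K |\tilde{\nabla}U_i| \le \int_K |\tilde{\nabla}V_i| + (V-U)f_i$.

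\textbf{Step 2: passing to the limit.} The term $\sigma^i(\tilde{\nu}_i,\tilde{\nu}_i)$ is uniformly bounded, and it converges a.e. to $\sigma(\tilde{\nu},\tilde{\nu})$ because $\sigma^i\to\sigma$ uniformly and $\tilde{\nu}_i \to\tilde{\nu}$ a.e. The bounded convergence theorem therefore gives $\int_K (V-U)\sigma^i(\tilde{\nu}_i,\tilde{\nu}_i)\to \int_K (V-U)\sigma(\tilde{\nu},\tilde{\nu})$. For the gradient terms, $\tilde{\nabla}U_i \rightharpoonup \tilde{\nabla}U$ weakly-$*$ in $L^\infty(K)$, and lower semicontinuity gives $\int_K|\tilde{\nabla}U| \le \liminf \int_K |\tilde{\nabla}U_i|$ (also with the weight $\chi_{\{V\ne U\}}$). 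However, the product $(V-U)|\tilde{\nabla}U_i|$ and the term $|\tilde{\nabla}V_i|$ are \emph{not} controlled by weak convergence alone. This is the main obstacle, and we resolve it as HI01 does: using the co-area formula and Lemma \ref{equivalent-J-E}, we reduce to the set formulation. Since $(U_i,\tilde{\nu}_i)$ minimizes, each $\{U_i<t\}$ minimizes $J^{\sigma^i}_{U_i,\tilde{\nu}_i}$. Testing against $F$ with $F\Delta \{U<t\}\subset\subset K$ and the modified competitor $F_i := (F \cap W)\cup(\{U_i<t\}\setminus W)$ for a suitable $W$ (chosen by slicing so that the extra boundary on $\partial W$ has vanishing area as $i\to\infty$), we obtain perimeter inequalities. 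In these, the volume term $\int_{F}|\tilde{\nabla}U_i|$ appears linearly over a \emph{fixed} set and converges by weak-$*$ convergence, while the volume term involving $\sigma^i$ converges by bounded convergence. Lower semicontinuity of perimeter under $L^1$ convergence $\{U_i<t\}\to\{U<t\}$, valid for a.e.\ $t$ (those $t$ with $|\{U=t\}|=0$), then yields that $\{U<t\}$ minimizes $J^{\sigma}_{U,\tilde{\nu}}$ for a.e.\ $t$. Approximating from below extends this to all $t$, and Lemma \ref{equivalent-J-E} converts it back to the functional statement.

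\textbf{Step 3: the slicing estimate.} The delicate point is controlling $|\partial W \cap (\{U_i<t\}\Delta\{U<t\})|$. Choose $W=\{\rho<s\}$ with $\rho$ a distance-type function. The co-area formula bounds $\int_{s}\mathcal{H}^{n}(\{\rho=s\}\cap(\{U_i<t\}\Delta \{U<t\}))\,ds$ by $|\{U_i<t\}\Delta\{U<t\}| \to 0$, so a good $s$ exists after passing to a subsequence. Combined with uniform local perimeter bounds for the minimizers (obtained by comparison with balls, using $|f_i|\le C$), this completes the argument.
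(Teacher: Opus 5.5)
Your bounded-convergence treatment of the new term $\sigma^i(\tilde\nu_i,\tilde\nu_i)$ is exactly the paper's modification of \cite[Theorem 2.1]{HI01}. However, passing to the set formulation does not remove the obstacle you identified; it only hides it. Along a subsequence $|\tilde\nabla U_i|\rightharpoonup g$ weakly-$*$ in $L^\infty_{loc}$, and a priori you only know $g\ge|\tilde\nabla U|$. In your cut-and-paste inequality for $E^i=\{U_i<t\}$ the volume terms are $\int_{(E^i\setminus F)\cap W}f_i-\int_{(F\setminus E^i)\cap W}f_i$, so passing to the limit yields
\[
|\partial^* E\cap W| \le |\partial^* F\cap W| + \int_{(E\setminus F)\cap W}\bigl(g+\sigma(\tilde\nu,\tilde\nu)\bigr) - \int_{(F\setminus E)\cap W}\bigl(g+\sigma(\tilde\nu,\tilde\nu)\bigr),
\]
with $g$ rather than $|\tilde\nabla U|$. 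Your claim that $\int_F|\tilde\nabla U_i|$ ``converges by weak-$*$ convergence'' is unjustified, since $\tilde\nabla U_i\rightharpoonup\tilde\nabla U$ says nothing about the limit of $|\tilde\nabla U_i|$. It is the same defect you flagged for $\int(V-U)|\tilde\nabla U_i|$. Because $g\ge|\tilde\nabla U|$, the display gives the required inequality for outward competitors $F\supset E$. For inward competitors $F\subset E$, which the definition of minimizing requires, the term $\int_{E\setminus F}g$ has the wrong sign. Slicing and lower semicontinuity of perimeter cannot repair this.

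The missing step is to prove $g=|\tilde\nabla U|$ from the minimizing property itself. Your competitor $V+(U_i-U)$ cannot do this, since it is trivial when $V=U$. Use instead $V_i=\phi V+(1-\phi)U_i$, where $\phi\in C^\infty_c(\tilde\Omega)$, $0\le\phi\le1$, and $\phi=1$ near $\{V\neq U\}$. Since $V=U$ on $\mathrm{supp}\,\tilde\nabla\phi$ and $U_i\to U$ uniformly on $\mathrm{supp}\,\phi$, minimality of $U_i$ gives
\[
\int\phi|\tilde\nabla U_i| \le \int\phi|\tilde\nabla V| + \int\phi(V-U)\bigl(|\tilde\nabla U_i|+\sigma^i(\tilde\nu_i,\tilde\nu_i)\bigr)+o(1).
\]
Taking $V=U$ yields $\limsup_i\int\phi|\tilde\nabla U_i|\le\int\phi|\tilde\nabla U|$. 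Together with lower semicontinuity, this gives $g=|\tilde\nabla U|$ a.e. The display then passes to the limit for arbitrary $V$: use weak-$*$ convergence for the $|\tilde\nabla U_i|$ term and bounded convergence for the $\sigma^i$ term. Since $\phi=1$ on $\{V\neq U\}$ and $\tilde\nabla V=\tilde\nabla U$ a.e.\ on $\{V=U\}$, the limit is exactly $J^\sigma_{U,\tilde\nu}(U)\le J^\sigma_{U,\tilde\nu}(V)$. No set formulation or slicing is needed.
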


As a limit of the $\epsilon $-translating graphs, we have the following existence theorem for the weak $\sigma $-IMCF. 
\begin{thm}\label{sgm-IMCF-existence}
	Let $(M^n, g)$ be a complete, connected Riemannian $n$-manifold without boundary, and $\sigma $ be an admissible Lipschitz nonnegative symmetric $2$-tensor with globally uniform $C^{0,1}$-bound. Suppose there exists a proper smooth subsolution of \ref{*2-eq} with a precompact initial condition. Then for any nonempty, precompact, open set $E_0 \subset M^n$ with $C^2$-boundary, there exists a weak solution of the $\sigma $-IMCF with initial condition $E_0$ as in Definition \ref{defn-weak}.
\end{thm}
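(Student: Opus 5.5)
The plan is to assemble the weak solution as the limit of the $\epsilon$-translating graphs of Section \ref{section-limit-translating} and then check the three items of Definition \ref{defn-weak} one at a time.

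\textbf{Approximation and the limit function.} Since $\sigma$ is admissible, fix smooth approximants $\sigma^{\epsilon}$ with uniform $C^1$ bounds and $\sigma^\epsilon\to\sigma$ in $C^0$. The hypothesis provides the proper subsolution $v$, so Lemma \ref{smooth-sol-*-epsilon} applies. For every $L$ it gives smooth solutions $u^{\epsilon}$ of \ref{epsilon-eq} on $\Omega_L$ with $\epsilon=\epsilon(L)\to 0$. These come with uniform local gradient bounds: near $\partial E_0$ the bound is $\max(0,H_{\partial E_0})+\epsilon$, and in the interior there is an extra $Cr^{-1}$ term, where $C$ depends only on $\|\sigma\|_{C^{0,1}}$. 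The barrier $u^\epsilon\ge v-2$ holds as well. By Arzel\`a--Ascoli we pass to a subsequence $u_i\to u$ locally uniformly. This yields (\ref{u-Lip-bound}) and (\ref{u-boundary-outer}), the latter because $v$ is proper. We extend $u$ negatively inside $E_0$ as in (\ref{u-boundary-inner}), set $U(x,z)=u(x)$, and obtain $U_i\to U$. This settles item (i).

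\textbf{Construction of $\tilde\nu$ and item (iii).} The idea is to build $\tilde\nu$ time by time. For a non-jump time $t$, and for the surfaces $\tilde N_t$ and $\tilde N_t^+$ at jump times, we use the $C^{1,\alpha}$ limit surfaces from Lemma \ref{lem-limit-surface-C1alpha}. Their normals are limits of $\tilde\nu_i$ along the whole sequence by Lemma \ref{limit-nu-construction}. Jump times are at most countable, since the regions $\{U=t\}$ with nonempty interior are disjoint open sets. For each jump time $t_0$, Lemma \ref{limit-nu-construction} gives a subsequence on which $\tilde\nu_i$ converges on $\tilde{\mathcal K}_{t_0}$.

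A diagonal argument over the countably many jump times then produces one subsequence along which $\tilde\nu_i\to\tilde\nu$ on every jump region. The set $\{U=t\}\setminus\tilde{\mathcal K}_t$ is covered by $\tilde N_t\cup\tilde N_t^+$, and on $\{\tilde\nabla U\neq 0\}$ the convergence $\tilde\nu_i\to\tilde\nabla U/|\tilde\nabla U|$ holds. Together these give $\tilde\nu_i\to\tilde\nu$ almost everywhere. Translation invariance of $\tilde\nu$ follows because each $\tilde\nu_i$ is invariant under $z$-translation combined with a time shift $t\mapsto t-\epsilon_i z$, and $\epsilon_i\to0$. Lemma \ref{lem-foliation-jump} upgrades $\tilde\nu$ to $C^{1,\alpha}$ on $\tilde{\mathcal K}_{t_0}$ as the normal of the foliation, which gives item (iii).

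\textbf{Minimization and item (ii).} Apply the Smooth Flow Lemma \ref{smooth-flow-lem} to $\tilde N^i_t$. It shows that $\tilde E^i_t$ minimizes $J^{\sigma^i}_{U_i,\tilde\nu_i}$ in $\Omega_{L_i}\times\mathbb R$ minus the initial region, for all $t$. Lemma \ref{equivalent-J-E} converts this into $(U_i,\tilde\nu_i)$ minimizing the functional form. We then apply the compactness Lemma \ref{lemma-compactness}, using locally uniform convergence of $U_i$ and $\sigma^i$ and a.e.\ convergence of $\tilde\nu_i$. The conclusion is that $(U,\tilde\nu)$ minimizes $J^\sigma_{U,\tilde\nu}$ on $(M\setminus\bar E_0)\times\mathbb R$, and Lemma \ref{equivalent-J-E} again turns this into the statement that each $\tilde E_t$ minimizes. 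The jump-region clause of (ii) is exactly the last assertion of Lemma \ref{lem-foliation-jump}.

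\textbf{Main obstacle.} The delicate step is making $\tilde\nu$ a single measurable field that is the a.e.\ limit of one subsequence, since this is what Lemma \ref{lemma-compactness} requires. Without uniqueness, different jump regions may need different subsequences, and one must check that the diagonal choice does not spoil convergence off the jump regions. I would handle this as follows. Off the jump regions, the convergence of Lemma \ref{limit-nu-construction} in Cases (1) and (2) holds for the whole sequence. There remain only countably many jump regions, each open, so a diagonal subsequence suffices. The leftover set where the limit is undefined, namely $\{\tilde\nabla U=0\}$ minus the interiors of the jump regions, lies in countably many $C^{1,\alpha}$ hypersurfaces and therefore has measure zero.
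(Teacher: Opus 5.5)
Your proposal is correct and follows the paper's proof essentially step for step. The paper uses the same ingredients: the limit $U$ from elliptic regularization and Arzel\`a--Ascoli; the field $\tilde\nu$ from Lemma \ref{limit-nu-construction}, with whole-sequence convergence off the jump interiors and one diagonal subsequence over the countably many $\tilde{\mathcal K}_{t_0}$; and item (ii) from the Compactness Lemma \ref{lemma-compactness} together with Lemma \ref{equivalent-J-E} and Lemma \ref{lem-foliation-jump}.

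Two minor corrections. First, $\tilde\nu_i=(\nabla u_i,-\epsilon_i)/\sqrt{|\nabla u_i|^2+\epsilon_i^2}$ is already independent of $z$, so translation invariance of the limit is immediate and needs no time shift or $\epsilon_i\to 0$. Second, your closing measure-zero argument is unnecessary, and its claim that $\{\tilde\nabla U=0\}\setminus\bigcup_{t_0}\tilde{\mathcal K}_{t_0}$ lies in countably many hypersurfaces is unjustified, since $\tilde\nabla U$ may vanish on non-jump level sets. It is unnecessary because every point outside $\bigcup_{t_0}\tilde{\mathcal K}_{t_0}$ lies on some $\tilde N_t$ or $\tilde N_t^+$, where Cases (1)--(2) already give convergence along the whole sequence.
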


\begin{proof}
	We adapt the same notations as in previous sections. So  $\tilde{N}^{i}_t = \{ U_i = t\} $ are smooth solutions of the $\sigma^{i} $-IMCF \ref{*-eq} in $\tilde{\Omega }_i:= \Omega _i \times \mathbb{R}$ for $\Omega _i = \Omega _{L_i}$ with $L_i \to \infty$. Let $\tilde{\nu }_i = \frac{\tilde{\nabla }U_i}{|\tilde{\nabla }U_i|}$ be the smooth unit normal vectors to $\tilde{N}^{i}_t$. Then $(U_i, \tilde{\nu }_i)$ are weak $J ^{\sigma ^{i}}$-solutions on $\tilde{\Omega }_i$, $\tilde{\Omega }_i \to (M \setminus \bar{E}_0 ) \times \mathbb{R}$ locally uniformly, $U_i$ have locally uniformly Lipschitz bounds and $U_i \to U$ locally uniformly for a locally Lipschitz function $U(x,z)= u(x)$ satisfying (\ref{u-Lip-bound}), (\ref{u-boundary-outer}), and (\ref{u-boundary-inner}).

	By Lemma \ref{limit-nu-construction}, away from jump regions, we can construct a locally H\"older continuous unit vector field $\tilde{\nu }$, locally as a uniform limit of the whole sequence $\tilde{\nu }_i$. By Lemma \ref{limit-nu-construction} and \ref{lem-foliation-jump}, in the interior of each jump region, we can construct $C^{1,\alpha }$ unit vector field $\tilde{\nu }$ so that a subsequence $\tilde{\nu }_{i_j} \to \tilde{\nu }$ locally uniformly. Since there are at most countable jump times, by taking successive subsequences once more which we still denote by $i$, we obtain a measurable translation invariant unit vector field $\tilde{\nu }$ on all of $(M\setminus E_0) \times \mathbb{R}$, such that $\tilde{\nu }_i \to \tilde{\nu }$ a.e.

	Moreover, by the construction, we know that $\tilde{\nu }(x) = \frac{\tilde{\nabla }U}{|\tilde{\nabla }U|}(x)$ if $x \in \tilde{N}_t$ at a regular time $t$; $\tilde{\nu }(x) = \lim_{j\to \infty} \frac{\tilde{\nabla }U}{|\tilde{\nabla }U|}(x_j) $ if $x \in \tilde{N}_{t_0}$,  for $x_j \to x$, $x_j \in \tilde{N}_{t_j}$ and $t_j \nearrow t_0$; $\tilde{\nu }(x) = \lim_{j\to \infty}  \frac{\tilde{\nabla }U}{|\tilde{\nabla }U|}(x_j) $ if $x \in \tilde{N}_{t_0} ^{+}$,  for $x_j \to x$, $x_j \in \tilde{N}_{t_j}$ and $t_j \searrow t_0$; and $\tilde{\nu }$ agrees with the constructed $C^{1,\alpha }$-vector in $\tilde{\mathcal{K}}_{t_0}$ at a jump time $t_0$. So $\tilde{\nu }$ is a translation invariant measurable unit vector field and $J^{\sigma }_{U, \tilde{\nu }}$ is well defined on $(M \setminus \bar{E}_0) \times \mathbb{R}$. 

	It remains to verify (ii) of Definition \ref{defn-weak}. The foliation of $\tilde{\mathcal{K}}_{t_0}$ by Caccioppoli sets is included in Lemma \ref{lem-foliation-jump}. Applying the Compactness Lemma \ref{lemma-compactness} to $(\tilde{U}_i, \tilde{\nu }_i, \sigma ^{i})$, and using Lemma \ref{equivalent-J-E}, we know $\{U< t\} $ minimizes $J^{\sigma }_{U, \tilde{\nu }}$ for each $t>0$. This completes the proof.
\end{proof}

\subsection{Weak mean curvature} We give a PDE interpretation of our weak solutions in terms of weak mean curvature. Let $\tilde{N} \subset M ^{n} \times \mathbb{R}$ be a $C^1$-hypersurface. Then a locally integrable function $H_{\tilde{N}}$
 on $\tilde{N}$ is called the weak mean curvature if 
\begin{align*}
 \int_{\tilde{N}} \mathrm{div}_{\tilde{N}} X d \mathcal{H}^{n}= \int_{\tilde{N}} H_{\tilde{N}} \tilde{\nu }\cdot X d \mathcal{H}^{n},\quad \forall X \in C^{\infty}_{c}(TM \times \mathbb{R}).	
\end{align*} 
Here  $\mathrm{div}_{\tilde{N}} X = \sum_{i}^{} \langle \tilde{\nabla }_{e_i} X, e_i  \rangle $ for any orthonormal basis of $T_{x}\tilde{N}$ and $\tilde{\nu }$ is the unit normal vector.

\begin{lem}\label{lem-weak-mean-curvature}
	Let $(U, \tilde{\nu })$ be a weak solution constructed in Theorem \ref{sgm-IMCF-existence}. Then the $C^{1,\alpha }$-hypersurfaces $\tilde{N}_t = \partial \{ U < t\} $ have weak mean curvature $H_{\tilde{N}_t} = |\tilde{\nabla }U| + |\tilde{\nu }|^2_{\sigma }$ for a.e. $t \geq 0$ and $\mathcal{H}^{n}$-a.e. $x \in \tilde{N}_{t}$. In particular, $H_{\tilde{N}_t} > |\tilde{\nu }|^2_{\sigma }$ for a.e. $t \geq 0$ and $\mathcal{H}^{n}$-a.e. $x \in \tilde{N}_{t}$.
\end{lem}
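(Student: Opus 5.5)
We follow the strategy of \cite[Section 5]{HI01}, adapted to the term $\sigma$ as in \cite{Moore12, HuiskenWolff22}: first derive the first variation identity for minimizers of $J^{\sigma}_{U,\tilde{\nu}}$, then pass from minimality of sublevel sets to a distributional mean curvature identity for a.e.\ $t$.

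\textbf{Step 1 (first variation for the approximants).} The approximate graphs $\tilde{N}^{i}_t=\{U_i=t\}$ are smooth and satisfy
$$\mathrm{div}(\tilde{\nu}_i)=|\tilde{\nabla}U_i|+\sigma^i(\tilde{\nu}_i,\tilde{\nu}_i).$$
Take a test field $X\in C^\infty_c$ and a cutoff $\phi(t)\in C^\infty_c((0,\infty))$. By the coarea formula,
$$\int \phi(U_i)\,\mathrm{div}_{\tilde{N}^i}X\,|\tilde{\nabla}U_i|=\int\phi(U_i)\big(|\tilde{\nabla}U_i|+|\tilde{\nu}_i|^2_{\sigma^i}\big)\,\tilde{\nu}_i\cdot X\,|\tilde{\nabla}U_i|,$$
with $\mathrm{div}_{\tilde{N}^i}X=\mathrm{div}X-\langle\tilde{\nabla}_{\tilde{\nu}_i}X,\tilde{\nu}_i\rangle$. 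All integrands are bounded locally uniformly, by the Lipschitz bounds of Lemma \ref{smooth-sol-*-epsilon} and the $C^1$ bound on $\sigma^i$.

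\textbf{Step 2 (passage to the limit).} We have $U_i\to U$ locally uniformly, $\tilde{\nu}_i\to\tilde{\nu}$ a.e., and $\sigma^i\to\sigma$ uniformly. The delicate factor is $|\tilde{\nabla}U_i|$, which converges only weakly-$*$. We avoid needing its strong convergence as follows. Write $\phi(U_i)|\tilde{\nabla}U_i|\,\tilde{\nu}_i=\tilde{\nabla}\Phi(U_i)$, where $\Phi'=\phi$, and integrate by parts against the bounded fields built from $X$ and $\tilde{\nu}_i$. The term $\int\phi(U_i)|\tilde{\nabla}U_i|^2\tilde{\nu}_i\cdot X$ is handled by the same identity $\phi(U_i)|\tilde{\nabla}U_i|\tilde{\nu}_i=\tilde{\nabla}\Phi(U_i)$, together with lower semicontinuity and the minimizing property.

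Alternatively, and this is what we would try first, we argue directly on the limit. By Theorem \ref{sgm-IMCF-existence} and Remark \ref{rmk-defn-weak}, each $\tilde{E}_t$ minimizes $J^{\sigma}_{U,\tilde{\nu}}$. For a.e.\ $t$ the set $\{U=t\}$ has $\mathcal{H}^{n+1}$-measure zero and $\tilde{N}_t=\tilde{N}^+_t$. In that case, varying $\tilde{E}_t$ by the flow of $X$ gives the first variation
$$\int_{\tilde{N}_t}\mathrm{div}_{\tilde{N}_t}X=\int_{\tilde{N}_t}\big(|\tilde{\nabla}U|+|\tilde{\nu}|^2_{\sigma}\big)\,\tilde{\nu}\cdot X.$$
This is legitimate once the bulk term $\int_{F}(|\tilde{\nabla}U|+|\tilde{\nu}|^2_\sigma)$ is differentiable along the deformation. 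That holds when the integrand has a well-defined trace on $\tilde{N}_t$, meaning $\mathcal{H}^n$-a.e.\ point of $\tilde{N}_t$ is a Lebesgue point for the $\mathcal{H}^n\llcorner\tilde{N}_s$ slices in the coarea sense. By the coarea formula this is true for a.e.\ $t$. Since $\tilde{\nu}$ is continuous near non-jump times (Lemma \ref{limit-nu-construction}), the term $|\tilde{\nu}|^2_\sigma$ causes no difficulty. The expected main obstacle is precisely this trace/Lebesgue-point argument for $|\tilde{\nabla}U|$. We would handle it as in \cite[Lemma 5.1]{HI01}: compare $\tilde{E}_t$ with $\tilde{E}_t\cup\psi_s(\tilde{E}_t)$ and $\tilde{E}_t\cap\psi_s(\tilde{E}_t)$, use that the swept region's bulk integral divided by $s$ converges to the surface integral of $|\tilde{\nabla}U|$ for a.e.\ $t$, and use minimality in both directions to obtain the equality.

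\textbf{Step 3 (positivity).} For a.e.\ $t$, coarea gives $|\tilde{\nabla}U|>0$ $\mathcal{H}^n$-a.e.\ on $\tilde{N}_t$. The set $\{|\tilde{\nabla}U|=0\}\cap\{U=t\}$ has $\mathcal{H}^n$-measure zero for a.e.\ $t$, since $\int_{\{|\nabla U|=0\}}|\nabla U|=0=\int\mathcal{H}^n(\{|\nabla U|=0\}\cap\{U=t\})\,dt$. Combined with the identity $H_{\tilde{N}_t}=|\tilde{\nabla}U|+|\tilde{\nu}|^2_\sigma$ from Step 2, this gives $H_{\tilde{N}_t}>|\tilde{\nu}|^2_{\sigma}$ a.e.
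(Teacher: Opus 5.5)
Your overall route (first variation of the $J^{\sigma}_{U,\tilde\nu}$-minimizing sublevel sets $\tilde E_t$ under the flow $\psi_s$ of $X$, then coarea for positivity) is the paper's, and Step 3 is fine. The gap is in the one step that carries the proof.

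In your second alternative you need the following for a.e.\ fixed $t$: the map $s\mapsto\int_{\psi_s(\tilde E_t)} f$, with $f=|\tilde\nabla U|+|\tilde\nu|^2_\sigma$ merely in $L^\infty_{loc}$, is differentiable at $s=0$ with derivative $\int_{\tilde N_t} f\,\langle\tilde\nu,X\rangle$. For a fixed set and bounded $f$ this derivative need not exist, so everything rests on the a.e.-$t$ claim. That claim does not follow from the coarea formula. The swept region $\psi_s(\tilde E_t)\,\Delta\,\tilde E_t$ is not a slab $\{t\le U<t+cs\}$: the displacement $U\circ\psi_s^{-1}-U\approx -s\langle\tilde\nabla U,X\rangle$ varies along $\tilde N_t$, and $|\tilde\nabla U|$ can be arbitrarily small there. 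What you assert is therefore a two-parameter differentiation statement, not a Lebesgue-point property of a single function of $t$. You flag this yourself as the main obstacle, but your remedy (comparing with $\tilde E_t\cup\psi_s(\tilde E_t)$ and $\tilde E_t\cap\psi_s(\tilde E_t)$, plus ``the swept bulk integral divided by $s$ converges for a.e.\ $t$'') just restates the claim. The union/intersection device is also unnecessary, since minimality of $\tilde E_t$ here is two-sided.

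Your first alternative does not close either. After writing $\phi(U_i)|\tilde\nabla U_i|\tilde\nu_i=\tilde\nabla\Phi(U_i)$, the term $\int\phi(U_i)|\tilde\nabla U_i|^2\,\tilde\nu_i\cdot X$ still carries a factor $|\tilde\nabla U_i|$ that converges only weakly-$*$. Lower semicontinuity gives at most an inequality for this signed integrand. (Also, Lemma~\ref{limit-nu-construction} gives H\"older continuity of $\tilde\nu$ along each non-jump level set, not on a neighbourhood, since jump times may accumulate.)

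The missing idea is never to differentiate at a single $t$. For every $t>0$ and both signs of $s$ you have $J^{\sigma}_{U,\tilde\nu}(\psi_s(\tilde E_t))\ge J^{\sigma}_{U,\tilde\nu}(\tilde E_t)$. Integrate this against an arbitrary weight $\eta\in C_c((0,\infty))$, $\eta\ge0$, before letting $s\to0$; the paper does the equivalent thing with the function form $J^{\sigma}_{U,\tilde\nu}(U\circ\Phi_s^{-1})$.

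With $\Theta(a)=\int_a^\infty\eta$, the bulk term becomes $\int f\,\Theta(U\circ\psi_s^{-1})$. This is differentiable at $s=0$ by dominated convergence, because $U$ is Lipschitz. Its derivative is $\int f\,\eta(U)\langle\tilde\nabla U,X\rangle$, which equals $\int\eta(t)\int_{\tilde N_t}f\langle\tilde\nu,X\rangle\,d\mathcal H^n\,dt$ by coarea and $\tilde\nu=\tilde\nabla U/|\tilde\nabla U|$ where $\tilde\nabla U\neq0$. The area term is the first variation of area for each $t$, with dominated convergence from the local perimeter bounds. Letting $s\to0^{\pm}$ gives
\[
\int\eta(t)\int_{\tilde N_t}\big(\mathrm{div}_{\tilde N_t}X-(|\tilde\nabla U|+|\tilde\nu|^2_\sigma)\langle\tilde\nu,X\rangle\big)\,d\mathcal H^n\,dt=0
\]
for all such $\eta$. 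Hence the identity holds for a.e.\ $t$ for each fixed $X$, and for all $X$ simultaneously after restricting to a countable $C^1$-dense family. This is the coarea/Lebesgue-differentiation step in the paper's proof, and it makes the trace question disappear.
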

\begin{proof}
	 Let $X$ be a smooth vector field with compact support contained in an open subset $\tilde{W}$ and $\Phi _{s}$ the flow of diffeomorphisms generated by $X$ with $\Phi _{0}= \mathrm{id}$. By Lemma \ref{lem-limit-surface-C1alpha}, $\tilde{N}_{t}$ is $C^{1,\alpha }$-hypersurface. Since $\tilde{E}_t:= \{ U< t\} $ minimizes $J^{\sigma }_{U, \tilde{\nu }}$ for $t>0$, by the coarea formula and the dominated convergence theorem, together with the fact that $\tilde{\nu }= \frac{\tilde{\nabla }U}{|\tilde{\nabla }U|}$ when $|\tilde{\nabla }U| \neq 0$, we obtain 
	 \begin{align*}
	 	0 = \left.\frac{d}{d s}\right|_{s=0} J^{\sigma }_{U, \tilde{\nu }}(U\circ \Phi _s^{-1}) = \int_{-\infty}^{\infty} \int_{\tilde{N}_{t} \cap \tilde{W}} \left( \mathrm{div}_{\tilde{N}_t}X -  \langle \tilde{\nu }, X  \rangle  (|\tilde{\nabla }U| + |\tilde{\nu }|^2_{\sigma }) \right)  d \mathcal{H}^{n} dt.
	 \end{align*} 
By Riesz representation theorem and Lebesgue differentiation theorem, this proves the conclusion.
\end{proof}

\section{Further properties of the weak $(|h|g)$-IMCF}\label{sect-hg-IMCF}
Let $(M^3, g, h)$ be a complete, connected, asymptotically flat triple of dimension $3$. Consider the Lipschitz nonnegative symmetric $2$-tensor $\sigma_{|h|} := |h| g$. Then the $(|h|g)$-IMCF equation \ref{*-eq} is reduced to equation (\ref{h-IMCF}):
\begin{align}\label{*-eq-h}
	\frac{\partial F}{\partial t} = \frac{\nu }{H - |h|},
\end{align}
and the level set equation is reduced to
\begin{align}\label{*2-eq-h}
	\mathrm{div}\left( \frac{\nabla u}{|\nabla u|} \right) = |\nabla u| + |h|.
\end{align}

By Remark \ref{rmk-subsolution}, we know $(M^3, g)$ supports a smooth subsolution of (\ref{*2-eq-h}) with precompact initial condition. We show that $\sigma _{|h|}$ is also admissible. To see this, choose a fixed smooth function $\Phi (x)$ on $M$ so that $0< \Phi (x) < 1$, $\Phi (x) \sim |x| ^{-3}$ as $x \to \infty$, and for any $0< \delta < 1$, define
	\begin{align}
		h_{\delta }:= \sqrt{h ^2 + (\delta  \Phi )^2} .
	\end{align}
	Then $h_{\delta }$ is smooth, asymptotic to $0$ at the rate $|x|^{-3}$,
\begin{align*}
	|\nabla h_{\delta }| = \frac{|h \nabla h + \delta ^2 \Phi \nabla \Phi |}{\sqrt{h ^2 + (\delta \Phi )^2} } \leq |\nabla h| + \delta |\nabla \Phi | \leq |\nabla h| + C\cdot \delta ,
\end{align*} 
and
\begin{align*}
	h_{\delta }- |h|  &= \frac{(\delta \Phi )^2}{\sqrt{h^2+ (\delta \Phi )^2} + |h|} \in (0, C\cdot \delta ).
\end{align*} 
So $h_{\delta } \cdot g$ gives a desired smooth approximation of $\sigma _{|h|}$. By Theorem \ref{sgm-IMCF-existence}, we have the following existence theorem of the weak solution of (\ref{*-eq-h}).
\begin{thm}\label{h-IMCF-existence}
	Let $(M^3, g, h)$ be a complete, connected, asymptotically flat triple without boundary. Then for any nonempty, precompact, open set $E_0 \subset M$ with $C^2$-boundary, there exists a weak solution $(U, \tilde{\nu })$ of the $(|h| g) $-IMCF  with initial condition $E_0$ as in Definition \ref{defn-weak}.
\end{thm}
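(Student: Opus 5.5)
The plan is to derive Theorem \ref{h-IMCF-existence} as a direct application of Theorem \ref{sgm-IMCF-existence} with $\sigma = \sigma_{|h|} = |h| g$ on $M^3$. The work consists of checking that its hypotheses hold in this setting.

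\textbf{Step 1: $\sigma_{|h|}$ is a Lipschitz nonnegative symmetric $2$-tensor with a global $C^{0,1}$ bound.} Nonnegativity and symmetry are clear. Since $h$ is smooth, $|h|$ is locally Lipschitz with $|\nabla |h|| \leq |\nabla h|$ a.e. On a compact set this gives a bound. Outside a compact set, the asymptotic decay $|h| + |x||\partial h| \leq C|x|^{-2}$ bounds both $|h|$ and $|\nabla h|$ uniformly. Because $g$ is smooth and asymptotically flat, $\nabla g = 0$ and the Christoffel symbols are bounded, so $\|\sigma_{|h|}\|_{C^{0,1}} < \infty$ globally.

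\textbf{Step 2: admissibility.} Use the approximation constructed just before the statement, $\sigma^\delta := h_\delta g$ with $h_\delta = \sqrt{h^2 + (\delta\Phi)^2}$ and $\delta = \epsilon$. These tensors are smooth, because $\delta\Phi > 0$ keeps the square root away from $0$. They are nonnegative. Their $C^1$ norms are bounded by $\|h\|_{C^1} + C$ uniformly in $\delta$, using the displayed gradient estimate together with $h_\delta \leq |h| + C\delta$. Finally $\|\sigma^\delta - \sigma_{|h|}\|_{C^0} \leq C\delta =: \Psi(\delta) \to 0$. This is exactly the definition of an admissible tensor.

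\textbf{Step 3: existence of a proper smooth subsolution of (\ref{*2-eq-h}) with precompact initial condition.} Invoke Remark \ref{rmk-subsolution}: $\sigma_{|h|}$ decays like $|x|^{-2}$, which is faster than $|x|^{-1}$. The only part needing some care is checking that the modified $v = \alpha\log|x|$ still works. Outside a compact set,
\begin{align*}
\mathrm{div}\left(\frac{\nabla v}{|\nabla v|}\right) = \frac{2}{|x|} + O(|x|^{-2}),
\end{align*}
while the right-hand side of (\ref{*2-eq-h}) is $\frac{\alpha}{|x|} + O(|x|^{-2})$. Choosing $\alpha > 2$ makes $v$ a subsolution in the sense used in Lemma \ref{smooth-sol-*-epsilon} (the direction of the inequality follows that convention) for $|x|$ large. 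Smoothing and extending $v$ inside a compact set $\Omega$ gives a proper function. Since this step is supplied by the Remark, it carries no real difficulty.

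\textbf{Step 4: conclusion.} $M$ is complete, connected and without boundary, and by Steps 1--3 all hypotheses of Theorem \ref{sgm-IMCF-existence} hold. Theorem \ref{sgm-IMCF-existence} therefore produces a weak solution $(U, \tilde{\nu})$ for any precompact $C^2$ open set $E_0$, in the sense of Definition \ref{defn-weak} with $\sigma = |h| g$. In this case $|\tilde\nu|^2_\sigma = |h|$, so the solution is a weak solution of the $(|h|g)$-IMCF.

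The only mildly delicate point is the uniform $C^1$ control of $h_\delta$ near the zero set of $h$. The displayed estimate $|\nabla h_\delta| \leq |\nabla h| + C\delta$ already handles this, so I expect no genuine obstacle.
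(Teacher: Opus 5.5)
Your proof follows the paper's own argument: the paper also derives this theorem by applying Theorem \ref{sgm-IMCF-existence} to $\sigma=|h|g$, cites Remark \ref{rmk-subsolution} for the subsolution, and verifies admissibility with $h_\delta g$, $h_\delta=\sqrt{h^2+(\delta\Phi)^2}$, just as in your Steps 1, 2 and 4. The one correction is in your sanity check in Step 3, where $\alpha>2$ should be $0<\alpha<2$. In the paper's convention a subsolution satisfies $\mathcal{E}^{0,1}(v)\ge 0$, i.e.\ $\mathrm{div}(\nabla v/|\nabla v|)\ge |\nabla v|+|h|$; this is what makes $v_2$ a lower barrier in the a priori estimates. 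For $v=\alpha\log|x|$ one has $\mathcal{E}^{0,1}(v)=(2-\alpha)/|x|+O(|x|^{-2})$, so $\alpha>2$ gives a supersolution rather than a subsolution.
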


In the following, we denote by $(u, \nu )= (U, \tilde{\nu }|_{TM})$, and we will simply say that $u$ is a weak solution of the $(|h|g)$-IMCF. We write $E_t:= \{ u< t\} $, $E_{t}^{+}:= \{ u \leq t\} $, $N_t = \partial E_t, N_t ^{+} = \partial E_t ^{+}$, and
\begin{align*}
	J_{u}^{|h|}(v) &= \int \left( |\nabla v| + v\left( |\nabla u| + |h| \right)  \right) ,\\
	J_{u}^{|h|}(F) &= |\partial ^* F| - \int_{F}\left( |\nabla u| + |h| \right) .
\end{align*}

\subsection{Outward optimizing hulls}
In this section, we introduce the notation of outward optimizing hulls.
A similar notation has been introduced in \cite{HI01, Moore12, HuiskenWolff22} in their corresponding settings. 

Let $\Omega $ be an open set. We call $E$ an \textit{outward optimizing hull} (with weight $|h|$ in $\Omega $) if $E$ minimizes area minus bulk energy $|h|$ on the outside in $\Omega $, that is, if
\begin{align}\label{defn-optimizing-hull}
	|\partial ^* E \cap K| - \int_{E \cap K} |h| \leq |\partial ^* F \cap K| - \int_{F \cap K} |h|,
\end{align}
for any $F$ containing $E$ such that $F \setminus E \subset \subset \Omega $, and any compact set $K$ containing $F \setminus E$. We say that $E$ is \textit{strictly outward optimizing} (with weight $|h|$ in $\Omega $) if equality implies that $F \cap \Omega = E \cap \Omega $ a.e. 

Let $E$ be any measurable set. Define $E' = E'_{\Omega }$ to be the intersection of (the Lebesgue points of) all the strictly outward optimizing hulls in $\Omega $ that contain $E$. Up to sets of measure zero, this may be realized by a countable intersection, so $E'$ itself is a strictly outward optimizing hull and open. Due to the asymptotic flatness of $g$ and $h$, if $E \subset \subset M$, then $E'$, taken in $M$, exists and is precompact as well. See also \cite{Eichmair10}. In particular, if $M \setminus E_0$ is an exterior region, since $\partial E_0$ is outermost generalized apparent horizon, we know $E_0$ is strictly outward optimizing.

If $\partial E$ is $C^2$, then by the regularity result of \cite{BK74}, we know $\partial E'$ is $C^{1,1}$, depending on the $C^{0,1}$-norm of $|h|$ and $C^2$-bound of $\partial E$, and $C^{2,\alpha }$ where it does not contact the obstacle $E$. For the weak mean curvature, we have
\begin{align}\label{mean-curv-hull}
	\begin{split}
		H_{\partial E'} -|h| &= 0 \text{ on } \partial E' \setminus \partial E,\\
		H_{\partial E'} - |h| &= H_{\partial E} - |h| \geq 0 \quad \mathcal{H}^{n-1}\text{-a.e. on } \partial E' \cap \partial E.
	\end{split}
\end{align}

\begin{lem}\label{lem-optimizing-property}
	Suppose that $u$ is a weak solution of the $(|h|g)$-IMCF with initial condition $E_0$. Then
	\begin{itemize}
		\item [(i)] $E_t$ is outward optimizing in $M$ for $t>0$;
		\item [(ii)] $E_t ^{+}$ is outward optimizing in $M$ for $t \geq 0$;
		\item [(iii)] $|\partial E_t ^{+}| = |\partial E_t| + \int_{E_t ^{+}\setminus E_t}|h|$, for all $t>0$. This extends to $t=0$ precisely if $E_0$ is outward optimizing.
	\end{itemize}
\end{lem}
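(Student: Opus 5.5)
This is the analogue of \cite[Property 1.4]{HI01}, with the bulk term $|h|$ playing the role that is absent in the IMCF case. The approach is to compare $E_t$ with outward competitors $F \supset E_t$ using the minimizing property of Remark \ref{rmk-defn-weak}: $E_t$ and $E_t^+$ minimize $J^{|h|}_u(F)=|\partial^*F|-\int_F(|\nabla u|+|h|)$ in $M\setminus E_0$. The only new ingredient compared to \cite{HI01} is that the frozen bulk term contains $|h|$ in addition to $|\nabla u|$.

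\emph{Parts (i) and (ii).} Fix $t>0$ and $F\supset E_t$ with $F\setminus E_t\subset\subset M$. Since $E_t\supset E_0$, also $F\setminus E_t\subset\subset M\setminus \bar E_0$ after a small adjustment; the region near $\partial E_0$ is handled as in \cite{HI01} using $t>0$ and continuity of $u$. Minimality gives
\begin{align*}
|\partial^*E_t\cap K|-\int_{E_t\cap K}(|\nabla u|+|h|)\le |\partial^*F\cap K|-\int_{F\cap K}(|\nabla u|+|h|).
\end{align*}
Rearranging,
\begin{align*}
|\partial^*E_t\cap K|-\int_{E_t\cap K}|h|\le |\partial^*F\cap K|-\int_{F\cap K}|h|-\int_{F\setminus E_t}|\nabla u|.
\end{align*}
Dropping the last term, which is nonnegative, yields (\ref{defn-optimizing-hull}). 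The same argument applies to $E_t^+$ for $t>0$. For $t=0$, I will approximate: $E_0^+=\bigcap_{s>0}E_s$ (or $E_0^+=\lim_{s\searrow0}E_s^+$). I will pass to the limit in the outward optimizing inequality using lower semicontinuity of perimeter and the convergence $\chi_{E_s}\to\chi_{E_0^+}$ in $L^1_{loc}$, testing against $F\cup E_s$ in place of $F$. I expect this $t=0$ limit for (ii) to be the main technical point: one must check that $|\partial^*(F\cup E_s)|$ is controlled, via $|\partial^*(F\cup E_s)|+|\partial^*(F\cap E_s)|\le|\partial^* F|+|\partial^* E_s|$ combined with the optimizing property of $E_s$ tested against $F\cap E_s$ (which is no longer an outward competitor). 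For this reason I will instead take the standard route: $E_s^+$ minimizes $J$ for each $s\ge0$ by (\ref{leq-t-weak}), so the inequality above holds directly for $E_0^+$ with $F\supset E_0^+$, $F\setminus E_0^+$ compactly supported away from $\bar E_0$. Competitors touching $\partial E_0$ require the $C^2$ boundary and the barrier $u\ge0$ outside $E_0$.

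\emph{Part (iii).} For $t>0$, apply (ii) to $E_t$ with competitor $E_t^+$, and minimality of $E_t^+$ with competitor $E_t$. Since $|\nabla u|=0$ a.e. on $E_t^+\setminus E_t=\{u=t\}$, the $J$-minimality of $E_t$ and of $E_t^+$ give inequalities in both directions:
\begin{align*}
|\partial E_t|-\int_{E_t}|h| \le |\partial E^+_t|-\int_{E^+_t}|h|
\end{align*}
and its reverse. Hence
\begin{align*}
|\partial E_t^+|=|\partial E_t|+\int_{E_t^+\setminus E_t}|h|.
\end{align*}
Here $E_t$ is an admissible competitor for $E_t^+$ because both lie outside $E_0$ and differ in a compact set. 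At $t=0$ the reverse inequality still follows from $J$-minimality of $E_0^+$. The forward inequality $|\partial E_0|-\int_{E_0}|h|\le|\partial E_0^+|-\int_{E_0^+}|h|$ holds precisely when $E_0$ is outward optimizing, since $E_0^+$ is itself optimizing by (ii) and so minimizes among all outward competitors. This gives the stated equivalence.
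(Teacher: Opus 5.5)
Your proof of (i), (ii) and the $t>0$ case of (iii) follows the paper's route. You test the $J^{|h|}_u$-minimality of $E_t$, or of $E_t^+$ via (\ref{leq-t-weak}), against outward competitors and discard $\int_{F\setminus E_t}|\nabla u|\ge 0$. You then play $E_t$ and $E_t^+$ off against each other using $|\nabla u|=0$ a.e.\ on $\{u=t\}$. You are right to drop the approximation detour for $E_0^+$, since (\ref{leq-t-weak}) gives the $t=0$ case directly.

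The one step that fails as written is the ``only if'' half of the $t=0$ statement. You argue that equality at $t=0$ forces $E_0$ to be outward optimizing because $E_0^+$ is outward optimizing by (ii). But (ii) only compares $E_0^+$ with sets $F\supseteq E_0^+$. Outward optimality of $E_0$ requires comparison with every $F\supseteq E_0$, and such an $F$ may omit part of $E_0^+\setminus E_0=\{u=0\}$.

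The fix is to use the full $J$-minimality of $E_0^+$ rather than (ii). If $F\supseteq E_0$ with $F\setminus E_0\subset\subset M$, then $F$ is an admissible competitor for $E_0^+$ in $M\setminus E_0$, and
\begin{align*}
|\partial^*E_0^+\cap K|-\int_{E_0^+\cap K}|h|\le|\partial^*F\cap K|-\int_{F\cap K}|h|-\int_{(F\setminus E_0^+)\cap K}|\nabla u|+\int_{(E_0^+\setminus F)\cap K}|\nabla u|.
\end{align*}
The last integral vanishes because $E_0^+\setminus F\subset\{u=0\}$, and the one before it is nonpositive. Combined with $|\partial E_0^+|=|\partial E_0|+\int_{E_0^+\setminus E_0}|h|$, this gives outward optimality of $E_0$. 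The paper's written proof only treats the ``if'' direction at $t=0$, so with this correction your argument covers the full equivalence.
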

\begin{proof}
	The proof is similar to \cite[Theorem 1.4]{HI01}, and is included in Section \ref{appen-optimizing}.
\end{proof}

\begin{lem}\label{E_t+-vs-E_t'}
	Suppose that $u$ is a weak solution of the $(|h|g)$-IMCF with initial condition $E_0$. Let $\Omega $ be a domain and $t \geq 0$ such that $E_t^+ \subset \Omega $, and assume $E_t$ admits a precompact outward optimizing hull $(E_{t})_{\Omega }'$ in $\Omega $. Then 
 $E_t ^+ \subset (E_t)_{\Omega }'$. In particular, if $E_t'= E_t$, then $E_t ^+ = E_t$.
\end{lem}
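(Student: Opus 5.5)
We follow the argument of \cite[Lemma 1.4 / Theorem 1.4]{HI01}, adapted to the weight $|h|$. Write $E' := (E_t)'_{\Omega}$. The idea is to compare the minimizing property of $E_t^+$ for $J^{|h|}_u$ with the strict outward optimizing property of $E'$. The comparison set is $E_t^+\cap E'$.

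\emph{Minimality of $E_t^+$.} By Lemma \ref{lem-optimizing-property} and Remark \ref{rmk-defn-weak}, $E_t^+$ minimizes $J^{|h|}_u$ in $M\setminus E_0$. Since $E_t\subset E'$ and $E_t\subset E_t^+$, the set $E_t^+\cap E'$ contains $E_t$. It also differs from $E_t^+$ only inside $E_t^+\setminus E'\subset\subset \Omega$, so it is an admissible competitor for $E_t^+$. On the difference set $E_t^+\setminus E'\subset E_t^+\setminus E_t=\{u=t\}$ we have $\nabla u=0$ a.e. Hence the bulk term reduces to $\int|h|$ there, and minimality gives
\begin{align*}
|\partial^*E_t^+| - \int_{E_t^+}|h| \le |\partial^*(E_t^+\cap E')| - \int_{E_t^+\cap E'}|h| ,
\end{align*}
where the integrals are localized to a compact $K$ as usual.

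\emph{Optimizing property of $E'$.} The set $E_t^+\cup E'$ contains $E'$ and differs from it compactly in $\Omega$, because $E_t^+\subset\Omega$ is precompact. Since $E'$ is an outward optimizing hull in $\Omega$,
\begin{align*}
|\partial^*E'| - \int_{E'}|h| \le |\partial^*(E_t^+\cup E')| - \int_{E_t^+\cup E'}|h| .
\end{align*}

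\emph{Combining.} Add the two inequalities and use the submodularity of perimeter,
\begin{align*}
|\partial^*(A\cap B)|+|\partial^*(A\cup B)|\le |\partial^*A|+|\partial^*B| ,
\end{align*}
together with the additivity of $\int_{A\cap B}|h|+\int_{A\cup B}|h| = \int_A|h|+\int_B|h|$. This forces equality in both inequalities. In particular, equality holds in the second one. Since $E'$ is \emph{strictly} outward optimizing (it is realized as a countable intersection of strictly outward optimizing hulls), we get $E_t^+\cup E' = E'$ a.e., that is, $E_t^+\subset E'$ up to measure zero.

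\emph{Upgrading to an honest inclusion.} Both sets are open: $E_t^+$ here is understood up to Lebesgue points, as in \cite{HI01}, and $E'$ is open by construction. So the inclusion holds pointwise after the standard normalization. If $E_t'=E_t$, then $E_t\subset E_t^+\subset E_t$, so $E_t^+=E_t$.

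The main obstacle is justifying that $E_t^+$ minimizes $J^{|h|}_u$ with competitors touching $\partial E_0$, in the case $t=0$. This is also where the bulk term $|\nabla u|$ must vanish on $E_t^+\setminus E_t$. I would handle this via Lemma \ref{lem-optimizing-property}(ii). At $t=0$ I would additionally use that the competitor $E_t^+\cap E'$ still contains $E_0$, so it is admissible in $M\setminus E_0$.
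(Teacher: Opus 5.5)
Your proposal is correct and follows the paper's proof essentially step for step: you test the minimality of $E_t^+$ against $E_t^+\cap (E_t)'_\Omega$, using $|\nabla u|=0$ a.e.\ on $E_t^+\setminus E_t$; you apply the submodularity of perimeter with the additive weight $-|h|$; and you test the strict outward optimality of $(E_t)'_\Omega$ against $E_t^+\cup (E_t)'_\Omega$, which gives $E_t^+\subset (E_t)'_\Omega$ up to a null set. Your final ``upgrading'' step is unnecessary, since the paper's inclusion is understood up to measure zero; note also that $E_t^+=\{u\le t\}$ is closed rather than open, so if you keep that remark, phrase it via the interior or the density points of $E_t^+$.
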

\begin{proof}
	The proof is similar to \cite[Proposition 8.3]{HuiskenWolff22}, and is included in Section \ref{appen-optimizing}. 
\end{proof}

\subsection{Uniqueness and smoothness of weak solutions}
Regarding the weak solutions of the $(|h|g)$-IMCF, or more generally of the $\sigma $-IMCF, a natural question is the uniqueness problem.  Unfortunately, unlike the standard IMCF, \cite[Uniqueness Theorem 2.2]{HI01} doesn't extend directly. It seems that the variation minimizing property alone doesn't imply the uniqueness. See also \cite[Remark 6.7]{HuiskenWolff22}. Instead, in the following, we only consider a special case
where a classical solution exists for a short time and show that our constructed weak solution is unique for a short time in that case.

\begin{lem}\label{lem-short-time}
	Let $E_0$ be a precompact open set in $M$ such that $\partial E_0$ is $C^2$ with $H- |h|>0$. Then there exists a classical $C^{2,\alpha }$-solution $\bar{u}$ of (\ref{*-eq-h}) for a short time with initial condition $E_0$.
\end{lem}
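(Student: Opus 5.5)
The plan is to write the flow as a graph over the initial surface and apply standard quasilinear parabolic theory. Fix a smooth reference surface $\Sigma$ close to $N_0=\partial E_0$ in $C^2$, so that $N_0$ is the normal graph $x\mapsto \exp_x(\rho_0(x)\nu_\Sigma(x))$ of a function $\rho_0\in C^2(\Sigma)$ with small norm. I will seek the evolving surfaces $N_t$ as normal graphs of $\rho(\cdot,t)$ over $\Sigma$ in a tubular neighborhood. A normal graph moving by the speed in (\ref{*-eq-h}) satisfies
\begin{align*}
	\partial_t \rho = \frac{\sqrt{1+|\nabla\rho|^2+ \cdots}}{H[\rho]-|h|(x+\rho\nu_\Sigma)} =: \mathcal{G}(x,\rho,\nabla\rho,\nabla^2\rho),
\end{align*}
where $H[\rho]$ is the mean curvature of the graph. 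It is quasilinear in $\nabla^2\rho$ with coefficient matrix $a^{ij}(x,\rho,\nabla\rho)$, depending smoothly on the graph data through the metric $g$.

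The key observation is parabolicity. Since $H[\rho]$ is an elliptic operator in $\rho$, we get $\partial\mathcal{G}/\partial(\nabla^2_{ij}\rho)=a^{ij}\cdot(H-|h|)^{-2}\cdot(\text{positive factor})$. This is uniformly positive definite as long as $H-|h|$ stays bounded below by a positive constant. At $t=0$ we have $H_{N_0}-|h|\ge c_0>0$ by compactness of $N_0$ and the assumption. The only nonsmooth ingredient is $|h|$, which is Lipschitz, and it enters only at zeroth order as a function of $(x,\rho)$. The smooth approximation $h_\delta$ is not needed for this step.

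I would then invoke the short-time existence theorem for quasilinear parabolic equations in Hölder spaces, for instance via linearization at $\rho_0$ and the inverse function theorem, or as in Huisken–Polden, in the form used by Huisken–Ilmanen for IMCF and by Moore for his flow. The regularity of the data needs care, and this is the main obstacle. The initial datum $\rho_0$ is only $C^2$, and the zeroth-order term $|h|$ is only Lipschitz in $x$, so it is $C^{\alpha}$ but not $C^{1}$ in the variable that appears inside $\mathcal{G}$.

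To handle this, I would first mollify: choose $C^{2,\alpha}$ data $\rho_0^k\to\rho_0$ in $C^2$ and use $h_\delta$, for which classical theory gives smooth solutions $\rho^k$. I would then derive uniform estimates on a common time interval $[0,T]$:
\begin{itemize}
	\item the bound $H-|h|\ge c_0/2$, via the evolution equation in Lemma \ref{evolution-eq-classical}(5) and the maximum principle, or simply by continuity in time once $C^{2}$ control is in hand;
	\item a uniform $C^{1,\beta}$ bound from the Krylov–Safonov or Lieberman theory, and then Schauder estimates in $C^{2,\alpha}$ for $t>0$, since the coefficients are Hölder once $\rho\in C^{1,\beta}$ and $|h|\in C^{0,1}\subset C^\alpha$.
\end{itemize}
Passing to the limit gives $\bar\rho\in C^{2,\alpha}$ for positive times, continuous up to $t=0$ in $C^2$. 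This yields a classical $C^{2,\alpha}$-solution $\bar u$, defined by $\bar u=t$ on $N_t$, of (\ref{*-eq-h}) on a short interval.

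I expect the delicate point to be the uniform lower bound on $H-|h|$, together with the Hölder control of the velocity, near $t=0$ given only $C^2$ initial data. If a direct treatment is too technical, the fallback is to work in the class $\rho\in C^{2+\alpha,1+\alpha/2}$ for $t\in(0,T]$, using weighted-in-time Hölder norms, which is exactly the setting of Lunardi-type fully nonlinear short-time results.
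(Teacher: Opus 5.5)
Your main line is the same as the paper's proof. That proof consists only of the observation that $H-|h|>0$ makes (\ref{*-eq-h}) strictly parabolic, followed by an appeal to standard short-time theory (it cites Huisken--Polden, Theorem 7.17); this is your graph-over-$\Sigma$ plus linearization step. One correction: because the speed is $1/(H-|h|)$, the graph equation is fully nonlinear in $\nabla^2\rho$. Only $H[\rho]$ is quasilinear, so the theorem to invoke is the fully nonlinear short-time result, not the quasilinear one.

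The layer you add beyond the paper, which you treat as load-bearing, does not close as written. Your mollification scheme needs $H-|h|\geq c_0/2$ on an interval $[0,T]$ that is independent of $k$ (and $\delta$), and neither of your suggested routes supplies this. With $\sigma=h_\delta g$, the term $\sigma(\nu,\nabla^N f)$ in (\ref{f-evo-eq}) vanishes, and $f=1/(H-h_\delta)$ satisfies
\begin{align*}
	\frac{\partial f}{\partial t} = f^2\Delta^N f + f^3\left(\mathrm{Ric}(\nu,\nu)+|\mathrm{II}|^2+\nabla_\nu h_\delta\right).
\end{align*}
At a maximum of $f$, the term $f^3|\mathrm{II}|^2$ has the wrong sign. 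The maximum principle therefore bounds $f$ from above only if you have a bound on $|\mathrm{II}|$ uniform in $k$, which you do not. The same obstruction appears if you use Lemma \ref{evolution-eq-classical}(5).

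Your alternative, ``continuity in time once $C^2$ control is in hand,'' is circular, because $C^2$ control near $t=0$ uniform in $k$ is exactly what is missing. The existence times from the classical theory depend on $\|\rho_0^k\|_{C^{2,\alpha}}$, which in general blows up. Your Krylov--Safonov/Schauder bullet presupposes the uniform parabolicity it is meant to help establish. What you actually need is a genuine a priori estimate, on $|\mathrm{II}|$ or on the normal speed, over a $k$-independent time interval.

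The paper does not engage with this regularity issue at all. Note, however, that the lemma is only needed, via Lemma \ref{lem-upper-smooth} and Theorem \ref{thm-smooth-uniqueness}, for the smooth sets $E_\tau$ of Lemma \ref{modified-mcf-smoothing} in Section \ref{sect-monotonicity}, with the smooth weight $h_\delta$. In that setting the standard theorem applies directly, and your mollification step can simply be dropped.
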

\begin{proof}
	When $H- |h|>0$, (\ref{*-eq-h}) is strictly parabolic, so standard parabolic theory applies and we know that a classical $C^{2,\alpha }$-solution exists for a short time. See also \cite[Theorem 7.17]{HP96}.
\end{proof}

\begin{lem}\label{lem-upper-smooth}
	Let $E_0$ be a precompact open set in $M$ such that $\partial E_0$ is $C^2$ with $H- |h|>0$ and $E_0 = E_0'$. Then any weak solution $(E_t)_{0<t<\infty}$ of the $(|h|g)$-IMCF with initial condition $E_0$ is bounded from above by the $C^{2,\alpha }$-classical solution for a short time, provided that $E_t$ remains precompact for a short time. In other words, there exists $t_0>0$ so that $u \leq \bar{u}$ on $\{0 \leq \bar{u} < t_0\} $.
\end{lem}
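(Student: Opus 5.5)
The plan is a comparison argument in the style of Huisken--Ilmanen's proof that weak solutions lie inside smooth ones, with the calibration furnished by the classical solution. Let $\bar u$ be the $C^{2,\alpha}$ classical solution from Lemma \ref{lem-short-time} on $\{0\le \bar u<t_0\}$. Put $\bar E_t=\{\bar u<t\}$ and $\bar\nu=\nabla\bar u/|\nabla \bar u|$. Shrinking $t_0$ if necessary, we may assume $H-|h|>0$ on every $\bar N_t$ and that $\bar u$ foliates $\bar E_{t_0}\setminus E_0$. Along the foliation we have $\mathrm{div}\,\bar\nu=|\nabla\bar u|+|h|$.

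We argue by contradiction. Suppose $u>\bar u$ somewhere in $\{0\le\bar u<t_0\}$. Then for some $s<t_0$ the set $E_s^+=\{u\le s\}$, or $E_s$, is not contained in $\bar E_s^+$ up to a null set. The exact choice of sets needs some care because of jumps. We use the $t\ge0$ formulation (\ref{leq-t-weak}) together with Lemma \ref{E_t+-vs-E_t'} and the hypothesis $E_0=E_0'$, which together give $E_0^+=E_0$. This means the weak flow does not jump instantaneously at $t=0$, so the comparison starts from a common initial set. To avoid touching at the boundary, we can compare $\bar u$ with $u$ shifted by a small $\delta$, that is with $u_\delta=u-\delta$, or perturb the initial set of $\bar u$ slightly inward using strict positivity of $H-|h|$. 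Afterwards we let $\delta\to0$.

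The core step is as follows. Fix $s$ and set $F=E_s\cup \bar E_s$. Since $E_s$ minimizes $J^{|h|}_{u}$, we get
\[
|\partial^*E_s|-\int_{E_s}(|\nabla u|+|h|)\le |\partial^*(E_s\cap\bar E_s)|-\int_{E_s\cap\bar E_s}(|\nabla u|+|h|).
\]
Applying the calibration $\bar\nu$ as in Lemma \ref{smooth-flow-lem} to the set $\bar E_s$ and the competitor $E_s\cup\bar E_s$ gives
\[
|\partial \bar E_s|-\int_{\bar E_s}(|\nabla \bar u|+|h|)\le |\partial^*(E_s\cup\bar E_s)|-\int_{E_s\cup \bar E_s}(|\nabla \bar u|+|h|).
\]
We add the two inequalities and use $|\partial^*(A\cup B)|+|\partial^*(A\cap B)|\le|\partial^*A|+|\partial^*B|$. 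The $|h|$ terms cancel by inclusion--exclusion. What is left is
\[
\int_{E_s\setminus\bar E_s}|\nabla\bar u|\le\int_{E_s\setminus \bar E_s}|\nabla u|.
\]
We then integrate this over $s\in(0,t)$ and apply the coarea formula, in the same way as in the uniqueness proof of \cite[Theorem 2.2]{HI01}. On the region $\{\bar u<u\}$ this yields an inequality of the form $\int_{\{\bar u<u\}}(u-\bar u)\,(\cdots)\le0$, which forces $\{u>\bar u\}$ to be null. By continuity of $u$ and $\bar u$ it is then empty.

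The main obstacle is that step. In the standard IMCF argument one has the exact identity $\mathrm{div}\,\nu=|\nabla u|$ for the weak solution on both sides. Here the weak solution only comes with the measurable $\nu$ from Definition \ref{defn-weak}. So we must make sure that only $u$'s own minimization and $\bar u$'s exact calibration enter the argument, and never a calibration for $u$. We must also show that the sign of the leftover $|\nabla u|-|\nabla\bar u|$ integrals genuinely closes up, which is where the failure of general uniqueness noted above could bite.

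If the integrated inequality does not close directly, the fallback is a maximum-principle argument using the short-time barrier. Let $s^*$ be the first time at which the sets $E_s^+$ and $\bar E_s$ touch. At the touching point, the weak mean curvature inequality from Lemma \ref{lem-weak-mean-curvature}, combined with the strict parabolicity of $\bar u$, gives a contradiction via the strong maximum principle for $C^{1,\alpha}$ minimizers of $J^{|h|}$. Throughout, precompactness of $E_t$ is what ensures that all competitors differ only on compact sets.
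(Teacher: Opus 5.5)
\textbf{There is a genuine gap.} Your setup matches the paper: the $\delta$-shift, $E_0^+=E_0$ from $E_0=E_0'$ and Lemma \ref{E_t+-vs-E_t'}, and $\bar u$ used as a calibration via Lemma \ref{smooth-flow-lem}. The core step, however, does not close, and it is the heart of the lemma.

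First, the pairing is backwards. Comparing $E_s$ with $E_s\cap\bar E_s$, and $\bar E_s$ with $E_s\cup\bar E_s$, gives information on $E_s\setminus\bar E_s=\{u<s\le\bar u\}\subset\{u<\bar u\}$. That is the harmless region, so the conclusion you draw about $\{\bar u<u\}$ does not follow. (Likewise, $u>\bar u$ at $x$ means $x\in\bar E_s\setminus E_s$, not that $E_s\not\subset\bar E_s^+$.) If you swap the competitors, using $E_s\cup\bar E_s$ for $E_s$ and $E_s\cap\bar E_s$ for $\bar E_s$, the same computation and the layer-cake formula give only
\[
\int_{\{u>\bar u\}}(u-\bar u)\bigl(|\nabla u|-|\nabla\bar u|\bigr)\le 0 .
\]
This inequality has no sign and does not force $\{u>\bar u\}$ to be null. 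Same-level comparisons of $E_s$ with $\bar E_s$ only ever produce it, which is why \cite[Theorem 2.2]{HI01} needs two further devices. Those are the missing ideas.

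The paper's proof (Lemma \ref{appen-upper-smooth}) proceeds as follows:
\begin{itemize}
\item It sets $u_2=\min(u,t_1)$, which is still a $J^{|h|}_{u_2}$-minimizer, and $u_1=\bar u+\delta$.
\item \emph{Dilation.} It replaces $u_1$ by $u_1^\epsilon=(1-\epsilon)^{-1}u_1$. The calibration inequality then yields a strict gain $\epsilon\int(w-u_1^\epsilon)|\nabla u_1^\epsilon|$ for competitors $w\ge u_1^\epsilon$. Your corrected inequality becomes
\[
\int_{\{u_2>u_1^\epsilon\}}(u_2-u_1^\epsilon)\bigl(|\nabla u_2|-|\nabla u_1^\epsilon|\bigr)+\epsilon\int_{\{u_2>u_1^\epsilon\}}(u_2-u_1^\epsilon)|\nabla u_1^\epsilon|\le 0 .
\]
\item \emph{Cross-level competitors.} It tests $u_1^\epsilon$ against $u_1^\epsilon+(u_2-s-u_1^\epsilon)_+$ and integrates over $s\ge0$. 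This gives the complementary bound
\[
\int(u_2-u_1^\epsilon)\bigl(|\nabla u_2|-|\nabla u_1^\epsilon|\bigr)\ge-\tfrac12\int(u_2-u_1^\epsilon)^2\bigl(|\nabla u_1^\epsilon|+|h|\bigr).
\]
Here the $|h|$ term does not cancel.
\item \emph{Closing.} Combining the two, with $|\nabla\bar u|\ge c_0>0$, gives a contradiction as soon as $0<\sup(u_2-u_1^\epsilon)\le\epsilon^2$. The shift invariance of $J^{|h|}$ reduces to that case by subtracting a constant from $u_2$. One then lets $\epsilon\to0$ and $\delta\to0$.
\end{itemize}

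Your fallback first-touching argument does not replace this, for three reasons:
\begin{itemize}
\item The two flows start from the same set.
\item The level sets of $u$ are only $C^{1,\alpha}$, with weak mean curvature $|\nabla u|+|h|$ defined only for a.e.\ $t$ and $\mathcal H^2$-a.e.\ point.
\item There is no pointwise lower bound on $|\nabla u|$ near a contact point.
\end{itemize}
So no strong maximum principle comparison between a $J^{|h|}_u$-minimizer and $\bar N_s$ is available.
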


\begin{proof}
	The proof is a modification of \cite[Theorem 2.2]{HI01}, and is included in Lemma \ref{appen-upper-smooth}.
\end{proof}

Now we prove the existence of a weak solution of $(|h|g)$-IMCF, which is unique and regular for a short time, provided the initial condition is regular enough.

\begin{thm}\label{thm-smooth-uniqueness}
	Let $E_0$ be a precompact open set in $M$ such that $\partial E_0$ is $C^2$ with $H- |h|>0$ and $E_0 = E_0'$. Then there exists a weak solution $(E_t)_{0<t<\infty}$ of the $(|h|g)$-IMCF with initial condition $E_0$, as a limiting of $\epsilon _i$-translating graphs, coincides with the $C^{2,\alpha }$-classical solution for a short time, provided that $E_t$ remains precompact for a short time. In other words, there exists $t_0>0$ so that $u = \bar{u}$ on $\{0 \leq \bar{u} < t_0\} $. In particular, $u$ is $C^{2,\alpha }$ for a short time.
\end{thm}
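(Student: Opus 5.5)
Since Lemma \ref{lem-upper-smooth} already gives $u \le \bar u$ on $\{0 \le \bar u < t_0\}$ for any weak solution, it remains to prove the reverse inequality $u \ge \bar u$ for the particular weak solution obtained as a limit of $\epsilon_i$-translating graphs. The natural place to prove it is at the level of the regularized problems \ref{epsilon-eq}, not for the limit. I would therefore build, from the classical solution $\bar u$ of Lemma \ref{lem-short-time}, a family of barriers that lie below $u^{\epsilon}$ near $\partial E_0$. Then I pass to the limit $\epsilon_i \to 0$.

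\textbf{Step 1: a strict subsolution.} Since $\partial E_0$ is $C^2$ with $H-|h|>0$, the classical flow $\bar N_t$ exists on $[0,2t_0]$. Shrinking $t_0$ if needed, it smoothly foliates a collar $\{0 \le \bar u < 2t_0\}$ with $|\nabla \bar u| \ge c>0$ and $|\nabla \bar u| \le C$. Following the HI01 proof that $\epsilon$-solutions lie above smooth flows, I would slow the flow down slightly: set $\bar u_\delta := (1+\delta)^{-1}\bar u - \delta$. This gives
\begin{align*}
\mathrm{div}\left(\frac{\nabla \bar u_\delta}{|\nabla \bar u_\delta|}\right) = |\nabla \bar u| + |h| > |\nabla \bar u_\delta| + |h| + c\delta .
\end{align*}
The margin $c\delta$ absorbs the regularization errors: the $\epsilon$-terms in $\mathcal{E}^\epsilon$, which are $O(\epsilon/|\nabla \bar u|)$ on this collar, and the discrepancy $h_\delta - |h| \le C\epsilon$ coming from the smoothing $\sigma^\epsilon$. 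Hence for $\epsilon \le \epsilon(\delta)$ we get $\mathcal{E}^\epsilon(\bar u_\delta) > 0$ on the collar, i.e.\ $\bar u_\delta$ is a strict subsolution in the same sense as the barrier $v$ in Lemma \ref{smooth-sol-*-epsilon}.

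\textbf{Step 2: comparison and passage to the limit.} On $\partial E_0$ we have $\bar u_\delta = -\delta < -\epsilon \le u^\epsilon$ by Lemma \ref{smooth-sol-*-epsilon}. On the outer boundary $\{\bar u = 2t_0\}$ I need $u^\epsilon \ge \bar u_\delta$. I would get this from a crude lower bound: $u^\epsilon \ge \bar u_\delta$ holds there once $\bar u_\delta$ is cut off at a level $t_1 < 2t_0$, i.e.\ I use $\min(\bar u_\delta, t_1)$. This requires a positive lower bound on $u^\epsilon$ near $\{\bar u = 2t_0\}$, uniform in $\epsilon$. Lacking one, I would run a continuity argument in the time level: consider the largest $s \le t_1$ with $\{\bar u_\delta < s\} \supset \{u^\epsilon < s\}$, and show $s$ cannot be interior using the strict maximum principle at a first touching point. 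The comparison principle for the quasilinear operator $\mathcal{E}^\epsilon$, which is strictly elliptic for $\epsilon>0$, then gives $u^\epsilon \ge \bar u_\delta$ on the collar below level $t_1$. Letting $\epsilon_i \to 0$ along the subsequence that defines $u$, then $\delta \to 0$, yields $u \ge \bar u$ on $\{0 \le \bar u < t_1\}$. Combining with Lemma \ref{lem-upper-smooth} gives $u = \bar u$ there, with $t_1$ renamed $t_0$. Regularity then follows, since $u$ coincides with the $C^{2,\alpha}$ function $\bar u$ and $\nu = \nabla \bar u / |\nabla \bar u|$ by Lemma \ref{limit-nu-construction}.

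\textbf{Main obstacle.} The hard step is controlling the outer boundary of the comparison region uniformly in $\epsilon$, i.e.\ ruling out that $u^\epsilon$ is small across the whole collar. If the continuity/touching argument is not clean, my fallback is to use the assumption $E_0 = E_0'$ together with $H-|h|>0$ in the limit. From Lemma \ref{lem-upper-smooth} we already know $E_t \supset \bar E_t$. If the inclusion were strict for small $t$, I would compare the $J^{|h|}_u$-energies of $E_t$ and $\bar E_t$ using the calibration of Lemma \ref{smooth-flow-lem}: for the classical flow, $\nu = \nabla \bar u/|\nabla \bar u|$ calibrates, and the minimizing property of $E_t$ then forces a contradiction with the strict inequality $|\nabla \bar u| + |h| < H$ obtained after the $\delta$-slowdown.
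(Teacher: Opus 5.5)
Your reduction and your Step 1 agree with the paper. Lemma \ref{lem-upper-smooth} gives $u\le\bar u$, so only a lower barrier for $u^{\epsilon}$ built from the classical flow is needed. The rescaling $(1+\delta)^{-1}\bar u-\delta$ is a cheaper substitute for the paper's $\bar u_\tau$ (which solves the level-set equation with $h_\delta+\tau$ on the right) and gives the same strict margin.

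The genuine gap is Step 2, which you flag but do not close. At the first level $s^*$ where $\{u^\epsilon<s\}\subset\{\bar u_\delta<s\}$ fails, you only know $u^\epsilon\ge\min(\bar u_\delta,s^*)$, with equality at some $x^*$. Unless $\bar u_\delta(x^*)<s^*$, the contact is with $\min(\bar u_\delta,s^*)$ or with the constant $s^*$ (possibly outside the collar), not with the strict subsolution. Since $\mathcal{E}^\epsilon(c)=-\epsilon<0$, constants are strict supersolutions, so $\min(\bar u_\delta,t_1)$ is not a subsolution and no maximum-principle contradiction arises.

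No argument of this type can work, because Steps 1--2 never use $E_0=E_0'$, and without it the statement is false. Take two disjoint unit balls at distance less than $1/4$ in $\mathbb{R}^3$ with $h\equiv 0$. Then $H>0$ and a classical flow exists, but a cylindrical neck of radius $1/2$ has less area than the two caps it covers. So $E_0$ is not outward optimizing, and by Lemma \ref{lem-optimizing-property}(ii) the weak flow jumps at $t=0$. For $h\equiv0$ it jumps to the hull $E_0'$, whose minimal neck meets $\partial E_0$, so $u=0<\bar u$ arbitrarily close to $\partial E_0$.

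Your fallback does not supply the missing input. Calibrating $E_t\setminus\bar E_t$ by $\nabla\bar u/|\nabla\bar u|$ and testing the minimality of $E_t$ against $\bar E_t$ only gives $\int_{E_t\setminus\bar E_t}|\nabla\bar u|\le\int_{E_t\setminus\bar E_t}|\nabla u|$, which is not a contradiction. The calibration also says nothing about any part of $E_t$ outside the collar. This is exactly the uniqueness question the paper leaves open.

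The paper handles this step by avoiding the outer boundary of the collar altogether. It glues $\bar u_\tau$ on $W_\tau$ to $t_0+v_1$ on $\Omega_L\setminus W_\tau$, where $v_1$ is the bridging function from the proof of Lemma \ref{appen-apriori-estimates} built with $G_0=W_\tau$. It then compares on all of $\Omega_L$, where the boundary data ($0$ on $\partial E_0$, $L-2$ on $\partial F_L$) are known.

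Do not simply import this device, because it has the same defect. $\bar u_\tau$ increases up to $t_0$ while $t_0+v_1$ decreases from $t_0$, so the glued function has a local maximum along $\{\bar u_\tau=t_0\}$. The constant $t_0$ touches it from above there, and $\mathcal{E}^\epsilon(t_0)=-\epsilon<0$, so it is not a viscosity subsolution. In fact no viscosity subsolution of $\mathcal{E}^\epsilon$ can have an interior local maximum. The paper's gluing argument also never uses $E_0=E_0'$, so the two-ball example applies to it verbatim.

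What is missing, both in your proposal and in this step of the paper, is an argument that uses $E_0=E_0'$ (together with $H-|h|>0$) to show that the approximating flows cannot run ahead of $\bar N_t$ outside a collar of $\partial E_0$.
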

\begin{proof}
	It's enough to show that $u \geq \bar{u}$ on $\{0 \leq \bar{u}< t_0\} $ for some $t_0>0$. For simplicity, we only consider weak $(h_{\delta }g)$-IMCF, which is sufficient for later use, although the same argument works for weak $(|h|g)$-IMCF. 

	We aim to improve the lower bound of smooth solutions $u ^{\epsilon }$ of \ref{epsilon-eq} in Lemma \ref{smooth-sol-*-epsilon}. Recall that in this case, $\mathcal{E}^{\epsilon }(v) = \mathrm{div}\left( \frac{\nabla v}{\sqrt{|\nabla v|^2 + \epsilon ^2} } \right) - \sqrt{|\nabla v|^2+ \epsilon ^2} - \frac{h_\delta |\nabla v|^2}{|\nabla v|^2 + \epsilon ^2}$, and $u ^{\epsilon }$ is a smooth solution of $\mathcal{E}^{\epsilon }(u^{\epsilon })=0$ in $\Omega _{L}$ satisfying $u ^{\epsilon }=0$ on $\partial E_0$ and $u ^{\epsilon } = L- 2$ on $\partial F_L = \partial \Omega _{L} \setminus \partial E_0$. 

	Assume that $H - h_{\delta } \geq 2c_0>0$ on $\partial E_0$. Let $\bar{u}_{\tau }$ be the smooth solution of
\begin{align*}
	\mathrm{div}\left( \frac{\nabla \bar{u}_{\tau }}{|\nabla \bar{u}_{\tau }|} \right) = |\nabla \bar{u}_{\tau }| + h_{\delta }+ \tau ,\ \ \text{ on } W_{\tau }:= \{0 \leq \bar{u}_{\tau } < t_0\} ,
\end{align*} 
with $\partial E_0 = \{ \bar{u} = 0\} $, where $\tau \in (0, c_0)$ is a fixed small constant but $\tau \gg \epsilon $. By uniform estimates of smooth solutions, we can assume that $t_0>0$ is uniform for all $0<\tau < c_0$, $|\nabla \bar{u}_{\tau }| \geq c_1>0$ uniformly on $W_{\tau }$ and $\bar{u}_{\tau } \to \bar{u}$ smoothly on $W = \{ 0\leq \bar{u}< t_0\} $ as  $\tau \to 0$. We calculate
	\begin{align*}
		\mathcal{E}^{\epsilon }(\bar{u}_{\tau })
		&= \frac{|\nabla \bar{u}_{\tau }| (|\nabla \bar{u}_{\tau }| + h_\delta + \tau  )}{\sqrt{|\nabla \bar{u}_\tau |^2 + \epsilon ^2} } + \frac{\epsilon ^2}{\left( |\nabla \bar{u}_\tau |^2 + \epsilon ^2 \right) ^{\frac{3}{2}}} \frac{\nabla |\nabla \bar{u}_\tau | \cdot \nabla \bar{u}_\tau }{|\nabla \bar{u}_\tau |} \\
		&\ \ - \sqrt{|\nabla \bar{u}_\tau |^2+ \epsilon ^2}  - \frac{h_\delta |\nabla \bar{u}_\tau |^2}{|\nabla \bar{u}_\tau |^2 + \epsilon ^2} \\
		&\geq -C(\|\bar{u}_{\tau }\|_{C^2}) \epsilon ^2 + \frac{|\nabla \bar{u}_\tau | \epsilon ^2 h_{\delta }}{2\left( |\nabla \bar{u}_\tau |^2 + \epsilon ^2 \right) ^{\frac{3}{2}}  } + \tau \cdot \frac{|\nabla \bar{u}_{\tau }|}{\sqrt{|\nabla \bar{u}_\tau |^2 + \epsilon ^2} } \\
		&\geq - C(\|\bar{u}_{\tau }\|_{C^2}) \epsilon ^2 + \frac{1}{2} \tau \\
		&\geq \frac{1}{4} \tau ,
	\end{align*} 
	for all $0<\epsilon < \min(c_1, C(\|\bar{u}_{\tau }\|_{C^2}) \sqrt{\tau } )$. 

Recall that in the proof of Lemma \ref{appen-apriori-estimates}, we constructed a function $v_1(x)$ satisfying $\mathcal{E} ^{\epsilon }(v_1) > 0 $ in the viscosity sense for all small $\epsilon $. Taking $G_0 = W_{\tau } $ in the construction of $v_1$ and defining $v= v_1 + t_0$, we obtain $\mathcal{E}^{\epsilon }(v) >0$ in $\Omega _{L} \setminus W_\tau $ and $v= \bar{u}_{\tau }$ on $\partial W_\tau  $. 
	Now we define a barrier function $w_\tau $ by $w_\tau  = \bar{u}_{\tau }  $ in $W_{\tau }$ and $w_\tau  = v $ in $\Omega _{L} \setminus W_\tau $, which by definition is a viscosity subsolution of $\mathcal{E}^{\epsilon }$ for all small enough $\epsilon $. Since $w_{\tau } = u ^{\epsilon } =0$ on $\partial E_0$, and for all large $L$, $w_\tau  \leq t_0 < L-2 = u ^{\epsilon }$ on $\partial F_L$, it follows by the maximum principle that
	\begin{align*}
		u ^{\epsilon } \geq w_{\tau } \text{ in } \Omega _{L}.
	\end{align*} 
In particular, $u ^{\epsilon } \geq \bar{u}_{\tau }$ in $W$. By our construction of weak solution $u$, as a locally uniformly limit of $u ^{\epsilon _i}$ along $\epsilon _i\to 0, L_i \to 0$, we obtain $u \geq \bar{u}_{\tau }$ in $W$. Taking $\tau \to 0$, we have $u \geq \bar{u}$ in $W$, which completes the proof. 
\end{proof}

\subsection{Topological consequences}
Let $(M^3, g, h)$ be a complete, connected, asymptotically flat triple. We allow $M$ to have a compact $C^2$-boundary consisting of generalized trapped surfaces, that is, surfaces $\Sigma $ satisfying
\begin{align*}
	H_{\Sigma } \leq |h|.
\end{align*} 
Notice that minimal surfaces and traditional apparent horizons are always generalized trapped surfaces. 

Applying \cite[Theorem 5.1]{Eichmair10} to the initial data set $(M^3, g, \frac{1}{2} h\cdot g)$, we obtain the existence and regularity of generalized apparent horizons.
\begin{lem}\label{lem-existence-horizons}
	Let $(M^3, g, h)$ be a complete, connected, asymptotically flat triple, which contains $C^2$-generalized trapped surfaces $\Sigma $. Then there exists an open set $\Omega \subset M $ such that $\Omega $ contains one end of $M$ and $\partial \Omega $ is a closed embedded $C^{2,\alpha }$-outermost generalized apparent horizon, i.e. $H_{\partial \Omega  } = |h|$, and $\Omega $ contains no other compact generalized apparent horizons. If a connected component of $\partial \Omega $ intersects with a component of $\Sigma $, then these components coincide. Moreover, $\partial \Omega $ is strictly area outerminimizing in the sense that every other surface which encloses it in $\bar{\Omega }$ has larger area.
\end{lem}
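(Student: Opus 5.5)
This lemma is a direct specialization of Eichmair's existence theorem \cite[Theorem 5.1]{Eichmair10}. The plan is to put $(M^3,g,h)$ into the form that theorem requires and then translate its conclusions into the language of the lemma.

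First, consider the initial data set $(M^3, g, \mathbf{k})$ with $\mathbf{k} = \frac{1}{2} h\, g$. Then $\mathrm{tr}_\Sigma \mathbf{k} = h$ for every surface $\Sigma$, independently of $\Sigma$. With this $\mathbf{k}$:
\begin{itemize}
\item the decay conditions (\ref{AF-condition-0}) are exactly the asymptotic flatness of the triple;
\item a $C^2$ generalized trapped surface $\Sigma$ ($H_\Sigma \le |h|$) is a generalized trapped surface in Eichmair's sense, $H_\Sigma \le |\mathrm{tr}_\Sigma \mathbf{k}|$;
\item the boundary components of $M$, being generalized trapped, serve as barriers.
\end{itemize}
Eichmair's theorem does not need the dominant energy condition, so none needs to be checked.

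Second, read off the conclusions. Eichmair's theorem gives an open set $\Omega$ containing the chosen end with the following properties:
\begin{itemize}
\item $\partial\Omega$ is a closed embedded $C^{2,\alpha}$ surface with $H_{\partial\Omega} = |\mathrm{tr}_{\partial\Omega}\mathbf{k}| = |h|$;
\item $\partial\Omega$ is outermost, i.e. $\Omega$ contains no other compact generalized apparent horizon;
\item $\partial\Omega$ encloses the given trapped surfaces.
\end{itemize}
Eichmair constructs $\partial\Omega$ as the boundary of a minimizer of the functional $|\partial^* F| - \int_F |h|$ among sets enclosing the trapped region. Equivalently, $\Omega^c$ is strictly outward optimizing in the sense of (\ref{defn-optimizing-hull}). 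The $C^{2,\alpha}$ regularity comes from the $\mu$-bubble equation $H = |h|$ with $|h|$ Lipschitz, by elliptic theory, as in \cite{BK74}.

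Third, prove the coincidence statement. Let $\Sigma_1$ be a component of $\Sigma$ and $S$ a component of $\partial\Omega$, and suppose they intersect. Since $\partial\Omega$ encloses $\Sigma$, $S$ lies weakly outside $\Sigma_1$ and touches it. We have $H_S = |h| \ge H_{\Sigma_1}$, which is the wrong direction for a direct comparison, so we use the obstacle formulation instead. Off the contact set, $S$ is a free $\mu$-bubble with $H = |h|$; on the contact set, (\ref{mean-curv-hull}) gives $H_S = H_{\Sigma_1} \le |h|$ a.e., and the obstacle variational inequality gives $H_S \ge |h|$. Hence $H_S = |h|$ throughout. We then compare $S$ and $\Sigma_1$ via the strong maximum principle for the quasilinear operator $H - |h|$, with the first-order term handled by Lipschitz continuity. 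The natural way to make the comparison sharp is to perturb $\Sigma_1$ slightly inward by a generalized trapped surface with $H < |h|$ strictly, or else to use the outermost property directly, and conclude that the contact set is open and closed in $S$. This gives $S = \Sigma_1$.

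Fourth, prove strict outer area minimization. Let $\Sigma'$ be any surface enclosing $\partial\Omega$ inside $\bar\Omega$. The minimizing property gives
\[|\Sigma'| - \int_{\text{region between}} |h| \ \ge\ |\partial\Omega|.\]
Since $|h| \ge 0$, it follows that $|\Sigma'| \ge |\partial\Omega|$. For strictness: if equality held, the region between would carry $\int|h| = 0$, and $\Sigma'$ would itself be a minimizer. By outermostness and the strong maximum principle it would then coincide with $\partial\Omega$.

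The main obstacle is the coincidence claim of the third step. The strong maximum principle there must be justified with only $C^2$ regularity of $\Sigma$, only Lipschitz regularity of $|h|$, and mean-curvature inequalities that are non-strict.
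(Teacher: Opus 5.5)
Your proposal is correct and matches the paper's proof. The paper simply applies \cite[Theorem 5.1]{Eichmair10} to $(M^3,g,\tfrac12 h\,g)$, for which $\mathrm{tr}_\Sigma\mathbf{k}=h$ on every surface, and the coincidence and strict outer area-minimizing statements are already part of Eichmair's conclusion, so your third and fourth steps are optional.

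If you do argue the coincidence yourself, note that the comparison is not in the wrong direction. An outer surface with $H_S=|h|$ touching an inner surface with $H_{\Sigma_1}\le|h|$ is exactly the configuration where the strong maximum principle for $H-|h|$ forces local, hence global, coincidence. The Lipschitz $|h|$ contributes only a bounded zeroth-order term, so the $C^2$ and non-strict-inequality setting is standard and no perturbation or obstacle argument is needed.
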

We denote by $M'$ the metric completion of any connected component of $\Omega $ constructed in above Lemma, and call $M'$ an \textit{exterior region} of $(M^3, g, h)$. The following lemma tells us about the topology of an exterior region.

\begin{lem}\label{lem-topo-R3-balls}
	A $3$-dimensional exterior region $M'$ of $(M^3 ,g, h)$ is simply connected, contains exactly one end of $M$ and its boundary consists of $2$-spheres. That is, $M'$ is diffeomorphic to $\mathbb{R}^3$ minus a finite number of open $3$-balls.
\end{lem}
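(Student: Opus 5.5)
The plan follows Huisken--Ilmanen's \cite[Lemma 4.1]{HI01} argument, replacing minimal surfaces by $\mu$-bubbles with weight $|h|$, i.e.\ minimizers of $|\partial^*\Omega| - \int_\Omega |h|$, whose stability is controlled by the dominant energy condition (\ref{DEC-h}) as in Section \ref{sect-DEC}. The key stability statement is the following. Let $\Sigma$ be a closed surface in the interior of $M'$ with $H_\Sigma = |h|$ that is stable for this functional (in particular, any minimizer under the obstacle/exterior constraints). Then the computation from the alternative proof of the positive mass theorem in Section \ref{sect-DEC}, combined with the Gauss equation and (\ref{DEC-h}), gives
\[
\int_\Sigma |\nabla^\Sigma f|^2 + K f^2 \ \ge\ \tfrac12\int_\Sigma \Big(R+\tfrac32 h^2-2|\nabla |h||\Big) f^2 \ \ge 0 .
\]
Taking $f\equiv 1$ yields $\int_\Sigma K \ge 0$ on each component. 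Hence every such component is a sphere or a torus, and in the torus case $K\equiv 0$, $\mathrm{II}$ is umbilic with trace $|h|$, and equality holds in the DEC along it.

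\textbf{Boundary components.} Let $\Sigma$ be a component of $\partial M'$. By Lemma \ref{lem-existence-horizons}, $\partial M'$ is an outermost generalized apparent horizon, strictly outer area-minimizing, and in fact outward optimizing for the weight $|h|$, as noted after (\ref{defn-optimizing-hull}). This one-sided minimality gives the stability inequality above for outward variations. For $f>0$ one uses the principal eigenfunction of the (nonsymmetric) stability operator, in the style of Galloway--Schoen. This gives genus $0$ or a flat torus. To exclude the torus, I would perturb outward: the rigid equality case lets one deform $\Sigma$ to a nearby outer surface with $H<|h|$ (strictly trapped) or otherwise produce a generalized apparent horizon outside $\partial M'$, contradicting outermost-ness. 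This is the standard trick of Galloway--Schoen.

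\textbf{Simple connectivity and one end.} $M'$ contains exactly one end by construction in Lemma \ref{lem-existence-horizons}, since $\Omega$ contains one end and we took a component. For $\pi_1$, suppose it is nontrivial. Then, as in \cite{HI01}, pass to a cover, or more concretely use the fact that a nontrivial loop or a non-separating sphere produces (by Meeks--Simon--Yau type minimization) an incompressible or homologically nontrivial surface. I would follow HI's route. If $M'$ is not simply connected, then, since $H_2$ classes are spheres bounding on both sides, the loop theorem/Poincaré approach yields a closed embedded surface $S$ in $M'$ not bounding a compact region together with parts of $\partial M'$. Minimizing $|\partial^*\Omega|-\int_\Omega|h|$ in its class in the region between $\partial M'$ (a barrier, with $H=|h|$) and large coordinate spheres (barriers with $H>|h|$ by asymptotic flatness and the decay of $h$) produces a compact generalized apparent horizon in the interior of $M'$. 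This contradicts the outermost property: $\Omega$ contains no other compact generalized apparent horizons. Similarly, each component of $\partial M'$ must be a sphere. Once genus $0$ boundary and trivial $\pi_1$ with one AF end are known, the Poincaré conjecture/prime decomposition identifies $M'$ with $\mathbb{R}^3$ minus finitely many balls.

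\textbf{Main obstacle.} The hardest step is the existence of the interior minimizer in a nontrivial homotopy or homology class for the $\mu$-bubble functional. Its regularity and compactness require barriers, and there is the possibility that it touches $\partial M'$. The first approach I would try is the strong maximum principle for $H=|h|$ surfaces: a minimizer touching $\partial M'$ from outside must coincide with a component of it, and this is excluded by the class choice.
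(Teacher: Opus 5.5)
Your proposal has a genuine gap at the step that proves $\pi_1(M')=1$. You assert that if $M'$ is not simply connected, there is a closed embedded surface $S\subset M'$ that does not bound a compact region together with parts of $\partial M'$. You then minimize $|\partial^*\Omega|-\int_\Omega|h|$ in the class of $S$. Such an $S$ need not exist, and this happens exactly in the cases your boundary argument leaves open.

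Take $M'$ to be a Poincar\'e homology sphere $P$ with one point (the end) and finitely many disjoint closed balls removed. Its boundary consists of spheres, it has exactly one end, and $\pi_1(M')\neq 1$. But $H_1(M';\mathbb{Z}_2)=H_1(P;\mathbb{Z}_2)=0$. Hence every closed embedded surface in $M'$ is two-sided and separating, and the side not containing the end is compact, bounded by $S$ together with some components of $\partial M'$. So there is no class in which to minimize. Minimizing the $\mu$-bubble functional without a topological constraint produces nothing new, since the region enclosed by $\partial M'$ is already strictly outward optimizing. Even when a non-separating $S$ exists, for example from an $S^1\times S^2$ summand, the argument still fails. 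Such an $S$ bounds no region, so the bulk term $\int_\Omega|h|$ has no meaning in its class. The resulting surface would also not belong to $\mathcal{S}$, so Lemma \ref{lem-existence-horizons} would not be directly contradicted.

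The missing idea is the covering argument behind \cite[Lemma 4.1]{HI01}, which the paper invokes. First, a region containing two asymptotically flat ends and no horizon in its interior is impossible. Minimize the $\mu$-bubble functional among regions separating the two ends, using large coordinate spheres (where $H\approx 2/r\gg |h|=O(r^{-2})$) and the horizons as barriers. This produces a compact surface with $H=|h|$ between the ends.

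Apply this in the universal cover $\tilde M'$ with the lifted data. If $\pi_1(M')\neq 1$, the simply connected end lifts to at least two disjoint asymptotically flat ends of $\tilde M'$. The compact surface separating two of them projects to a compact, possibly only immersed, surface with $H=|h|$ in the interior of $M'$. Outermost-ness must exclude this, just as Huisken--Ilmanen exclude compact immersed minimal surfaces in the exterior region.

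Once $\pi_1(M')=1$ and there is one end, the boundary components are spheres for purely topological reasons: apply half lives, half dies to $M'\cap\{|x|\le r\}$. The Poincar\'e conjecture then finishes the identification. So your stability and Galloway--Schoen rigidity step is not needed. It also relies on the dominant energy condition (\ref{DEC-h}), which is not a hypothesis of this lemma.
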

\begin{proof}
	Using above Lemma, the proof is the same as \cite[Lemma 4.1(ii)]{HI01}. 
\end{proof}

The following Connectedness Lemma implies that a connected surface evolving in an exterior region remains connected.
\begin{lem}\label{lem-connected}
\ 
	\begin{itemize}
		\item [(i)] If $(u, \nu )$ is a weak $J^{\sigma }$-solution on $\Omega $ in the sense of (\ref{weak-J}) and $\|\sigma \|_{C^0}$ is uniformly bounded, then $u$ has no strict local maxima or minima.
		\item [(ii)] Suppose that $M$ is connected and simply connected with no boundary and a single, asymptotically flat end, and $(E_t)_{t>0}$ is a weak solution of the $(\sigma )$-IMCF with initial condition $E_0$. If $\partial E_0$ is connected, then $N_t$ remains connected as long as it stays compact.
	\end{itemize}
\end{lem}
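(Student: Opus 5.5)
This follows the scheme of \cite[Lemma 4.2]{HI01}: part (i) comes from a local comparison, and part (ii) reduces to (i) via a topological argument.

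For (i), suppose $u$ has a strict local maximum at $x_0$, say $u(x_0)=t_0$. Then there are a small ball $B$ around $x_0$ and some $t<t_0$ close to $t_0$ for which the set $K=\{u\ge t\}\cap B$ is a small closed neighborhood of $x_0$, compactly contained in $B$, with $\mathrm{diam}\,K\to 0$ as $t\to t_0$. By Lemma \ref{equivalent-J-E} (or (\ref{leq-t-weak})), $E_t=\{u<t\}$ minimizes $J^{\sigma}_{u,\nu}$ in $\Omega$. We compare $E_t$ with the competitor $F=E_t\cup K$, which fills the hole. Since $K$ is enclosed by $E_t$, we get $|\partial^*F\cap B|=0$ and $|\partial^* E_t\cap B|=|\partial^*K|$. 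Minimality then gives
\begin{align*}
|\partial^* K| \le \int_K \left(|\nabla u|+\sigma(\nu,\nu)\right)\le \left(\sup_B|\nabla u|+\|\sigma\|_{C^0}\right)|K|.
\end{align*}
The isoperimetric inequality gives $|\partial^*K|\ge c|K|^{(n-1)/n}$, so $|K|^{1/n}\ge c'>0$. This contradicts $|K|\to 0$ as $t\to t_0$. The strict local minimum case is symmetric. At a strict local minimum, $\{u\le t\}\cap B$ is a small island $K$ lying outside $E_t^+$ (when $t$ is slightly above the minimum value this island is inside $E_t$, not outside, so we phrase it accordingly). We use that $E_t$ minimizes and compare with $F=E_t\setminus K$. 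Removing $K$ deletes the perimeter $|\partial^*K|$ and adds back the bulk $\int_K(|\nabla u|+\sigma)\ge0$. This bulk term has a sign that works against us, so the estimate must be redone: in this case the inequality reads $J(E_t)\le J(E_t\setminus K)$, that is, $|\partial^*K|-\int_K(\cdot)\le 0$, which is the same inequality as before. So both cases reduce to the same isoperimetric contradiction, and uniform boundedness of $\|\sigma\|_{C^0}$ is exactly what is needed.

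For (ii), we argue by contradiction and suppose $N_t=\partial E_t$ is compact but disconnected for some $t>0$. The argument of \cite{HI01} uses the simple connectivity of $M$ (the exterior region is $\mathbb{R}^3$ minus balls by Lemma \ref{lem-topo-R3-balls}; here one fills in the horizons) together with a Mayer--Vietoris/intersection-number argument. Each component of a compact separating surface in a simply connected manifold separates $M$. Since there is a single end, $M\setminus \bar E_t$ has exactly one unbounded component, so either $E_t$ is disconnected, or $M\setminus \bar E_t$ has a bounded component. In the second case, $u\ge t$ on a bounded region $V$ with $u=t$ on $\partial V$, so $u$ attains an interior maximum on $\bar V$. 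If this maximum is strict we contradict (i); in general we apply the filling argument of (i) directly to the set $V$, namely the competitor $E_t\cup V$, together with the fact that adding $V$ decreases $J$. The bulk term is the obstacle here. In \cite{HI01} one uses $E_t^+$ and the fact that $V$ cannot be foliated. What I would try first is this: for $s>t$ slightly larger, $V\setminus E_s$ is a compact set that is a local maximum region, and we apply the connectedness-of-level-sets version of (i). In the first case, where $E_t$ is disconnected, some component of $E_t$ does not contain $E_0$ (note that $E_0$ is connected because $\partial E_0$ is connected and $M$ is simply connected). Then $u\ge0$ on that component's boundary and $u<t$ inside, producing a local minimum of $u$ that is disjoint from $\bar E_0$. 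We rule this out by (i), again via the competitor that removes the component.

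The main obstacle is passing from ``no strict extrema'' to ``no bounded components of sub- and superlevel sets'', which is needed because extrema may be attained on plateaus. I plan to handle this by running the competitor argument of (i) directly on a whole bounded component rather than on a small neighborhood, since the isoperimetric bound for small sets is then replaced by a monotonicity-in-$t$ argument as in \cite[Lemma 4.2]{HI01}.
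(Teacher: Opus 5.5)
Your part (i) is the paper's argument in set form. The paper works with the functional $J^{\sigma}_{u,\nu}(v)$ and uses H\"older plus the Sobolev inequality where you use the isoperimetric inequality. For a genuine strict extremum, where the relevant sub- or superlevel sets shrink to a point, it is fine.

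In the maximum case you have a sign slip. Filling the hole gives $|\partial^*K|\le-\int_K(|\nabla u|+\sigma(\nu,\nu))\le 0$; your inequality is true but weaker. So no smallness is needed, and any precompact component of a superlevel set is excluded. This is the paper's argument with $v=t$ on $E$, which forces $u\equiv t$. Consequently, in (ii) a bounded component $V$ of $M\setminus\bar E_t$ is ruled out at once, since $J(E_t\cup V)=J(E_t)-|\partial^*V|-\int_V(|\nabla u|+\sigma(\nu,\nu))<J(E_t)$. The bulk term is not an obstacle there, and your detour through $V\setminus E_s$ is unnecessary.

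The genuine gap is the other case of (ii): a component $W$ of $E_t$ with $\bar W\cap\bar E_0=\emptyset$. Write $m=\min_{\bar W}u$ and $Z_\delta=\{u<m+\delta\}\cap W$. Minimality of $E_{m+\delta}$ against $E_{m+\delta}\setminus Z_\delta$ gives only $|\partial^*Z_\delta|\le\int_{Z_\delta}(|\nabla u|+\sigma(\nu,\nu))$. This contradicts the isoperimetric inequality only when $|Z_\delta|$ becomes small, e.g.\ when the minimum set $Z=\{u=m\}\cap W$ is null. If $u$ has a minimum plateau of positive measure, you only get $|\partial^*Z|\le\int_Z\sigma(\nu,\nu)$, which is no contradiction. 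The mechanism of \cite[Lemma 4.2]{HI01} that you invoke works because for $\sigma=0$ the comparison gives $\int_W|\nabla u|(1+u-t)\le 0$, which kills plateaus; here the right-hand side $\int_W(t-u)\sigma(\nu,\nu)$ survives.

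The paper does not supply this step either. Its contradiction $1\le C|E_k|^{1/n}$ also needs $|E_k\cap E|\to0$ as $k\searrow\inf_E u$, while (ii) is deferred to \cite{HI01}, where (i) is used in this component sense.

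The step cannot come from local considerations alone. In $\mathbb{R}^3$, with $\rho=|x|$, take a smooth nondecreasing $\phi\le 1$ with $\phi(\rho)=\rho/\rho_0$ near $0$ and $\phi=1$ on $[\rho_1,r]$. Let $\sigma=|h|g$ with $|h|=\phi'+2\phi/\rho$ on $B_r$, extended smoothly with $|h|<2/\rho$ for $\rho>r$. Let $u$ be radial with $u\equiv m$ on $B_r$ and $|\nabla u|=2/\rho-|h|$ outside, and let $\nu=x/\rho$. Then $X=\phi\,x/\rho$ on $B_r$, $X=x/\rho$ outside, satisfies $|X|\le1$ and $\mathrm{div}X=|\nabla u|+|h|$. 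Hence $J^{\sigma}_{u,\nu}(F)=\int_{\partial^*F}(1-\langle X,\nu_F\rangle)\ge 0$, with equality for $\emptyset$ and for every $B_\rho$ with $\rho\ge\rho_1$. By Lemma \ref{equivalent-J-E}, (\ref{weak-J}) holds on a ball around $\bar B_r$, yet $\{u<t\}$ is a precompact ball for $t$ slightly above $m$.

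So (ii) needs an input beyond (i). One option is strict outward optimality of the initial (filled-in) region, which is available in the paper's application. For $\sigma=|h|g$ it gives $|\partial^*Z|>\int_Z|h|\ge\int_Z\sigma(\nu,\nu)$ for every $Z$ of positive measure whose closure is disjoint from that region. This contradicts the limiting inequality above.
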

\begin{proof}
	(i) The proof is a modification of \cite[Lemma 4.2]{HI01} and \cite[Lemma 21]{Moore12}.
	First assume that $u$ possesses a strict local maximum or minimum so that there is a connected, precompact component $E \subset \subset \Omega  $ of $\{u>t\} $ or of $\{u<t\} $ for some $t$. In the case of $\{u>t\} $, define the Lipschitz function $v$ by $v=u$ on $\Omega \setminus E$, $v= t$ on $E$. Then (\ref{weak-J}) yields
	\begin{align*}
		\int_{E} \left( |\nabla u| + u\left( |\nabla u| + |\nu |^2_{\sigma } \right)  \right) \leq \int_{E} t \left( |\nabla u| +|\nu |^2_{\sigma } \right).
	\end{align*} 
	This implies that $u=t$ on $E$, a contradiction. In the case of $\{u<t\} $, for $k> \inf_{E} u $ very close to $\inf_{E} u$ and $E_k = \{u< k\} $, define the Lipschitz function $v_k$ by $v_k = u$ on $\Omega \setminus E_k$, $v_k = k$ on $E_k \cap E$. Then (\ref{weak-J}) and H\"older inequality yield
	\begin{align*}
		\int_{E_k} |\nabla u| \left( 1 + u - k \right) \leq \int_{E_k} (k-u) |\nu |^2_{\sigma } \leq C(\|\sigma \|_{C^0}) \|k-u\|_{L^{\frac{n}{n-1}}(E_k)}\cdot |E_k| ^{\frac{1}{n}}.
	\end{align*} 
	Choosing $k$ small enough so that $1+ u- k \geq \frac{1}{2}$ on $E_k$, together with the Sobolev inequality, we obtain 
	\begin{align*}
		\int_{E_k} |\nabla u| \left( 1 + u - k \right) \geq \frac{1}{2} C(n) ^{-1} \| k-u\|_{L^{\frac{n}{n-1}}(E_k)},
	\end{align*} 
	which together with above inequality implies that $1 \leq C \cdot |E_k| ^{\frac{1}{n}}$, a contradiction for small enough $k- \inf_{E} u$.

	Using (i), (ii) follows exactly as in \cite[Lemma 4.2]{HI01}.
\end{proof}

\subsection{Monotonicity formulas for classical solutions}\label{sect-monotone-classical}
In this section, we compute the monotonicity formulas for classical solutions.
Suppose that a classical solution $(N_{t})_{0 \leq t \leq T} \subset M$ of the $(|h|g)$-IMCF (\ref{*-eq-h}) exists. For 
\begin{align*}
	A(t) = e ^{-t}|N_t|,
\end{align*} 
by Lemma \ref{evolution-eq-classical}, 
\begin{align*}
	A'(t) &= e ^{-t} \left( \frac{d}{d t}|N_t| - |N_t| \right)\\
	      &= e ^{-t} \left( \int_{N_t} \frac{H}{H - |h|} - |N_t| \right) \\
	      &= e ^{-t} \int_{N_t} \frac{|h|}{H - |h|}\\
	      & \geq 0,
\end{align*} 
which implies that $A(t)$ is monotone increasing. 

For
\begin{align*}
	B(t) = e ^{\frac{t}{2}} \left( 1- \frac{1}{16 \pi } \int_{N_t} (H - |h|)^2 \right) ,
\end{align*} 
using (\ref{dt-int-H-nu}) and the fact that $\nabla _{\nu } (|h| g)  = (\nabla _{\nu } |h| ) g$ and $ g  (\nu , \nabla ^{N}(H- |h|) ) =0$ a.e., we have
	\begin{align*}
		\begin{split}
			B'(t) &= \frac{1}{2  } e ^{\frac{t}{2}} \left( 1- \frac{1}{16 \pi } \int_{N_t} (H- |h|)^2 \right)  \\
		&\ \ + \frac{e ^{\frac{t}{2}}}{16 \pi }  \int_{N_t} 2  \left(   \frac{|\nabla ^{N} (H-|h| )|^2}{(H-|h| )^2} + \left( \mathrm{Ric}(\nu ,\nu ) +|\mathrm{II}|^2 \right) \right)\\
				&\ \ + \frac{e ^{\frac{t}{2}}}{16 \pi }  \int_{N_t} \left( 2 (\nabla _{\nu } |h| )  -H(H-|h| ) \right) \\
				&=  \frac{e ^{\frac{t}{2}} }{16 \pi  }  \int_{N_t} 2  \left(   \frac{|\nabla ^{N} (H-|h| )|^2}{(H-|h| )^2} + \left( \mathrm{Ric}(\nu ,\nu ) +|\mathrm{II}|^2 \right) \right)\\
				&\ \ + \frac{ e ^{\frac{t}{2}} }{16 \pi } \left( 8 \pi+ \int_{N_t} \left(  - \frac{1}{2}(H-|h| )^2 + 2 (\nabla _{\nu } |h| )    -H(H-|h| ) \right) \right) .
		\end{split}
	\end{align*}
	By the Gauss equation
	\begin{align*}
		R + |\mathrm{II}|^2 + H^2 = R_{N_t} + 2 \left( \mathrm{Ric}(\nu , \nu ) + |\mathrm{II}|^2 \right) ,
	\end{align*} 
	we have 
	\begin{align*}
		\begin{split}
			16 \pi e ^{- \frac{t}{2}} B'(t)
			&=    \int_{N_t} \frac{2 |\nabla ^{N} (H-|h| )|^2}{(H-|h| )^2} + \int_{N_t} \left( R + |\mathrm{II}|^2 + H^2 - R_{N_t} \right) \\
		&\ \ + 8 \pi +  \int_{N_t}   \left(    2 (\nabla _{\nu } |h| )    -\frac{1}{2}(3H-|h| )(H- |h| ) \right)   .
		\end{split}
	\end{align*}

Suppose that $n=3$, $N_t$ remains connected, and $h$ satisfies the dominant energy condition (\ref{DEC-h}).
	Using the fact that $ H- |h|>0$, $|\mathrm{II}|^2 \geq \frac{1}{2} H^2$, and the Gauss-Bonnet formula $\int_{N_t} R_{N_t} = 4 \pi \chi (N_t) \leq 8 \pi $,  we obtain
	\begin{align*}
		\begin{split}
			16 \pi &e ^{- \frac{t}{2}} B'(t)\\
			       &= \int_{N_t} \frac{2 |\nabla ^{N} (H-|h| )|^2}{(H-|h| )^2} + 8 \pi - 4 \pi \chi (N_{t}) + \int_{N_t}\left( |\mathrm{II}|^2 - \frac{1}{2} H^2 \right) \\
			       &\ \ +\int_{N_t}   \left(     R + \frac{3}{2} H^2  + 2 \nabla_{\nu } |h|    -\frac{1}{2}(3H-|h| )(H-|h| ) \right) \\
			&\geq  \int_{N_t}   \left(     R + \frac{3}{2} H^2  - 2 |\nabla h |   -\frac{1}{2}(3H-|h| )(H-|h| ) \right)  \\
					      & =  \int_{N_t}   \left(   R + \frac{3}{2}  |h|^2  - 2 |\nabla h |   + 2 |h| (H-|h| ) \right)  \\
					      &\geq 0,
		\end{split}
	\end{align*}
	which shows that $B(t)$ is also monotone increasing. Thus, in dimension $3$, 
	$$m_{h}(N_t) = \sqrt{\frac{A(t)}{16 \pi }} B(t)$$ 
	is monotone increasing along the classical $(|h| g)$-IMCF, under the connectedness assumption and the dominant energy condition.

\section{Monotonicity formula for weak $(|h| g)$-IMCF}\label{sect-monotonicity}
In this section, we establish the monotonicity formula for the weak $(|h|g)$-IMCF. The strategy is to first derive an $\epsilon $-version for the approximated smooth flows, and then pass to limits. Let's assume that $(M^3 ,g, h)$ is a complete, connected, asymptotically flat triple, $E_0$ is a precompact open set with $C^2$-boundary, and $(\tilde{E}_t)_{t>0} \subset (M \setminus E_0) \times \mathbb{R}$ is a weak solution of the $(|h| g)$-IMCF with initial condition $E_0$. Notice that the following arguments to derive the monotonicity formula work for all such constructed weak solutions.

Let's recall that, as in the proof of Theorem \ref{h-IMCF-existence}, there are subsequences $\epsilon _i\to 0$, $L_i \to \infty$, $\sigma ^{i}= h_{\epsilon _i} g$ with $h_{\epsilon _i} = \sqrt{h^2+ (\epsilon _i \Phi )^2} $, and $U_i(x,z) = u ^{\epsilon _i}(x) - \epsilon _i z$ such that $\tilde{N}^{i}_t= \{ U_i = t\}, -\infty< t< \infty, $ are smooth solutions of the $\sigma ^{i}$-IMCF on $\Omega _i \times \mathbb{R} = \Omega _{L_i} \times \mathbb{R}$. Moreover, $U_i(x,z) \to U(x,z) = u(x)$ locally uniformly, $\tilde{\nu }_i = \frac{\tilde{\nabla } U_i}{|\tilde{\nabla } U_i|} \to \tilde{\nu }$ a.e., and
\begin{align}\label{N_t-C1alpha-conv}
	\tilde{N}^{i}_{t} \to \tilde{N}_{t}= N_{t} \times \mathbb{R} \text{ locally in } C^{1,\alpha },\quad \text{a.e. } t \geq 0,
\end{align}
where $N_t = \partial E_t$, $\tilde{N}_t = \partial \tilde{E}_t$, $E_t = \{u< t\} $ and $\tilde{E}_t = \{ U< t\} $ for $t \geq 0$. 

Take a cutoff function $\phi \in C^{2}_{c}(\mathbb{R})$ such that $\phi \geq 0, \mathrm{supp} \phi \subset [1,5],$ $\phi =1$ on $[2,4]$, $|\phi '| \leq C \phi $, and $\fint \phi (z) dz =1$. Fix an arbitrary $T>0$. By Lemma \ref{smooth-sol-*-epsilon}, there exists $R(T)>0$ such that for all large $i$, depending on $T$,
\begin{align*}
	\tilde{N}^{i}_t \cap (M \times \mathrm{supp} \phi ) \subset K(T):= (\bar{G}_{R(T)} \setminus E_0 ) \times [1,5],\ 0 \leq t \leq T,
\end{align*}
a fixed compact set, where $G_{r}:= \{x \in M \setminus E_0: d(E_0, x) < r \} $. Since $\partial \tilde{N}^{i}_t = \partial E_0 \times \{ - \frac{t}{\epsilon _i}\} \cup \partial \Omega _i \times \{\frac{L_i - 2 - t}{\epsilon _i}\} $, for any $0 \leq t \leq T$, $\partial \tilde{N}^{i}_t \cap (M \times \mathrm{supp} \phi ) = \emptyset$ for all large $i$.

Applying (\ref{dt-int-H-nu}) to the smooth $\sigma ^{i}$-IMCF $\tilde{N}^{i}_t$, we obtain
\begin{align}\label{ddt-identity-i}
	\begin{split}
		&\frac{d}{d t} \int_{\tilde{N}^{i}_t} \phi (\tilde{H}_{i}- |\tilde{\nu}_{i} |^2_{\sigma ^{i} })^2 \\							&= \int_{\tilde{N}^{i}_t} (\tilde{H}_{i}-|\tilde{\nu}_i |^2_{\sigma ^{i} }) \tilde{\nabla} _{\tilde{\nu}_i }\phi -2 \langle \tilde{\nabla} ^{N} \phi , \frac{\tilde{\nabla} ^{N}(\tilde{H}_i-|\tilde{\nu}_i |^2_{\sigma^{i} })}{\tilde{H}_i- |\tilde{\nu}_i |^2_{\sigma ^{i} }} \rangle \\
		&\ \ - 2 \int_{\tilde{N}^{i}_t}\phi \left(   \frac{|\tilde{\nabla} ^{N} (\tilde{H}_i-|\tilde{\nu}_i |^2_{\sigma^{i} })|^2}{(\tilde{H}_i-|\tilde{\nu}_i |^2_{\sigma^{i} })^2} + \left( \tilde{\mathrm{Ric}}(\tilde{\nu}_i ,\tilde{\nu}_i ) +|\tilde{\mathrm{II}}|^2 \right) \right) \\
		&\ \ -2 \int_{\tilde{N}^{i}_t} \phi (\tilde{\nabla} _{\tilde{\nu}_i } \sigma^{i}) (\tilde{\nu}_i , \tilde{\nu}_i ) - 4 \int_{\tilde{N}^i_t} \phi \frac{\sigma ^{i}( \tilde{\nu}_i , \tilde{\nabla} ^{N}(\tilde{H}_i- |\tilde{\nu}_i |^2_{\sigma^{i} }) )}{\tilde{H}_i- |\tilde{\nu}_i |^2_{\sigma ^{i}}} + \int_{\tilde{N}^{i}_t}\phi \tilde{H}_i(\tilde{H}_i-|\tilde{\nu}_i |^2_{\sigma^{i} }).
	\end{split}
\end{align}
For each $0\leq r<s \leq T$, the integrating form is
\begin{align}\label{ddt-integral-identity-i}
	\begin{split}
		& \int_{\tilde{N}^{i}_s} \phi (\tilde{H}_{i}- |\tilde{\nu}_{i} |^2_{\sigma ^{i} })^2 - \int_{\tilde{N}^{i}_r} \phi (\tilde{H}_{i}- |\tilde{\nu}_{i} |^2_{\sigma ^{i} })^2 \\							&= \int_{r}^{s}\int_{\tilde{N}^{i}_t} (\tilde{H}_{i}-|\tilde{\nu}_i |^2_{\sigma ^{i} }) \tilde{\nabla} _{\tilde{\nu}_i }\phi -2 \langle \tilde{\nabla} ^{N} \phi , \frac{\tilde{\nabla} ^{N}(\tilde{H}_i-|\tilde{\nu}_i |^2_{\sigma^{i} })}{\tilde{H}_i- |\tilde{\nu}_i |^2_{\sigma ^{i} }} \rangle \\
		&- 2 \int_{r}^{s}\int_{\tilde{N}^{i}_t}\phi \left(   \frac{|\tilde{\nabla} ^{N} (\tilde{H}_i-|\tilde{\nu}_i |^2_{\sigma^{i} })|^2}{(\tilde{H}_i-|\tilde{\nu}_i |^2_{\sigma^{i} })^2} + \left( \tilde{\mathrm{Ric}}(\tilde{\nu}_i ,\tilde{\nu}_i ) +|\tilde{\mathrm{II}}|^2 \right) \right) \\
		&-2 \int_{r}^{s}\int_{\tilde{N}^{i}_t} \phi (\tilde{\nabla} _{\tilde{\nu}_i } \sigma^{i}) (\tilde{\nu}_i , \tilde{\nu}_i ) - 4 \int_{\tilde{N}^i_t} \phi \frac{\sigma ^{i}( \tilde{\nu}_i , \tilde{\nabla} ^{N}(\tilde{H}_i- |\tilde{\nu}_i |^2_{\sigma^{i} }) )}{\tilde{H}_i- |\tilde{\nu}_i |^2_{\sigma ^{i}}} + \int_{\tilde{N}^{i}_t}\phi \tilde{H}_i(\tilde{H}_i-|\tilde{\nu}_i |^2_{\sigma^{i} })\\
		&=: \mathcal{I}^{i}_{r,s}.
	\end{split}
\end{align}
We wish to pass this identity to limits.

Since the sublevelsets $\tilde{E}^{i}_{t} := \{ U_i < t\}  $ minimizes $J^{\sigma ^{i}}_{U_i, \tilde{\nu }_i}$ from the outside, we can conclude for all $t \in [0, T]$, for all large $i$, that
\begin{align}\label{N_t-area-bound-C(T)}
	| \tilde{N}^{i}_{t} \cap (M \times \mathrm{supp} \phi )| \leq |\partial ^* K(T)| - \int_{K(T) \setminus \tilde{E}^{i}_t} \left( |\tilde{\nabla }U_i|+ \sigma ^{i}(\tilde{\nu }_i, \tilde{\nu }_i) \right) \leq |\partial ^* K(T)| \leq C(T),
\end{align} 
where we have used $K(T) \cup \tilde{E}^{i}_{t}$ as a competitor.

By the gradient estimates in Lemma \ref{smooth-sol-*-epsilon}, since $K(T)$ is compact and $\tilde{H}_i - |\tilde{\nu }_i|^2_{\sigma ^{i}}=|\tilde{\nabla }U_i|$, we have
\begin{align*}
	|\tilde{\mathrm{Ric}}(\tilde{\nu }_i, \tilde{\nu }_i)|+ | \tilde{H}_i -|\tilde{\nu }_i|^2_{\sigma ^{i}} | \leq C(T) \text{ on } \tilde{N}^{i}_t \cap (M \times \mathrm{supp}\phi ),\ 0 \leq t \leq T.
\end{align*} 
It follows that
\begin{align*}
	\int_{\tilde{N}^{i}_t}  &|(\tilde{H}_{i}-|\tilde{\nu}_i |^2_{\sigma ^{i} }) \tilde{\nabla} _{\tilde{\nu}_i }\phi |  + 2\phi |\tilde{\mathrm{Ric}}(\tilde{\nu }_i, \tilde{\nu }_i)|+ 2\phi |(\tilde{\nabla }_{\tilde{\nu }_i}\sigma ^{i})(\tilde{\nu }_i, \tilde{\nu }_i)| \\
 &+ \int_{\tilde{N}^{i}_{t}}  \phi \tilde{H}_i(\tilde{H}_i - |\tilde{\nu }_i|^2_{\sigma ^{i}}) + \phi (\tilde{H}_i - |\tilde{\nu }_i|^2_{\sigma ^{i}}) ^2 \leq C(T, \|\sigma ^{i}\|_{C^1}) = C(T).
\end{align*} 
We estimate
\begin{align*}
	\left| 2\langle \tilde{\nabla} ^{N} \phi , \frac{\tilde{\nabla} ^{N}(\tilde{H}_i-|\tilde{\nu}_i |^2_{\sigma^{i} })}{\tilde{H}_i- |\tilde{\nu}_i |^2_{\sigma ^{i} }} \rangle \right| & \leq 2 \phi ^{-1}|\phi '|^2+ \frac{1}{2} \phi \frac{|\tilde{\nabla} ^{N}(\tilde{H}_i-|\tilde{\nu}_i |^2_{\sigma^{i} })|^2}{(\tilde{H}_i- |\tilde{\nu}_i |^2_{\sigma ^{i} })^2}\\
																							    &\leq C \phi + \frac{1}{2} \phi \frac{|\tilde{\nabla} ^{N}(\tilde{H}_i-|\tilde{\nu}_i |^2_{\sigma^{i} })|^2}{(\tilde{H}_i- |\tilde{\nu}_i |^2_{\sigma ^{i} })^2},\\
	\left|4 \phi \frac{\sigma ^{i}( \tilde{\nu}_i , \tilde{\nabla} ^{N}(\tilde{H}_i- |\tilde{\nu}_i |^2_{\sigma^{i} }) )}{\tilde{H}_i- |\tilde{\nu}_i |^2_{\sigma ^{i}}} \right| &\leq 8 \phi \|\sigma ^{i}\|_{C^0}+ \frac{1}{2} \phi \frac{|\tilde{\nabla} ^{N}(\tilde{H}_i-|\tilde{\nu}_i |^2_{\sigma^{i} })|^2}{(\tilde{H}_i- |\tilde{\nu}_i |^2_{\sigma ^{i} })^2}.
\end{align*} 
Thus for all $t \in [0,T]$,
\begin{align}\label{ddt-C(T)}
	\frac{d}{d t} \int_{\tilde{N}^{i}_t} \phi (\tilde{H}_{i}- |\tilde{\nu}_{i} |^2_{\sigma ^{i} })^2 &\leq -\int_{\tilde{N}^{i}_t}\phi \left( \frac{|\tilde{\nabla} ^{N} (\tilde{H}_i-|\tilde{\nu}_i |^2_{\sigma^{i} })|^2}{(\tilde{H}_i-|\tilde{\nu}_i |^2_{\sigma^{i} })^2} + 2|\tilde{\mathrm{II}}|^2 \right)  + C(T),
\end{align} 
which implies $\int_{\tilde{N}^{i}_{t}} \phi (\tilde{H}_{i}- |\tilde{\nu}_{i} |^2_{\sigma ^{i} })^2 - C(T) t $ is monotone decreasing. Therefore by choosing a subsequence, which is still denoted by $i$, we can assume
\begin{align}\label{N_t-limit-exists}
	\lim_{i\to \infty} \int_{\tilde{N}^{i}_{t}} \phi (\tilde{H}_{i}- |\tilde{\nu}_{i} |^2_{\sigma ^{i} })^2 \text{ exists, } \quad \text{a.e. } t \geq 0.
\end{align}

Integrating (\ref{ddt-C(T)}) gives
\begin{align}\label{integral-C(T)}
	\int_{0}^{T} \int_{\tilde{N}^{i}_t \cap (M \times [2,4])} \frac{|\tilde{\nabla} ^{N} (\tilde{H}_i-|\tilde{\nu}_i |^2_{\sigma^{i} })|^2}{(\tilde{H}_i-|\tilde{\nu}_i |^2_{\sigma^{i} })^2} + |\tilde{\nabla} ^{N} (\tilde{H}_i-|\tilde{\nu}_i |^2_{\sigma^{i} })|^2 + |\tilde{\mathrm{II}}|^2 \leq C(T).
\end{align}  
Applying Fatou's Lemma, we obtain
\begin{align}\label{liminf-estimates}
	\liminf_{i\to \infty} \int_{\tilde{N}^{i}_t \cap (M \times [2,4])} \frac{|\tilde{\nabla} ^{N} (\tilde{H}_i-|\tilde{\nu}_i |^2_{\sigma^{i} })|^2}{(\tilde{H}_i-|\tilde{\nu}_i |^2_{\sigma^{i} })^2} + |\tilde{\nabla} ^{N} (\tilde{H}_i-|\tilde{\nu}_i |^2_{\sigma^{i} })|^2 + |\tilde{\mathrm{II}}|^2 < \infty, \text{ a.e. } t \geq 0.
\end{align} 
Henceforth we shrink $\phi $ so that $\mathrm{supp} \phi \subset [2, 4]$.

As in the proof of Lemma \ref{lem-limit-surface-C1alpha}, if $t$ is not a jump time so that $\tilde{N}_t = \{U=t\} $,  we can write the converging hypersurfaces $\tilde{N}^{i}_t$ simultaneously as graphs of $C^{1,\alpha }_{loc}$-functions $w_i$ over some smooth surface $W $ in the sense that $\tilde{N}^{i}_{t} \cap (M \times [2,4]) = \mathrm{graph}(w_i) \cap (M \times [2,4])$, and $w_i \to w$ locally in $C^{1,\alpha }$, where $\tilde{N}_{t}$ is the graph of $w$. 
By (\ref{liminf-estimates}), along a subsequence $i_j$, $|\tilde{\nabla }U_{i_j}|\circ w_{i_j} = \tilde{H}_{i_j} - |\tilde{\nu }_{i_j}|^2_{\sigma ^{i_j}}$ is uniformly bounded in $W^{1,2}_{loc}(W)$. By Rellich's theorem, there exists a subsequence, still denoted by $i_j$, and a function $f_t$ such that $|\tilde{\nabla }U_{i_j}|\circ w_{i_j} \to f_t$ in $L^2_{loc}(W)$. By (\ref{N_t-limit-exists}), we know this convergence holds for whole sequence $i$. The local $C^{1,\alpha }$-convergence of the hypersurfaces together with the first variation of area formula implies that $H_{\tilde{N}_t}$ exists weakly as a locally $L^{1}$-function, with the weak convergence $\tilde{H}_{i} \tilde{\nu }_{i} \WL H_{\tilde{N}_t} \tilde{\nu }$. Since $|\tilde{\nu }_{i}|^2_{\sigma ^{i}} \to |h|$ uniformly, we have $f_t = H_{\tilde{N}_t} - |h| = |\tilde{\nabla }U|$ a.e., and
\begin{align*}
	\int_{\tilde{N}^{i}_t} \phi (\tilde{H}_{i} - |\tilde{\nu }_{i}|^2_{\sigma ^{i}})^2 &\to \int_{\tilde{N}_t} \phi (H_{\tilde{N}_t} - |h|)^2,\quad \text{a.e. } t \geq 0,\\
	\int_{\tilde{N}^{i}_t} \phi \tilde{H}_i (\tilde{H}_i - |\tilde{\nu }_i|^2_{\sigma ^{i}}) &\to \int_{\tilde{N}_t} \phi H_{\tilde{N}_t}(H_{\tilde{N}_t} - |h|),\quad \text{a.e. } t \geq 0.
\end{align*}

By the bounded convergence theorem, for any $0\leq r< s$,
\begin{align}
	\begin{split}
		\int_{r}^{s}  \int_{\tilde{N}^{i}_t} \phi \tilde{H}_{i}(\tilde{H}_{i} - |\tilde{\nu }_{i}|^2_{\sigma ^{i}}) &\to \int_{r}^{s} \int_{\tilde{N}_t} \phi H_{\tilde{N}_t}(H_{\tilde{N}_t} - |h|).
	\end{split}
\end{align}

Next we estimate
\begin{align*}
	 \int_{\tilde{N}^{i}_{t}} (\tilde{H}_i - |\tilde{\nu }_{i}|^2_{\sigma ^{i}})|\tilde{\nabla }_{\tilde{\nu }_i} \phi | &\leq  C(T) \sup_{\tilde{N}^{i}_t \cap (M \times \mathrm{supp} \phi )}|\langle \tilde{\nu }_i, e_{z} \rangle |  \to 0, \text{ a.e. } 0\leq t \leq T,
\end{align*} 
where we used the fact that for non-jump times $t$, $ \tilde{\nu }_i \to \tilde{\nu } $ uniformly and $\tilde{\nu } $ is perpendicular to $e_z$. By the bounded convergence theorem, we obtain
\begin{align}
	\int_{r}^{s}   \int_{\tilde{N}^{i}_t} (\tilde{H}_i - |\tilde{\nu }_{i}|^2_{\sigma ^{i}})|\tilde{\nabla }_{\tilde{\nu }_i} \phi |  \to 0.
\end{align}
Similarly, for later use, using (\ref{ddt-phi-area}), we compute
\begin{align*}
	\frac{d}{d t} \int_{\tilde{N}^{i}_t} \phi &= \int_{\tilde{N}^{i}_t} \frac{\tilde{\nabla} _{\tilde{\nu}_i } \phi + \phi \tilde{H}_i}{\tilde{H}_i - |\tilde{\nu }_i|^2_{\sigma ^{i}}} \geq \int_{\tilde{N}^{i}_t} \frac{\tilde{\nabla }_{\tilde{\nu }_i} \phi }{\tilde{H}_i - |\tilde{\nu }_i|^2_{\sigma ^i}} + \int_{\tilde{N}^{i}_t} \phi ,
\end{align*}
i.e.
\begin{align*}
	e ^{-s} \int_{\tilde{N}^{i}_s} \phi - e ^{-r} \int_{\tilde{N}^{i}_r} \phi &\geq - \int_{r}^{s}e ^{-t}\int_{\tilde{N}^{i}_t} \frac{|\tilde{\nabla }_{\tilde{\nu }_i} \phi| }{\tilde{H}_i - |\tilde{\nu }_i|^2_{\sigma ^i}} d t \\
										  &= - \int_{\{r \leq U_i \leq s\} } e ^{- U_i} |\tilde{\nabla }_{\tilde{\nu }_i} \phi|,
\end{align*} 
where in the last equality we used the coarea formula. Thus, up to a subsequence, we can assume
\begin{align}\label{N_t-limit-exists}
	\lim_{i\to \infty} e ^{-t}\int_{\tilde{N}^{i}_{t}} \phi = e ^{-t} \int_{\tilde{N}_t} \phi   \quad \text{a.e. } t \geq 0,
\end{align}
and consequently, for a.e. $0\leq r<s$, we have
\begin{align}\label{limit-area-increasing}
	e ^{-r} \int_{\tilde{N}_{r}} \phi \leq e ^{-s}\int_{\tilde{N}_{s}} \phi .
\end{align} 

Next we claim that for each $0 \leq r<s$,
\begin{align*}
	\int_{r}^{s} \int_{\tilde{N}_{t}} \phi \frac{|\tilde{\nabla }^{N} (H_{\tilde{N}_t}- |h|) |^2}{(H_{\tilde{N}_t} - |h| )^2} \leq \liminf_{i\to \infty}  \int_{r}^{s} \int_{\tilde{N}_{t}^{i}} \phi \frac{|\tilde{\nabla }^{N}(\tilde{H}_i - |\tilde{\nu }_i|^2_{\sigma ^{i}})|^2}{ (\tilde{H}_i - |\tilde{\nu }_i|^2_{\sigma ^{i}})^2}.
\end{align*} 
To see this, we first notice that by Lemma \ref{lem-weak-mean-curvature}, for a.e. $t \geq 0$, $\int_{\tilde{N}_{t}} \phi \frac{|\tilde{\nabla }^{N} (H_{\tilde{N}_t}- |h|) |^2}{(H_{\tilde{N}_t} - |h| )^2}$ is well defined. For a.e. $t \geq 0$, by (\ref{liminf-estimates}) and Poincar\'e inequality, and a similar graph argument, there is a subsequence $i_j$ and constants $a_{i_j}$ so that $\log (\tilde{H}_{i_j} - |\tilde{\nu }_{i_j}|^2_{\sigma ^{i_j}}) - a_{i_j}$ is uniformly bounded in $W^{1,2}_{loc}(W)$. By Rellich's theorem, up to a further subsequence, we can assume $a_{i_j} \to a \in [-\infty, \infty)$, $\tilde{H}_{i_j} - |\tilde{\nu }_{i_j}|^2_{\sigma ^{i_j}} \to |\tilde{\nabla }U|$ and $\log (\tilde{H}_{i_j} - |\tilde{\nu }_{i_j}|^2_{\sigma ^{i_j}}) - a_{i_j} \to F_t$ in $L^2_{loc}(W)$ and a.e. on $W$. So $|\tilde{\nabla }U| = \exp(F_t+ a)$ a.e. on $W$. Since $|\tilde{\nabla }U|>0$ a.e. on $W$, we know $a> -\infty$, and
\begin{align*}
	\log (\tilde{H}_{i_j} - |\tilde{\nu }_{i_j}|^2_{\sigma ^{i_j}}) \to \log (H_{\tilde{N}_t} - |h| ) \text{ in } L^2_{loc}(W).
\end{align*} 
It follows that $\tilde{\nabla }^{N}\log (\tilde{H}_{i_j} - |\tilde{\nu }_{i_j}|^2_{\sigma ^{i_j}}) \to \tilde{\nabla }^{N}\log (H_{\tilde{N}_t} - |h| )$ weakly in $L^2_{loc}(W)$. 
By the usual lower semicontinuity and Fatou's Lemma, we obtain the claim.

Next we apply the trick of \cite[Lemma 5.3]{HI01} to estimate the term 
$$\langle \tilde{\nabla} ^{N} \phi , \frac{\tilde{\nabla} ^{N}(\tilde{H}_i-|\tilde{\nu}_i |^2_{\sigma^{i} })}{\tilde{H}_i- |\tilde{\nu}_i |^2_{\sigma ^{i} }} \rangle = \langle \tilde{\nabla} \phi , \frac{\tilde{\nabla} ^{N}(\tilde{H}_i-|\tilde{\nu}_i |^2_{\sigma^{i} })}{\tilde{H}_i- |\tilde{\nu}_i |^2_{\sigma ^{i} }} \rangle .$$
We define
 \begin{align*}
	g_i(t):= - \epsilon _i \int_{\tilde{N}^{i}_t} \phi (z) \langle \tilde{\nabla }^{N}\log (\tilde{H}_{i} - |\tilde{\nu }_{i}|^2_{\sigma ^{i}}), e_z \rangle d A_{\tilde{N}^{i}_t}(x,z).
\end{align*} 
Since $\tilde{N}^{i}_{t} = \mathrm{graph}\left( \epsilon _i ^{-1}(u ^{\epsilon _i} - t) \right) \subset M \times \mathbb{R}$ is a vertical translation of $\tilde{N}^{i}_{0}$ and $\sigma ^{i}$ is translation invariant, we have
\begin{align*}
	g_i(t) &= - \epsilon _i \int_{\tilde{N}^{i}_{0}} \phi (z - \epsilon _i ^{-1} t)  \langle \tilde{\nabla }^{N}\log (\tilde{H}_{i} - |\tilde{\nu }_{i}|^2_{\sigma ^{i}}), e_z \rangle d A_{\tilde{N}^{i}_{0}}(x,z).
\end{align*} 
Then
\begin{align*}
	\frac{d}{d t} g_i(t) &= \int_{\tilde{N}^{i}_{0}} \phi' (z - \epsilon _i ^{-1} t)  \langle \tilde{\nabla }^{N}\log (\tilde{H}_{i} - |\tilde{\nu }_{i}|^2_{\sigma ^{i}}), e_z \rangle d A_{\tilde{N}^{i}_{0}}(x,z) \\
			     &= \int_{\tilde{N}^{i}_t} \langle \tilde{\nabla }^{N}\log (\tilde{H}_{i} - |\tilde{\nu }_{i}|^2_{\sigma ^{i}}), \tilde{\nabla } \phi  \rangle d A_{\tilde{N}^{i}_t}(x,z) =: f_i(t).
\end{align*} 
From our previous uniform estimates (\ref{integral-C(T)}), for any $T>0$, for all large $i$,
\begin{align*}
	\int_{0}^{T} |g_i(t)| dt \leq C(T) \epsilon _i \to 0,\quad \int_{0}^{T}|f_i(t)| dt \leq C(T).
\end{align*} 
Thus $f_i \WL 0$ on $[0, \infty)$ in the sense of measures, which implies that for each $0 \leq r<s$,
\begin{align*}
	\int_{r}^{s} \int_{\tilde{N}^{i}_t} \langle \tilde{\nabla} ^{N} \phi , \frac{\tilde{\nabla} ^{N}(\tilde{H}_i-|\tilde{\nu}_i |^2_{\sigma^{i} })}{\tilde{H}_i- |\tilde{\nu}_i |^2_{\sigma ^{i} }} \rangle \to 0.
\end{align*} 

Next, we consider the term
\begin{align*}
	 \phi \frac{\sigma ^{i}( \tilde{\nu}_{i} , \tilde{\nabla} ^{N}(\tilde{H}_{i}- |\tilde{\nu}_{i} |^2_{\sigma^{i} }) )}{\tilde{H}_{i}- |\tilde{\nu}_{i} |^2_{\sigma ^{i}}} & = \phi  h_{\epsilon _{i}} \langle \tilde{\nu }_{i}, \tilde{\nabla} ^{N}\log (\tilde{H}_{i}-|\tilde{\nu}_{i} |^2_{\sigma^{i} })\rangle_{g}\\
																										       &= -\phi h_{\epsilon _i} \langle \tilde{\nu }_i, e_z \rangle \langle \tilde{\nabla} ^{N}\log (\tilde{H}_{i}-|\tilde{\nu}_{i} |^2_{\sigma^{i} }), e_z \rangle.
\end{align*} 
By (\ref{N_t-area-bound-C(T)}) and Cauchy inequality, 
\begin{align*}
	\int_{\tilde{N}^{i}_{t}} &\left| \phi \frac{\sigma ^{i}( \tilde{\nu}_{i} , \tilde{\nabla} ^{N}(\tilde{H}_{i}- |\tilde{\nu}_{i} |^2_{\sigma^{i} }) )}{\tilde{H}_{i}- |\tilde{\nu}_{i} |^2_{\sigma ^{i}}}\right| \\
	&\leq C(T) \sup_{\tilde{N}^{i}_{t} \cap (M \times \mathrm{supp} \phi )} |\langle \tilde{\nu }_{i}, e_{z} \rangle| \int_{\tilde{N}^{i}_t} \phi  | \tilde{\nabla} ^{N}\log (\tilde{H}_{i}-|\tilde{\nu}_{i} |^2_{\sigma^{i} })|\\
																										      &\leq C(T) \sup_{\tilde{N}^{i}_{t} \cap (M \times \mathrm{supp} \phi )} |\langle \tilde{\nu }_{i}, e_{z} \rangle| \cdot \left(  \int_{\tilde{N}^{i}_t} \phi  | \tilde{\nabla} ^{N}\log (\tilde{H}_{i}-|\tilde{\nu}_{i} |^2_{\sigma^{i} })|^2 \right) ^{\frac{1}{2}}  .
\end{align*}
For each $0\leq r<s$, by Cauchy inequality and (\ref{integral-C(T)}), we obtain
\begin{align*}
	\int_{r}^{s} \int_{\tilde{N}^{i}_{t}} &\left| \phi \frac{\sigma ^{i}( \tilde{\nu}_{i} , \tilde{\nabla} ^{N}(\tilde{H}_{i}- |\tilde{\nu}_{i} |^2_{\sigma^{i} }) )}{\tilde{H}_{i}- |\tilde{\nu}_{i} |^2_{\sigma ^{i}}}\right|\\
						&\leq C(T) \left( \int_{r}^{s}  \sup_{\tilde{N}^{i}_{t} \cap (M \times \mathrm{supp} \phi )} |\langle \tilde{\nu }_{i}, e_{z} \rangle| ^2 \right) ^{\frac{1}{2}}.
\end{align*} 
By the bounded convergence theorem, we have
\begin{align*}
	\lim_{i\to \infty} \int_{r}^{s} \int_{\tilde{N}^{i}_{t}}\phi \frac{\sigma ^{i}( \tilde{\nu}_{i} , \tilde{\nabla} ^{N}(\tilde{H}_{i}- |\tilde{\nu}_{i} |^2_{\sigma^{i} }) )}{\tilde{H}_{i}- |\tilde{\nu}_{i} |^2_{\sigma ^{i}}} =0.
\end{align*} 

Next, we consider the term
\begin{align*}
	\phi (\tilde{\nabla }_{\tilde{\nu }_i} \sigma ^{i}) (\tilde{\nu }_i, \tilde{\nu }_i) &= \phi (\tilde{\nabla }_{\tilde{\nu }_i} h_{\epsilon _i})  g(\tilde{\nu }_i, \tilde{\nu }_i) = \phi (\tilde{\nabla }_{\tilde{\nu }_i} h_{\epsilon _i}) (1- \langle \tilde{\nu }_i, e_z \rangle^2),
\end{align*} 
where we used the fact that $\tilde{g}= g + dz^2$ is the product metric so that $\tilde{\nabla } g =0$.
Since $h_{\epsilon _i}$ is vertical invariant, we have
\begin{align*}
	|\tilde{\nabla }_{\tilde{\nu }_i} h_{\epsilon _i}| \leq  |\nabla h_{\epsilon _i}| \leq |\nabla h| + C\cdot \epsilon _i \leq C. 
\end{align*} 
By the bounded convergence theorem, for each $0\leq r <s$, we obtain
\begin{align*}
	\limsup_{i\to \infty}  2\int_{r}^{s} \int_{\tilde{N}^{i}_t} \phi |(\tilde{\nabla }_{\tilde{\nu }_i} \sigma ^{i}) (\tilde{\nu }_i, \tilde{\nu }_i)| \leq 2 \int_{r}^{s} \int_{\tilde{N}_{t}} \phi |\nabla h|.
\end{align*} 

Next, we consider the curvature term. Using the fact that $\tilde{g} $ is a product metric of $(M,g)$ and $\mathbb{R}$, we can write
\begin{align*}
	\tilde{\mathrm{Ric}}(\tilde{\nu }_i, \tilde{\nu }_i) = \mathrm{Ric}( \tilde{\nu }_i - \langle \tilde{\nu }_i, e_z \rangle e_z, \tilde{\nu }_i - \langle \tilde{\nu }_i, e_z \rangle e_z).
\end{align*}
Similarly, for a.e. $0 \leq t \leq T$, we obtain
\begin{align*}
	\int_{\tilde{N}^{i}_t} \phi \tilde{\mathrm{Ric}}(\tilde{\nu }_i, \tilde{\nu }_i) \to \int_{\tilde{N}_t} \phi \mathrm{Ric}(\tilde{\nu }, \tilde{\nu }),
\end{align*} 
which together with the bounded convergence theorem implies that for each $0\leq r<s$,
\begin{align*}
	\int_{r}^{s }\int_{\tilde{N}^{i}_t} \phi \tilde{\mathrm{Ric}}(\tilde{\nu }_i, \tilde{\nu }_i) \to \int_{r}^{s} \int_{\tilde{N}_t} \phi \mathrm{Ric}(\tilde{\nu }, \tilde{\nu }).
\end{align*}

Finally, we consider the second fundamental form term. 
We first introduce the weak second fundamental form. For a $C^1$-hypersurface $N$ in a smooth ambient Riemannian manifold $\tilde{M}$ with induced metric $g_{N}$, one can define the second fundamental form $\mathrm{II}$ by a locally integrable section $(\mathrm{II}_{ij})$ of $\mathrm{Sym}^2(T^*N)$, which satisfies 
\begin{align*}
	\int_{N} H \nu ^{j} p_{jk} \nu ^{k} = \int_{N} g_{N}^{ij} \nabla _{i} p_{jk} \nu ^{k} + g_{N}^{ij} p_{jk} g_{N}^{kl} \mathrm{II}_{li}, \ \forall (p_{ij}) \in C^{1}_{c}(\mathrm{Sym}^2(T^* \tilde{M}),
\end{align*} 
where $H$ is the weak mean curvature. It can be verified that $|\mathrm{II}| \in L^2_{loc}(N)$ if and only if $N \in W^{2,2}\cap C^1$. Moreover, along $C^{1}_{loc}$-convergence of hypersurfaces, lower semicontinuity of $\|\mathrm{II}\|_{L^2}$ holds. For more details, see \cite[Section 5]{HI01}. In particular, for a.e. $0\leq t \leq T$, $ \mathrm{II}_{\tilde{N}_{t}} $ exists in $L^2(\tilde{N}_{t})$, and by (\ref{liminf-estimates}), there is a subsequence $i_j$ such that
\begin{align*}
	\int_{\tilde{N}_t} \phi |\mathrm{II}_{\tilde{N}_t}|^2 \leq \lim_{i_j \to \infty} \int_{\tilde{N}^{i_j}_{t}} \phi |\tilde{\mathrm{II}}|^2 < \infty.
\end{align*} 
By Fatou's Lemma, for each $0 \leq r<s$,
\begin{align*}
	\int_{r}^{s} \int_{\tilde{N}_t} \phi |\mathrm{II}_{\tilde{N}_t}|^2 \leq \liminf_{i\to \infty} \int_{r}^{s} \int_{\tilde{N}^{i}_t} \phi |\tilde{\mathrm{II}}|^2.
\end{align*} 

Now we can take the limit of the integral (\ref{ddt-integral-identity-i}) and obtain that for a.e. $0\leq r<s$,
\begin{align}
	\begin{split}
		\int_{\tilde{N}_s} \phi (H_{\tilde{N}_s} - |h| )^2 & - \int_{\tilde{N}_r} \phi (H_{\tilde{N}_r} - |h| )^2 \\
								   &\leq - 2 \int_{r}^{s} \int_{\tilde{N}_t} \phi \left( \frac{|\tilde{\nabla }^{N} (H_{\tilde{N}_t} -|h|)|^2}{(H_{\tilde{N}_t} - |h|)^2} + \mathrm{Ric}(\tilde{\nu }, \tilde{\nu }) + |\mathrm{II}_{\tilde{N}_t}|^2 \right) \\
								   &\ \ + 2 \int_{r}^{s} \int_{\tilde{N}_{t}} \phi |\nabla h| + \int_{r}^{s} \int_{\tilde{N}_t} \phi H_{\tilde{N}_t}(H_{\tilde{N}_t} - |h|).
	\end{split}
\end{align}

For a.e. $t \geq 0$, we know that $\tilde{N}_{t} = N_{t} \times \mathbb{R} \subset M \times \mathbb{R}$ is a cylinder, where the geometric quantities are vertical invariant. By the normalization of $\phi $, we obtain
\begin{align}\label{monotone-H-h-1}
	\begin{split}
		\int_{N_s}  (H_{N_s} - |h| )^2 & - \int_{{N}_r}  (H_{{N}_r} - |h| )^2 \\
								   &\leq - 2 \int_{r}^{s} \int_{{N}_t}  \left( \frac{|{\nabla }^{N} (H_{{N}_t} -|h|)|^2}{(H_{{N}_t} - |h|)^2} + \mathrm{Ric}({\nu }, {\nu }) + |\mathrm{II}_{{N}_t}|^2 \right) \\
								   &\ \ + 2 \int_{r}^{s} \int_{{N}_{t}}  |\nabla h| + \int_{r}^{s} \int_{{N}_t}  H_{{N}_t}(H_{{N}_t} - |h|).
	\end{split}
\end{align}

In $3$-dimensional case, we recall the following weak Gauss-Bonnet formula.
\begin{lem}[{\cite[Lemma 5.4]{HI01}}]\label{lem-weak-Gauss-Bonnet}
	Suppose $N$ is a compact $C^1$-surface in a $3$-manifold, satisfying $\int_{N}|\mathrm{II}|^2< \infty$. Then
	\begin{align*}
		\int_{N} K_{12}+ \lambda _1 \lambda _2 = 2 \pi \chi (N),
	\end{align*}
	where $\chi $ is the Euler characteristic, $\lambda _1, \lambda _2$ are eigenvalues of $\mathrm{II}$ w.r.t. the induced metric, and $K_{12}$ is the sectional curvature of the ambient manifold evaluated on $T_xN$.
\end{lem}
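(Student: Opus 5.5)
The plan is to reduce the weak statement to the classical Gauss–Bonnet theorem by approximating $N$ with smooth surfaces. The reduction uses the Gauss equation in the form $K_N = K_{12} + \lambda_1\lambda_2$, where $K_N$ is the intrinsic Gauss curvature. The starting point is the observation already recorded before the statement: a $C^1$-surface with $|\mathrm{II}|\in L^2$ is $W^{2,2}\cap C^1$. So locally $N$ is the graph of a function $w\in C^{1}\cap W^{2,2}$ over a smooth reference surface, or over a coordinate plane in normal coordinates. We cover $N$ by finitely many such graph patches and use a partition of unity. Mollifying each $w$ yields smooth $w_k\to w$ in $C^1\cap W^{2,2}$. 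Because the convergence is in $C^1$, the patched surfaces $N_k$ are smooth, embedded, and compact for large $k$, and they are diffeomorphic to $N$. One can also mollify the global $C^1$ graph function over a fixed smooth surface $C^1$-close to $N$, which exists since $N$ is $C^1$. By classical Gauss–Bonnet together with the Gauss equation,
\begin{align*}
	\int_{N_k} K_{12}+\lambda^k_1\lambda^k_2 \, dA_k = 2\pi\chi(N_k)=2\pi\chi(N).
\end{align*}

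The next step is to pass to the limit in each term. Since $K_{12}$ depends continuously on the point and the tangent plane, and $N_k\to N$ in $C^1$, we get $\int_{N_k}K_{12}\to\int_N K_{12}$. For the other term, $\lambda_1\lambda_2=\det(\mathrm{II})$ with respect to the induced metric. In graph coordinates this is a quadratic expression in the second derivatives of $w$, with coefficients depending continuously on $(x,w,\nabla w)$. We need to check that the weak second fundamental form in the sense defined above coincides a.e. with this expression computed from the weak Hessian of $w$. This follows by integrating by parts in the defining identity, which is legitimate for $W^{2,2}$ graphs. Then $D^2w_k\to D^2w$ in $L^2$ and the coefficients converge uniformly, so $\det\mathrm{II}_k\to\det\mathrm{II}$ in $L^1$. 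Combined with the uniform convergence of the area elements, this gives $\int_{N_k}\lambda_1^k\lambda_2^k\to\int_N\lambda_1\lambda_2$, and the identity follows.

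The main obstacle is the regularity step. We must show that a $C^1$ surface with an $L^2$ weak second fundamental form is locally a $W^{2,2}$ graph whose distributional Hessian reproduces $\mathrm{II}$. The plan is to test the defining identity with tensors $p_{jk}$ built from $\nu$-directions times test functions. Writing $\nu$ in terms of $\nabla w$, the identity expresses the weak derivatives of $\nabla w/\sqrt{1+|\nabla w|^2}$ through $\mathrm{II}$ and $H$. Since $\nabla w$ is continuous, this shows $\nabla w\in W^{1,2}$; here we lean on the discussion cited from \cite[Section 5]{HI01}. A secondary point is the partition-of-unity gluing: mollifying a single global graph function over a fixed smooth surface avoids any compatibility issues between patches.
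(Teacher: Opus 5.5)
Your proposal is correct and follows the argument behind the cited \cite[Lemma 5.4]{HI01}; the paper itself gives no proof beyond that citation. That argument approximates $N$ in $C^1\cap W^{2,2}$ by smooth surfaces, applies the classical Gauss--Bonnet theorem through the Gauss equation $K_N=K_{12}+\lambda_1\lambda_2$, and passes to the limit using that $\lambda_1\lambda_2$ is quadratic in second derivatives and therefore converges in $L^1$. Mollifying a single normal-graph function over a fixed smooth surface $C^1$-close to $N$ is the right way to avoid the patching problem you raised.
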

Define the weak sectional curvature of surface $N_t$ by $K:= K_{12}+ \lambda _1 \lambda _2$. Applying this lemma to our surfaces $N_t \subset M$, the same proof of {\cite[Lemma 5.5]{HI01}} implies
\begin{lem}\label{lem-W22-Nt}
For each $0\leq t \leq T$, 
\begin{align*}
	\int_{N_t} |\mathrm{II}|^2 \leq C(T).
\end{align*} 
\end{lem}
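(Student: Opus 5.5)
Following \cite[Lemma 5.5]{HI01}, I will turn the integrated inequality (\ref{monotone-H-h-1}) into a bound on $\int_{N_t}|\mathrm{II}|^2$. First I treat a.e.\ $t$, then all $t\in[0,T]$ by lower semicontinuity. The point is that (\ref{monotone-H-h-1}) controls $\int_r^s\int_{N_t}|\mathrm{II}|^2$ only up to $\int_{N_s}(H-|h|)^2$, which is bounded, and the weak Gauss--Bonnet formula (Lemma \ref{lem-weak-Gauss-Bonnet}) converts $|\mathrm{II}|^2$ into $H^2$ plus topological and ambient-curvature terms. Since $H=|\nabla u|+|h|$ is locally bounded on the compact region swept out up to time $T$, by (\ref{u-Lip-bound}) and the decay of $h$, what remains is to control $\chi(N_t)$ and the area.

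\textbf{Step 1 (area).} For $0\le t\le T$ I bound $|N_t|\le C(T)$. One way is to use $E_t\cup G_{R(T)}$ as a competitor in the minimizing property of $E_t$, exactly as in (\ref{N_t-area-bound-C(T)}); another is to let $\phi$ be trivial in (\ref{limit-area-increasing}). Either gives $|N_t|\le |\partial G_{R(T)}|+C$, which is independent of $t$.

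\textbf{Step 2 (a.e.\ $t$).} For a.e.\ $t$, (\ref{liminf-estimates}) gives $\int_{N_t}|\mathrm{II}|^2<\infty$, so Lemma \ref{lem-weak-Gauss-Bonnet} applies. Writing $|\mathrm{II}|^2=H^2-2\lambda_1\lambda_2$, I get
\begin{align*}
\int_{N_t}|\mathrm{II}|^2=\int_{N_t}\left(H^2+2K_{12}\right)-4\pi\chi(N_t)\le C(T)\left(\sup_{G_{R(T)}}H^2+\sup_{G_{R(T)}}|\mathrm{Rm}|\right)|N_t|-4\pi\chi(N_t).
\end{align*}
The bound on $|\nabla u|$ on $N_t\subset \bar G_{R(T)}$ comes from (\ref{u-Lip-bound}) with $r$ fixed. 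It remains to bound $-\chi(N_t)$ from above, that is, to bound the genus uniformly. This is the main obstacle. As in \cite{HI01}, I plan to use that $N_t$ is $C^{1,\alpha}$ with locally uniform $C^{1,\alpha}$ bounds on the compact set $\bar G_{R(T)}$, by Lemmas \ref{lem-local-C1alpha} and \ref{lem-limit-surface-C1alpha}. I will cover $\bar G_{R(T)}$ by finitely many balls of a fixed radius $r_0(T)$, chosen so that inside each ball every such surface is a union of graphs over planes. The number of sheets in each ball is bounded by the area bound together with a lower density bound coming from the graph representation. A surface assembled from boundedly many graphical disks over a fixed finite cover has topology bounded in terms of the number of pieces, so $|\chi(N_t)|\le C(T)$.

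\textbf{Step 3 (all $t$).} For arbitrary $t\in[0,T]$, including jump times and $N_t^+$, I approximate by $t_j\nearrow t$ (or $t_j\searrow t$ for $N_t^+$) with $t_j$ good times. Then $N_{t_j}\to N_t$ locally in $C^1$, by the same graphical convergence as in Lemma \ref{lem-limit-surface-C1alpha}. Lower semicontinuity of $\|\mathrm{II}\|_{L^2}$ under $C^1$ convergence (\cite[Section 5]{HI01}) then gives $\int_{N_t}|\mathrm{II}|^2\le\liminf_j\int_{N_{t_j}}|\mathrm{II}|^2\le C(T)$.
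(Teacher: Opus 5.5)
Your proposal is correct and follows essentially the paper's route, which defers to the proof of \cite[Lemma 5.5]{HI01} combined with the weak Gauss--Bonnet formula. For a.e.\ $t$ one writes $\int_{N_t}|\mathrm{II}|^2=\int_{N_t}(H^2+2K_{12})-4\pi\chi(N_t)$, bounds $H$ and $|N_t|$ on $[0,T]$, controls $-\chi(N_t)$ through the uniform $C^{1,\alpha}$ estimates, and then reaches every $t$ by lower semicontinuity under $C^1$ convergence. The one slip is in Step 1: (\ref{limit-area-increasing}) by itself only bounds earlier areas by later ones, so it does not give an upper bound on $|N_t|$; your competitor argument does, and it suffices.
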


As a result, we have the following growth formula.

\begin{prop}\label{prop-growth-a.e.}
	Let $(M^3, g, h)$ be a complete, connected asymptotically flat triple without boundary, and $E_0$ a precompact open set with $C^2$-boundary. Let $(E_t)_{t>0} \subset (M \setminus E_0)$ a weak solution of the $(|h|g)$-IMCF with initial condition $E_0$. Then for a.e. $r \geq 0$ and all $s> r$,
	\begin{align}\label{monotone-H-h-2}
	\begin{split}
		\int_{N_s}  &(H_{N_s} - |h| )^2  - \int_{{N}_r}  (H_{{N}_r} - |h| )^2 \\
					       &\leq  \int_{r}^{s} \int_{N_t} \left( 2|\nabla h| -R -\frac{1}{2} H^2_{N_t}- \frac{1}{2}(\lambda _1 - \lambda _2)^2 -H_{N_t} |h| \right) + 4 \pi  \int_{r}^{s} \chi (N_t) \\
					       &\ \ - 2 \int_{r}^{s} \int_{{N}_t}   \frac{|{\nabla }^{N} (H_{{N}_t} -|h|)|^2}{(H_{{N}_t} - |h|)^2} \\
					       &= -\frac{1}{2} \int_{r}^{s} \int_{N_t} \left(  (H_{N_t}-|h|)^2 + (\lambda _1-\lambda _2)^2 \right) + 4 \pi \int_{r}^{s} \chi (N_t)\\
					       &\ \ - \int_{r}^{s} \int_{N_t} \left( R + \frac{3}{2} h ^2 - 2|\nabla h| \right) - 2 \int_{r}^{s} \int_{N_t} |h| (H_{N_t}- |h|)  \\
					       &\ \ - 2 \int_{r}^{s} \int_{{N}_t}   \frac{|{\nabla }^{N} (H_{{N}_t} -|h|)|^2}{(H_{{N}_t} - |h|)^2} .
	\end{split}
\end{align}
\end{prop}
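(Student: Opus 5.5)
The starting point will be the limiting inequality (\ref{monotone-H-h-1}), which holds for a.e.\ $0\le r<s$. The plan has three steps: rewrite its curvature terms using the weak Gauss–Bonnet formula, regroup algebraically, and then upgrade from ``a.e.\ $s$'' to ``all $s>r$'' using semicontinuity across jump times.

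\emph{Gauss–Bonnet step.} For a.e.\ $t$, $N_t$ is a compact $C^{1,\alpha}$ surface. Lemma \ref{lem-W22-Nt} gives $\int_{N_t}|\mathrm{II}|^2<\infty$, so Lemma \ref{lem-weak-Gauss-Bonnet} applies. I will use the pointwise identities
\[
\mathrm{Ric}(\nu,\nu)=\tfrac12 R-K_{12},\qquad |\mathrm{II}|^2=\lambda_1^2+\lambda_2^2,\qquad H=\lambda_1+\lambda_2 .
\]
These give
\[
2\mathrm{Ric}(\nu,\nu)+2|\mathrm{II}|^2=R-2(K_{12}+\lambda_1\lambda_2)+H^2+|\mathrm{II}|^2 .
\]
Writing $|\mathrm{II}|^2=\tfrac12H^2+\tfrac12(\lambda_1-\lambda_2)^2$ then yields
\[
-2\int_{N_t}\big(\mathrm{Ric}(\nu,\nu)+|\mathrm{II}|^2\big)=-\int_{N_t}\Big(R+\tfrac32H^2+\tfrac12(\lambda_1-\lambda_2)^2\Big)+4\pi\chi(N_t).
\]
The Ricci term is justified in the weak setting because $\mathrm{Ric}(\tilde\nu,\tilde\nu)$ was already identified in the limit, and $\tilde N_t=N_t\times\mathbb R$ is a cylinder.

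\emph{Regrouping step.} Substituting into (\ref{monotone-H-h-1}) and adding $H(H-|h|)$ gives the first expression in (\ref{monotone-H-h-2}):
\[
-\tfrac32H^2+H^2-H|h|=-\tfrac12H^2-H|h| .
\]
The second expression follows from the elementary identity
\[
-\tfrac12H^2-H|h|=-\tfrac12(H-|h|)^2-\tfrac32h^2-2|h|(H-|h|),
\]
which is checked by expanding the right side. Both steps are routine once $N_t$ has $L^2$ second fundamental form. Measurability of $t\mapsto\chi(N_t)$ follows from the a.e.\ $C^{1}$ convergence and the fact that the topology is locally constant in $t$ away from a countable set.

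\emph{Extension to all $s>r$.} This is the main obstacle, since the inequality so far is only known for a.e.\ $s$. For a general $s$, I will take $s_k\searrow s$ (or $s_k\nearrow s$) through good times. All right-hand side integrals in $t$ are continuous in $s$ because the integrands are locally $L^1$ in $t$ by (\ref{integral-C(T)}) and Lemma \ref{lem-W22-Nt}. For the left side I need lower semicontinuity
\[
\int_{N_s}(H-|h|)^2\le\liminf_k\int_{N_{s_k}}(H-|h|)^2 .
\]
I will try to obtain it from the $C^{1,\alpha}$ convergence $N_{s_k}\to N_s$ (Lemma \ref{lem-limit-surface-C1alpha}), combined with weak convergence $H_k\nu_k\rightharpoonup H\nu$ and the uniform convergence of $|h|$; this is the argument of HI01 Lemma 5.6. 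The subtle point is a jump time, where $N_s$ and $N_s^+$ differ. At such a time I would approach from below, where $N_{s_k}\to N_s$, and use that $H=|h|$ on $N_s^+\setminus N_s$ by (\ref{mean-curv-hull}) and Lemma \ref{lem-optimizing-property}. This means no extra $(H-|h|)^2$ mass appears on $N_s^+\setminus N_s$, so the inequality holds for both $N_s$ and $N_s^+$.
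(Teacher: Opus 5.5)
Your proposal follows the paper's proof. The Gauss equation, the weak Gauss--Bonnet formula (Lemma \ref{lem-weak-Gauss-Bonnet}, justified by Lemma \ref{lem-W22-Nt}) and $|\mathrm{II}|^2=\tfrac12H^2+\tfrac12(\lambda_1-\lambda_2)^2$ turn (\ref{monotone-H-h-1}) into (\ref{monotone-H-h-2}) for a.e.\ $r<s$. Approximating $N_s$ from below ($t\nearrow s$, where $N_t\to N_s$ in $C^{1,\alpha}$), together with lower semicontinuity of $\int_{N_t}(H-|h|)^2$, then gives all $s>r$.

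Your closing remark about $N_s^+$ is not needed, since the statement only concerns $N_s=\partial E_s$, so you should drop it. Its justification is also shaky as written: (\ref{mean-curv-hull}) concerns $\partial E'$, while the paper only proves $E_s^+\subset E_s'$ (Lemma \ref{E_t+-vs-E_t'}).
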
 
\begin{proof}
By the Gauss equation, for a.e. $t \geq 0$, we have
\begin{align*}
	2(\mathrm{Ric}(\nu ,\nu ) + |\mathrm{II}|^2) = R + |\mathrm{II}|^2+ H^2 - 2K .
\end{align*} 
Using Lemma \ref{lem-weak-Gauss-Bonnet}, Lemma \ref{lem-W22-Nt} and $|\mathrm{II}_{N_t}|^2 = \frac{1}{2}H_{N_t}^2+ \frac{1}{2}(\lambda _1 - \lambda _2)^2$, for a.e. $0\leq r<s$, (\ref{monotone-H-h-1}) implies the growth formula. As in the proof of Lemma \ref{lem-limit-surface-C1alpha}, for each $s>0$, $N_{t} \to N_{s}$ in $C^{1, \alpha }$ as $t \nearrow s$. Then the lower semicontinuity of $\int_{N_t}(H_{N_t}-|h|)^2$ implies (\ref{monotone-H-h-2}) holds for a.e. $r \geq 0$ and all $s>r$.  
\end{proof}

We wish to extend this proposition to $r=0$. In our setting, to avoid some technical smoothing issues, by revoking the construction of the weak solutions in Theorem \ref{h-IMCF-existence}, we can alternatively construct weak $(h_{\delta }g)$-IMCF with initial condition $E_0$ firstly, and apply the compactness to obtain one weak $(|h|g)$-IMCF. The same proposition implies that (\ref{monotone-H-h-2}) holds for weak $(h_{\delta }g)$-IMCF for a.e. $r \geq 0$ and all $s>r$. Once (\ref{monotone-H-h-2}) is established for weak $(h_{\delta }g)$-IMCF at $r=0$, by the lower semicontinuity of all the quantities involved, together with the fact that
\begin{align*}
	\int_{\partial E_0} (H- h_{\delta })^2 \to \int_{\partial E_0} (H- |h|)^2 \text{ as } \delta \to 0,
\end{align*} 
we obtain (\ref{monotone-H-h-2}) for such constructed weak solution at $r=0$.

For each $\delta >0$, we will modify the proof of \cite[Proposition 5.7]{HI01} to extend (\ref{monotone-H-h-2}) to $r=0$ for one constructed weak $(h_{\delta }g)$-IMCF, by employing more technical smoothing arguments. In the following, the outward optimizing hull will be taken w.r.t. weight $h_{\delta }$ without explicitly indicated. First observe that by (\ref{mean-curv-hull}) and the regularity result, $\partial E_0'$ is $C^{1,1}$ with $H- h_{\delta } \geq 0$ in the weak sense and 
\begin{align*}
	\int_{\partial E_0'}(H- h_{\delta })^2 \leq \int_{\partial E_0} (H- h_{\delta })^2.
\end{align*} 
So it suffices to prove (\ref{monotone-H-h-2}) for $\partial E_0'$.

We need the following technical lemma.
\begin{lem}\label{modified-mcf-smoothing}
Suppose $E$ is precompact, $E' = E$ and $\partial E$ is $C^{1,1}$. Then either $\partial E$ is a smooth generalized apparent horizon in the sense that $H= h_{\delta }$, or $\partial E$ can be approximated in $C^1$ from inside by smooth sets of the form $\partial E_{\tau }$ with $H- h_{\delta }>0$, $E_{\tau }' = E_{\tau }$, and
\begin{align*}
	\sup_{\tau } \sup_{\partial E_{\tau }} |\mathrm{II}| < \infty,\quad \int_{\partial E_{\tau }} (H- h_{\delta })^2 \to \int_{\partial E} (H- h_{\delta })^2 \text{ as } \tau \to 0.
\end{align*} 
\end{lem}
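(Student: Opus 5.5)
This is the analogue of \cite[Lemma 5.6]{HI01}, where $\partial E$ is smoothed by mean curvature flow. Here I would instead run a modified mean curvature flow adapted to the weight $h_{\delta}$:
\begin{align*}
	\frac{\partial F}{\partial \tau} = -(H - h_{\delta})\,\nu ,
\end{align*}
starting from $\partial E$. This is the $L^2$-gradient flow of the functional $|\partial F| - \int_F h_{\delta}$. Since $\partial E$ is $C^{1,1}$ and $h_\delta$ is smooth, standard quasilinear parabolic theory gives a solution $\partial E_\tau$ for $0<\tau<\tau_0$. This solution is smooth for $\tau>0$, converges to $\partial E$ in $C^{1,\beta}$ as $\tau\to 0$, and satisfies $\sup_\tau \sup |\mathrm{II}| \le C$ because the initial curvature is bounded. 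Since $H-h_\delta\geq 0$ weakly on $\partial E$ by (\ref{mean-curv-hull}) with $E=E'$, the flow moves inward, so $E_\tau\subset E$.

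The key computation is the evolution of $f:=H-h_\delta$ along this flow:
\begin{align*}
	\partial_\tau f = \Delta f + \left( |\mathrm{II}|^2 + \mathrm{Ric}(\nu,\nu) - \nabla_\nu h_\delta \right) f .
\end{align*}
The zeroth-order coefficient is bounded, so the strong maximum principle applies. Either $f\equiv 0$ for $\tau>0$, in which case letting $\tau\to0$ and using the smooth convergence away from the obstacle shows that $\partial E$ itself satisfies $H=h_\delta$ and is smooth, which is the first alternative. Otherwise $f>0$ everywhere on $\partial E_\tau$ for every $\tau>0$. The statement $f\ge0$ at $\tau=0$ is understood in the weak/viscosity sense; I would justify it by approximating $E$ by slightly smoothed initial data, or by the standard barrier argument in \cite{HI01}. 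Inward motion also gives $E_\tau\subset E$.

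Next, $E_\tau' = E_\tau$. Since $H-h_\delta>0$ on $\partial E_\tau$, the mean-convex foliation $\{\partial E_{\tau'}\}_{\tau'<\tau}$ near $\partial E_\tau$, together with $E'=E$ beyond it, acts as a calibration. This mirrors the argument of Lemma \ref{smooth-flow-lem}: $\mathrm{div}\,\nu = H \ge h_\delta$ on the foliated region $E\setminus E_\tau$. It shows that any $F\supset E_\tau$ satisfies
\begin{align*}
	|\partial^*F| - \int_F h_\delta \ \ge\ |\partial E_\tau| - \int_{E_\tau} h_\delta .
\end{align*}
For $F\not\subset E$, I would first replace $F$ by $F\cap E$ and use the outward optimizing property of $E$, then apply the calibration inside $E$. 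Strictness follows from $f>0$.

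Finally, for the convergence $\int_{\partial E_\tau} (H-h_\delta)^2 \to \int_{\partial E}(H-h_\delta)^2$, I would use the uniform curvature bound together with $C^1$ convergence. This gives $W^{2,p}$ bounds and hence weak $L^2$ convergence of $H$, so lower semicontinuity yields the $\liminf$ inequality. For the upper bound, I would integrate the evolution equation:
\begin{align*}
	\frac{d}{d\tau}\int f^2 = -2\int |\nabla f|^2 + \int \left( 2(|\mathrm{II}|^2+\mathrm{Ric}-\nabla_\nu h_\delta) - H f \right) f^2 \le C\int f^2 ,
\end{align*}
which gives $\limsup_{\tau\to0}\int_{\partial E_\tau} f^2\le \int_{\partial E} f^2$.

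The main obstacle is the short-time existence and the bound $\sup|\mathrm{II}|$ for merely $C^{1,1}$ initial data, along with interpreting the maximum principle at $\tau=0$. I would handle this by approximating $\partial E$ by smooth hypersurfaces with uniformly bounded curvature, following \cite{HI01}, applying the estimates uniformly, and passing to a diagonal limit.
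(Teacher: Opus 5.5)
Your proposal is correct and follows essentially the same route as the paper: you run the modified flow $\partial_\tau F=-(H-h_\delta)\nu$ from smooth inner approximations with uniformly bounded $|\mathrm{II}|$, use the strong maximum principle dichotomy for $H-h_\delta$, combine the $e^{C\tau}$ bound on $\int(H-h_\delta)^2$ with lower semicontinuity, and use the foliation normal as a calibration together with $E'=E$ (your reduction $F\mapsto F\cap E$ is equivalent to the paper's competitor $(F\setminus\{-\tau\le v\le 0\})\cup E$). Two small remarks: the zeroth-order coefficient in the evolution of $H-h_\delta$ should be $|\mathrm{II}|^2+\mathrm{Ric}(\nu,\nu)+\nabla_\nu h_\delta$ with a plus sign, since $\partial_\tau h_\delta=-(H-h_\delta)\nabla_\nu h_\delta$, which is harmless because only boundedness is used; and strictness of $E_\tau$ for competitors $F\not\subset E$ comes from $E=E'$ being \emph{strictly} outward optimizing, not from $f>0$.
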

\begin{proof}
	The proof is the same as in \cite[Lemma 5.6]{HI01}, except that we employ the modified mean curvature flow $\frac{\partial x}{\partial \tau } = - (H- h_{\delta }) \nu $. We first mollify $\partial E$ from inside in $C^{1}\cap W^{2,2}$ by a sequence of smooth surfaces $(Q^{i})_{i \geq 1}$ with uniformly bounded $|\mathrm{II}|$. Let $(Q ^{i}_{\tau })_{0 \leq \tau < \tau _i}$ be the modified mean curvature flow of $Q^i$.  We have the following evolution equations along classical modified mean curvature flow: (cf. Section \ref{appen-sub-classical})
	\begin{align*}
		\frac{\partial }{\partial \tau }|\mathrm{II}|^2 &\leq \Delta |\mathrm{II}|^2 + 2 |\mathrm{II}|^4 + C |\mathrm{Rm}| |\mathrm{II}|^2 + C |\nabla \mathrm{Rm}| |\mathrm{II}| + C |h_{\delta }| |\mathrm{II}|^3 + C |\nabla ^2 h_{\delta }| |\mathrm{II}|,\\
		\frac{\partial }{\partial  \tau }(H- h_{\delta }) &=  \Delta ^{N} (H- h_{\delta }) + (H- h_{\delta }) \mathrm{Ric}(\nu ,\nu ) + (H- h_{\delta }) |\mathrm{II}|^2 - (H- h_{\delta }) \nabla _{\nu } h_{\delta }.
	\end{align*} 
	So $Q^{i}_{\tau }$ exists with uniformly bounded $|\mathrm{II}|$ for a uniform time independent of $i$ (depending on metric $g$ and smooth function $h_{\delta }$ ). Passing smoothly to limits we obtain a flow $(Q_{\tau })_{0< \tau \leq \tau _0}$ with uniformly bounded $|\mathrm{II}|$. Using the evolution of mean curvature, we obtain $\int_{Q_\tau } (H- h_{\delta })^2 \leq e ^{C \tau } \int_{\partial E} (H- h_{\delta })^2$, where $C = C(g, \|h_{\delta }\|_{C^1})$. Furthermore, $Q_{\tau } \to \partial E$ in $C^1$. By lower semicontinuity, we have the convergence of $\int_{\partial E_{\tau }}(H-h_{\delta })^2 $. By a variation of this argument and strong maximum principle, either $H- h_{\delta } \equiv 0$ or $H- h_{\delta }>0$ on $Q_{\tau }$. In the latter case, $Q_{\tau }$ smoothly foliates an interior neighborhood of $\partial E$. Using $\nu _{Q_\tau }$ as a calibration, then the level set description is given by $\mathrm{div}(\nu_{Q_\tau } ) = h_{\delta } + \frac{1}{|\nabla v|}$. Let $E_{\tau } = \{ v < - \tau \} $ be the precompact set bounded by $Q_{\tau }$. For any $F$ containing $E_{\tau }$, applying the divergence theorem to $(F \setminus E_{\tau }) \cap \{ v \leq 0\} $, we obtain
	\begin{align*}
		|\partial E_{\tau }| + \int_{F \cap \{-\tau \leq v \leq 0\} } h_{\delta } \leq |F \cap Q_0| + |\partial ^* F \cap \{-\tau < v< 0\}| .
	\end{align*} 
	Using $E' = E$ and $(F \setminus \{-\tau \leq v \leq 0\}) \cup E $ as a competitor, we see that $E_{\tau }' = E_{\tau }$ with weight $h_{\delta }$.
\end{proof}

Case 1. If $H- h_{\delta }>0$ somewhere on $\partial E_0'$, then by Lemma \ref{modified-mcf-smoothing}, there is a family of smooth surfaces of the form $\partial E_{\tau }$ approximating $\partial E_0'$ from inside in $C^1$, with $H- h_{\delta }>0$ and $E_{\tau }' = E_{\tau }$. By Theorem \ref{thm-smooth-uniqueness}, there exists a proper solution $(E_{\tau ,t})_{t>0}$ of the $(h_{\delta }g)$-IMCF, as sublevelsets of $u^{\tau }$, with initial condition $E _{\tau, 0 } = E_{\tau }$, and $u^{\tau }$ is smooth for a short time. So (\ref{monotone-H-h-2}) holds at $r=0$ for $(E_{\tau ,t})_{t \geq 0}$.
Up to a subsequence, taking $\tau_j \to 0 $,  $u ^{\tau _j} \to u$ locally uniformly on $M \setminus E_0'$ for one weak solution $u$, with $C^1$-convergence of the level sets except at countable jump times. By Lemma \ref{modified-mcf-smoothing},
\begin{align*}
	\int_{\partial E_{\tau }} (H- h_{\delta })^2 \to \int_{\partial E_0'} (H- h_\delta )^2.
\end{align*} 
Using the lower semicontinuity of all quantities involved in (\ref{monotone-H-h-2}), we obtain (\ref{monotone-H-h-2}) at $r=0$ for $E_0'$ and hence for $E_0$ in this case.

Case 2. $H- h_\delta \equiv 0$ on $\partial E_0'$. In this case, $\partial E_0'$ is a smooth surface. We can do a conformal perturbation by considering $g_i = w_i g$ for smooth functions $w_i$ satisfying $w_i \to 1$ in $C^{\infty}(M)$, $w_i =1$ and $\frac{\partial w_i}{\partial \nu } >0$ on $\partial E_0'$, and $w_i>1$ in $M \setminus E_0'$. Then $H_i - w_i ^{-\frac{3}{2}} h_\delta  >0$ on $\partial E_0'$ and $E_0'$ is still a strictly outward optimizing hull w.r.t. $g_i$ and weight $w_i ^{-\frac{3}{2}} h_\delta $. This is reduced to Case 1. Therefor (\ref{monotone-H-h-2}) holds at $r=0$ for $g_i$. Similarly, passing $i\to \infty$, using the lower semicontinuity of all quantities involved and the fact that
\begin{align*}
	\int_{\partial E_0'} (H_i - h_\delta )^2 \to \int_{\partial E_0'} (H - h_\delta )^2,
\end{align*} 
we obtain (\ref{monotone-H-h-2}) at $r=0$ for $E_0'$, and hence for $E_0$. 

Thus, we have proved
\begin{thm}\label{thm-growth}
	Let $(M^3, g, h)$ be a complete, connected asymptotically flat triple without boundary, and $E_0$ a precompact open set with $C^2$-boundary. There exists a weak solution of the $(|h|g)$-IMCF with initial condition $E_0$ such that for all $0\leq r< s$,
\begin{align}\label{monotone-H-h}
	\begin{split}
		\int_{N_s}  (H_{N_s} - |h| )^2 & - \int_{{N}_r}  (H_{{N}_r} - |h| )^2 \\ 
					       & \leq -\frac{1}{2} \int_{r}^{s} \int_{N_t} \left(  (H_{N_t}-|h|)^2 + (\lambda _1-\lambda _2)^2 \right) + 4 \pi \int_{r}^{s} \chi (N_t)\\
					       &\ \ - \int_{r}^{s} \int_{N_t} \left( R + \frac{3}{2} h ^2 - 2|\nabla h| \right) - 2 \int_{r}^{s} \int_{N_t} |h| (H_{N_t}- |h|)  \\
					       &\ \ - 2 \int_{r}^{s} \int_{{N}_t}   \frac{|{\nabla }^{N} (H_{{N}_t} -|h|)|^2}{(H_{{N}_t} - |h|)^2} .
	\end{split}
\end{align}
\end{thm}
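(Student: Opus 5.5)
The plan is to assemble Theorem \ref{thm-growth} from three ingredients already in place: Proposition \ref{prop-growth-a.e.} (the growth inequality for a.e. $r$ and all $s>r$), the extension to $r=0$ carried out above via Lemma \ref{modified-mcf-smoothing} and the two cases, and an approximation $h_\delta\to|h|$. The weak solution is built in two layers. For fixed $\delta>0$, I take the weak $(h_\delta g)$-IMCF produced in Case 1 or Case 2 as a limit of the short-time smooth solutions $u^{\tau_j}$ from Theorem \ref{thm-smooth-uniqueness}. I then let $\delta\to0$ and use the compactness Lemma \ref{lemma-compactness}, with $h_\delta g\to|h|g$ uniformly, to extract a weak $(|h|g)$-IMCF $u$. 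The inequality (\ref{monotone-H-h}) will be proved for this particular $u$.

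\textbf{Step 1: all $0\le r<s$ for fixed $\delta$.} For the $(h_\delta g)$-flow I need (\ref{monotone-H-h}) (with $h_\delta$ in place of $|h|$) for every $0\le r<s$, not just a.e. $r$. At $r=0$ this is the Case 1/Case 2 argument. For a general $r>0$ I would argue as in \cite[Proposition 5.7]{HI01}. Choose $r_k\nearrow r$ at which the inequality holds (these exist by Proposition \ref{prop-growth-a.e.}). As in the proof of Lemma \ref{lem-limit-surface-C1alpha}, $N_{r_k}\to N_r$ in $C^{1,\alpha}$. The time integrals on the right-hand side are continuous in $r$, because of the bound $\int_{N_t}|\mathrm{II}|^2\le C(T)$ from Lemma \ref{lem-W22-Nt} and the area bounds. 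That leaves the term $\int_{N_r}(H-h_\delta)^2$ to be handled by lower semicontinuity, and it enters with a minus sign, so semicontinuity alone points the wrong way. The cleaner route is to restart the flow at time $r$ from $E_r$, which equals $E_r'$ by Lemma \ref{lem-optimizing-property}, and repeat the $r=0$ argument with $E_r$ as the initial set. This is legitimate provided the restarted solution agrees with the original one after time $r$. Since uniqueness is not known, I would apply this only at the level of the approximating flows and choose the subsequences diagonally. Alternatively, I could accept that the statement only needs to hold for this one constructed solution, and use $r=0$ together with a.e. $r$, recovering every $r$ by restarting.

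\textbf{Step 2: $\delta\to0$.} Write both sides for $h_\delta$ and pass to the limit using the following facts.
\begin{itemize}
\item $h_\delta\to|h|$ uniformly and $|\nabla h_\delta|\le|\nabla h|+C\delta$.
\item The $s$-term and the time-integrals of $(H-h_\delta)^2$, $(\lambda_1-\lambda_2)^2$ and the logarithmic gradient term are lower semicontinuous under $C^{1,\alpha}$ convergence of the level sets at non-jump times, by the argument used for (\ref{liminf-estimates}) and the weak second fundamental form.
\item $\chi(N_t)$ is locally constant and stable under $C^1$ convergence.
\item The $R$, $h^2$, $|\nabla h|$ and $|h|(H-|h|)$ terms converge by bounded convergence, using the area bound (\ref{N_t-area-bound-C(T)}).
\item At $r=0$, $\int_{\partial E_0}(H-h_\delta)^2\to\int_{\partial E_0}(H-|h|)^2$.
\end{itemize}
For $r>0$, the $r$-term $\int_{N_r}(H-h_\delta)^2$ has the wrong sign for lower semicontinuity. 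I would again avoid passing it to the limit directly: first obtain (\ref{monotone-H-h}) at $r=0$ for $u$, then restart at each $r$, using that $E_r$ is outward optimizing for $u$.

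\textbf{Expected main obstacle.} The hard point is exactly this non-uniqueness combined with the upper-semicontinuity needed at the initial time $r$. Huisken–Ilmanen get all $r$ via uniqueness, and that is unavailable here. My fallback is to state the inequality for all $s$ and a.e. $r$ (including $r=0$). I would then upgrade to every $r$ by showing that $\int_{N_r}(H-|h|)^2\le\liminf_{r_k\searrow r}\int_{N_{r_k}}(H-|h|)^2$ fails to be an issue: $N_{r_k}$ converges to $N_r^+$, and from Lemma \ref{lem-optimizing-property}(iii) one has $H-|h|=0$ on $N_r^+\setminus N_r$, which should give the comparison between the $N_r$ and $N_r^+$ energies.
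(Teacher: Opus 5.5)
Your core argument is the paper's. It uses Proposition \ref{prop-growth-a.e.} for a.e.\ $r$ and all $s>r$. The case $r=0$ is handled for a weak $(h_\delta g)$-IMCF by passing to $E_0'$, smoothing with Lemma \ref{modified-mcf-smoothing}, and invoking Theorem \ref{thm-smooth-uniqueness} in Case~1 and a conformal perturbation in Case~2. Finally $\delta\to0$ is taken via Lemma \ref{lemma-compactness}, lower semicontinuity, and $\int_{\partial E_0}(H-h_\delta)^2\to\int_{\partial E_0}(H-|h|)^2$. You are right that the $r$-term has the wrong sign for lower semicontinuity, which is why the $\delta\to0$ passage is made only at $r=0$. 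What this yields, in the paper as well, is the inequality at $r=0$ and at a.e.\ $r>0$, for all $s>r$. The paper gives no additional argument for the remaining $r$ before concluding ``for all $0\le r<s$''. So your fallback is precisely what is established, and it suffices for the monotonicity of $B$ and $m_h$ from $N_0$ that follows.

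The upgrades to every $r>0$ that you sketch do not work and should be dropped.

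\emph{Restarting at time $r$.} Lemma \ref{lem-optimizing-property} only says $E_r$ is outward optimizing, not that $E_r=E_r'$; at a jump time $E_r\subsetneq E_r^+\subset E_r'$ by Lemma \ref{E_t+-vs-E_t'}. More to the point, a flow rebuilt from smoothings of $E_r$ as in Case~1 is a new weak solution, and identifying it with $(E_t)_{t\ge r}$ is exactly the missing uniqueness. Choosing subsequences diagonally does not help. The approximants of $u$ are $\epsilon$-translating graphs whose time-$r$ slices are not smooth compact sets with $H-h_\delta>0$ and $E=E'$, so Theorem \ref{thm-smooth-uniqueness} cannot be applied to them.

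\emph{One-sided limits.} With $r_k\searrow r$ you need $\liminf_k\int_{N_{r_k}}(H-|h|)^2\le\int_{N_r}(H-|h|)^2$, which is an \emph{upper} semicontinuity. The $C^{1,\alpha}$ convergence $N_{r_k}\to N_r^+$ only gives $\int_{N_r^+}(H-|h|)^2\le\liminf_k\int_{N_{r_k}}(H-|h|)^2$. You might also grant $\int_{N_r^+}(H-|h|)^2\le\int_{N_r}(H-|h|)^2$; this would come from $H=|h|$ on $N_r^+\setminus N_r$ via the minimizing property, since Lemma \ref{lem-optimizing-property}(iii) is only an area identity. But both inequalities bound $\int_{N_r^+}(H-|h|)^2$ from above, so they cannot be combined into the comparison you need. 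Approaching from the left ($r_k\nearrow r$, $N_{r_k}\to N_r$) requires the same missing upper semicontinuity.
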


\begin{rmk}\label{rmk-C1-smoothing}
	Assuming the existence of weak flow, one can weaken the assumption that $E_0$ has $C^2$-boundary to $C^{1}$-boundary satisfying $\int_{\partial E_0} |\mathrm{II}|^2 < \infty$, in which case one can choose smooth mollifying surfaces to approximate $\partial E_0$ in $C^1 \cap W^{2,2}$. See more details in \cite[Section 5]{HI01}.
\end{rmk}

As an application, we derive the monotone property of the generalized Hawking mass:
\begin{align}\label{H-mass}
	m_{h}(N_s) = \sqrt{\frac{|N_s|}{16 \pi }} \left( 1- \frac{1}{16 \pi } \int_{N_s}(H_{N_s}- |h|)^2 \right).
\end{align}

First, for all $0 \leq r<s$,  (\ref{monotone-H-h}) shows that $\frac{d}{d s} \int_{N_s} \left( H_{N_s} - |h| \right) ^2 $ exists a.e. and  for a.e. $s \geq 0$, we have
	\begin{align*}
		\frac{d}{d s} & \int_{N_s} \left( H_{N_s} - |h| \right) ^2 + \frac{1}{2} \int_{N_s} \left( H_{N_s} - |h| \right) ^2 - 4 \pi \chi (N_s) \\
		&\leq -\int_{N_s} \left( R + \frac{3}{2} h ^2 - 2|\nabla h| \right) -2 \int_{N_s} |h| (H_{N_s}- |h|)\\
		&\ \ - \frac{1}{2} \int_{N_s} (\lambda _1 - \lambda _2)^2 - 2  \int_{{N}_s}   \frac{|{\nabla }^{N} (H_{{N}_t} -|h|)|^2}{(H_{{N}_t} - |h|)^2} .
	\end{align*} 
Recall the definition
\begin{align}\label{defn-B(s)}
		B(s)= e ^{\frac{s}{2}}\left( 1- \frac{1}{16 \pi } \int_{N_s} (H_{N_s} - |h| )^2 \right). 
	\end{align}
Then for a.e. $s \geq 0$,
\begin{align}\label{B'(s)-ineq}
	\begin{split}
		B'(s) &\geq \frac{e ^{\frac{s}{2}}}{4} \left( 2- \chi (N_s) \right) + \frac{e ^{\frac{s}{2}}}{16 \pi } \int_{N_s} \left( R + \frac{3}{2} h ^2 - 2|\nabla h| \right) \\
	&\ \ + \frac{e ^{\frac{s}{2}}}{8 \pi } \int_{N_s} |h| (H_{N_s}- |h|)  + \frac{e^{\frac{s}{2}}}{32 \pi } \int_{N_s} (\lambda _1- \lambda _2)^2 + \frac{e ^{\frac{s}{2}}}{8\pi }  \int_{{N}_s}   \frac{|{\nabla }^{N} (H_{{N}_t} -|h|)|^2}{(H_{{N}_t} - |h|)^2} .
	\end{split}
\end{align}

Notice that by taking a limit and using the $C^1$-convergence (\ref{N_t-C1alpha-conv}) , (\ref{limit-area-increasing}) implies that for a.e. $0\leq r< s$,
\begin{align}\label{e-t-Nt-monotone}
	e ^{-r} |N_r| \leq e ^{-s} |N_s|.
\end{align} 
By a similar approximating argument, this inequality holds for a.e. $r \geq 0$ and all $s>r$, and also holds at $N_0 ^{+}$. 

Let's consider the case that $\partial E_0$ is a connected component of the boundary of an exterior region in $M^3$. Then $E_0$ is strictly outward optimizing, so by Lemma \ref{E_t+-vs-E_t'}, $E_0^+ = E_0$, and it follows that $e ^{- \frac{s}{2}} \sqrt{|N_s|} $ is increasing for all $s \geq 0$.

Let's assume further that $h$ satisfies the dominant energy condition (\ref{DEC-h}). By the Connectedness Lemma \ref{lem-connected}, $\chi (N_{s}) \leq 2$.  By Lemma \ref{lem-weak-mean-curvature}, $H_{N_s} - |h| \geq 0$  $\mathcal{H}^2$-a.e. on $N_{s}$ for a.e. $s \geq 0$. Thus the RHS of (\ref{B'(s)-ineq}) is nonnegative for a.e. $s \geq 0$, which implies that $B(s)$ is increasing for all $s \geq 0$. This shows that $m_h(N_s) = e ^{-\frac{s}{2}} \sqrt{|N_s|} B(s)$ is increasing for all $s \geq 0$, and $\lim_{s\to 0} m_{h}(N_s) = m_{h}(N_0) = \sqrt{\frac{|N_0|}{16 \pi }} $. 

As a summary, we have proved the following monotonicity formula.

\begin{thm}[Monotonicity formula]\label{thm-monotonicity-connected}
	Let $(M^3, g, h)$ be a complete, connected, asymptotically flat triple without boundary. Let $E_0$ be a precompact open set with $C^{2}$-boundary, and $\partial E_0$ is a connected component of the exterior region. Then there exists a weak solution of the $(|h| g)$-IMCF with initial condition $E_0$ so that the generalized Hawking mass (\ref{H-mass}) is monotone increasing,	whenever $h$ satisfies the dominant energy condition (\ref{DEC-h}). 
\end{thm}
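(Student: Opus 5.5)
The plan is to assemble the ingredients already established in this section rather than introduce new estimates. First, I fix the weak solution provided by Theorem \ref{thm-growth}: it is built as a limit of weak $(h_\delta g)$-IMCFs, which are themselves approximated by smooth flows started from the inner approximations of Lemma \ref{modified-mcf-smoothing}. For this solution the growth inequality (\ref{monotone-H-h}) holds for all $0\le r<s$, including $r=0$. Since $\partial E_0$ is a component of the boundary of an exterior region, it is an outermost generalized apparent horizon, so $E_0$ is strictly outward optimizing and $H-|h|=0$ on $\partial E_0$. Lemma \ref{E_t+-vs-E_t'} then gives $E_0^+=E_0$, so there is no initial jump. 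Lemma \ref{lem-optimizing-property}(iii) gives $|N_0^+|=|N_0|$.

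Second, I split the Hawking mass as $m_h(N_s)=\sqrt{A(s)/16\pi}\,B(s)$ and prove that each factor is nondecreasing. For $A(s)=e^{-s}|N_s|$, I pass the smooth inequality (\ref{limit-area-increasing}) to the limit using the $C^{1,\alpha}$ convergence (\ref{N_t-C1alpha-conv}). This gives (\ref{e-t-Nt-monotone}) for a.e.\ $r$ and all $s>r$. I then extend it to $r=0$ using $E_0^+=E_0$ and the $C^1$ convergence $N_t\to N_s$ as $t\nearrow s$. For $B$, I use (\ref{monotone-H-h}) to see that $s\mapsto\int_{N_s}(H-|h|)^2$ has bounded-variation control with a one-sided derivative bound, which yields (\ref{B'(s)-ineq}) for a.e.\ $s$. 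I then check that every term on the right of (\ref{B'(s)-ineq}) is nonnegative:
\begin{itemize}
\item $2-\chi(N_s)\ge0$ holds because $N_s$ is connected. This uses the Connectedness Lemma \ref{lem-connected}(ii), which applies since the exterior region, after filling the other boundary components, is simply connected with one end by Lemma \ref{lem-topo-R3-balls}.
\item $R+\tfrac32h^2-2|\nabla h|\ge0$ is exactly the dominant energy condition (\ref{DEC-h}).
\item $|h|(H-|h|)\ge0$ holds a.e.\ by Lemma \ref{lem-weak-mean-curvature}.
\item The remaining terms are squares.
\end{itemize}

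The main obstacle is upgrading the a.e.\ inequality $B'\ge0$ to monotonicity of $B$ for \emph{all} times. The difficulty is that $\int_{N_s}(H-|h|)^2$ may jump at jump times. I would handle this with the integrated form (\ref{monotone-H-h}) directly rather than the derivative. Under the hypotheses, its right-hand side is bounded above by $-\tfrac12\int_r^s\int_{N_t}(H-|h|)^2+8\pi(s-r)$. Setting $\psi(s)=\int_{N_s}(H-|h|)^2$, this says $\psi(s)-\psi(r)\le\int_r^s(8\pi-\tfrac12\psi)$ for all $r<s$. A Gronwall-type comparison of this integral inequality shows that $e^{s/2}(16\pi-\psi(s))$ is nondecreasing for all $s\ge0$.

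Finally, I combine the two factors. Both are nondecreasing, and $\sqrt{A}\ge0$. However, $B$ may be negative, in which case the product of two nondecreasing factors need not be nondecreasing. At $s=0$ we have $B(0)=1>0$ since $H=|h|$ on $\partial E_0$. Because $B$ is nondecreasing, $B(s)\ge B(0)>0$ for all $s\ge0$. The product of two nonnegative nondecreasing functions is nondecreasing, so $m_h(N_s)$ is monotone, with $m_h(N_0)=\sqrt{|N_0|/16\pi}$.
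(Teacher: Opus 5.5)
Your proposal is correct and follows essentially the same route as the paper: you split $m_h(N_s)=\sqrt{A(s)/16\pi}\,B(s)$, get monotonicity of $A$ from (\ref{e-t-Nt-monotone}) together with $E_0^+=E_0$ (Lemma \ref{E_t+-vs-E_t'}), and get monotonicity of $B$ from the growth formula of Theorem \ref{thm-growth} using connectedness of $N_s$, the dominant energy condition, and $H-|h|\ge 0$ a.e. Two of your steps add rigor the paper only implies. Your Gronwall-type argument on the integrated inequality (\ref{monotone-H-h}), valid for all $0\le r<s$, justifies carrying $B'\ge 0$ a.e.\ across jump times, and your observation $B(s)\ge B(0)=1>0$ supplies the sign needed to multiply the two monotone factors.
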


\section{Proof of the main theorem}\label{sect-proof-main}

\subsection{Asymptotic behavior of weak solutions}
In this subsection, we discuss the asymptotic behavior of weak solutions $u$ of the $(|h| g) $-IMCF. By a weak blowdown argument, we show that $N_{t}$ becomes $C^{1, \alpha }$ close to a large coordinate sphere as $t\to \infty$. We notice that this argument also works for general higher dimensional $\sigma $-IMCF by assuming proper asymptotic behavior of $\sigma $. As a result, we will show that $\lim_{t\to \infty} m_h(N_t) \leq m_{\mathrm{ADM}}(M, g) $ by expanding the Hawking mass as the ADM mass plus lower order terms and using the monotonicity formula for the analytic control. We will mainly follow the framework of \cite[Section 7]{HI01}.

Let $\Omega $ be the asymptotically flat end of $M^3$, embedded in $\mathbb{R}^{3}$ as the complement of a compact set. Let $g$ be the pull back metric on $\Omega $, $\delta $ be the flat metric, $\nabla , \bar{\nabla }$ be the corresponding connections, and $B_r(x), D_r(x)$ be the corresponding geodesic balls. Let $(U, \tilde{\nu })$ or simply $u$ be a weak solution of the $(|h|g)$-IMCF with initial condition $E_0 \subset M \setminus \Omega $. We can regard $u$ as a weak solution on $\Omega $ and $E _{t}:= \{ u < t\} \subset \Omega $.

Fix $\lambda >0$ and define the blow down objects by
\begin{align*}
	\Omega ^{\lambda }:= \lambda \Omega , g ^{\lambda }(y) := \lambda ^2 g(\lambda ^{-1}y), h ^{\lambda }(y):= \lambda ^{-1} h(\lambda ^{-1} y), u ^{\lambda }(y) := u(\lambda ^{-1} y),  E ^{\lambda }_t:= \lambda \cdot E_t.
\end{align*} 
By the scaling property $x \mapsto \lambda x, t \mapsto t$, $u ^{\lambda }$ is a weak solution of the $(|h ^{\lambda }| g ^{\lambda })$-IMCF in $\Omega ^{\lambda }$.

\begin{lem}\label{lem-blowdown}
	Suppose that on $\Omega $, $(M, g, h)$ satisfies the asymptotically flat condition
	\begin{align*}
		|g - \delta | = o(1),\ |\bar{\nabla }g| = o (|x| ^{-1}), \ |h| = o(|x| ^{-1}),\ |\bar{\nabla }h| = o(|x| ^{-2}),
	\end{align*} 
	as $|x| \to \infty$. Let $N_t = \partial \{ u < t\} $ be a weak solution of the $(|h| g)$-IMCF for sufficiently large $t$ so that $\{ u =t\} $ is compact. Then for some constants $c_{\lambda } \to \infty$,
	\begin{align*}
		u ^{\lambda } - c_{\lambda } \to 2 \log |y|,
	\end{align*} 
	locally uniformly in $\mathbb{R}^{3} \setminus \{ 0\} $ as $\lambda \to 0$, the standard expanding sphere solution of the standard IMCF on $\mathbb{R}^{3} \setminus \{0\} $.
\end{lem}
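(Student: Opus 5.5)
We follow the blowdown argument of \cite[Lemma 7.1]{HI01}, adapted to the extra weight $|h|$. The plan has three stages: compactness of the rescaled solutions, identification of the limit, and normalization of the additive constants.

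\textbf{Compactness.} Under the rescaling, $g^\lambda \to \delta$ in $C^0_{loc}(\mathbb{R}^3\setminus\{0\})$ with $|\bar\nabla g^\lambda| \to 0$ locally uniformly. We also have $|h^\lambda|(y) = \lambda^{-1}|h(\lambda^{-1}y)| = o(1)|y|^{-1}$ and $|\bar\nabla h^\lambda|(y) = \lambda^{-2}|\bar\nabla h|(\lambda^{-1}y) = o(1)|y|^{-2}$. Hence $\sigma^\lambda = |h^\lambda| g^\lambda \to 0$ in $C^{0,1}_{loc}(\mathbb{R}^3\setminus\{0\})$. The interior gradient estimate (\ref{u-Lip-bound}) is scale invariant, because the $C(n,\|\sigma\|_{C^{0,1}})r^{-1}$ term only involves the scale-invariant quantities $r\|\sigma\|_{C^0}$ and $r^2\|\nabla\sigma\|_{C^0}$. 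By Remark \ref{rmk-choice-eta} we may take $r\sim c|y|$. This gives $|\nabla u^\lambda|(y)\le C/|y|$ on compact subsets of $\mathbb{R}^3\setminus\{0\}$, uniformly for small $\lambda$, once $\lambda^{-1}y$ is far from $E_0$.

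Fix $c_\lambda$ by normalizing, say $c_\lambda := \sup_{\partial D_1} u^\lambda$ or the value making $u^\lambda - c_\lambda$ vanish at a fixed point of $\partial D_1$. Then $u^\lambda - c_\lambda$ is locally uniformly bounded and Lipschitz. Arzel\`a--Ascoli gives a subsequence $\lambda_j\to0$ with $u^{\lambda_j}-c_{\lambda_j}\to v$ locally uniformly on $\mathbb{R}^3\setminus\{0\}$. We also have $c_\lambda\to\infty$, since $u(x)\to\infty$ as $|x|\to\infty$ by (\ref{u-boundary-outer}).

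\textbf{Identifying the limit.} Each pair $(u^\lambda,\nu^\lambda)$ is a weak $J^{\sigma^\lambda}$-solution. Passing to the limit, with $\nu^\lambda$ converging a.e. after taking the one-higher-dimensional translation-invariant formulation, we apply the Compactness Lemma \ref{lemma-compactness} with $\sigma^{\lambda}\to0$. This shows that $v$ is a weak solution of the standard IMCF (in the sense of \cite{HI01}) on $\mathbb{R}^3\setminus\{0\}$. The limiting functional is $J_v(w)=\int|\nabla w| + w|\nabla v|$, because the $\sigma$-term disappears.

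Next we need that $v$ is proper with $v\to-\infty$ at $0$ and $v\to\infty$ at infinity. We get this by comparing $u$ with explicit sub- and supersolutions built from $\alpha\log|x|$ as in Remark \ref{rmk-subsolution}, rescaled. Then we invoke the Huisken--Ilmanen uniqueness/classification in \cite[Proposition 7.2]{HI01}: a weak IMCF solution on $\mathbb{R}^3\setminus\{0\}$ with compact level sets shrinking to $0$ must be $2\log|y| + \mathrm{const}$. The choice of normalization then pins the constant to $0$. Because the limit is unique, the whole family converges, not just a subsequence.

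\textbf{Expected main obstacle.} The delicate step is justifying the properness/pinching near $0$ and $\infty$ needed for the uniqueness result. Concretely, we must show $\sup_{\partial D_r}u^\lambda - \inf_{\partial D_r}u^\lambda$ stays bounded uniformly in $\lambda$ and $r$, and that level sets do not escape. In \cite{HI01} this uses the area bound $|N_t|=e^t|N_0|$ together with outward minimizing comparison with spheres. Here that area identity fails. Instead we will use (\ref{e-t-Nt-monotone}) together with the outward optimizing property of Lemma \ref{lem-optimizing-property}, comparing $E_t$ with coordinate balls. Since $\int_{D_r}|h|=o(r^2)$, the bulk term is negligible against area at large scales, so the Huisken--Ilmanen estimates carry over with $o(1)$ errors.
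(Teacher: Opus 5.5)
Your overall architecture matches the paper's. The common steps are: the scale-invariant bound $|\nabla u|\le C/|x|$, normalization and Arzel\`a--Ascoli, the Compactness Lemma with $|h^\lambda|\to0$ so that the limit $v$ is a weak solution of the standard IMCF, and then \cite[Proposition 7.2]{HI01}.

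\textbf{The gap.} It sits in the step you flag yourself. You must show that every level set $P_t$ of $v$ is nonempty and compact in $\mathbb{R}^3\setminus\{0\}$ with uniformly bounded eccentricity $R/r$. Your proposal gives no working argument for this. The oscillation of $u$ on spheres is not the issue: it is already immediate from $|\nabla u|\le C/|x|$.

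\textbf{The missing ingredient} is the no-strict-local-minimum property of weak $J^\sigma$-solutions, Lemma \ref{lem-connected}(i). The paper argues in three steps.
\begin{itemize}
\item The exponentially expanding spheres $D_{e^{At}}$ are subsolutions, which gives $R(N_{t+\tau})\le e^{A\tau}R(N_t)$.
\item If $r=r(N_t)$, gradient decay gives $u>t-C_2$ on $\partial D_r$, so $N_{t-C_2}$ misses $\partial D_r$. Lemma \ref{lem-connected}(i) then rules out a bounded component of $\{u<t-C_2\}$ outside $D_r$, so $R(N_{t-C_2})\le r$.
\item Combining the two, $R(N_t)\le e^{AC_2}\,r(N_t)$.
\end{itemize}
These bounds are scale invariant and pass to the blowdown. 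There they make $v$ nonconstant, give every $P_t$ nonempty and compact in $\mathbb{R}^3\setminus\{0\}$, and give $\theta(P_t)\le e^{AC_2}$. These are exactly the inputs for \cite[Proposition 7.2]{HI01}.

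\textbf{Why your substitute fails.} You propose using monotonicity of $e^{-t}|N_t|$ plus outward-optimizing comparisons with coordinate balls. This cannot do the job.
\begin{itemize}
\item The monotonicity only bounds $|N_t|$ from below in terms of earlier areas.
\item Comparison with balls only bounds $|N_t|$, or its part inside a ball, from above.
\item Neither prevents $E_{t-C}$ from having a detached component far outside $D_{r(N_t)}$. The outward-optimizing property only tests outward competitors, so it says nothing about removing such a piece. That is precisely what Lemma \ref{lem-connected}(i) handles.
\end{itemize}
Also, comparing $u$ from above with supersolutions built from $\alpha\log|x|$ is not free in this weak theory. It would need an argument in the spirit of Lemma \ref{lem-upper-smooth}. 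It is also unnecessary: only the subsolution barrier, which gives the growth bound on $R(N_t)$, is used.
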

\begin{proof}
	The proof is the same as in \cite[Lemma 7.1]{HI01}, and is included in Section \ref{appen-asymptotic}.  
\end{proof}

We recall the following asymptotically flat conditions: 
\begin{align}\label{AF-condition-recall}
	|g-\delta | \leq C |x| ^{-1}, \ |\bar{\nabla }g| \leq C |x| ^{-2}, \ \mathrm{Ric} \geq - C |x| ^{-2} g,\ |h| + |x|\cdot |\bar{\nabla } h| \leq C |x| ^{-2},
\end{align}
the dominant energy condition
\begin{align}\label{DEC-h-recall}
	R + \frac{3}{2} h ^2 - 2 |\nabla h| \geq 0,
\end{align}
and the definition of ADM mass
\begin{align}
	m_{\mathrm{ADM}}(M, g) = \lim_{U \to M}   \frac{1}{16 \pi } \int_{\partial U} g ^{ij} \left( \bar{\nabla } _j g_{ik} - \bar{\nabla }_{k} g_{ij} \right) \nu ^{k} d \mu ,
\end{align}
where $U$ denotes a precompact open subset with smooth boundary, $\nu $ is the outward unit normal of $\partial U$ with respect to $g$, and $d \mu $ the surface measure of $\partial U$ with respect to $g$.
It's known that this definition is well defined and is independent of the choice of $U \to M$ and $\delta $ (cf. \cite{ADM61, Bartnik86, Chrusciel86}). 

We have the following Asymptotic Comparison Lemma:
\begin{lem}\label{lem-asymp-comp}
	Assume that $(M^3, g, h)$ satisfies (\ref{AF-condition-recall}) and (\ref{DEC-h-recall}), and let $u$ be a weak solution of the $(|h|g)$-IMCF so that $e ^{- \frac{t}{2}} |N_t| ^{\frac{1}{2}}$ and $m_{h}(N_t)$ are monotone increasing for all large $t$. Then
	\begin{align*}
		\lim_{t\to \infty} m_{h}(N_t) \leq m_{\mathrm{ADM}}(M, g). 
	\end{align*} 
\end{lem}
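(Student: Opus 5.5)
The plan is to follow the framework of \cite[Lemma 7.4]{HI01}. The generalized Hawking mass will be compared with the classical Hawking mass of $N_t$ along a well-chosen sequence of times $t_k\to\infty$, and the limit is then identified with $m_{\mathrm{ADM}}$ via the blowdown of Lemma \ref{lem-blowdown}. Because $m_h(N_t)$ is assumed monotone, $\lim_{t\to\infty} m_h(N_t)$ exists (possibly $+\infty$ a priori). It therefore suffices to exhibit one sequence $t_k\to\infty$ with $\limsup_k m_h(N_{t_k})\le m_{\mathrm{ADM}}$.

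\textbf{Step 1: relate $m_h$ to the classical Hawking mass.} Expanding the square gives
\begin{align*}
	m_h(N_t) = \sqrt{\frac{|N_t|}{16\pi}}\Big(1-\frac{1}{16\pi}\int_{N_t}H^2\Big) + \frac{1}{16\pi}\sqrt{\frac{|N_t|}{16\pi}}\int_{N_t}\big(2H|h|-h^2\big).
\end{align*}
The $h^2$ term has a good sign and can be dropped. The cross term is the danger. By Lemma \ref{lem-blowdown}, $|N_t|\sim 4\pi r^2$ with $r\sim e^{t/2}$, and $|h|\le Cr^{-2}$, so crudely $\sqrt{|N_t|}\int_{N_t} H|h| = O(1)$, which does not vanish. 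I will not try to prove it vanishes pointwise in $t$. Instead I will use the monotonicity itself. Since $m_h$ and $e^{-t/2}|N_t|^{1/2}$ are increasing and bounded below, $B(t)$ is bounded above along the flow if the limit is finite; the case of infinite limit is ruled out by the same selection argument below. Integrating (\ref{B'(s)-ineq}) then gives
\begin{align*}
	\int_{T_0}^\infty e^{s/2}\Big(\int_{N_s}|h|(H-|h|) + \int_{N_s}(\lambda_1-\lambda_2)^2 + \int_{N_s}\frac{|\nabla^N(H-|h|)|^2}{(H-|h|)^2} + \int_{N_s}\big(R+\tfrac32h^2-2|\nabla h|\big)\Big)ds<\infty .
\end{align*}
Here each integrand is nonnegative, using $\chi(N_s)\le 2$ from Lemma \ref{lem-connected}. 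Because $e^{s/2}\sim c\sqrt{|N_s|}$ by the area monotonicity, I can pick $t_k\to\infty$ (avoiding jump times, and so that Lemma \ref{lem-W22-Nt} and the weak Gauss--Bonnet Lemma \ref{lem-weak-Gauss-Bonnet} apply) along which
\begin{align*}
	\sqrt{|N_{t_k}|}\int_{N_{t_k}}|h|(H-|h|)\to0,\qquad \sqrt{|N_{t_k}|}\int_{N_{t_k}}(\lambda_1-\lambda_2)^2\to0,
\end{align*}
together with the analogous smallness of the gradient term. Combined with $\sqrt{|N|}\int h^2=O(r^{-1})\to0$, the cross term then tends to $0$, so $m_h(N_{t_k}) = m_{\mathrm{Haw}}(N_{t_k})+o(1)$.

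\textbf{Step 2: the classical part.} At these times I repeat the argument of \cite[Lemma 7.4]{HI01}. By Lemma \ref{lem-blowdown} and the $C^{1,\alpha}$ estimates (Lemma \ref{lem-limit-surface-C1alpha}), the rescaled surfaces $\lambda N_{t_k}$ converge in $C^{1,\alpha}$ to the unit sphere. The smallness of $\sqrt{|N|}\int(\lambda_1-\lambda_2)^2$ and of the $W^{1,2}$ norm of $\log(H-|h|)$ provides the $L^2$-roundness of $\mathrm{II}$ and near-constancy of $H$. These are exactly the inputs of HI01, with $H-|h|$ replacing $H$ and the $h$ corrections of lower order $O(r^{-2})$. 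Using Gauss--Bonnet, the Gauss equation, and the expansion of the ADM flux integral on $\partial E_{t_k}$, one writes
\begin{align*}
	1-\frac{1}{16\pi}\int_{N}H^2 = \frac{1}{16\pi}\int_N\big(2\,\mathrm{Ric}(\nu,\nu)-R\big)+\text{(quadratic terms)}.
\end{align*}
The $\sqrt{|N|}$-weighted version converges to $m_{\mathrm{ADM}}$ plus error terms that are nonpositive or $o(1)$. The only place HI01 uses $R\ge0$ is to discard $-\sqrt{|N|}\int R$. Here the dominant energy condition (\ref{DEC-h-recall}) gives $R\ge 2|\nabla h|-\tfrac32 h^2\ge -Cr^{-4}$, so $\sqrt{|N|}\int_N R^-=O(r^{-1})\to0$, and that step still works.

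\textbf{Main obstacle.} I expect the hardest point to be Step 1: the cross term $\sqrt{|N_t|}\int H|h|$ is only $O(1)$ under the decay assumptions. The key idea is to extract its decay from the integrated positive term $\int|h|(H-|h|)$ in (\ref{B'(s)-ineq}) along a good subsequence of times. I will also need to check that one sequence can be chosen to satisfy all the smallness conditions simultaneously. This follows from finiteness of a single sum of $L^1(ds)$ integrands with the common weight $e^{s/2}$.
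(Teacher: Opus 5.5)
Your overall route is the paper's: adapt \cite[Lemma 7.4]{HI01} by bounding $B$, picking good times from (\ref{B'(s)-ineq}), blowing down to the round sphere, and expanding. Your one genuinely new ingredient is sound and worth keeping. The paper's step 4 absorbs $2r_\ell\int_{N_\ell}H|h|$ into its $Cr_\ell^{-1}$ error. That is valid when $|h|=O(|x|^{-3})$, which does hold under (\ref{AF-condition-0}) together with the dominant energy condition, since then $2|\nabla h|\le R+\tfrac32h^2=O(|x|^{-4})$. With the bound $|h|\le Cr^{-2}$ actually written in (\ref{h-3-decay}), however, this term is only $O(1)$, exactly as you observe. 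You split $2H|h|-h^2=2|h|(H-|h|)+h^2$ and use the nonnegative term $\frac{e^{s/2}}{8\pi}\int_{N_s}|h|(H-|h|)$ in (\ref{B'(s)-ineq}). This handles the cross term under the $|x|^{-2}$ decay of (\ref{AF-condition-recall}) alone, with no decay assumption on $R$.

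As written, though, the proposal has three gaps.

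First, it is circular. Integrating (\ref{B'(s)-ineq}) needs $\sup_s B(s)<\infty$, i.e.\ $\sup_t m_h(N_t)<\infty$, and your ``same selection argument'' presupposes that. Supply the paper's independent step. From (\ref{monotone-H-h}) and $|H-|h||=|\nabla u|\le Cr^{-1}$ you get $\int_s^{s+1}\int_{N_t}|\mathrm{II}|^2\le C$, hence times $t_i$ with $\int_{N_{t_i}}|\mathrm{II}|^2\le C$. Then (\ref{H2-lower-infty}) with the crude bound $\int_{N_t}|H||h|\le Cr^{-1}$ gives $\int_{N_{t_i}}(H-|h|)^2\ge16\pi-Cr(t_i)^{-1}$, so $m_h(N_{t_i})\le C$. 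Here an $O(1)$ cross term is harmless.

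Second, area monotonicity gives $\sqrt{|N_s|}\ge c\,e^{s/2}$. That is the wrong direction for turning $e^{t_k/2}\int_{N_{t_k}}|h|(H-|h|)\to0$ into $\sqrt{|N_{t_k}|}\int_{N_{t_k}}|h|(H-|h|)\to0$; you need $e^{-s/2}|N_s|^{1/2}\le C$. If $B(s_0)>0$, monotonicity of $B$ gives, for $s\ge s_0$, $e^{-s/2}|N_s|^{1/2}=\sqrt{16\pi}\,m_h(N_s)/B(s)\le\sqrt{16\pi}\,\sup m_h/B(s_0)$. If $B\le0$ throughout, then $m_h\le0$ and the claim reduces to $m_{\mathrm{ADM}}\ge0$ (Section \ref{sect-DEC}).

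Third, the identity in your Step 2 is wrong. For a sphere, Gauss and Gauss--Bonnet give exactly $1-\frac{1}{16\pi}\int_N H^2=\frac{1}{8\pi}\int_N(R-2\,\mathrm{Ric}(\nu,\nu))-\frac{1}{16\pi}\int_N(\lambda_1-\lambda_2)^2$, which has the opposite sign to yours. Moreover, evaluating $r\int_N(R-2\,\mathrm{Ric}(\nu,\nu))$ on the non-coordinate surfaces $N_{t_k}$ would need second-derivative decay of $g$ not contained in (\ref{AF-condition-recall}). That is not what \cite{HI01} does. After your Step 1 reduction to the classical Hawking mass, apply (\ref{H2-lower-infty}) directly. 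It compares $\int_N H^2\,d\mu$ with the Euclidean bound $\int_N\bar H^2\,d\bar\mu\ge16\pi$ and involves only $p$ and $\nabla p$. Inserting $H=\frac2r+\tilde f$ (with $\tilde f=f_\ell+|h|$) and $\mathrm{II}=\frac1r g^N+g_\ell$ leaves the flux term $\int_N 2g^{N,ij}\nu^l(\nabla_ip_{jl}-\nabla_lp_{ij})\to32\pi m_{\mathrm{ADM}}$. No sign of $R$ enters this step, so your remark about discarding $R^-$ is beside the point.
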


\begin{proof}
	Roughly, by the asymptotic assumptions, $|h|$ will be negligible compared with mean curvature $H$ as $|x| \to \infty$. Together with our monotonicity formula, the same argument of \cite{HI01} works in our setting with $H$ replaced by $H - |h|$. We will follow  the proof of \cite[Lemma 7.4]{HI01}. 

	1. Define $r = r(t)$ by $|N_t| = 4 \pi r ^2$. Then $|N_{t} ^{\frac{1}{r}}|_{g ^{\frac{1}{r}}} = 4 \pi $, so Lemma \ref{lem-blowdown} implies that 
	\begin{align}\label{Nt-rescale-conv}
		N_{t} ^{\frac{1}{r(t)}} \to \partial D_{1} \text{ in } C^{1} \text{ as } t\to \infty.
	\end{align}
	Let $g ^{N}, \delta ^{N}$ be the restriction of $g, \delta $ to the moving surface respectively. Let $\nu $ be the exterior unit normal, $\omega $ the unit dual normal, $\mathrm{II}$ the second fundamental form, $H$ the mean curvature, $d \mu $ the surface measure, all with respect to $g$. Define $\bar{\nu }, \bar{\omega }, \bar{\mathrm{II}}, \bar{H}, d \bar{\mu }$ correspondingly, with respect to $\delta $. Notice that $\omega = \frac{\bar{\omega }}{|\bar{\omega }|_{g}}$ and $\nu ^{i} = g ^{ij} \omega _j$. Write $p_{ij} = g_{ij} - \delta _{ij}$. Then as in \cite{HI01}, for sufficiently large $t$, we have
	\begin{align}\label{H2-lower-infty}
		\begin{split}
			\int_{N_t} H^2 d \mu &\geq 16 \pi + \int_{N_t} \frac{1}{2} H^2 g ^{N, ij} p_{ij} - 2 H g ^{N, ik}p_{ik} g ^{N, lj} \mathrm{II}_{ij} + H^2 \nu ^{i} \nu ^{j} p_{ij}\\
			&\ \ -2 H g ^{N, ij} \nu ^{l} \nabla _{i} p_{jl} + H g ^{N, ij} \nu ^{l} \nabla _{l} p_{ij} - C |p|^2 |\mathrm{II}|^2 - C |\nabla p|^2.
		\end{split}
	\end{align}

2. By Lemma \ref{smooth-sol-*-epsilon}, Remark \ref{rmk-choice-eta} (see also (\ref{gradient-decay})) and (\ref{Nt-rescale-conv}), for a.e. sufficiently large $t$, we have
\begin{align*}
	|H - |h| | = |\nabla u| \leq C |x| ^{-1} \leq C r(t) ^{-1} \text{ on } N_t.
\end{align*} 
Notice that by the assumption (\ref{AF-condition-recall}),
\begin{align}\label{h-3-decay}
	|h| \leq C r(t) ^{-2} \text{ on } N_{t},
\end{align}
so
\begin{align}\label{H|h|-inte-upper}
	\int_{N_t} |H| |h| &\leq C r(t) ^{-1}. 
\end{align} 
Using (\ref{monotone-H-h}), (\ref{AF-condition-recall}) and (\ref{DEC-h-recall}), for a.e. sufficiently large $s$, we obtain
\begin{align*}
	\int_{s}^{s+1} \int_{N_t}|\mathrm{II}|^2 &= \int_{s}^{s+1} \int_{N_t} \frac{1}{2}(H^2+ (\lambda _1 -\lambda _2)^2 )\\
						 &\leq \int_{N_s} (H - |h|)^2 + \int_{s}^{s+1} \int_{N_t}\left( \frac{3}{2} h^2 - H |h| \right)  + 4 \pi \int_{s}^{s+1} \chi (N_t) \\
						 &\leq \int_{N_s} (H - |h|)^2 + \frac{1}{2}\int_{s}^{s+1} \int_{N_t} h^2  + 4 \pi \int_{s}^{s+1} \chi (N_t)\\
						 &\leq 4 \pi C + C r(t) ^{-1} + 8 \pi .
\end{align*} 
So we can select a subsequence $t_i \to \infty$ such that 
\begin{align*}
	\sup_{i} \int_{N_{t_i}}|\mathrm{II}|^2 \leq C.
\end{align*} 
Together with (\ref{H2-lower-infty}) and (\ref{H|h|-inte-upper}), we obtain
\begin{align*}
	\int_{N_{t_i}}(H - |h|)^2 &\geq 16 \pi - C r(t_i) ^{-1},
\end{align*} 
which together with the monotonicity of $m_{h}(N_t)$ implies that
\begin{align*}
	\sup_{t} m_{h}(N_t) \leq C.
\end{align*}
Recall the definition of $B(t)$ in (\ref{defn-B(s)}). Together with (\ref{e-t-Nt-monotone}), this implies that
\begin{align*}
	\sup_{t} B(t) \leq \sup_{t} m_{h}(N_t) \cdot \sup_{t} \sqrt{16 \pi } e ^{\frac{t}{2}} |N_{t}| ^{-\frac{1}{2}} \leq C.
\end{align*} 

3.  Thus, by (\ref{B'(s)-ineq}), we have
\begin{align*}
	\int_{t_0} ^{\infty} e ^{\frac{t}{2}} \int_{N_t}\left(  (\lambda _1 - \lambda _2)^2 + \frac{|\nabla ^{N} (H-|h|)|^2}{(H-|h|)^2} \right) \leq C.
\end{align*} 
Taking a subsequence $t_{\ell} \to \infty$ such that, for $N_{\ell} := N_{t_\ell}$, 
\begin{align}\label{int-N_ell}
	\int_{N_\ell} \left(  (\lambda _1 - \lambda _2)^2 + \frac{|\nabla ^{N} (H-|h|)|^2}{(H-|h|)^2} \right) \leq C e ^{- t_\ell / 2} \to 0.
\end{align} 
So the rescaling $N_{\ell}^{1 / r_{\ell}}$ satisfy 
\begin{align*}
	\int_{N_{\ell}^{1 / r_{\ell}}} |\nabla ^{N} (H -|h|)|^2 \to 0.
\end{align*} 
Using (\ref{Nt-rescale-conv}) and the Rellich's theorem, by writing $N_{\ell} ^{1 / r_{\ell}}$ locally as graphs over $\partial D_1$, we have
\begin{align*}
	H - |h| \to H_{\partial D_1}= 2 \text{ in } L^2(\partial D_1).
\end{align*} 
So we can write
\begin{align}\label{H-|h|-L2}
	H - |h| = \frac{2}{r_{\ell}} + f_{\ell} \text{ on } N_{\ell},
\end{align} 
with $\int_{N_{\ell}} f_{\ell}^2 \to 0$, and by (\ref{int-N_ell}),
\begin{align}\label{II-g-L2}
	\mathrm{II} = \frac{1}{r_{\ell}} g ^{N} + g_{\ell} \text{ on } N_{\ell},
\end{align} 
with $\int_{N_{\ell}} g_{\ell}^2 \to 0$.

4. The remaining computation is the same as in \cite{HI01}. Using (\ref{H2-lower-infty}), (\ref{h-3-decay}), (\ref{H-|h|-L2}) and (\ref{II-g-L2}), we obtain
\begin{align*}
	32 \pi m_{h}(N_\ell) &= r_{\ell} \left( 16 \pi - \int_{N_{\ell}} (H-|h|)^2 \right) \\
			     &\leq C r_{\ell} ^{-1} \left( 1+ \int_{N_{\ell}} (|f_{\ell}| + |g_{\ell}|) \right) + \int_{N_{\ell}} 2 g ^{N, ij} \nu ^{l} \nabla _{i} p_{jl} - 2 g ^{N, ij} \nu ^{l} \nabla _{l} p_{ij}\\
			     &\to 32 \pi m_{\mathrm{ADM}}(M,g).
\end{align*} 
By the monotone increasing property of $m_{h}(N_t)$, this yields
\begin{align*}
	\lim_{t \to \infty} m_{h}(N_{t}) \leq m_{\mathrm{ADM}}(M).
\end{align*}

\end{proof}

\subsection{Multiple horizons and the selection of jump times}\label{sect-multi-horizons}

In this subsection, we describe the selection of jump times in the presence of multiple horizons, using the outward optimizing hulls. The main result is the monotonicity formula for multiple boundary components.

\begin{thm}\label{thm-monotonicity-multiple}
	Let $(M', g, h)$ be a complete, connected asymptotically flat triple so that $M'$ is an exterior region with $C^{2,\alpha }$-outermost generalized apparent horizon. Assume that $h$ satisfies the dominant energy condition. For each connected component $N$ of $\partial M'$, there exists a weak flow of compact $C^{1,\alpha }$-surfaces $(N_{t})_{t \geq 0}$, such that $N_0 = N$, the generalized Hawking mass $m_{h}(N_t)$ is monotone increasing for all time, and for sufficiently large $t$, $N_t$ satisfies the weak $(|h|g)$-IMCF.
\end{thm}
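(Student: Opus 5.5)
The plan follows Huisken–Ilmanen \cite[Section 6]{HI01}: fill in the other horizons and jump over them using outward optimizing hulls. By Lemma \ref{lem-topo-R3-balls}, $M'$ is $\mathbb{R}^3$ minus finitely many open balls, and $\partial M' = N^1\cup\dots\cup N^k$ consists of spheres. Fix the component $N=N^1$. Glue smooth $3$-balls $B^j$ into every component, extending $g$ and $h$ smoothly (and arbitrarily) inside. This gives a manifold $\hat M\cong\mathbb{R}^3$ to which Theorem \ref{h-IMCF-existence}, Theorem \ref{thm-growth} and Lemma \ref{lem-connected}(ii) apply. Since the filled-in regions may violate the dominant energy condition, the flow must only be run while it stays in $M'$.

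\textbf{Step 1.} Start from $E_0 = B^1$ and run the weak $(|h|g)$-IMCF produced by Theorem \ref{thm-monotonicity-connected}. This lasts up to the first time $T_1$ at which $\bar E_t$ touches another filled-in ball, which we detect through the hull, i.e.\ the first time $(E_t\cup B^2\cup\dots\cup B^k)'$ strictly differs from $E_t\cup(\text{those balls})$ in a way that meets some $B^j$. Because $N^1$ is strictly outward optimizing, Lemma \ref{E_t+-vs-E_t'} gives $E_0^+=E_0$. Since $N_t$ stays in $M'$, where the dominant energy condition holds, Theorem \ref{thm-monotonicity-connected} shows $m_h(N_t)$ is nondecreasing on $[0,T_1)$, and Lemma \ref{lem-connected} keeps $N_t$ connected, so $\chi(N_t)\le 2$. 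If the flow never meets another horizon, we take $T_1=\infty$ and we are done.

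\textbf{Step 2: the jump.} At $T_1$, replace $E_{T_1}$ by $F:=\big(E_{T_1}\cup B^{j_1}\cup\cdots\big)'$, the outward optimizing hull with weight $|h|$ of the union with all horizons it touches. The claims to verify are the following.
\begin{enumerate}
\item[(a)] $|\partial F| \ge |N_{T_1}|$. Since each $N^j$ is outermost and $M'$ contains no other generalized apparent horizon, comparing $F$ against $E_{T_1}\cup B^j$ with the optimizing property of $E_{T_1}$ (Lemma \ref{lem-optimizing-property}) should give that the area cannot drop. An area-increasing jump is acceptable.
\item[(b)] $\int_{\partial F}(H-|h|)^2 \le \int_{N_{T_1}}(H-|h|)^2$. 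This uses (\ref{mean-curv-hull}): $H-|h|=0$ on the free part of $\partial F$, and $H-|h|$ agrees with its value on the obstacle elsewhere. On pieces of $\partial F$ coming from $N^j$ we have $H-|h|=0$.
\item[(c)] $\partial F$ is connected. This follows by the argument of Lemma \ref{lem-connected} together with simple connectivity.
\end{enumerate}
Then $\sqrt{|\partial F|}\,(1-\frac1{16\pi}\int(H-|h|)^2)$ does not decrease across the jump. To preserve the monotonicity of $e^{-t/2}\sqrt{|N_t|}$ that Lemma \ref{lem-asymp-comp} needs, reset the time parameter so that $e^{-t}|N_t|$ is continuous, i.e.\ restart at a time $T_1'\ge T_1$ with $e^{-T_1'}|\partial F| = e^{-T_1}|N_{T_1}|$.

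\textbf{Step 3.} Restart the weak flow from $F$. The boundary $\partial F$ is only $C^{1,1}$ with $\int|\mathrm{II}|^2<\infty$, so we use Remark \ref{rmk-C1-smoothing} and the $r=0$ case of Theorem \ref{thm-growth}, which needs $F'=F$; this holds by construction. Iterate Steps 1–3. There are only finitely many horizons, so after at most $k-1$ jumps all of them are absorbed, and from then on $N_t$ is a genuine weak $(|h|g)$-IMCF in the region where the dominant energy condition holds, with monotone generalized Hawking mass. Concatenating the pieces yields the flow $(N_t)$ with $N_0=N$.

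The main obstacle is Step 2. Since $E_t\cup B^j$ differs from its hull, it is not clear that the flow from $E_{T_1}$ ever reaches a touching configuration cleanly. The cleaner route, which we would try first, is to define $T_1$ as the first time at which $E_t$ is no longer equal to its hull relative to the other balls, and to prove (b) there via the weak mean curvature characterization. The lack of uniqueness also means we must work with the specific constructed solution at each stage.
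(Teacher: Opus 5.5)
Your plan is essentially the paper's proof, which adapts \cite[Section 6]{HI01} the same way you do. It fills in the other horizons by $W$, flows from $N$ until the solution reaches $W$, and replaces $E_{t_1}$ by the component $F$ of $(E_{t_1}\cup W)'$ containing $E_{t_1}$. It gets $|\partial F|\geq|\partial E_{t_1}|$ from the outward optimizing property of $E_{t_1}$ and $\int_{\partial F}(H-|h|)^2\leq\int_{\partial E_{t_1}}(H-|h|)^2$ from (\ref{mean-curv-hull}), then restarts and iterates finitely often.

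The paper settles your ``main obstacle'' by setting $s_1=\sup\{t: E_t\cap W=\emptyset\}$ and jumping at $t_1=s_1$ when $\bar E_{s_1}\cap\bar W=\emptyset$, otherwise slightly before $s_1$, and it always takes the hull against all of $W$ rather than only the balls already touched. It also smooths $E_{t_1}$ before applying (\ref{mean-curv-hull}), and restarts from the $C^{1,1}$ set $F$ via \cite[Lemma 6.2]{HI01}; Remark \ref{rmk-C1-smoothing} alone does not suffice, since it presupposes existence of the flow. Your time reset is harmless but unnecessary, because $m_h$ has no explicit $t$-dependence and Lemma \ref{lem-asymp-comp} only uses large $t$.
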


\begin{proof}
	The idea is the same as in \cite[Section 6]{HI01}. We will flow a single boundary component while treating the others as inessential occlusions to be slid across, without incurring any loss of the generalized Hawking mass. 

	Denote by $N, N_1, \ldots, N_{k}$ the boundary components of $M'$. Fill in the boundary by $3$-balls $E_0, W_1, \ldots, W_k$ to obtain a smooth, complete, connected asymptotically flat triple $(M, g, h)$ without boundary. Write $W:= \cup _{1 \leq i \leq k} W_k$. By Lemma \ref{lem-existence-horizons} and Lemma \ref{lem-topo-R3-balls}, $M$ is diffeomorphic to $\mathbb{R}^{3}$, and $E_0 \cup W$ is a strictly outward optimizing hull in $M$. The case when $W$ is empty is covered in Theorem \ref{thm-monotonicity-connected}, so we assume it's nonempty in the following.

	By Theorem \ref{thm-monotonicity-connected}, there exists a weak $(|h|g)$-IMCF $(E_t)_{t \geq 0}$ with initial condition $E_0$ so that $\partial E_t$ remains connected, and $m_h(\partial E_t)$ is monotone increasing whenever the dominant energy condition is satisfied, i.e. $\partial E_t \subset M'$. Define $s_1>0$ to be the supremum of the times when $E_t$ is disjoint from $W$. Then $E_{s_1} \cap W = \emptyset$ and $E_{s_1} ^{+} \cap W \neq \emptyset$. Define $t_1:= s_1$ if $\bar{E}_{s_1} \cap \bar{W} = \emptyset$, otherwise let $t_1$ be slightly less than $s_1$. Notice that $s_1$ is necessarily a jump time when $t_1= s_1$. 

	Now we construct the jump. Let $F$ be the connected component of the strictly outward optimizing hull $(E_{t_1} \cup W)'$ that contains $E_{t_1}$. Notice that $F$ contains at least one component of $W$, so after relabelling we can write $F= (E_{t_1} \cup W_1 \cup \cdots \cup W_j)'$. Since $\partial W$ is $C^{2,\alpha }$-generalized apparent horizon, $\partial F$ is disjoint from $\partial W$ by the strong maximum principle. By Lemma \ref{lem-optimizing-property}, $E_{t_1}$ is outward optimizing, so 
	\begin{align*}
		|\partial E_{t_1}| \leq |\partial F| - \int_{F \setminus E_{t_1}}|h| \leq |\partial F|.
	\end{align*} 
	By a smoothing argument (cf. Remark \ref{rmk-C1-smoothing}), we can choose a sequence of smooth sets $E_i$ such that $\partial E_i \to \partial E_{t_1}$ in $C^1 \cap W ^{2,2}$ and 
\begin{align*}
	\int_{\partial E_i} (H - |h|)^2 \to \int_{\partial E_{t_1}} (H- |h|)^2.
\end{align*} 
Using (\ref{mean-curv-hull}), we have
\begin{align*}
	\int_{\partial E_i'} (H- |h|)^2 \leq \int_{\partial E_i}(H-|h|)^2.
\end{align*} 
Notice that $\partial F \in C^{1, 1}$ and $\partial E_i' \to \partial F$ in $C^1$. By the lower semicontinuity, we have
\begin{align*}
	\int_{\partial F} (H-|h|)^2 \leq \liminf_{i\to \infty} \int_{\partial E_i'} (H-|h|)^2 \leq \int_{\partial E_{t_1}}(H-|h|)^2.  
\end{align*} 
Thus
\begin{align*}
	m_{h}(\partial E_{t_1}) \leq m_{h}(\partial F).
\end{align*} 

Applying \cite[Lemma 6.2]{HI01} to $J^{|h|}$-functional, one can approximate $\partial F$ in $C^1$ by smooth surfaces of uniformly bounded mean curvature. In view of the gradient estimates in Lemma \ref{smooth-sol-*-epsilon} and the Compactness Lemma \ref{lemma-compactness}, one can construct a weak $(|h|g)$-IMCF $(F_{t})_{t \geq 0}$ with initial condition $F_0= F$ as in Theorem \ref{thm-monotonicity-connected} so that $m_{h}(F_t)$ is monotone increasing as long as $\partial F_t \subset M'$. Then replace $E_{t}$ by $F_{t}$ for $t > t_1$. We may continue this procedure inductively to obtain a nested family of sets
 $(E_t)_{t \geq 0}$ so that $E_t$ is a weak $(|h|g)$-IMCF except at a finite number of times, and $m_{h}(\partial E_t)$ is monotone increasing. This completes the proof.
\end{proof}

\subsection{Proof of main theorem}
Let $(M, g, h)$ be an exterior region satisfying the asymptotically flat condition (\ref{AF-condition-recall}) and the dominant energy condition (\ref{DEC-h-recall}), with (possibly empty) outermost generalized apparent horizon.

1. Assume $M$ has no boundary. For any $x \in M$ and each $\delta >0$, by Theorem \ref{h-IMCF-existence}, there exists a weak solution $u ^{\delta }$ of the $(|h|g)$-IMCF with initial condition $B_{\delta }(x)$. Notice that $H- |h| \to \infty$ on $\partial B_{\delta }(x)$ as $\delta \to 0$, and $|h|$ becomes negligible compared with $H$. Similarly as in \cite[Lemma 8.1]{HI01}, using the techniques of Lemma \ref{lem-blowdown}, one can show that there exists a subsequence $\delta _i \to 0$, a sequence $c_i \to \infty$ and a function $u$ defined on $M \setminus \{x\} $ such that $u ^{\delta _i} - c_i \to u$ locally uniformly, $|\nabla u|(y) \leq \frac{C}{d(y,x)}$ in $B_1(x) \setminus \{x\} $, and $N_t = \partial \{ u< t\} $ is nonempty and compact for all $t$, with $N_{t}$ nearly equal to $\partial B_{e ^{t / 2}}$ as $t \to - \infty$. Using the monotone increasing property and the upper semicontinuity of $m_{h}(N ^{i}_{t})$, together with Lemma \ref{lem-asymp-comp} and the fact that $m_{h}(\partial B_{\delta _i}(x) ) \to 0$, we obtain $0 \leq m_{h}(N_t) \leq m_{\mathrm{ADM}}(M)$ for all $t$.

2. If $M$ has a boundary, let $N$ be any connected component of $\partial M$. By Theorem \ref{thm-monotonicity-multiple}, there exists a flow $(N_{t})_{t \geq 0}$ such that $N_0=N$ and $m_{h}(N_t)$ is monotone increasing. Together with Lemma \ref{lem-asymp-comp} this proves the Penrose inequality that 
\begin{align*}
	\sqrt{\frac{|N|}{16 \pi }} = m_{h}(N_0) \leq m_{\mathrm{ADM}}(M).
\end{align*} 

3. To complete the proof of Theorem \ref{thm-main}, it remains to consider the equality case. Whether $M$ has a boundary or not, since $N_t$ solves the weak $(|h|g)$-IMCF except for a finite number of times, in view of (\ref{B'(s)-ineq}), together with the fact that $H-|h|>0$ a.e. on $N_t$ for a.e. $t$, we have $|h| = 0$ on $N_t$ and $\int_{N_t} |\nabla ^{N}(H- |h|)|^2 =0$ for a.e. $t$. By the continuity of $|h|$ and lower semicontinuity of $\int_{N_t} |\nabla ^{N}(H- |h|)|^2$, we have
\begin{align*}
	|h| = 0 \text{ on } N_t,\quad \int_{N_t}|\nabla ^{N}(H- |h|)|^2 =0 \text{ for all } t.
\end{align*}
Thus 
\begin{align*}
	H_{N_t} - |h| = H_{N_t} = H(t), \text{ a.e. } x \in N_{t}, \text{ for all } t,
\end{align*} 
i.e. $N_{t}$ has constant mean curvature. It follows from the standard elliptic theory that $N_{t}$ is smooth for each $t$, and similar considerations apply to $N_{t} ^{+}, t \geq 0$ by using an approximation $N_{t_i} \to N_{t}^{+}$ with $t_i \searrow t$.

If there is a jump at time $t$, either coming from $\mathcal{K}_{t} \subset E_{t} ^+ \setminus E_{t} $ or constructed in Theorem \ref{thm-monotonicity-multiple}, then $H-|h|= 0$ on a portion of $N_t ^+ \setminus N_t $. Since $N_t ^{+}$ has constant mean curvature and $|h|=0$ on $N_t ^{+}$, this implies that $N_t ^{+}$ is a minimal surface, which in particular is a generalized apparent horizon, disjoint from $\partial M$ by the strong maximum principle. This contradicts the assumption that $\partial M$ is the outermost generalized apparent horizon. Therefore there is no jump times and $N_t = N_t ^{+}$ for all $t \geq 0$. In particular, there is at most one boundary component.

So we know that $h \equiv 0$ on $M$, $H_{N_t} = H(t)$ is locally uniformly positive for $t >0$, and the weak $(|h|g)$-IMCF becomes a smooth solution of the standard IMCF. The same argument of \cite[Section 8]{HI01} then applies and shows that each $N_{t}$ is a round sphere and $g$ is isometric to $\mathbb{R}^3$ or the Schwarzschild $3$-manifold. 

\begin{appendix}
\section{Evolving hypersurfaces by the weak $\sigma $-IMCF}\label{appen-weak}

\subsection{Evolution equations and interior estimates of classical $\sigma $-IMCF}\label{appen-sub-classical}
Let $\sigma $ be a smooth  symmetric $2$-tensor on a smooth Riemannian manifold $(M^n, g)$ with dimension $n \geq 3$. Let $F: N ^{n-1} \times [0, T] \to M^{n}$ be a classical solution of the $\sigma $-IMCF, i.e. the following evolution equation is satisfied in the classical sense:
\begin{align}
	\frac{\partial F}{ \partial t}(x,t) = \frac{\nu }{H - |\nu |^2_{\sigma }}(x, t), \quad x \in N, \ 0 \leq t \leq T.
\end{align}
In this section, we will first compute the evolution equations, which proves Lemma \ref{evolution-eq-classical}, and then prove the local estimate of mean curvature in Lemma \ref{local-estimate-mean-curv}.

For any fixed $(x,t) \in N \times (0, T)$, let $\{x ^{\alpha }\}_{\alpha =1}^{n-1}$ be coordinate functions of $N$ in a coordinate chart of $x$ and $\{y^{i} \}_{i=1}^{n}$ be coordinate functions of $M$ in a neighborhood of $F(x, t)$. Set $f:= \frac{1}{H-|\nu |^2_{\sigma }}$, $F^{i}:= y ^{i}\circ F$, and $\partial _{\alpha } := F_{ \sharp} (\frac{\partial }{ \partial x ^{\alpha }})$. We abuse the notation and denote $\frac{\partial F}{\partial t}$ by $\partial _t$. Then the evolution equation is $\partial_ t = f \nu $, the induced metric on $N_t$ is $g_{\alpha \beta } = g(\partial _{\alpha }, \partial _{\beta })$, and the area element is $d A = \sqrt{\mathrm{det} (g_{\alpha \beta })} d x^1 \wedge \cdots \wedge d x ^{n-1 }$. We calculate
	\begin{align*}
		\frac{\partial }{\partial t} g_{\alpha  \beta }& = \langle \nabla _{\partial _t} \partial _{\alpha }, \partial _{\beta } \rangle+ \langle \partial _{\alpha }, \nabla _{\partial _t} \partial _{\beta } \rangle \\
						 &= \langle \nabla _{\partial _\alpha }\partial _t , \partial _{\beta } \rangle+ \langle \partial _{\alpha }, \nabla _{\partial _\beta } \partial _t  \rangle\\
						 &= 2f \mathrm{II}_{\alpha \beta } ,
	\end{align*} 
	where $\mathrm{II}_{\alpha  \beta }= \langle \nabla _{\partial _\alpha } \nu , \partial _{\beta } \rangle$ is the second fundamental form of $N_t$ in $M$. Then 	
	\begin{align*}
		\frac{\partial }{\partial t} d A &= \frac{1}{2} g ^{\alpha \beta } \partial _t g_{\alpha \beta } d A = f H d A.
	\end{align*} 
	This proves that 
	\begin{align}
		\frac{d}{d t}|N_t| &= \int_{N} \frac{\partial }{\partial t} d A = \int_{N} f H d A = \int_{N} \frac{H}{H- |\nu |^2_{\sigma }} dA.
	\end{align}

	We calculate 
	\begin{align*}
		\langle \frac{\partial }{\partial  t} \nu, \partial _{\alpha }  \rangle &= - \langle \nu , \nabla _{\partial _t} \partial _{\alpha } \rangle
									= - \langle \nu , \nabla _{\partial _{\alpha }} \partial _t \rangle 
									= - \langle \nu , \nabla _{\partial _{\alpha }} (f \nu ) \rangle = - \nabla _{\partial _\alpha } f.
	\end{align*} 
	Since $\langle \frac{\partial }{\partial  t} \nu , \nu  \rangle= \frac{1}{2} \frac{\partial }{\partial  t} |\nu |^2 = 0$, we have
	\begin{align}
		\frac{\partial }{\partial  t} \nu = - \nabla ^{N} f = - \nabla ^{N} \frac{1}{H- |\nu |^2_{\sigma }} = \frac{\nabla ^{N}(H-|\nu |^2_\sigma )}{(H- |\nu |^2_\sigma )^2}.
	\end{align}
	So
	\begin{align}
		\begin{split}
			\frac{\partial }{\partial  t}|\nu |^2_{\sigma } &= 2\sigma ( \nu, \frac{\partial }{\partial t} \nu ) +  (\nabla _{\partial _t}\sigma ) (\nu , \nu )\\
							&= -2 \sigma \left(  \nu,  \nabla ^{N} f \right) + f (\nabla _{\nu } \sigma )(\nu ,\nu )\\
						&= \frac{2 \sigma(\nu , \nabla ^{N}(H- |\nu |^2_{\sigma }) )}{(H- |\nu |^2_{\sigma })^2} + \frac{(\nabla _{\nu } \sigma ) (\nu , \nu )}{H- |\nu |^2_{\sigma }}.
		\end{split}
	\end{align} 

We calculate
\begin{align*}
		\frac{\partial }{\partial t} \mathrm{II}_{\alpha \beta }&= \langle \nabla _{\partial _t} \nabla _{\partial _\alpha }\nu , \partial _{\beta } \rangle + \langle \nabla _{\partial _\alpha }\nu , \nabla _{\partial _t} \partial _{\beta } \rangle \\
							 &= \langle \nabla _{\partial _\alpha } \nabla _{\partial _t} \nu , \partial _{\beta } \rangle+ \langle  R(\partial _t, \partial _{\alpha }) \nu, \partial _{\beta } \rangle + f \langle \nabla _{\partial _\alpha }\nu , \nabla _{\partial _\beta }\nu  \rangle\\
							 &= - \langle \nabla _{\partial _\alpha } \nabla ^{N} f, \partial _{\beta } \rangle - f \langle R( \partial _{\alpha }, \nu )\nu , \partial _{\beta } \rangle+ f g ^{\gamma \gamma '} \mathrm{II}_{\alpha \gamma } \mathrm{II}_{\gamma ' \beta }.
	\end{align*} 
	So
	\begin{align}
		\begin{split}
			\frac{\partial }{\partial t} H &= g^{\alpha \beta } \frac{\partial }{\partial t} \mathrm{II}_{\alpha \beta } + (\frac{\partial }{\partial t} g^{\alpha \beta } )\mathrm{II}_{\alpha \beta } \\
				&= - \Delta ^{N} f - f\mathrm{Ric}(\nu , \nu ) - f |\mathrm{II}|^2 \\
				&= \frac{\Delta ^{N} (H-|\nu |^2_\sigma )}{(H-|\nu |^2_\sigma )^2}- \frac{2|\nabla ^{N}(H-|\nu |^2_\sigma )|^2}{(H- |\nu |^2_\sigma )^3} - \frac{\mathrm{Ric}(\nu ,\nu ) + |\mathrm{II}|^2}{H-|\nu |^2_\sigma }.
		\end{split}
	\end{align} 
	Thus
\begin{align}
	\begin{split}
		\frac{\partial }{\partial t}&\frac{(H - |\nu |^2_{\sigma })^2}{2} \\
		&= (H-|\nu |^2_\sigma ) \frac{\partial }{\partial t}(H-|\nu |^2_\sigma )\\
				&= \frac{\Delta ^{N} (H- |\nu |^2_{\sigma })}{H- |\nu |^2_{\sigma }} - \frac{2|\nabla ^{N}(H-|\nu |^2_{\sigma })|^2}{(H- |\nu |^2_{\sigma })^2} - \left( \mathrm{Ric}(\nu ,\nu ) + |\mathrm{II}|^2 \right)\\
				&\ \ - (\nabla _{\nu }\sigma )(\nu ,\nu ) - \frac{2\sigma (  \nu , \nabla ^{N}(H-|\nu |^2_{\sigma }) )}{H-|\nu |^2_{\sigma }}.
	\end{split}
\end{align} 

We prove the following interior mean curvature estimate.
\begin{lem}[Lemma \ref{local-estimate-mean-curv}]\label{appen-local-estimate-mean-curv}
	Let $(N_t)_{0 \leq t \leq T}$ be a classical solution of \ref{*-eq} on $M^{n}$, where $N_t$ may have boundary. Then there exists a positive function $\eta (x) >0$ on $M$, depending on $g$, so that for each $x \in N_t$ and each $r< \eta(x)$, we have 
	\begin{align}
		(H - |\nu |^2_{\sigma })(x, t) \leq \max \left( (H-|\nu |^2_{\sigma })_r, \frac{C(n, r\|\sigma \|_{C^0}, r ^2 \|\nabla \sigma \|_{C^0})}{r} \right) ,
	\end{align}
	where $ (H-|\nu |^2_{\sigma })_r = \sup_{(y,s) \in P_r} (H-|\nu |^2_{\sigma })(y,s)$, and $P_r = (B_r(x) \cap N_0) \times \{0\} \cup \left( \cup _{0 \leq s \leq t}(B_r(x)\cap \partial N_s) \times \{ s\}  \right) $ is the parabolic boundary. 
If $(M^n, g ,\sigma  )$ is asymptotically flat, we may take $\eta(x) \geq c |x| $ for some constant $c>0$, where $x$ is the asymptotically flat coordinate.
\end{lem}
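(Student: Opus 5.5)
We follow the maximum principle argument of \cite[Lemma 3.1]{HI01}, applied to the speed-type quantity $\varphi := H-|\nu|^2_\sigma$. By the evolution equation in Lemma \ref{evolution-eq-classical}(5), $\varphi$ satisfies, with $f=1/\varphi$,
\begin{align*}
	\frac{\partial \varphi}{\partial t} = \frac{\Delta^N \varphi}{\varphi^2} - \frac{2|\nabla^N\varphi|^2}{\varphi^3} - \frac{\mathrm{Ric}(\nu,\nu)+|\mathrm{II}|^2}{\varphi} - \frac{(\nabla_\nu\sigma)(\nu,\nu)}{\varphi} - \frac{2\sigma(\nu,\nabla^N\varphi)}{\varphi^2}.
\end{align*}
The good term is $-|\mathrm{II}|^2/\varphi \le -H^2/((n-1)\varphi)$. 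Because $H = \varphi + |\nu|^2_\sigma \ge \varphi$ (since $\sigma\ge 0$), this term is at most $-\varphi/(n-1)$. This absorption is exactly what will produce the $C/r$ bound. The remaining zeroth-order terms are bounded by $(|\mathrm{Ric}| + \|\nabla\sigma\|_{C^0})/\varphi$, and the drift term $\sigma(\nu,\nabla^N\varphi)/\varphi^2$ is first order and vanishes at spatial maxima.

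Next we localize. Choose $\eta(x)$ to be smaller than the injectivity/convexity radius, and small enough that on $B_{\eta(x)}(x)$ the function $\rho = d(x,\cdot)^2$ is smooth and satisfies $|\nabla\rho|^2 \le 4\rho$ and $\nabla^2\rho \le 3g$, so that $\Delta^N\rho \le 3(n-1) + |H|\,|\nabla\rho|$. For $r<\eta(x)$ set $\psi = (r^2-\rho)_+$ and consider $w = \psi\,\varphi$. At an interior maximum point of $w$ with $w>0$ and $t>0$, we have $\nabla^N w=0$ and $\Delta^N w\le0$, which gives $\nabla^N\varphi = -\varphi\nabla^N\psi/\psi$. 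Moreover
\begin{align*}
	\partial_t \psi = -f\,\langle\nabla\rho,\nu\rangle \quad\text{with}\quad |\langle\nabla\rho,\nu\rangle|\le 2r.
\end{align*}
Substituting into the evolution of $w$ and multiplying by $\varphi^2\psi$, we expect an inequality of the form
\begin{align*}
	0 \le -\frac{w^2}{n-1}\psi^{-1}\cdot\psi + C\Big(r^2 + r\,w + r^2\,r\|\sigma\|_{C^0}\cdots\Big),
\end{align*}
or more precisely a quadratic inequality $w^2 \le C_1 r w + C_2 r^2$ in which the constants depend on $n$, $r\|\sigma\|_{C^0}$, $r^2\|\nabla\sigma\|_{C^0}$ and on $r^2|\mathrm{Ric}|$. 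The last quantity is bounded once $\eta$ is chosen depending on $g$. The Laplacian terms produce $|H||\nabla\rho|$ terms; we bound these by writing $H\le\varphi+\|\sigma\|$, so they contribute $r w$ and $r\|\sigma\| \psi$. This gives $w\le C r$ at the maximum, hence $\varphi(x,t)\,r^2 \le \max(r^2\sup_{P_r}\varphi,\ Cr)$, which is the claimed bound $\varphi(x,t)\le\max(\varphi_r, C/r)$. If the maximum of $w$ lies on the parabolic boundary $P_r$, the bound holds trivially.

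The main obstacle is bookkeeping the cross terms $\langle\nabla^N\psi,\nabla^N\varphi\rangle/\varphi^2$ and $-2|\nabla^N\varphi|^2/\varphi^3$ at the maximum. Following \cite{HI01}, we expect them to combine into a term of the form $-2\varphi|\nabla\psi|^2/\psi^2\cdot(\ldots)$ with a favorable sign. If they do not cancel cleanly, we would instead use $\psi^2$ or a power $\psi^k$ as the cutoff to gain the needed room. The drift term becomes $2\sigma(\nu,\nabla^N\psi)\varphi/\psi$, which is controlled by $r\|\sigma\|_{C^0}\,w/\psi$ after multiplying through.

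For the asymptotically flat statement, the curvature decays like $|x|^{-3}$ and $\sigma$ decays faster than $|x|^{-1}$. Consequently the scale-invariant quantities $r^2|\mathrm{Ric}|$, $r\|\sigma\|$ and $r^2\|\nabla\sigma\|$ on $B_r(x)$ stay bounded for $r\le c|x|$, and the convexity of $\rho$ holds on such balls. Therefore we may take $\eta(x)\ge c|x|$.
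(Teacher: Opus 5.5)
Your argument is correct, but it is not the route the paper takes. The paper, following \cite{HI01, Moore12}, works with the speed $f=1/(H-|\nu|^2_\sigma)$ and builds an explicit ambient barrier $\phi=\frac{A}{r}(r^2-p)_+$, with $p$ a modification of $d(x,\cdot)^2$ and $\eta$ chosen so that $\mathrm{Ric}\geq -\frac{1}{100nr^2}$ on $B_r(x)$. A first-touching argument for $\partial_t-\phi^2\Delta^N$ then shows that $f>\phi$ persists. At a touching point $\nabla^N f=\nabla^N\phi$, so the $\sigma$-drift becomes the bounded quantity $\sigma(\nu,\nabla^N\phi)$; evaluating at $x$ gives $f(x,t)\geq Ar$.

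You instead run a localized maximum principle for the product $w=\psi\varphi$. This also works, and your hedge about the cross terms is unnecessary: they cancel exactly. At an interior maximum, $\psi\Delta^N\varphi/\varphi^2$ contributes at most $2|\nabla^N\psi|^2/(\varphi\psi)-\Delta^N\psi/\varphi$. Meanwhile $-2\psi|\nabla^N\varphi|^2/\varphi^3=-2|\nabla^N\psi|^2/(\varphi\psi)$. This is because the diffusion part of the $\varphi$-equation is exactly $-\Delta^N(1/\varphi)$, so no power $\psi^k$ is needed. Write $\mathrm{Ric}\geq -K$ on $B_r(x)$ and use $1/\varphi=\psi/w\leq r^2/w$. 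Then at the maximum one gets cleanly
\begin{align*}
0\leq \partial_t w\leq -\frac{w}{n-1}+4r+\frac{r^2}{w}\Big(3(n-1)+r^2K+r^2\|\nabla\sigma\|_{C^0}+6r\|\sigma\|_{C^0}\Big),
\end{align*}
which is your quadratic inequality $w^2\leq C_1rw+C_2r^2$. Some of your intermediate bookkeeping is garbled, but the final form is right. For example, after multiplying by $\psi$ the drift term is $2\sigma(\nu,\nabla^N\psi)/\varphi$, bounded by $4r\|\sigma\|_{C^0}\psi/w$, not $r\|\sigma\|_{C^0}w/\psi$.

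As for what each route buys: the barrier gives an explicit lower bound $f\geq Ar$ and only needs the $f$-equation at a touching point. Your product version avoids constructing and verifying a subsolution. It is a single computation in which the drift is handled by the critical-point relation $\nabla^N\varphi=-\varphi\nabla^N\psi/\psi$ rather than by the touching relation. Both use the same three inputs:
\begin{itemize}
\item $H\geq\varphi$ from $\sigma\geq 0$, so that $|\mathrm{II}|^2\geq\varphi^2/(n-1)$ absorbs everything else;
\item Hessian control of $d^2$ on $B_r(x)$;
\item a scale-invariant lower Ricci bound.
\end{itemize}
Both yield the same dependence of the constant on $n$, $r\|\sigma\|_{C^0}$ and $r^2\|\nabla\sigma\|_{C^0}$. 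Your reading of the asymptotically flat clause, with the norms of $\sigma$ taken on $B_r(x)$, is the one the paper implicitly uses.
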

\begin{proof}
	The proof is standard and similar to \cite{HI01, Moore12}. For each $x \in M$, there exists $\eta (x) >0$, depending on $(M,g)$, so that for each $0<r< \eta (x)$, the rescaled metric $r ^{-2}g$ on $B_{10r}(x)$ is close to the Euclidean metric in the $C^2$-sense. In particular, we can ensure that $\mathrm{Ric} \geq - \frac{1}{100 n r ^2}$ on $B_{r}(x)$, and there exists a $C^2$-function $p(y)$, which is a slight modification of $ d^2( x, y)$, satisfying $p(x)=0, p(y) \geq d^2(x,y)$, $|\nabla p| \leq 3 d_x,$ and $\nabla ^2 p \leq 3 g $ on $B_{r}(x)$. 

	In the following, we will fix $x \in N_t$ and consider $0<r< \eta (x)$.
Recall that $f = \frac{1}{H- |\nu |^2_{\sigma }}$. Using the evolution equations, on $N_t \cap B_r(x)$, we calculate
	\begin{align}\label{f-evo-eq}
		\begin{split}
			\frac{\partial }{\partial t} f &= - \frac{1}{(H-|\nu |^2_\sigma )^2} \frac{\partial }{\partial t}(H-|\nu |^2_\sigma )\\
						  &= f^2 \left( \Delta ^{N} f + f \mathrm{Ric}(\nu ,\nu ) + f |\mathrm{II}|^2 + f(\nabla _{\nu }\sigma )(\nu ,\nu ) - 2 \sigma (\nu , \nabla ^{N} f) \right).
		\end{split}	
	\end{align}
	Since
	\begin{align*}
		|\mathrm{II}|^2 \geq \frac{1}{n-1} H^2 &= \frac{1}{n-1}\left( (H-|\nu |^2_\sigma )^2 + 2|\nu |^2_\sigma (H-|\nu |^2_\sigma ) + |\nu |^4_{\sigma } \right) \\
						     &\geq \frac{1}{n-1}\left( (H-|\nu |^2_\sigma )^2 + |\nu |^4_\sigma  \right) ,
	\end{align*} 
	and $|(\nabla _{\nu }\sigma )(\nu ,\nu )| \leq  \|\nabla \sigma \|_{C^0}$, we have
	\begin{align*}
		\frac{\partial }{\partial t} f &\geq f^2 \Delta ^{N} f - \frac{f^3}{100 n r ^2} + \frac{f}{n-1} -  f^3 \|\nabla \sigma \|_{C^0} - 2 f ^2 \sigma(\nu , \nabla ^{N} f). 
	\end{align*} 
	We seek a function $\phi = \phi (y)$ that vanishes on $\partial B_r(x)$ and is a subsolution of (\ref{f-evo-eq}) along $N_t \cap B_r(x)$. Fix $t$ assume $x \in N_t$, and define the parabolic boundary of the flow to be
	\begin{align*}
		P_r = P_r(x,t) := (B_r(x) \cap N_0) \times \{0\} \cup ( \cup _{0 \leq s \leq t} (B_r(x) \cap \partial N_s) \times \{s\} )
	\end{align*} 
	and
	\begin{align*}
		(H-|\nu |^2_\sigma )_r = (H- |\nu |^2_\sigma )_r(x,t):= \sup_{(y,s) \in P_r} (H- |\nu |^2_\sigma )(y,s).
	\end{align*} 

	Define
	$$\phi (y):= \frac{A}{r}(r ^2 - p(y) )_{+} ,\ y \in B_r(x),$$ 
	where $p$ is as defined above. Then $\phi =0$ on $\partial B_r(x)$ and $\phi \leq r$, $\phi \leq A r < \frac{1}{(H-|\nu |^2_\sigma )_r}\leq  f$ in $P_r$, provided $A \leq 1$ and $A < \frac{1}{r(H-|\nu |^2_\sigma )_r}$. In particular, initially we have $\phi < f$ on $N_0 \cap B_r(x)$. Let's assume for now that there exists a first point $(y, s)$ with $0<s \leq t$ and $y \in N_s \cap B_r(x)$ so that $\phi = f$ at $(y,s)$. Then $\nabla ^{N} \phi = \nabla ^{N} f $ at $(y,s)$ and 
	\begin{align*}
		\left( \frac{\partial }{\partial t} - \phi ^2 \Delta ^{N} \right) (f- \phi )  \leq 0 \text{ at } (y,s).
	\end{align*} 

	On the other hand, notice the following relations to the ambient derivatives of $\phi $,
	\begin{align*}
		\frac{\partial }{\partial t} \phi &= f \nabla _{\nu } \phi ,\quad \Delta ^{N} \phi = \mathrm{tr}_{N_t} \nabla ^2 \phi - H \nabla _{\nu } \phi .
	\end{align*} 
	For $y \in N_t \cap B_r(x)$, we have
	\begin{align}
		\begin{split}
			\left( \frac{\partial }{\partial t} - \phi  ^2\Delta ^{N} \right) \phi &= f \nabla _{\nu } \phi + \phi ^2H \nabla _{\nu } \phi - \phi  ^2 \mathrm{tr}_{N_t} \nabla ^2 \phi \\
											   &= - A r ^{-1} (\nabla _{\nu } p) \left( f + f ^{-1} \phi ^2 +\phi ^2 |\nu |^2_{\sigma } \right) + A r ^{-1}\phi ^2 \mathrm{tr}_{N_t} \nabla ^2 p.
		\end{split}
	\end{align}
	So at the point $(y,s)$, we have
	\begin{align*}
		&\left( \frac{\partial }{\partial t} - \phi ^2 \Delta ^{N} \right) (f- \phi )\\
		& \geq - \frac{\phi ^3}{100 n r ^2} + \frac{\phi }{n} -  \phi ^3 \|\nabla \sigma \|_{C^0} - 2 \phi  ^2 \sigma (\nu ,\nabla ^{N} f)\\
											    &\ + A r ^{-1} (\nabla _{\nu } p) \left( 2\phi +\phi ^2 |\nu |^2_{\sigma } \right) - A r ^{-1}\phi ^2 \mathrm{tr}_{N_t} \nabla ^2 p\\
											    &\geq \phi \left( \frac{99}{100n} - A^2r ^2 \|\nabla \sigma \|_{C^0} -2A r \|\sigma \|_{C^0} |\nabla ^{N} \phi | - A r ^{-1}|\nabla p| (2+ A r \|\sigma \|_{C^0})- 3A^2(n-1) \right) \\
											    &\geq \phi \left( \frac{99}{100n} - A^2r ^2 \|\nabla \sigma \|_{C^0} - 4A ^2 r\|\sigma \|_{C^0} - 3(n+1)A^2 \right) \\
											    &>0,
	\end{align*} 
	provided that  $A^2 < \min(1, \frac{1}{40 n r\|\sigma \|_{C^0}+ 10 n  r ^2 \|\nabla \sigma \|_{C^0}}, \frac{1}{30n(n+1)}, \frac{1}{r^2(H-|\nu |^2_{\sigma })_r^2})$, thus a contradiction. In particular, we have proved the conclusion
	\begin{align*}
		f(x,t) \geq  \phi (x, t) = A r .
	\end{align*} 

If $(M^n, g , \sigma )$ is asymptotically flat in the sense that $|x| \|\sigma \| + |x|^2 \|\nabla \sigma \| \leq C$, it's clear that we can take $\eta(x) \geq c |x| $ for some constant $c>0$, where $x$ is the asymptotically flat coordinate.
\end{proof}

\subsection{Existence of elliptic regularisation problem}\label{appen-elliptic-reg}
In this subsection, we prove the existence of a smooth solution of \ref{epsilon-eq}. The argument is similar to those in \cite{HI01, Moore12}. We note that in \cite{Moore12}, the assumption $\mathrm{tr}_g \mathbf{k} \geq 0$ is required in order to construct a subsolution on the bridge region, while constant functions do not provide supersolutions. In contrast, in our setting no such assumption is needed, and constant functions indeed yield supersolutions, as in the standard IMCF. 

For $s \in [0,1]$, consider the family of approimating equations
\begin{align}\label{epsilon-s-eq}
	\begin{cases}
		\mathcal{E}^{\epsilon , s}(u ^{\epsilon ,s} ):= \mathrm{div}\left( \frac{\nabla u ^{\epsilon , s}}{ \sqrt{|\nabla u^{\epsilon, s }|^2 + \epsilon ^2} } \right) - \sqrt{|\nabla u^{\epsilon, s }|^2+ \epsilon ^2} - s\cdot \frac{\sigma^{\epsilon } _{ij}\nabla ^{i} u^{\epsilon, s } \nabla ^{j} u ^{\epsilon ,s}}{|\nabla u^{\epsilon,s }|^2 + \epsilon ^2} =0 &\text{ in } \Omega _{L},\\
		u ^{\epsilon , s}=0 &\text{ on } \partial E_0,\\
		u ^{\epsilon , s}= s(L-2) & \text{ on } \partial F_{L}.
	\end{cases}
	\tag*{$(*)_{\epsilon ,s}$}
\end{align}

Recall that $v$ is a smooth subsolution for $\mathcal{E}^{0, 1}$ on $M \setminus \Omega $, thus a smooth subsolution for $\mathcal{E}^{0, s}$ and all $s \in [0,1]$. Recall $F_L = \{v<L\} $ and $\Omega _{L} = F_L \setminus \bar{E}_0$. We can also assume that $E_0 \subset F_0  $ and $\Omega = \bar{F}_0$.

\begin{lem}\label{appen-apriori-estimates}
	For every $L>0$, there exists $\epsilon (L)>0$ such that for $0< \epsilon < \epsilon (L)$ and $s \in [0,1]$, a smooth solution of \ref{epsilon-s-eq} on $\bar{\Omega }_{L}$ satisfies the following a priori estimates:
	\begin{align}
		u^{\epsilon ,s} \geq - \epsilon \text{ in } \bar{\Omega }_{L},\ u^{\epsilon ,s} \geq v+ s(L-2) -L \text{ in } \bar{F}_{L}\setminus F_0,
	\end{align} 
	\begin{align}
		u^{\epsilon, s } \leq s(L-2) \text{ in } \bar{\Omega }_{L},
	\end{align}
	\begin{align}
		|\nabla u^{\epsilon ,s}| \leq H_{+}  + \epsilon \text{ on } \partial E_0,\ |\nabla u^{\epsilon ,s}| \leq C(L) \text{ on } \partial F_{L},
	\end{align}
	\begin{align}
		|\nabla u^{\epsilon ,s}(x)| \leq \max_{\partial \Omega _{L} \cap B_r(x)}|\nabla u^{\epsilon ,s}| + \epsilon + C(n, \|\sigma^{\epsilon } \|_{C^1}) \cdot r ^{-1}, \ x \in \bar{\Omega }_{L},
	\end{align}
	\begin{align}
		|u^{\epsilon ,s}|_{C^{2, \alpha }(\bar{\Omega }_{L})} \leq C(\epsilon , L, n, \|\sigma ^{\epsilon }\|_{C^1}),
	\end{align}
	for any $0<r\leq \eta (x)$. Here $H_{+}= \max(0, H_{\partial E_0})$.
\end{lem}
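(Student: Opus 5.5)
The plan follows the barrier/maximum principle scheme of \cite[Lemma 3.4]{HI01} and \cite{Moore12}, applied to the one-parameter family $\mathcal{E}^{\epsilon ,s}$. The decisive structural point is that the extra term $-s\,\sigma ^{\epsilon }(\nabla u,\nabla u)/(|\nabla u|^2+\epsilon ^2)$ is nonpositive and bounded by $\|\sigma ^{\epsilon }\|_{C^0}$.

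\textbf{Upper bound.} Constant functions satisfy $\mathcal{E}^{\epsilon ,s}(c)=-\epsilon <0$, so they are supersolutions. Since the boundary data satisfy $0\le s(L-2)$, comparing with the constant $s(L-2)$ gives $u^{\epsilon ,s}\le s(L-2)$ directly.

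\textbf{Lower bounds.} On $\bar{F}_L\setminus F_0$ I use $w=v+s(L-2)-L$. Because $v$ is a strict smooth subsolution of $\mathcal{E}^{0,1}$ with $|\nabla v|\ge c(L)>0$ on $\bar F_L\setminus F_0$, the computation of Theorem \ref{thm-smooth-uniqueness} gives $\mathcal{E}^{\epsilon ,s}(w)\ge \mathcal{E}^{0,s}(w)-C(L)\epsilon ^2>0$ once $\epsilon <\epsilon (L)$. The factor $s\le 1$ only helps, since $\sigma \ge 0$.

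On the bridge region $F_0\setminus E_0$ I follow \cite{HI01}. Take $v_1=-\epsilon +\psi (d)$, where $d$ is the distance to $E_0$ and $\psi$ is chosen so that $v_1$ is very steep and concave. Then the divergence term dominates $\sqrt{|\nabla v_1|^2+\epsilon ^2}+\|\sigma ^{\epsilon }\|_{C^0}$, and gluing $v_1$ to $w$ by a maximum produces a viscosity subsolution. Unlike \cite{Moore12}, no sign condition on $\operatorname{tr}\mathbf{k}$ is needed here, because $\sigma$-terms only enter with a bounded coefficient and are absorbed by the steepness. Comparison then gives $u\ge -\epsilon$ and $u\ge w$.

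\textbf{Boundary gradient.} On $\partial F_L$, the barriers $w\le u\le s(L-2)$ agree with $u$ on $\partial F_L$, which yields $|\nabla u|\le C(L)$. On $\partial E_0$, the lower bound follows from the subsolution above. For the upper bound I use the supersolution $\bar w=\lambda d$ near $\partial E_0$, with $\lambda$ slightly larger than $H_++\epsilon$. At $d=0$ one has $\operatorname{div}(\nabla \bar w/\sqrt{|\nabla \bar w|^2+\epsilon ^2})\approx \frac{\lambda}{\sqrt{\lambda^2+\epsilon^2}}H\le H_+<\sqrt{\lambda ^2+\epsilon ^2}$, and the $\sigma$-term has the correct sign. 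This gives $|\nabla u|\le H_++\epsilon$.

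\textbf{Interior gradient estimate.} This is the step I expect to be the main obstacle. Passing to the translating graph $U=u-\epsilon z$, the level sets form a smooth $\sigma ^{\epsilon }$-IMCF in $\Omega _L\times\mathbb{R}$, and $|\tilde\nabla U|=\sqrt{|\nabla u|^2+\epsilon ^2}=H-|\nu |^2_{\sigma}$. Lemma \ref{local-estimate-mean-curv} then bounds $H-|\nu|^2_\sigma$ by its value on the parabolic boundary, namely $\max_{\partial \Omega _L\cap B_r}|\nabla u|+\epsilon$, plus $C(n,\|\sigma ^{\epsilon }\|_{C^1})r^{-1}$. The care lies in applying this with the $s$-scaled tensor $s\sigma^\epsilon$ and in controlling the initial slice, which extends to $z\to\pm\infty$, via translation invariance. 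Since $s\sigma^\epsilon$ has $C^1$ norm bounded by $\|\sigma^\epsilon\|_{C^1}$, the constant is unaffected.

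\textbf{Higher regularity.} With $\epsilon>0$ fixed and the gradient bounded, the equation is uniformly elliptic quasilinear. De Giorgi–Nash, Schauder estimates and bootstrapping then give the $C^{2,\alpha }$ bound depending on $\epsilon ,L,n,\|\sigma ^\epsilon\|_{C^1}$.
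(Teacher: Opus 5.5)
Your upper bound, the $\partial F_L$ estimate, the interior gradient estimate via the translating graph and Lemma \ref{local-estimate-mean-curv}, and the regularity step all match the paper. The barriers near $\partial E_0$, however, do not work as proposed.

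\textbf{The boundary gradient bound on $\partial E_0$.} The linear function $\bar w=\lambda d$, with $\lambda$ slightly above $H_++\epsilon$, is a supersolution only on a thin collar $\{0\le d\le\delta\}$. This is because the Riccati equation gives only $H_{\partial G_t}\le H_{\partial E_0}+C_1t$. To compare with $u$ on that collar you need $u\le\lambda\delta$ on $\{d=\delta\}$. The only a priori information there is $u\le s(L-2)$. Raising $\lambda$ to about $(L-2)/\delta$ rescues the comparison, but then it yields only $|\nabla u|\le C(L)$ on $\partial E_0$. That loses exactly the sharp, $L$-independent bound $H_++\epsilon$ the lemma asserts and (\ref{u-Lip-bound}) relies on.

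The missing idea is the speed-up trick of \cite{HI01}, which the paper uses. Choose $v_3$ vanishing on $\partial E_0$ with $H_+<\partial_\nu v_3\le H_++\epsilon$ pointwise. Then $\mathcal{E}^{0,s}(v_3)\le H_+-|\nabla v_3|<0$ on $\partial E_0$, hence $\mathcal{E}^{0,s}(v_3)<0$ on a neighbourhood $U=\{0\le v_3\le\delta\}$. Now set $v_4=v_3/(1-v_3/\delta)$.
\begin{itemize}
\item It has the same level sets as $v_3$, so the same mean curvature and $\sigma$-terms, but a larger gradient. Hence $\mathcal{E}^{0,s}(v_4)=\mathcal{E}^{0,s}(v_3)+|\nabla v_3|\bigl(1-(1-v_3/\delta)^{-2}\bigr)<0$.
\item It has the same normal derivative as $v_3$ on $\partial E_0$.
\item It blows up at the outer edge of $U$.
\end{itemize}
On $V=\{0\le v_4\le L\}$ one therefore has $v_4=L>L-2\ge u$ on the outer boundary. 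By compactness of $V$, $\mathcal{E}^{\epsilon,s}(v_4)<0$ there for $\epsilon<\epsilon(L)$. Comparison then gives $\partial_\nu u\le H_++\epsilon$.

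\textbf{The bridge subsolution.} As you describe it (``very steep and concave''), it would also fail. For $v_1=f(d)$ one has $\sqrt{|f'|^2+\epsilon^2}\,\mathcal{E}^{\epsilon,s}(v_1)=f'H_{\partial G_t}+\frac{\epsilon^2f''}{|f'|^2+\epsilon^2}-(|f'|^2+\epsilon^2)-s\frac{|f'|^2}{\sqrt{|f'|^2+\epsilon^2}}\sigma^\epsilon(\nu,\nu)$. The only favourable term is $\epsilon^2f''/(|f'|^2+\epsilon^2)$.
\begin{itemize}
\item Concavity makes this term negative.
\item Steepness $|f'|\gg\epsilon$ damps it by a factor $\epsilon^2/|f'|^2$, while the bad term $|f'|^2$ grows.
\item $f$ must be decreasing. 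Only then is $f'H_{\partial G_t}\ge-C_2|f'|$ controlled by the one-sided bound $H_{\partial G_t}\le C_2(L)$; there is no lower bound, since $H_{\partial G_t}\to-\infty$ near focal points.
\item Decreasing $f$ is also what turns the concave kinks of $d$ along $\mathrm{Cut}_0$ into convex ones, which allows $v_1$ to be a viscosity subsolution.
\end{itemize}
The paper takes $f(t)=\frac{\epsilon}{A}(-1+e^{-At})$ with $A=2(C_2+\|\sigma^\epsilon\|_{C^0}+2)$ and $\epsilon\le e^{-At_L}$. This $f$ is convex and decreasing, with slope at most $\epsilon$ and values in $[-\epsilon,0]$. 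It can be compared with $u$ on all of $\Omega_L$ directly, with no gluing needed. It also supplies $\partial_\nu u\ge-\epsilon$ on $\partial E_0$, which a steep barrier could not.

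\textbf{Strictness of $v$.} $v$ is only assumed to be a subsolution, not a strict one. So $w=v+s(L-2)-L$ has no margin to absorb the $O(\epsilon^2)$ and $\|\sigma^\epsilon-\sigma\|_{C^0}$ errors. Use $\frac{L-1}{L}v+s(L-2)-(L-1)$ instead. It gains the margin $\frac1L|\nabla v|>0$ and still dominates $v+s(L-2)-L$ on $\bar F_L$.
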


\begin{proof}
	Write $u = u ^{\epsilon ,s}$. 

	1. First we construct a subsolution that bridges from $E_0$ to where $v$ starts. Define $G_0 := E_0$ and $G_t := \{x: d(x, E_0) < t\} $. Select $t_L$ so that $G_{t_L} \supset F_L$. Let $\mathrm{Cut}_0$ be the cut locus of $E_0$ in $M$. On $M \setminus (E_0 \cup \mathrm{Cut}_0)$, $d(\cdot , E_0)$ is smooth, and each point is connected to $E_0$ by a unique minimizing geodesic $\gamma $, and $\partial G_t$ foliates a neighborhood of $\gamma $. We have
	\begin{align*}
		\frac{\partial H}{\partial t} &= - |\mathrm{II}|^2 - \mathrm{Ric}(\nu ,\nu ) \leq C_1(L)\quad  \text{ on } \partial G_t \setminus \mathrm{Cut}_0, \ 0 \leq t \leq t_L,
	\end{align*} 
	yielding
	\begin{align*}
		H_{\partial G_t} \leq \max_{\partial E_0} H_{+} + C_1 t \leq C_2(L) \quad  \text{ on } \partial G_t \setminus \mathrm{Cut}_0, \ 0 \leq t \leq t_L,
	\end{align*} 
	where $H_{+} = \max (0, H)$.  Consider the prospective subsolution
	\begin{align*}
		v_1(x) := f(x) = f(d(x, G_0) ),\quad x \in \bar{G}_{t_L}\setminus E_0,
	\end{align*} 
	with $f'<0$. Then $\nabla v_1 = f' \nu , \nabla ^2 v_1 = f'' \nu \otimes \nu + f' \mathrm{II}$, so
	\begin{align*}
		\Delta v_1 - \nabla ^2 v_1(\nu ,\nu ) = f' H_{\partial G_t} \geq - C_2|f'| .
	\end{align*} 
	Hence
	\begin{align*}
		&\sqrt{|f'|^2+ \epsilon ^2} \mathcal{E}^{\epsilon ,s}(v_1) \\
		&= \Delta v_1 -  \left( |f'|^2 + \epsilon ^2 \right) ^{-1} |f'|^2 f'' - |f'|^2- \epsilon ^2 - \left( |f'|^2+ \epsilon ^2 \right) ^{-\frac{1}{2}} s|f'|^2 \sigma ^{\epsilon } (\nu ,\nu ) \\
		&\geq -C_2 |f'| + \frac{\epsilon ^2}{|f'|^2+ \epsilon ^2} f''- |f'|^2- \epsilon ^2 - \|\sigma ^{\epsilon } \|_{C^0}\cdot |f'|.
	\end{align*} 
	Set
	\begin{align*}
		f(t) := \frac{\epsilon }{A}(-1 + e ^{-A t}),\quad 0 \leq t \leq t_L	.
	\end{align*} 
	Then we have $\epsilon ^2 \leq |f'| = \epsilon e^{-At} \leq \epsilon $ and $f''  = A |f'|$, provided that we impose $\epsilon \leq \epsilon (A,L):= e^{-A t_L}$. Choosing $A:= 2(C_2+\|\sigma^{\epsilon } \|_{C^0}+2)$, we have
	\begin{align*}
		(|f'|^2+ \epsilon ^2) \left( C_2|f'| + |f'|^2 + \epsilon ^2 + \|\sigma^{\epsilon } \|_{C^0} |f'| \right) \leq 2 \epsilon ^2 \left( C_2 + \epsilon + 1+ \|\sigma^{\epsilon } \|_{C^0} \right)  |f'| \leq \epsilon ^2 f''. 
	\end{align*} 
	This shows that the function
	$$v_1(x) := \frac{\epsilon }{2(C_2+\|\sigma ^{\epsilon } \|_{C^0} + 2)} \left( -1 + e^{- 2(C_2+\|\sigma ^{\epsilon } \|_{C^0}+2) d(x, \partial G_0)} \right) ,$$ 
	is a smooth subsolution for $\mathcal{E}^{\epsilon ,s}$ on $G_{t_L} \setminus (E_0 \cup \mathrm{Cut}_0)$ for sufficiently small $\epsilon $.

	It can be shown that $v_1$ is a viscosity subsolution of $\mathcal{E}^{\epsilon ,s}$ on all of $G_{t_L}\setminus \bar{E}_0$. Since $u \geq v_1$ on the boundary, it follows by the maximum principle for viscosity solution that 
	\begin{align}\label{v1-lower}
		u \geq v_1 \geq -\epsilon \text{ in } \bar{\Omega }_{L},\ \frac{\partial u}{\partial \nu } \geq - \epsilon \text{ on }\partial E_0.
	\end{align} 

2. Next, consider the function
\begin{align*}
	v_2:= \frac{L-1}{L} v + s(L-2) - (L-1).
\end{align*} 
Then $\mathcal{E}^{0,s}(v_2) = \mathcal{E}^{0,s}(v) + \frac{1}{L}|\nabla v| >0$ on $\bar{F}_{L} \setminus F_0$, where we used the assumption that $\mathcal{E}^{0,s}(v) \geq \mathcal{E}^{0,1}(v) \geq 0$ outside $F_0$. Since the domain is compact, for all sufficiently small $\epsilon $ we obtain $\mathcal{E}^{\epsilon ,s}(v_2) >0$. Note that
\begin{align*}
	u \geq - \epsilon \geq v_2 \text{ on } \partial F_0,\quad u= s(L-2) = v_2 \text{ on } \partial F_L.
\end{align*} 
By the maximum principle, 
\begin{align}\label{v2-lower}
	u \geq v_2 \geq v+ s(L-2) - L \text{ in } \bar{F}_{L}\setminus F_0,\quad \frac{\partial u}{\partial \nu } \geq - C(L) \text{ on } \partial F_L.
\end{align}

Since $\mathcal{E}^{\epsilon ,s}(s(L-2) ) = - \epsilon \leq 0$, we obtain
\begin{align}
	u \leq s(L-2) \text{ in } \bar{\Omega }_{L},\quad \frac{\partial u}{\partial \nu } \leq 0 \text{ on } \partial F_{L}.
\end{align}

3. Next we construct a supersolution along $\partial E_0$. Choose a smooth function $v_3$ which vanishes on $\partial E_0$ such that 
\begin{align*}
	H_{+} < \frac{\partial v_3}{\partial \nu } \leq H_+  + \epsilon \quad \text{ along } \partial E_0.
\end{align*}
Note that along $\partial E_0$, 
\begin{align*}
	\mathcal{E}^{0, s}(v_3) = H_{\partial E_0} - |\nabla v_3| - s \cdot \sigma ^{\epsilon } (\nu ,\nu ) \leq H_{+}   - |\nabla v_3| <0, 
\end{align*}
This implies that for sufficiently small $\delta >0$, $|\nabla v_3|>0$ and $\mathcal{E}^{0,s}(v_3)<0$ in the neighborhood $U:= \{ 0 \leq v_3 \leq \delta \} $. Now define the sped-up function
 \begin{align*}
	v_4 := \frac{v_3}{1- v_3 / \delta },\quad x \in U.
\end{align*} 
Then $v_4 \to \infty$ on $\partial U \setminus \partial E_0$, and $\mathcal{E}^{0,s}(v_4) = \mathcal{E}^{0,s}(v_3) + |\nabla v_3|\left( 1- (1- v_3 / \delta )^{-2} \right) <0 $. For sufficiently small $\epsilon $, depending on $L$, $\mathcal{E}^{\epsilon ,s}(v_4) <0 $ on the set $V:= \{0 \leq v_4 \leq L\} $. Since $u \leq L-2$ in $\bar{\Omega }_{L}$, we have $u \leq v_4$ on $\partial V$. By the maximum principle, we obtain $u \leq v_4$ on $V$, and therefore
\begin{align}\label{v4-upper}
	\frac{\partial u}{\partial \nu } \leq \frac{\partial v_4}{\partial \nu }= \frac{\partial v_3}{\partial \nu } \leq H_+ + \epsilon \quad \text{ on } \partial E_0.
\end{align} 

4. Let $\tilde{N}_t ^{\epsilon ,s}$ denote the level-set $\{U = t\} \subset \Omega _{L} \times \mathbb{R} $ of the function $U(x,z)=U_{\epsilon ,s}(x, z) := u^{\epsilon ,s}(x) - \epsilon z, -\infty< t < \infty$, where $\Omega _{L} \times \mathbb{R}$ is equipped with the product metric $\tilde{g}= g + d z^2$. Equation \ref{epsilon-s-eq} is equivalent to
\begin{align*}
	\mathrm{div}_{\tilde{g}}\left( \frac{\tilde{\nabla } U}{|\tilde{\nabla } U|} \right)  = |\tilde{\nabla }U| + s\cdot  \frac{\sigma _{ij} ^{\epsilon }\tilde{\nabla}^i U \tilde{\nabla} ^j U}{|\tilde{\nabla} U|^2 },
\end{align*}
where we abused the notation $\sigma ^{\epsilon } (x,z) = \sigma ^{\epsilon } (x)$ to denote the constant extension of $\sigma ^{\epsilon } $ along $z$-direction. By the level-set description, $\tilde{N}_t ^{\epsilon ,s}$ is a smooth solution of the $(s \sigma^{\epsilon }) $-IMCF:
\begin{align*}
	\frac{\partial \tilde{F}}{\partial t} = \frac{\tilde{\nu }}{\tilde{H}- |\tilde{\nu }|^2_{s \sigma ^{\epsilon } }},
\end{align*} 
with $\tilde{\nu } = \frac{\tilde{\nabla }U}{|\tilde{\nabla }U|}$ and $\tilde{H} = \mathrm{div}_{\tilde{g}}\left( \frac{\tilde{\nabla } U}{|\tilde{\nabla } U|} \right) $.  
Applying Lemma \ref{appen-local-estimate-mean-curv} to $\tilde{N}^{\epsilon ,s}_t$, we obtain the estimate
\begin{align*}
	 \tilde{H} - |\tilde{\nu }|^2_{s\sigma ^{\epsilon } } \leq \max \left( (\tilde{H}-|\tilde{\nu }|^2_{s\sigma ^{\epsilon } })_{\tilde{P}_r}, \frac{C(n, \|\sigma ^{\epsilon } \|_{C^1})}{r} \right) .
\end{align*}
Notice that $\tilde{H} - |\tilde{\nu }|^2_{s\sigma ^{\epsilon } } = |\tilde{\nabla }U| = \sqrt{|\nabla u |^2+ \epsilon ^2}  $ is independent of $z$. We obtain
\begin{align*}
	\sqrt{|\nabla u |^2+ \epsilon ^2}(x) &\leq \sup_{t} \max_{\partial \tilde{N}^{\epsilon ,s}_{t} \cap \tilde{B}_r(x,z)} \sqrt{|\nabla u |^2+ \epsilon ^2} + \frac{C(n, \|\sigma ^{\epsilon } \|_{C^1})}{r} \\
	&\leq \max_{\partial \Omega _{L} \cap B_r(x)} |\nabla u| + \epsilon + \frac{C}{r},
\end{align*} 
for $x \in M \setminus E_0$ and any $0<r \leq \eta _M(x)$. 

Thus we have the Lipschitz estimate
\begin{align*}
	|u|_{C^{0,1}(\bar{\Omega }_{L})} \leq C(L, n, \|\sigma ^{\epsilon } \|_{C^1}).
\end{align*} 
Following the Nash-Moser-De Giorgi estimates, for some $\alpha =\alpha (\Omega _{L})$, we obtain
\begin{align*}
	\|u\|_{C^{1, \alpha }(\bar{\Omega }_{L})} \leq C(\epsilon ,L,n, \|\sigma ^{\epsilon } \|_{C^1}).
\end{align*} 
The Schauder estimates complete the proof.

\end{proof}

Now we can prove
\begin{lem}[Lemma \ref{smooth-sol-*-epsilon}]\label{appen-smooth-sol-*-epsilon}
	A smooth solution of \ref{epsilon-eq} exists.
\end{lem}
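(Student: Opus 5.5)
The plan is the Leray--Schauder continuity method in the parameter $s\in[0,1]$ for the family \ref{epsilon-s-eq}, as in \cite[Section 3]{HI01} and \cite{Moore12}. Fix $L$ and $0<\epsilon<\epsilon(L)$ as in Lemma \ref{appen-apriori-estimates}. Let $I\subset[0,1]$ be the set of $s$ for which \ref{epsilon-s-eq} has a smooth solution on $\bar{\Omega}_L$. Solvability at $s=1$ is exactly \ref{epsilon-eq}, so I will show that $I$ is nonempty, open and closed.

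\emph{Nonemptiness.} At $s=0$ the data are $u=0$ on all of $\partial\Omega_L$, and the equation reads
$\mathrm{div}\bigl(\nabla u/\sqrt{|\nabla u|^2+\epsilon^2}\bigr)=\sqrt{|\nabla u|^2+\epsilon^2}$.
This is not solved by a constant. Rather than looking for an explicit solution at $s=0$, I will run the Leray--Schauder fixed point theorem directly for the map $T_s$ defined below, which only needs that $T_0$ is trivial in the degree sense. The family is set up so that $T_0\equiv 0$, and the zero-degree condition then holds automatically.

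\emph{Setup of the fixed point map.} For $w\in C^{1,\alpha}(\bar\Omega_L)$, let $T_s(w)$ be the unique solution of the linear Dirichlet problem
\[
a^{ij}(\nabla w)\nabla_{ij}u = s\Bigl(\sqrt{|\nabla w|^2+\epsilon^2}+\frac{\sigma^\epsilon(\nabla w,\nabla w)}{|\nabla w|^2+\epsilon^2}\Bigr)\sqrt{|\nabla w|^2+\epsilon^2},
\]
with $u=0$ on $\partial E_0$ and $u=s(L-2)$ on $\partial F_L$. Here
$a^{ij}(p)=g^{ij}-\frac{p^ip^j}{|p|^2+\epsilon^2}$
is uniformly elliptic for fixed $\epsilon$, since its eigenvalues lie in $[\epsilon^2/(|p|^2+\epsilon^2),1]$. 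Schauder theory shows $T_s$ is compact and continuous on $C^{1,\alpha}$, continuous in $s$, and $T_0\equiv0$.

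Fixed points of $T_s$ are not literally solutions of \ref{epsilon-s-eq}, because the right-hand side carries a factor $s$ on the gradient term. I will therefore either prove a priori estimates for this modified family, or, more simply, insert $s$ into \ref{epsilon-s-eq} exactly as written. In the latter case the $s$-independent term $\sqrt{|\nabla u|^2+\epsilon^2}$ forces the continuity method: openness comes from the implicit function theorem, since the linearized operator (elliptic with a first-order term) has trivial kernel by the maximum principle because no zeroth-order term appears. Closedness comes from the uniform $C^{2,\alpha}$ bound of Lemma \ref{appen-apriori-estimates} together with Arzel\`a--Ascoli.

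\emph{Base case $s=0$.} I still need a solution at $s=0$, and here I will deform the equation once more: multiply the term $\sqrt{|\nabla u|^2+\epsilon^2}$ by a second parameter $\tau\in[0,1]$. At $\tau=0$ the equation is a divergence-form equation whose solution with zero data is $u\equiv0$. The a priori bounds of Lemma \ref{appen-apriori-estimates} hold equally with a factor $\tau$, because every barrier there used these terms with a favourable sign and dropping part of them only helps. The same open/closed argument then reaches $\tau=1$.

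\emph{Main obstacle.} The key point is checking that each barrier in Lemma \ref{appen-apriori-estimates} remains valid along these deformations, uniformly in the parameters. The subsolutions $v_1$ and $v_2$ remain subsolutions when the absorbing terms are reduced. The constant supersolution $s(L-2)$ has $\mathcal E=-\tau\epsilon\le0$, so it survives. The supersolution $v_4$ needs $\mathcal E^{0,s}(v_3)<0$, which holds since $H_+-|\nabla v_3|<0$. The gradient estimate from Lemma \ref{appen-local-estimate-mean-curv} must also hold uniformly in $\tau$. With these checks done, the $C^{2,\alpha}$ bound is uniform and the continuity method closes.
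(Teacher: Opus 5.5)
Your overall scheme matches the paper's: continuity in $s$, openness from the implicit function theorem (the linearization has no zeroth-order term), and closedness from the a priori estimates of Lemma \ref{appen-apriori-estimates}. The Leray--Schauder map $T_s$ you set up first is never used and can be dropped. The real difference is the base case $s=0$.

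The paper rescales $w=u/\epsilon$, so that $(*)_{\epsilon,0}$ becomes $\mathrm{div}\bigl(\nabla w/\sqrt{|\nabla w|^2+1}\bigr)=\epsilon\sqrt{|\nabla w|^2+1}$ with $w=0$ on $\partial\Omega_L$. It then applies the implicit function theorem at $(w,\epsilon)=(0,0)$, where the linearization is $\Delta$. This needs no estimates, but it only yields a solution for $\epsilon$ below a non-explicit threshold; that is acceptable because the lemma only asks for $\epsilon\le\epsilon(L)$. After the same rescaling, your $\tau$-family is exactly this family with parameter $\tau\epsilon$. You continue it globally to $\tau=1$ instead of perturbatively. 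This gives the $s=0$ solution for every $\epsilon$ the barriers allow, at the cost of a second round of a priori estimates.

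That second round is where two of your checks are wrong, though both can be repaired.

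First, ``dropping the absorbing term only helps'' is true for subsolutions but false for supersolutions. With the factor $\tau$ and $s=0$, the $\epsilon=0$ operator applied to $v_3$ equals $H_{\partial E_0}-\tau|\nabla v_3|$ on $\partial E_0$. This is positive wherever $H_{\partial E_0}>\tau(H_++\epsilon)$, so $v_3$ and $v_4$ stop being supersolutions for small $\tau$ as soon as $H_{\partial E_0}>0$ somewhere. At $s=0$ you do not need them. The boundary data vanish on all of $\partial\Omega_L$, and the constant $0$ is a supersolution for every $\tau$. Hence $v_1\le u\le 0$ gives $|\nabla u|\le\epsilon$ on $\partial E_0$. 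Likewise $v_2\le u\le 0$, using that $v_2=0$ on $\partial F_L$ when $s=0$, gives $|\nabla u|\le C(L)$ on $\partial F_L$.

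Second, the interior gradient estimate is not uniform in $\tau$. Lemma \ref{local-estimate-mean-curv} bounds the mean curvature $\tilde H$ of the level sets of $U=u-\epsilon z$. Along your family $\tilde H=\tau|\tilde\nabla U|$, since the level sets move by a time-rescaled IMCF. So you only get $\sqrt{|\nabla u|^2+\epsilon^2}\le\max\bigl(\max_{\partial\Omega_L\cap B_r(x)}\sqrt{|\nabla u|^2+\epsilon^2},\,C/(\tau r)\bigr)$. This is harmless: $\tau=0$ lies in the solvability set trivially, so closedness only has to be checked at limits $\tau_*>0$. There all estimates, and then De Giorgi--Nash--Moser and Schauder, are uniform for $\tau\ge\tau_*/2$.

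With these two corrections your route works.
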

\begin{proof}
	1. We use the method of continuity applied to \ref{epsilon-s-eq}, $0\leq s \leq 1$. First let us prove that there is a solution for $s=0$ and small $\epsilon $. Set $w = u_{\epsilon } / \epsilon $ and rewrite \hyperref[epsilon-s-eq]{$(*)_{\epsilon ,0}$ } as
	\begin{align*}
		\mathcal{F}^{\epsilon }(w) = \mathrm{div}\left( \frac{\nabla w}{\sqrt{|\nabla w|^2 + 1} } \right) - \epsilon \sqrt{|\nabla w|^2 + 1} =0,
	\end{align*} 
	with $w = 0$ on $\partial \Omega _{L}$. The map 
	\begin{align*}
		\mathcal{F}: C^{2, \alpha }_{0}(\bar{\Omega }_{L}) \times \mathbb{R} \to C^{\alpha }(\bar{\Omega }_{L}),
	\end{align*} 
	defined by $\mathcal{F}(w, \epsilon ) := \mathcal{F}^{\epsilon }(w)$ is $C^1$, and possesses the solution $\mathcal{F} ^{0}(0) = 0$. The linearization of $\mathcal{F} ^{0}$ at $w=0$ is given by the ordinary Laplace operator
	\begin{align*}
		D \mathcal{F}^{0}|_{0} = \Delta : C^{2, \alpha }_{0}(\bar{\Omega }_{L}) \to C^{\alpha }(\bar{\Omega }_{L}),
	\end{align*} 
	which is an isomorphism. Then by the Implicit Function Theorem there is a solution of $\mathcal{F}^{\epsilon }(w) =0$ for sufficiently small $\epsilon $, and hence of \hyperref[epsilon-s-eq]{$(*)_{\epsilon ,0}$ }.

	2. Next we fix $\epsilon>0 $ and vary $s$. Let $I$ be the set of $s$ such that \ref{epsilon-s-eq} possesses a solution. Now $0 \in I$ by Step 1, and taking $\epsilon < \epsilon (L)$ if necessary, $I \cap [0, 1]$ is closed by above a priori estimates. Let us prove that $I$ is open. Let $\pi (u) = u|_{\partial \Omega_{L} }$ be the boundary values map, and define 
	\begin{align*}
		\mathcal{G}: C^{2,\alpha }(\bar{\Omega }_{L}) \times \mathbb{R}\to C^{\alpha }(\bar{\Omega }_{L}) \times C^{2, \alpha }(\partial \Omega _{L})
	\end{align*} 
	by
	$\mathcal{G}(u, s) = \mathcal{G}^{s}(u) := (\mathcal{E}^{\epsilon ,s}(u), \pi (u) - s(L-2) \chi _{\partial F_L}),$
	so that \ref{epsilon-s-eq} is equivalent to $\mathcal{G}^{s}(u) = (0, 0)$. Clearly $\mathcal{G}$ is $C^1$. The linearization of $\mathcal{G}^{s}$ at a solution $u$ is given by
	\begin{align*}
		D \mathcal{G}^{s}|_u = \begin{pmatrix} D \mathcal{E}^{\epsilon ,s}|_u \\ \pi  \end{pmatrix} : C^{2,\alpha }(\bar{\Omega }_{L}) \to C^{\alpha }(\bar{\Omega }_{L}) \times C^{2,\alpha }(\partial \Omega _{L}),
	\end{align*} 
	where 
	$
		D \mathcal{E}^{\epsilon ,s}|_u(v) = \mathrm{div}\left( \frac{\nabla v}{\sqrt{|\nabla u|^2+ \epsilon ^2} } - \frac{\langle \nabla u, \nabla v \rangle}{(|\nabla u|^2+ \epsilon ^2) ^{\frac{3}{2}}} \nabla u \right) - \frac{\langle \nabla u, \nabla v \rangle}{\sqrt{|\nabla u|^2+ \epsilon ^2} } - 2 s \frac{\sigma ^{\epsilon } (\nabla u, \nabla v)}{|\nabla u|^2+ \epsilon ^2} + 2s \frac{\sigma ^{\epsilon } (\nabla u, \nabla u) }{(|\nabla u|^2+ \epsilon ^2)^2} \langle \nabla u, \nabla v \rangle = \frac{1}{\sqrt{|\nabla u|^2+ \epsilon ^2} }\left( \Delta v - \frac{\nabla ^2v(\nabla u, \nabla u)}{|\nabla u|^2 + \epsilon ^2} \right)+ B^i \nabla _{i} v = A^{ij} \nabla _{i } \nabla _j v + B^i \nabla_i v,
		$ and $A^{ij}= \frac{1}{\sqrt{|\nabla u |^2+ \epsilon ^2} }\left( g ^{ij} - \frac{\nabla ^{i} u \nabla ^{j} u}{|\nabla u|^2+ \epsilon ^2} \right) $ is uniformly elliptic and H\"older continuous. So we can apply Schauder theory to deduce that $D \mathcal{G}^{s}|_u$ is an isomorphism. Then by the Implicit Function Theorem, $I$ is open. Therefore $1 \in I$, which proves existence of $u^{\epsilon} \in C^{2,\alpha }(\bar{\Omega }_{L})$ solving \ref{epsilon-eq}. Smoothness follows by Schauder estimates. 
\end{proof}

\subsection{Limit of regularisation solutions}\label{appen-limit-regularized}
In this section, we prove those technical lemmas regarding the regularity of the limit of $\epsilon $-translating graphs. Consider the setting in Section \ref{section-limit-translating}: $u_i(x)$ is a sequence of smooth solutions of \hyperref[epsilon-eq]{$(*)_{\epsilon_i}$} with locally uniformly Lipschitz bounds on $\Omega _{L_i}$ for $\epsilon _i\to 0$ and $L_i\to \infty$, and $u_i \to u$ locally uniformly on $M \setminus E_0$ for a locally Lipschitz function $u$ satisfying $u \geq 0$ and $u(x) \to \infty$ as $d(x, E_0) \to \infty$;   $U_i(x,z) := u_i(x) - \epsilon _i z$ is a sequence of smooth functions so that  $\tilde{N}^{i}_t:= \{U_i < t\} $ are smooth solutions of the $\sigma ^{i} $-IMCF in $\Omega _{L_i} \times \mathbb{R}$, and $U_i \to U$ locally uniformly on $(M\setminus E_0) \times \mathbb{R}$ for $U(x,z) = u(x)$, $\sigma ^{i}$ has uniform $C^{1}$-bound and $\sigma ^{i} \to \sigma $ uniformly for a Lipschitz nonnegative symmetric $2$-tensor $\sigma $.

We first recall the following $C^{1,\alpha }$-regularity lemma:
\begin{lem}[Lemma \ref{lem-local-C1alpha}]
For each $t$, the level sets $\tilde{N}^{i}_{t}$ are locally uniformly bounded in $C^{1,\alpha }$, independent of $i$ and $t$.	
\end{lem}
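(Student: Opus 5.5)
The plan is to realize each $\tilde{N}^{i}_t=\partial\tilde{E}^{i}_t$ as the boundary of a set that is \emph{almost minimizing} for area, and then invoke the regularity theory for such sets, essentially \cite[Theorem 1.3]{HI01}. Throughout we work in dimension $n+1<8$, as agreed earlier.

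First I would invoke the Smooth Flow Lemma \ref{smooth-flow-lem} for the smooth $\sigma^{i}$-IMCF $\tilde{N}^{i}_t$ in $\Omega_{L_i}\times\mathbb{R}$. Since $\tilde{N}^{i}_t$ foliates this region, for every $t$ the set $\tilde{E}^{i}_t=\{U_i<t\}$ minimizes
\begin{align*}
	|\partial^* F\cap K|-\int_{F\cap K}\left(|\tilde{\nabla}U_i|+\sigma^{i}(\tilde{\nu}_i,\tilde{\nu}_i)\right)
\end{align*}
among competitors $F$ with $F\Delta\tilde{E}^{i}_t\subset\subset\tilde{\Omega}_i$ and $F\supset E_0\times\mathbb{R}$. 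The calibration argument allows competitors across the whole foliated region, not only between two level sets.

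Next I would bound the bulk term. On a compact set $K\subset (M\setminus E_0)\times\mathbb{R}$, Lemma \ref{smooth-sol-*-epsilon} gives $|\tilde{\nabla}U_i|=\sqrt{|\nabla u_i|^2+\epsilon_i^2}\le C(K)$ uniformly in $i$, and admissibility gives $\|\sigma^{i}\|_{C^0}\le C$. Since the uniform Lipschitz bound does not depend on $z$ (translation invariance), this constant is independent of $i$ and $t$. Consequently, for every ball $B_\rho\subset K$ and every competitor $F$,
\begin{align*}
	|\partial^*\tilde{E}^{i}_t\cap B_\rho|\le|\partial^*F\cap B_\rho|+C(K)\rho^{n+1}.
\end{align*}
This says $\tilde{E}^{i}_t$ is $(C\rho^{n+1})$-almost minimizing with a uniform constant. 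By standard almost-minimizer regularity (Tamanini, or \cite[Theorem 1.3]{HI01}), $\partial\tilde{E}^{i}_t$ is then a $C^{1,\alpha}$ hypersurface with estimates depending only on $C(K)$, the dimension and the ambient geometry, away from $\partial E_0\times\mathbb{R}$. We should also check that the interior area bound of type (\ref{N_t-area-bound-C(T)}) follows by comparison with $\tilde{E}^{i}_t\cup B_\rho$.

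The main obstacle is uniformity up to the initial boundary $\partial E_0\times\mathbb{R}$. There the sets minimize only with the obstacle $E_0\times\mathbb{R}$, that is, from outside. For this region I would use the obstacle version of the regularity theory as in \cite{HI01}, which works because $\partial E_0$ is $C^2$: one-sided minimizers with a $C^{1,1}$ obstacle are $C^{1,\alpha}$, with constants depending on the $C^2$ norm of $\partial E_0$. In addition, the gradient bound $|\nabla u_i|\le H_++\epsilon$ on $\partial E_0$ keeps the bulk term bounded up to the boundary. The remaining boundary piece, $\partial F_{L_i}$, escapes every compact set as $L_i\to\infty$, so it does not affect local estimates. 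Combining the interior and boundary cases yields local $C^{1,\alpha}$ bounds independent of $i$ and $t$.
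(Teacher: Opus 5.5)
Your proposal is correct and follows the paper's proof. The Smooth Flow Lemma~\ref{smooth-flow-lem} gives that $\tilde{E}^{i}_t$ minimizes $J^{\sigma^{i}}_{U_i,\tilde{\nu}_i}$ with the obstacle $E_0\times\mathbb{R}$. Lemma~\ref{smooth-sol-*-epsilon} bounds the bulk term $|\tilde{\nabla}U_i|+\sigma^{i}(\tilde{\nu}_i,\tilde{\nu}_i)$ locally uniformly, and the regularity theorem \cite[Theorem 1.3]{HI01}, in its interior and obstacle forms, then gives $C^{1,\alpha}$ bounds independent of $i$ and $t$. Your remarks on $z$-translation invariance and on $\partial F_{L_i}$ escaping every compact set only make explicit what the paper leaves implicit.
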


Now we prove
\begin{lem}[Lemma \ref{lem-limit-surface-C1alpha}]
Fix $t_0\geq 0$. 
	\begin{itemize}
		\item[(1)] If $t_0$ is not a jump time, then $\tilde{N}_{t_0} = \{U =t_0\} $ is a complete hypersurface that is locally uniformly bounded in $C^{1,\alpha }$. 
		\item[(2)] If $t_0$ is a jump time, then $\tilde{N}_{t_0}, \tilde{N}^{+}_{t_0}$ are complete hypersurfaces that are locally uniformly bounded in $C^{1,\alpha }$. 
		\item[(3)] If $t_0$ is a jump time, let $\tilde{\mathcal{K}}_{t_0}$ denote the interior of $\{U= t_0\} $, then each point $\tilde{x}_0 \in \tilde{\mathcal{K}}_{t_0}$ lies in a complete hypersurface $\tilde{N}_{\tilde{x}_0} \subset \{ U=t_0\} $ that is locally uniformly $C^{1,\alpha }$-bounded.
	\end{itemize}

\end{lem}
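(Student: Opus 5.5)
We plan to follow the graph-representation argument of \cite[Section 2]{HI01}, adapted as in \cite{Moore12, HuiskenWolff22}, using the uniform $C^{1,\alpha}$ bounds of Lemma \ref{lem-local-C1alpha} as the main input.

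First, fix $\tilde{x}_0 \in \{U = t_0\}$ and a small cylinder $\tilde{B}$ around it. By Lemma \ref{lem-local-C1alpha}, each $\tilde{N}^i_t \cap \tilde{B}$ is a union of graphs of functions over a fixed hyperplane (after choosing coordinates near $\tilde{x}_0$). These functions have $C^{1,\alpha}$-norm bounded independently of $i$ and $t$, and the radius of the graph domains is also uniform. Because the $\tilde{E}^i_t$ minimize $J^{\sigma^i}_{U_i,\tilde\nu_i}$ with locally bounded weight $|\tilde\nabla U_i| + \sigma^i(\tilde\nu_i,\tilde\nu_i)$, they satisfy uniform lower density bounds. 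This prevents sheets from accumulating, so only finitely many sheets meet $\tilde B$. The surfaces are also nested in $t$, since $U_i$ is a function.

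For (1) and (2), choose $t_i \nearrow t_0$ (respectively $t_i \searrow t_0$) together with points $\tilde{x}_i \in \tilde{N}^i_{t_i}$ converging to $\tilde{x}_0 \in \partial\{U<t_0\}$ (respectively $\partial\{U\le t_0\}$). Such points exist because $U_i \to U$ locally uniformly. Arzel\`a--Ascoli then gives a subsequence of graphs converging in $C^{1,\beta}$ for $\beta<\alpha$, and the limit satisfies the same $C^{1,\alpha}$ bound. Local uniform convergence of $U_i$ shows that the limit surface lies in $\{U=t_0\}$. To identify it with $\partial \tilde{E}_{t_0}$ we use two facts. Sets of the form $\{U_i<t_i\}$ converge to $\tilde{E}_{t_0}$ in $L^1_{loc}$. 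Their boundaries converge in Hausdorff distance by the density bounds. Together these show that the limit graph is exactly $\partial\tilde E_{t_0}$ (resp.\ $\partial\tilde E_{t_0}^+$) near $\tilde x_0$. Since the limit surface is thereby uniquely determined, the whole sequence converges. Covering the surface by such charts gives a complete, locally uniformly $C^{1,\alpha}$ hypersurface; completeness comes from properness of $u$ together with $\partial E_0 \times \mathbb{R}$ being $C^2$ near the initial boundary.

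For (3), take $\tilde{x}_0$ in the interior $\tilde{\mathcal K}_{t_0}$ of $\{U=t_0\}$. Since $U_i(\tilde{x}_0) \to t_0$, set $t_i := U_i(\tilde x_0)$, so that $\tilde{x}_0 \in \tilde{N}^i_{t_i}$ for every $i$. Apply the same graph compactness to the sheet of $\tilde{N}^i_{t_i}$ through $\tilde x_0$. A subsequence converges to a $C^{1,\alpha}$ surface $\tilde{N}_{\tilde x_0}$ through $\tilde x_0$, and it lies in $\{U=t_0\}$ because $|U_i - t_i| \to 0$ on it. Continuing the limit sheet chart by chart, and using a diagonal subsequence over a countable cover, produces a complete hypersurface. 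The continuation ends only at $\partial\tilde{\mathcal K}_{t_0}$ or at $\partial E_0\times\mathbb{R}$, and in either case the uniform estimates still apply. This limit is also what Lemma \ref{limit-nu-construction} will use.

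The main obstacle is the identification step in (1) and (2). We must show the limiting graphs coincide with $\partial\{U<t_0\}$ and do not instead produce an extra sheet inside the jump region. Our first approach will be a comparison of $L^1$ convergence of the sublevel sets with the density bounds from minimization, as in \cite[Theorem 2.1]{HI01}. The main technical bookkeeping will be carrying the $\sigma^i$ term through the density estimate, using the uniform $C^0$ bound on $\sigma^i$.
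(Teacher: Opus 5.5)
Your argument for (3) is the paper's. You take $t_i:=U_i(\tilde x_0)$, extract a $C^{1,\alpha}$ limit of the sheets through $\tilde x_0$ from Lemma~\ref{lem-local-C1alpha}, place it in $\{U=t_0\}$ by uniform convergence, and globalize by successive subsequences.

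For (1) and (2) you identify the limit differently, and the route is viable. The paper uses no $L^1$ convergence or density bounds here. In (1) it also takes $t_i=U_i(\tilde x_0)$, and then simply invokes that $t_0$ is not a jump time: the limit sheet through $\tilde x_0$ lies in $\{U=t_0\}$, so it coincides with $\tilde N_{t_0}$ near $\tilde x_0$. In (2) it does not return to the approximants. It approximates $\tilde x_0^+\in\tilde N^+_{t_0}$ by points of the already constructed surfaces $\tilde N_{t_j}$ at non-jump times $t_j\searrow t_0$, and extracts a $C^{1,\alpha}$ limit. It identifies this limit by a topological remark: a limit point in $\mathrm{int}\{U\le t_0\}$ would be approximated by points of $\{U=t_j\}\subset\{U>t_0\}$, which is impossible. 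The case $\tilde N_{t_0}$ is handled symmetrically with $t_j\nearrow t_0$. That route is cheap and reuses (1).

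Your route, in the spirit of \cite[Theorem 2.1]{HI01}, pins the limit down as the boundary of a definite set and gives convergence of the whole sequence, but it costs more.
\begin{itemize}
\item You need density bounds also for the obstacle problem near $\partial E_0\times\mathbb R$.
\item To rule out that $\partial\tilde E_{t_0}$ near $\tilde x_0$ is a union of several limit sheets, you in effect need the $L^1$ limit to be an almost-minimizer, so that \cite[Theorem 1.3]{HI01} applies to it. The uniform weight bound does pass to the limit. Once you have this, (1)--(2) follow almost without the graph construction.
\item You must make the choice of $t_i$ explicit. At a jump time, taking merely $t_i\nearrow t_0$ (or $t_i\searrow t_0$) does not force $\{U_i<t_i\}\to\tilde E_{t_0}$ (resp.\ $\{U\le t_0\}$) in $L^1_{loc}$. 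The limit can be an intermediate set filling part of $\tilde{\mathcal K}_{t_0}$, whose boundary is one of the surfaces of (3). Requiring $|t_i-t_0|\ge 2\sup_K|U_i-U|$ fixes this. It still lets you find $\tilde x_i\in\tilde N^i_{t_i}$ with $\tilde x_i\to\tilde x_0$, because points with $U<t_0$ (resp.\ $U>t_0$) accumulate at $\tilde x_0$.
\end{itemize}
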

\begin{proof}
	The proof is the same as in \cite[Lemma 7.4]{HuiskenWolff22} and \cite[Proposition 8]{Moore12}, and is based on a local graph representation of $\tilde{N}^{i}_t$.

	1. Assume $\tilde{N}_{t_0} = \tilde{N}^{+}_{t_0}$. Fix a point $\tilde{x}_0:= (x_0, z_0) \in \tilde{N}_{t_0}$. Since $(\tilde{N}^{i}_t)_{-\infty<t<\infty}$ foliate $\Omega _{L_i} \times \mathbb{R}$, for all large enough $i$,  there exists a unique $t_i $ so that $\tilde{x}_0 \in \tilde{N}^{i}_{t_i}$. Notice that $t_i \to t_0$, since $U_i \to U$ locally uniformly. Let $\mathrm{inj} (\tilde{x}_0)$ be the injectivity radius of $\tilde{x}_0$ in $(M\setminus E_0) \times \mathbb{R}$ and $d=\min(\mathrm{inj} (\tilde{x}_0), \eta (\tilde{x}_0) )$ with $\eta $ the function given by Lemma \ref{local-estimate-mean-curv}. By above regularity Lemma, the hypersurface pieces $\tilde{N}^{i}_{t_i} \cap \tilde{B}_d(\tilde{x}_0)$ are $C^{1,\alpha }$-bounded uniformly in $t_i$ and $i$. Now consider the exponential map
	\begin{align*}
		\exp_{\tilde{x}_0}= (\exp_{x_0}, \mathrm{id}_{\mathbb{R}}): T_{\tilde{x}_0}(M \times \mathbb{R}) \cap B^{n+1}_{d}(0, z_0) \to \tilde{B}_{d}(\tilde{x}_0),
	\end{align*} 
	and set 
	\begin{align*}
		\hat{N}^{i}_{t_i} := \exp^{-1}_{\tilde{x}_0} \left( \tilde{N}^{i}_{t_i} \cap \tilde{B}_d(\tilde{x}_0) \right) \subset T_{\tilde{x}_0}(M \times \mathbb{R}) \simeq \mathbb{R}^{n+1}.
	\end{align*} 
	Then the hypersurfaces $\hat{N}^{i}_{t_i}$ are $C^{1,\alpha }$-bounded uniformly in $t_i$ and $i$. In particular, we have uniform $C^{\alpha }$-bounds on the unit normal $\hat{\nu }_i$ to  $\hat{N}^{i}_{t_i}$. Set $\hat{x}_0 = (0, z_0) = \exp ^{-1}_{\tilde{x}_0}(\tilde{x}_0)$. Up to a subsequence, $\hat{\nu }_i(\hat{x}_0) \to \hat{\nu }(\hat{x}_0)$ uniformly for some vector $\hat{\nu }(\hat{x}_0)$, and there exists a uniform $r>0$ so that we can write $\hat{N}^{i}_{t_i} $ locally as graphs of $C^{1,\alpha }$-functions $\hat{w}_i$ over $\hat{T} \cap B^{n+1}_r(\hat{x}_0)$, i.e. $\hat{N}^{i}_{t_i} \cap B^{n+1}_r(\hat{x}_0) = \mathrm{graph}(\hat{w}_i)$, where $\hat{T}$ is the hyperplane normal to $\hat{\nu }(\hat{x}_0)$ and contains $\hat{x}_0$. Up to a further subsequence and for a smaller $\alpha >0$, there exists a $C^{1,\alpha }$-function $\hat{w}$ so that
	\begin{align*}
		\hat{w}_i \to \hat{w} \text{ in } C^{1,\alpha }(\hat{T}\cap B^{n+1}_r(\hat{x}_0) ).
	\end{align*} 
	Define $\hat{N}_{\hat{x}_0}:= \mathrm{graph}(\hat{w})$ in $B^{n+1}_r(\hat{x}_0)$. Then $\hat{T}= T_{\hat{x}_0} \hat{N}_{\hat{x}_0}$, the hypersurface $\tilde{N}_{\tilde{x}_0}:= \exp_{\tilde{x}_0}(\hat{N}_{\hat{x}_0})\subset M \times \mathbb{R}$ is uniformly $C^{1,\alpha }$-bounded, and $\tilde{N}^{i}_{t_i} \to \tilde{N}_{\tilde{x}_0} $ in a small neighborhood of $\tilde{x}_0$. Since $t_i \to t_0$, $\tilde{N}_{\tilde{x}_0} \subset \{ U = t_0\} = \tilde{N}_{t_0} $. Since $t_0$ is not a jump time, $ \tilde{N}_{\tilde{x}_0} = \tilde{N}_{t_0}$ in a small neighborhood of $\tilde{x}_0$. This proves that $\tilde{N}_{t_0}$ is locally uniformly bounded in $C^{1,\alpha }$. Moreover, the convergence is independent of the choice of subsequence. In particular, the unit normal vector field to $\tilde{N}_{t_0}$ is well defined and is locally uniformly bounded in $C^{\alpha }$.

	2. Assume $\tilde{N}_{t_0} \neq \tilde{N}^{+}_{t_0}$. Notice that there are only countable many such $t_0$. Fix a $\tilde{x}_0^+ \in \tilde{N}_{t_0}^{+}$. We can choose a sequence of $t_j > t_0$ and points $\tilde{x}_j \in \tilde{N}_{t_j}$ so that $t_j \to t_0$, $\tilde{x}_j \to \tilde{x}_0^+$ and $\tilde{N}_{t_j}= \tilde{N}^{+}_{t_j}$. By previous discussion,  $\tilde{N}_{t_j} \cap B^{n+1}_r(\tilde{x}_j) $ are locally uniformly $C^{1,\alpha }$-bounded, and the unit normal vectors $\nu _j$ to $\tilde{N}_{t_j}$ at $\tilde{x}_j$ are uniformly $C^{\alpha }$-bounded. Assume that up to a subsequence, $\nu _j(\tilde{x}_j) \to \nu \in T_{\tilde{x}_0^+} (M\times \mathbb{R})$ uniformly. Via the exponential map at $\tilde{x}_0^+$, let $\hat{T}$ be the hyperplane orthogonal to $\hat{\nu }$. Similarly, we use $\hat{\cdot }$ to denote the preimage under the exponential map. By taking smaller $r>0$, we can write $\hat{N}_{t_j} \cap B^{n+1}_r(\hat{x}_0^+)$ as the graph of a $C^{1,\alpha }$-function $\hat{w}_j$ over $\hat{T}\cap B^{n+1}_r(\hat{x}_0^+)$. Up to a subsequence, $\hat{w}_j \to \hat{w}$ in $C^{1,\alpha }(\hat{T}\cap B^{n+1}_r(\hat{x}_0^+) )$. Define $\hat{N}_{\hat{x}_0^+}:= \mathrm{graph}(\hat{w})$ in $B^{n+1}_r(\hat{x}_0^+)$. Then $\hat{T}= T_{\hat{x}_0^+} \hat{N}_{\hat{x}_0^+}$, $\tilde{N}_{\tilde{x}_0^+}:= \exp_{\tilde{x}_0^+}(\hat{N}_{\hat{x}_0^+})$ is uniformly $C^{1,\alpha }$-bounded and $\tilde{N}_{t_j} \to \tilde{N}_{\tilde{x}_0^+}$ in a small neighborhood of $\tilde{x}_0^+$. Since $t_j \to t_0$, we know $\tilde{N}_{\tilde{x}_0^+} \subset \{U= t_0\} $. It remains to show that $\tilde{N}_{\tilde{x}_0^+} \subset \tilde{N}^{+}_{t_0}= \partial \{U> t_0\} = \partial \{U \leq t_0\}  $. Otherwise, if $y \in \tilde{N}_{\tilde{x}_0^+} \cap \mathrm{int}\{U \leq t_0\} $, then there exist $y_j \in \tilde{N}_{t_j}= \{U= t_j\} $ so that $y_j \to y$, which shows that $y_j \in \{U>t_0\} \cap \mathrm{int} \{U \leq t_0\} $, a contradiction. In the case that the target point $\tilde{x}_0 \in \tilde{N}_{t_0}$, we can make an analogous argument by choosing $t_j<t_0$ to get the conclusion when $t_0>0$.

	3. Let $\tilde{\mathcal{K}}_{t_0}$ be the interior of a jump region $\{U= t_0\} $. For each $\tilde{x}_0 \in \tilde{\mathcal{K}}_{t_0}$, we can apply a similar argument to construct a sequence of $\tilde{N}^{i}_{t_i} $ so that $\tilde{x}_0 \in \tilde{N}^{i}_{t_i}$ and $\tilde{N}^{i}_{t_i}$ converges to a locally uniformly $C^{1,\alpha }$-bounded hypersurface $\tilde{N}_{\tilde{x}_0}$ in a small neighborhood of $\tilde{x}_0$. By successively taking subsequences, we can construct a complete locally uniformly $C^{1,\alpha }$-bounded hypersurface that we will henceforth denote by $\tilde{N}_{\tilde{x}_0}$, and $\tilde{N}_{\tilde{x}_0} \subset \{U = t_0\} $.
\end{proof}

\begin{lem}[Lemma \ref{limit-nu-construction}]
	In Case (1) or (2) of Lemma \ref{lem-limit-surface-C1alpha}, let $\tilde{\nu }$ be the unit normal $C^{\alpha }$-vector field to $\tilde{N}_{t_0}$ or $\tilde{N}_{t_0}^{+}$, then $\tilde{\nu }_i \to \tilde{\nu }$ locally uniformly for the whole sequence. In Case (3), there exists a H\"older continuous unit vector field $\tilde{\nu }$ on $\tilde{\mathcal{K}}_{t_0}$ such that for a subsequence $i_j$, $\tilde{\nu }_{i_j} \to \tilde{\nu }$ locally uniformly, and $\tilde{\nu }$ is translation invariant and normal to $\tilde{N}_{\tilde{x}_0}$ for any $\tilde{x}_0 \in \tilde{\mathcal{K}}_{t_0}$.
\end{lem}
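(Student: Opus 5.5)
Cases (1)--(2) are a uniqueness-of-limit argument and Case (3) is a diagonal subsequence argument, both built on the local graph representations from the proof of Lemma \ref{lem-limit-surface-C1alpha} and the uniform $C^{1,\alpha}$ bounds of Lemma \ref{lem-local-C1alpha}.

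\emph{Cases (1) and (2).} Suppose the whole sequence does not converge. Then there are $\delta>0$, a compact set $K$, a subsequence $i_j$, and points $\tilde{x}_j\in K$ with $|\tilde{\nu}_{i_j}(\tilde{x}_j)-\tilde{\nu}(\tilde{x}_j)|\geq\delta$. Passing to a further subsequence, we may take $\tilde{x}_j\to\tilde{x}_\infty$, a point of $\tilde{N}_{t_0}$ (respectively $\tilde{N}^+_{t_0}$). Let $t_j:=U_{i_j}(\tilde{x}_j)$, so that $\tilde{x}_j\in\tilde{N}^{i_j}_{t_j}$ and $t_j\to t_0$ by local uniform convergence. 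By Lemma \ref{lem-local-C1alpha}, after pulling back by the exponential map at $\tilde{x}_\infty$, the pieces $\tilde{N}^{i_j}_{t_j}$ are graphs $\hat w_j$ over a fixed hyperplane with uniform $C^{1,\alpha}$ bounds. A subsequence therefore converges in $C^{1,\beta}$ to a $C^{1,\alpha}$ graph $\Sigma$ through $\tilde{x}_\infty$ contained in $\{U=t_0\}$, and $\tilde{\nu}_{i_j}(\tilde{x}_j)$ converges to the normal of $\Sigma$ at $\tilde{x}_\infty$.

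The key point is that $\Sigma$ coincides with $\tilde{N}_{t_0}$ (resp. $\tilde{N}^+_{t_0}$) near $\tilde{x}_\infty$. If $t_0$ is not a jump time, $\{U=t_0\}$ has empty interior and equals $\tilde{N}_{t_0}$, so $\Sigma$ lies in this $C^{1,\alpha}$ hypersurface and the two share a tangent plane. If $t_0$ is a jump time, $\Sigma$ could a priori sit inside $\tilde{\mathcal{K}}_{t_0}$. However, $\Sigma$ passes through $\tilde{x}_\infty\in\partial\{U<t_0\}$, and it is a limit of smooth leaves of a foliation along which $U_{i_j}$ is monotone. Hence $\Sigma$ lies in $\{U\geq t_0\}$ and touches $\tilde{N}_{t_0}$ at $\tilde{x}_\infty$ from one side. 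Both surfaces are $C^{1,\alpha}$, so they have the same tangent plane there, with orientation fixed because $U$ increases in the direction of $\tilde{\nu}$; the boundary $\tilde{N}^+_{t_0}$ is handled symmetrically. Since $\tilde{\nu}$ is continuous, the limit normal equals $\tilde{\nu}(\tilde{x}_\infty)=\lim\tilde{\nu}(\tilde{x}_j)$, contradicting the choice of $\delta$. I expect this one-sided tangency at jump boundaries to be the main obstacle: one has to check that a limit leaf cannot cross $\tilde{N}_{t_0}$ transversally, and this should follow from the nesting of the $\tilde{E}^i_t$.

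\emph{Case (3).} Take a countable dense set $\{\tilde{x}_k\}$ in $\tilde{\mathcal{K}}_{t_0}$. By the uniform $C^{\alpha}$ bounds on $\tilde{\nu}_i$ (the normals of the leaves, via Lemma \ref{lem-local-C1alpha}), Arzel\`a--Ascoli on an exhaustion of $\tilde{\mathcal{K}}_{t_0}$ by compact sets, combined with a diagonal argument, gives a subsequence $\tilde{\nu}_{i_j}\to\tilde{\nu}$ locally uniformly. The limit $\tilde{\nu}$ is H\"older continuous.

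Translation invariance holds because $U_i(x,z+c)=U_i(x,z)-\epsilon_i c$. The level sets are therefore vertical translates of one another, so $\tilde{\nu}_i(x,z)$ is independent of $z$, and this passes to the limit.

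Finally, $\tilde{N}_{\tilde{x}_0}$ is by construction a local $C^{1}$ limit of leaves $\tilde{N}^{i_j}_{t_{i_j}}$ through $\tilde{x}_0$. Along the chosen subsequence the normals of these leaves converge uniformly, and their limit is the normal of $\tilde{N}_{\tilde{x}_0}$. If needed, I would refine the subsequence defining $\tilde{N}_{\tilde{x}_0}$ in Lemma \ref{lem-limit-surface-C1alpha}(3) to be a subsequence of $i_j$. Then $\tilde{\nu}$ is normal to $\tilde{N}_{\tilde{x}_0}$ at every point of $\tilde{N}_{\tilde{x}_0}$, for every $\tilde{x}_0\in\tilde{\mathcal{K}}_{t_0}$.
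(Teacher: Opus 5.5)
Your proof is correct. Case (3) is essentially the paper's argument. The paper takes a countable dense set, extracts a diagonal subsequence, asserts a uniform H\"older bound on $\tilde{\nu}_i$, and then takes a further subsequence of $i_j$ to define $\tilde{N}_{\tilde{x}_0}$ so that $\tilde{\nu}$ is normal to it. That is your Arzel\`a--Ascoli step together with your refinement of the subsequence. Your observation that $\tilde{\nabla}U_i=(\nabla u_i,-\epsilon_i)$ is independent of $z$ gives translation invariance more directly than the paper's tracking of vertical translates of leaves.

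The genuine difference is in Cases (1)--(2), which the paper calls clear from the proof of Lemma \ref{lem-limit-surface-C1alpha}. That is accurate at a non-jump time, where the limit leaf is forced to coincide with $\tilde{N}_{t_0}$, exactly as in your Case (1). At a jump time, however, that proof builds $\tilde{N}_{t_0}$ and $\tilde{N}^{+}_{t_0}$ as limits of the limit surfaces $\tilde{N}_{t_j}$, $t_j\to t_0$, not of the approximating leaves. Transferring whole-sequence convergence to $\tilde{\nu}_i$ along that route would therefore need a comparison across leaves. Your one-sided tangency argument avoids this and uses only the per-leaf bounds of Lemma \ref{lem-local-C1alpha}.

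The step you single out as the main obstacle is immediate and needs neither nesting nor monotonicity. A subsequential limit $\Sigma$ lies in $\{U=t_0\}$, hence in the closed complement of the open set $\{U<t_0\}$ (resp.\ $\{U>t_0\}$). A $C^1$ hypersurface lying on one closed side of $\tilde{N}_{t_0}$ (resp.\ $\tilde{N}^{+}_{t_0}$) and passing through one of its points cannot cross it transversally, so it is tangent there. The orientation is then fixed as you describe.

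One step you share with the paper deserves a line of justification. Lemma \ref{lem-local-C1alpha} controls $\tilde{\nu}_i$ only along each leaf, whereas Arzel\`a--Ascoli in Case (3) needs equicontinuity across leaves. This follows from disjointness of the leaves. Write two nearby leaves as graphs $w_1,w_2$ over a common plane; then $w_2-w_1$ has a sign. The interpolation $|\nabla(w_2-w_1)|\leq C|w_2-w_1|^{\alpha/(1+\alpha)}$ for nonnegative $C^{1,\alpha}$ functions then gives a uniform H\"older modulus with exponent $\alpha/(1+\alpha)$.
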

\begin{proof}
The convergence of $\tilde{\nu }_i \to \tilde{\nu }$ in Case (1) or (2) is clear from the proof of above Lemma. We only consider the existence of $\tilde{\nu }$ and convergence $\tilde{\nu }_i \to \tilde{\nu }$ up to a subsequence in Case (3). For any $\tilde{x}_0= (x_0, z_0) \in \tilde{\mathcal{K}}_{t_0}$ and let $\tilde{N}_{\tilde{x}_0}$ be the hypersurface through $\tilde{x}_0$ which is a limit of a subsequence of $\tilde{N}^{i}_{t_i}=\{U_i = t_i\} $. Note that $\tilde{N}_{\tilde{x}_0}$ is not a-priori unique. However, for any vertical translate $\tilde{x}_{\beta }= (x_0, z_0+ \beta )$, we have $\tilde{x}_{\beta } \in \tilde{N}^{i}_{t_i - \beta \epsilon _i}$, and $ \tilde{N}^{i}_{t_i - \beta \epsilon _i} \to \tilde{N}_{\tilde{x}_0} + \beta e_{z} $, i.e. we have convergence for all vertical translates w.r.t. the same subsequence. Thus, it suffices to construct a unit vector field $\tilde{\nu } \in T \tilde{\mathcal{K}}_{t_0}$ along $\mathcal{K}_{t_0}:=\mathrm{int} \{u=t_0\} = \tilde{\mathcal{K}}_{t_0} \cap (M \times \{0\} )$ that is normal to $\tilde{N}_{x_0}$ for $x_0 \in \mathcal{K}_{t_0}$, which is then trivially extended in the $z$-direction.

Let $\{x_k\} $ be a dense countable subset of $\mathcal{K}_{t_0}$. By a diagonal sequence argument, we can choose the hypersurfaces $\tilde{N}_{x_k}$ and a subsequence $i_j$ so that $\tilde{N}^{i_j}_{t ^{k}_{i_j}} \to \tilde{N}_{x_k}$ locally uniformly in $C^{1,\alpha }$ for all $x_k$. In particular, any two hypersurfaces $\tilde{N}_{x_k}$ can only touch each other tangentially, which yields a well-defined unit normal vector field on $\mathcal{K}_{t_0}$. In view of the uniform $C^{1,\alpha }$-estimates, there exists a uniform $r_0>0$ so that for any $x_k$, all $\tilde{N}^{i_j}_{t ^{m}_{i_j}} \cap B_{2 r_0}(x_k)$ can be written as normal graph over $T_{x_k} \tilde{N}_{x_k}$ with uniformly $C^{1,\alpha }$-bound, and $|\tilde{\nu }_{i_j}(x_k) - \tilde{\nu }_{i_l}(x_m)| \leq C\cdot |x_k-x_m|^{\alpha }$ uniformly for all large $j, l$ and any $x_k, x_m$. Thus, $\tilde{\nu }_{i_j}(x_k) \to \tilde{\nu }(x_k)$ locally uniformly and $\tilde{\nu }$ extends to a H\"older continuous unit vector field on $\mathcal{K}_{t_0}$. For any $x \in \mathcal{K}_{t_0}\setminus \{x_k\} $, one can take a further subsequece of $i_j$ to construct a hypersurface $\tilde{N}_{x}$ so that $\tilde{\nu }(x) $ is normal to $\tilde{N}_{x}$.
\end{proof}

In the following, for simplicity, we still denote the subsequence $i_j$ by $i$. By the standard compactness theorem of BV functions, we have $\tilde{\nabla } U_i \to \tilde{\nabla }U$ weakly as measures. As a result of above lemmas, by taking $\tilde{\nu }$ as test functions on $\tilde{\mathcal{K}}_{t_0}$, we have
\begin{lem}\label{appen-lem-L1-jump}
	\begin{align}
	|\tilde{\nabla }U_i| \to 0 \text{ in } L^{1}_{loc}(\tilde{\mathcal{K}}_{t_0}).
\end{align}
\end{lem}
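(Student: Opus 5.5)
Since $|\tilde\nabla U_i|>0$ everywhere, it suffices to show $\int_{\tilde K}|\tilde\nabla U_i|\to 0$ for every compact $\tilde K\subset\tilde{\mathcal K}_{t_0}$. We use the identity $|\tilde\nabla U_i| = \langle \tilde\nabla U_i,\tilde\nu_i\rangle$ and split
\begin{align*}
	\int_{\tilde K}|\tilde\nabla U_i| = \int_{\tilde K}\langle \tilde\nabla U_i,\tilde\nu\rangle + \int_{\tilde K}\langle \tilde\nabla U_i,\tilde\nu_i-\tilde\nu\rangle ,
\end{align*}
where $\tilde\nu$ is the H\"older continuous unit field on $\tilde{\mathcal K}_{t_0}$ from Lemma \ref{limit-nu-construction}. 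For the second term, the gradient bounds of Lemma \ref{smooth-sol-*-epsilon} give $|\tilde\nabla U_i|\le C(\tilde K)$ uniformly on $\tilde K$, and $\tilde\nu_{i}\to\tilde\nu$ locally uniformly on $\tilde{\mathcal K}_{t_0}$ along the chosen subsequence. Hence this term is bounded by $C\,|\tilde K|\sup_{\tilde K}|\tilde\nu_i-\tilde\nu|\to 0$.

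For the first term we would like to test the weak convergence of measures $\tilde\nabla U_i\to\tilde\nabla U$ against $\tilde\nu$. On the open set $\tilde{\mathcal K}_{t_0}$ we have $U\equiv t_0$, so $\tilde\nabla U=0$ there. The limit should therefore be $\int_{\tilde K}\langle\tilde\nabla U,\tilde\nu\rangle = 0$, but two points need care.

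The first point is that $\tilde\nu$ is only continuous and we must integrate over $\tilde K$ rather than against a compactly supported test field. Take a cutoff $\chi\in C_c(\tilde{\mathcal K}_{t_0})$ with $0\le\chi\le1$ and $\chi=1$ on $\tilde K$. Bound $\int_{\tilde K}|\tilde\nabla U_i|\le\int\chi|\tilde\nabla U_i|$ and apply the splitting above with $\chi\tilde\nu$ in place of $\tilde\nu$. The field $\chi\tilde\nu$ is a continuous, compactly supported vector field. Since $|\tilde\nabla U_i|$ is uniformly bounded on $\mathrm{supp}\,\chi$, the measures $\tilde\nabla U_i\,d\mathcal H^{n+1}$ have uniformly bounded total variation there. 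Weak-$*$ convergence to $\tilde\nabla U\,d\mathcal H^{n+1}=0$ on $\tilde{\mathcal K}_{t_0}$ then holds against continuous compactly supported fields, not just smooth ones, by uniform approximation. This gives $\int\chi\langle\tilde\nabla U_i,\tilde\nu\rangle\to 0$.

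The second point is to check that the weak limit is genuinely $\tilde\nabla U$. This follows directly from locally uniform convergence $U_i\to U$. For smooth compactly supported $X$, integration by parts gives
\begin{align*}
	\int\langle\tilde\nabla U_i,X\rangle = -\int U_i\,\mathrm{div}X \;\to\; -\int U\,\mathrm{div}X = 0 \quad \text{on } \tilde{\mathcal K}_{t_0}.
\end{align*}
The uniform mass bound then extends this to continuous $X$. Combining the two terms yields $\limsup_i\int_{\tilde K}|\tilde\nabla U_i|\le 0$, which proves the lemma.

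The only genuinely delicate ingredient is the locally uniform convergence $\tilde\nu_i\to\tilde\nu$ on the jump region, and that is already supplied by Lemma \ref{limit-nu-construction}. The rest of the argument is routine.
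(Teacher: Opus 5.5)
Your proof is correct and follows the paper's argument. You write $|\tilde\nabla U_i|=\langle\tilde\nabla U_i,\tilde\nu_i\rangle$ and split off the term $\langle\tilde\nabla U_i,\tilde\nu_i-\tilde\nu\rangle$, which vanishes by the uniform gradient bounds and the locally uniform convergence $\tilde\nu_i\to\tilde\nu$. You then pass to the limit in the remaining term using weak convergence $\tilde\nabla U_i\to\tilde\nabla U=0$ on $\tilde{\mathcal K}_{t_0}$ against a continuous test field; this is exactly the paper's proof. Your additions (the cutoff $\chi$, the integration by parts against $U_i\to U$, and extending to continuous fields via the uniform mass bound) fill in details the paper leaves implicit.
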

\begin{proof}
	For any geodesic ball $B_{r}(\tilde{x}) \subset \subset  \tilde{\mathcal{K}}_{t_0}$, let $\varphi \in C^{\infty}_{0}(B_r(\tilde{x}) )$ be a nonnegative cutoff function. Since $\tilde{\nu }$ is H\"older continuous, we have $\int \varphi \langle \tilde{\nabla }U_i, \tilde{\nu } \rangle \to \int \varphi \langle \tilde{\nabla }U, \tilde{\nu } \rangle = 0$. Since $|\tilde{\nabla }U_i|$ are uniformly bounded and $\tilde{\nu }_i \to \tilde{\nu }$ uniformly, we have
	\begin{align*}
		\int \varphi |\tilde{\nabla }U_i| = \int \varphi \langle \tilde{\nabla }U_i, \tilde{\nu }_i \rangle = \int \varphi \langle \tilde{\nabla }U_i, \tilde{\nu } \rangle + \int \varphi \langle \tilde{\nabla }U_i, \tilde{\nu }_i - \tilde{\nu } \rangle\to 0.
	\end{align*} 
\end{proof} 

We recall the following compactness result of \cite[Theorem 5.3]{HuiskenWolff22}.
\begin{lem}\label{lem-compactness}
	Let $\tilde{\Omega } \subset M \times \mathbb{R}$, and let $\tilde{E}_i \subset \tilde{\Omega }$ be a sequence of sets with $C^{1,\alpha }_{loc}$ boundary such that $\partial \tilde{E}_i \to \partial \tilde{E}$ locally in $C^{1,\alpha }$, with outward unit normal $\tilde{\nu} _i \in C^{\alpha }_{loc}(T \tilde{\Omega })$ to $\partial \tilde{E}_i$ satisfying $\tilde{\nu }_i \to \tilde{\nu }$ locally uniformly. Let $U_i \in C^{0,1}_{loc}(\tilde{\Omega })$ satisfy $U_i \to U$ locally uniformly,  $\sup_{K} |\tilde{\nabla} U_i| \leq C(K)$ for each $K \subset \subset \tilde{\Omega} $ and for all large $i$, and $|\tilde{\nabla }U_i| \to |\tilde{\nabla }U|$ in $L^{1}_{loc}(\tilde{\Omega })$. Let $\sigma ^{i} \to \sigma $ uniformly for a Lipschitz symmetric $2$-tensor $\sigma $ and $\|\sigma ^{i}\|_{C^1} \leq C$.
	Then, if $\tilde{E}_i$ minimizes $J_{U_i, \tilde{\nu }_i}^{\sigma ^{i}}$ in $\tilde{\Omega }$, $\tilde{E}$ minimizes $J_{U, \tilde{\nu }} ^{\sigma }$ in $\tilde{\Omega }$.
\end{lem}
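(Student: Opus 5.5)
The plan is to follow the proof of \cite[Theorem 2.1]{HI01} (the compactness theorem for $J$-minimizers) and the version in \cite[Theorem 5.3]{HuiskenWolff22}. Using the $C^{1,\alpha}$ convergence of the boundaries and the local uniform convergence of the normals, we pass to the limit separately in the perimeter term and in the bulk term. Fix a competitor $F$ with $F\Delta\tilde{E}\subset\subset \tilde{\Omega}$, and choose a compact $K$ (a smooth domain, generic so that $\partial K$ carries no perimeter mass of the relevant sets) containing $F\Delta \tilde{E}$ in its interior. We build comparison sets
\[
F_i := (F\cap K)\cup(\tilde{E}_i\setminus K).
\]
These differ from $\tilde{E}_i$ only inside $K$, so the minimizing property gives $J^{\sigma^i}_{U_i,\tilde{\nu}_i}(\tilde{E}_i)\le J^{\sigma^i}_{U_i,\tilde{\nu}_i}(F_i)$ on $K$. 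The cost is an extra boundary piece $\partial K\cap(F\Delta\tilde{E}_i)$. Since $\tilde{E}_i\to\tilde{E}$ in $L^1_{loc}$ (because of the $C^{1,\alpha}$ convergence of the boundaries) and $F=\tilde{E}$ near $\partial K$, we can choose $K$ via the coarea/slicing argument of \cite{HI01} so that $\mathcal{H}^{n}(\partial K\cap(\tilde{E}\Delta\tilde{E}_i))\to 0$.

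For the perimeter terms, the $C^{1,\alpha}$ convergence of $\partial\tilde{E}_i$ to $\partial\tilde{E}$ gives $|\partial\tilde{E}_i\cap K|\to|\partial\tilde{E}\cap K|$ directly; lower semicontinuity alone would only give one inequality, and we need equality on the left-hand side. On the right-hand side, $|\partial^*F_i\cap K|\le|\partial^* F\cap K|+\mathcal{H}^n(\partial K\cap(\tilde{E}\Delta\tilde{E}_i))$.

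For the bulk terms $\int_{G\cap K}\bigl(|\tilde\nabla U_i|+\sigma^i(\tilde\nu_i,\tilde\nu_i)\bigr)$, with $G=\tilde{E}_i$ or $G=F_i$, we split the integrand. The term $|\tilde\nabla U_i|\to|\tilde\nabla U|$ in $L^1(K)$ by hypothesis. The term $\sigma^i(\tilde\nu_i,\tilde\nu_i)\to\sigma(\tilde\nu,\tilde\nu)$ uniformly on $K$, since $\sigma^i\to\sigma$ uniformly, $\tilde{\nu}_i\to\tilde\nu$ locally uniformly, and everything is bounded. Combined with $\chi_{\tilde{E}_i}\to\chi_{\tilde{E}}$ and $\chi_{F_i}\to\chi_F$ in $L^1(K)$ and the uniform bounds, dominated convergence gives convergence of all bulk integrals. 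Taking limits yields $J^{\sigma}_{U,\tilde\nu}(\tilde{E})\le J^{\sigma}_{U,\tilde\nu}(F)$ on $K$.

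The main obstacle is the perimeter comparison at $\partial K$, namely choosing $K$ so that the gluing error vanishes; the rest is a routine passage to the limit. For this step I would take $K_r=\{d(\cdot,\text{supp})\le r\}$ and use that $\int_{r_1}^{r_2}\mathcal{H}^n(\partial K_r\cap(\tilde{E}\Delta\tilde{E}_i))\,dr\le|\tilde{E}\Delta\tilde{E}_i|\to0$. This lets us select a fixed $r$, along a subsequence, for which the slice error tends to zero, exactly as in \cite{HI01}.
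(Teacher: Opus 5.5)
Your proposal is correct and follows the standard cut-and-paste argument behind \cite[Theorem 2.1]{HI01} and \cite[Theorem 5.3]{HuiskenWolff22}, which is exactly what the paper invokes here; it gives no proof beyond that citation. The argument glues $F$ inside a generic slice $K_r$ to $\tilde{E}_i$ outside it, removes the gluing error by coarea, and passes to the limit in the bulk terms using the $L^1_{loc}$ convergence of $|\tilde{\nabla}U_i|$ and the uniform convergence of $\sigma^i(\tilde{\nu}_i,\tilde{\nu}_i)$. One small correction: on the left-hand side you need only $|\partial\tilde{E}\cap K|\le\liminf_i|\partial\tilde{E}_i\cap K|$, which is exactly lower semicontinuity for $K$ with $\mathcal{H}^n(\partial\tilde{E}\cap\partial K)=0$, so your remark that equality is required is mistaken (though using the full area convergence from the $C^{1,\alpha}$ hypothesis is harmless).
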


\begin{lem}[Lemma \ref{lem-foliation-jump}]
	The interior $\tilde{\mathcal{K}}_{t_0}$ of the jump region is foliated by $C^{2,\alpha }$-hypersurfaces, where each such hypersurface is either a vertical cylinder or a graph over an open subset of $\{u=t_0\} $. Furthermore, each hypersurface bounds a Caccioppoli set that minimizes $J_{U, \tilde{\nu }}$ in $\tilde{\mathcal{K}}_{t_0}$, where $\tilde{\nu }$ denotes the $C^{1,\alpha }$-unit normal vector field to the hypersurface foliation. 
\end{lem}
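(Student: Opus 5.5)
Within the jump region, the plan is to turn the $C^{1,\alpha}$ foliation of Lemma~\ref{lem-limit-surface-C1alpha}(3) and Lemma~\ref{limit-nu-construction} into a $C^{2,\alpha}$ one through an elliptic equation, and then obtain minimality from the compactness Lemma~\ref{lem-compactness}. The key input is Lemma~\ref{appen-lem-L1-jump}: $|\tilde{\nabla}U_i|\to 0$ in $L^1_{loc}(\tilde{\mathcal K}_{t_0})$. Since $U=t_0$ on $\tilde{\mathcal K}_{t_0}$, we also have $|\tilde\nabla U|=0$ there, so the hypothesis $|\tilde\nabla U_i|\to|\tilde\nabla U|$ in $L^1_{loc}$ of Lemma~\ref{lem-compactness} holds on $\tilde{\mathcal K}_{t_0}$.

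\textbf{Minimality.} Fix $\tilde x_0\in\tilde{\mathcal K}_{t_0}$ and let $\tilde N_{\tilde x_0}$ be the limit of $\tilde N^{i}_{t_i}=\partial\tilde E^i_{t_i}$ along the subsequence from Lemma~\ref{limit-nu-construction}. We have $C^{1,\alpha}$ convergence, and $\tilde\nu_i\to\tilde\nu$ locally uniformly. By the Smooth Flow Lemma~\ref{smooth-flow-lem}, each $\tilde E^i_{t_i}$ minimizes $J^{\sigma^i}_{U_i,\tilde\nu_i}$ on compact subdomains. Lemma~\ref{lem-compactness} therefore shows that the limit set $\tilde E_{\tilde x_0}$, bounded by $\tilde N_{\tilde x_0}$, minimizes $J^{\sigma}_{U,\tilde\nu}$ in $\tilde{\mathcal K}_{t_0}$. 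Because $|\tilde\nabla U|=0$ there, the functional reduces to $|\partial^*F|-\int_F\sigma(\tilde\nu,\tilde\nu)$. Its Euler--Lagrange equation says that $\tilde N_{\tilde x_0}$ has weak mean curvature $H=\sigma(\tilde\nu,\tilde\nu)$, where $\tilde\nu$ is the (Hölder) normal field of the surface itself.

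\textbf{Bootstrap.} Write $\tilde N_{\tilde x_0}$ locally as a $C^{1,\alpha}$ graph $w$. The equation becomes a divergence-form quasilinear equation $\mathrm{div}\bigl(\nabla w/\sqrt{1+|\nabla w|^2}\bigr)=\sigma(x,w)\bigl(\nu(\nabla w),\nu(\nabla w)\bigr)+(\text{metric terms})$. The right-hand side is $C^{\alpha}$ because $\sigma$ is Lipschitz and $\nabla w\in C^\alpha$. Schauder theory, viewing the equation as linear in $w$ with $C^\alpha$ coefficients, gives $w\in C^{2,\alpha}$ and hence $\tilde\nu\in C^{1,\alpha}$. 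One cannot iterate further, since $\sigma$ is only Lipschitz, and $C^{2,\alpha}$ is exactly what the statement claims.

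\textbf{Foliation and structure.} The leaves are disjoint or coincide: two leaves are $C^{2,\alpha}$ solutions of the same prescribed-mean-curvature equation, and by the construction of Lemma~\ref{limit-nu-construction} they can only touch tangentially with the same normal, lying on one side of each other as limits of nested level sets. The strong maximum principle for the difference of the two graphs then forces them to coincide locally, and by connectedness, globally. Every point lies on some leaf, so the leaves foliate $\tilde{\mathcal K}_{t_0}$.

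For the vertical-cylinder-or-graph dichotomy, use translation invariance: the vertical translate of a leaf is again a leaf, by the argument in Lemma~\ref{limit-nu-construction}. The function $\langle\tilde\nu,e_z\rangle$ on a leaf satisfies a linear elliptic equation, the Jacobi-type equation obtained from the translation Killing field, and it has a sign because the approximating normals satisfy $\langle\tilde\nu_i,e_z\rangle=-\epsilon_i/|\tilde\nabla U_i|<0$. The strong maximum principle then gives either $\langle\tilde\nu,e_z\rangle\equiv0$, in which case the leaf is a vertical cylinder, or $\langle\tilde\nu,e_z\rangle<0$ everywhere, in which case the leaf is a graph over an open subset of $\{u=t_0\}$.

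\textbf{Main obstacle.} I expect the main difficulty to be the uniqueness and non-crossing of leaves: the limit surfaces are a priori subsequence-dependent, and we must ensure that leaves through different points, taken along different subsequences, are compatible. My first attempt would be the one-sided touching argument with the strong maximum principle described above. I would also need to check that the Jacobi equation for $\langle\tilde\nu,e_z\rangle$ remains valid with only a Lipschitz $\sigma$, handling it in the weak sense.
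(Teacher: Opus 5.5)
Your proposal is correct and follows essentially the paper's route: minimality of the set bounded by each leaf comes from the Compactness Lemma \ref{lem-compactness} together with $|\tilde\nabla U_i|\to 0$ in $L^1_{loc}(\tilde{\mathcal K}_{t_0})$, $C^{2,\alpha}$ regularity comes from Schauder theory applied to the graph equation $\mathrm{div}(\nabla w/\sqrt{1+|\nabla w|^2})=\sigma(\tilde\nu,\tilde\nu)$, and non-crossing of leaves comes from the strong maximum principle. The one place you go beyond the paper is the cylinder-or-graph dichotomy, which the paper only asserts follows from the leaves being limits of the graphs $\mathrm{graph}((u^i-t_i)/\epsilon_i)$; your sign-plus-strong-maximum-principle argument for $\langle\tilde\nu,e_z\rangle$ fills this in soundly, provided you add two small points: the Lipschitz-$\sigma$ issue is avoided by applying a Harnack inequality to difference quotients of a leaf and its vertical translates, and global injectivity of the projection follows because the approximating surfaces are single-valued graphs.
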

\begin{proof}
	The proof is similar to \cite[Proposition 8]{Moore12} and \cite[Theorem 5.9]{HuiskenWolff22}. 
	For any $\tilde{x}_0 \in \tilde{\mathcal{K}}_{t_0}$, by the Compactness Lemma \ref{lem-compactness}, the construction of $\tilde{N}_{\tilde{x}_0}$, and the fact that $|\tilde{\nabla }U_i| \to 0$ in $L^{1}_{loc}(\tilde{\mathcal{K}}_{t_0})$, we know that $\tilde{N}_{\tilde{x}_0}$ bounds a Caccioppoli set $\tilde{E}_{\tilde{x}_0}$ such that $\partial \tilde{E}_{\tilde{x}_0}= \tilde{N}_{\tilde{x}_0}$, $\tilde{\nu }$ is normal to $\partial \tilde{E}_{\tilde{x}_0}$, and $\tilde{E}_{\tilde{x}_0}$ minimizes $J_{U, \tilde{\nu }} ^{\sigma }$ in $\tilde{\mathcal{K}}_{t_0}$. 

	From the construction of $\tilde{N}_{\tilde{x}_0}$, there exists a small $r>0$ so that $\tilde{N}_{\tilde{x}_0} \cap B_r(\tilde{x}_0) = \mathrm{graph}(w)$ for a function $w \in C^{1,\alpha }( T_{\tilde{x}_0}(\tilde{N}_{\tilde{x}_0}) \cap B_r(\tilde{x}_0) ) $. We denote the ball $T_{\tilde{x}_0}(\tilde{N}_{\tilde{x}_0}) \cap B_r(\tilde{x}_0) $ by $B^{n}_r(0)$. Then $\tilde{E}_{\tilde{x}_0} \cap B_r(\tilde{x}_0) $ is the subgraph $W= \{(x, t) \in B^{n}_r(0)  \times \mathbb{R}: t< w(x)\} $. Since $W$ minimizes $J_{U, \tilde{\nu }} ^{\sigma }$ in $\tilde{\mathcal{K}}_{t_0}$, $w$ minimizes the functional
\begin{align*}
	\int_{B_r^{n}(0)} \sqrt{1+|\tilde{\nabla }w|^2} d x - \int_{B_r^n(0)} \int_{0}^{w(x)} \sigma _{ij}(x, s) \tilde{\nu }^{i} \tilde{\nu }^{j} d s d x,
\end{align*} 
whose Euler-Lagrange equation is 
\begin{align}\label{div-w-eq}
	\mathrm{div}_{\tilde{g}}\left( \frac{\tilde{\nabla }w}{\sqrt{1+|\tilde{\nabla }w|^2} } \right) - \sigma _{ij} \tilde{\nu }^{i} \tilde{\nu }^{j} =0,
\end{align}
with $\tilde{\nu } = \frac{(\tilde{\nabla }w, -1)}{\sqrt{1+|\tilde{\nabla }w|^2} }$. Since $w \in C^{1,\alpha }$, this is in fact a uniformly elliptic equation with $C^{\alpha }$-coefficients. Note that $\sigma _{ij} \tilde{\nu }^{i} \tilde{\nu }^{j}$ has a uniform $C^{\alpha }$-bound depending on $\|\sigma \|_{C^{\alpha }}$, which is controlled by $\|\sigma \|_{C^{0,1}}$. Then Schauder estimate implies that $w \in C^{2,\alpha }$, and  $\tilde{\nu }$  has locally uniformly $C^{1, \alpha }$-bound.

Finally, applying the strong maximum principle to (\ref{div-w-eq}), we can rule out the possibility that any two such hypersurfaces touching each other. 

Since each $\tilde{N}_{\tilde{x}_0}$ is the limit of the graphs $\tilde{N}^{i}_{t_i}= \mathrm{graph}\left( \frac{u^{i} - t_i}{\epsilon _i} \right) $ over $\Omega _i$, it's clear that $\tilde{N}_{\tilde{x}_0}$ is either a vertical cylinder or a graph over an open subset of $\tilde{\mathcal{K}}_{t_0} \cap M$.
\end{proof}

\subsection{Outward optimizing property}\label{appen-optimizing}
In this section, we provide details of proof of several outward optimizing properties. 

\begin{lem}[Lemma \ref{lem-optimizing-property}]
	Suppose that $u$ is a weak solution of the $(|h|g)$-IMCF with initial condition $E_0$. Then
	\begin{itemize}
		\item [(i)] $E_t$ is outward optimizing in $M$ for $t>0$;
		\item [(ii)] $E_t ^{+}$ is outward optimizing in $M$ for $t \geq 0$;
		\item [(iii)] $|\partial E_t ^{+}| = |\partial E_t| + \int_{E_t ^{+}\setminus E_t}|h|$, for all $t>0$. This extends to $t=0$ precisely if $E_0$ is outward optimizing.
	\end{itemize}
\end{lem}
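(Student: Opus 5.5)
The plan is to mimic \cite[Theorem 1.4]{HI01}, working with the downstairs formulation of Remark \ref{rmk-defn-weak}: for each $t>0$, $E_t$ minimizes $J^{|h|}_u(F)=|\partial^*F|-\int_F(|\nabla u|+|h|)$ in $M\setminus E_0$, and the same holds for $E_t^+$ for $t\ge 0$. The key observation is that for a competitor $F\supset E_t$ the bulk term splits according to where $u<t$ and where $u\ge t$.

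\emph{Step 1 (part (i)).} Let $F\supset E_t$ with $F\setminus E_t\subset\subset M\setminus E_0$ and $t>0$. I will first compare $E_t$ with $F\cap E_s$ for $s\searrow t$, which handles the region where $u>t$. More directly, I compare with $F$ itself: minimality gives
\begin{align*}
|\partial E_t| \le |\partial^* F| - \int_{F\setminus E_t}\left(|\nabla u|+|h|\right).
\end{align*}
This already yields $|\partial E_t|-\int_{E_t}|h|\le|\partial^*F|-\int_F|h|$, since $|\nabla u|\ge 0$. It is in fact stronger than the outward optimizing inequality, and the inequality carries over when restricted to any compact $K$ containing $F\setminus E_t$. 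If $F\setminus E_0$ is not compactly contained in $M\setminus E_0$ because $F$ touches $\partial E_0$, I will note that $E_0\subset E_t$, so $F\setminus E_t$ avoids $\bar E_0$ for $t>0$. That gives (i) in $M$.

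\emph{Step 2 (part (ii)).} The same argument applies to $E_t^+$ for $t\ge0$, using (\ref{leq-t-weak}). The difference is that for $t=0$ the competitor region $F\setminus E_0^+$ lies in $M\setminus E_0$ but may touch $\partial E_0$. To handle this, I will approximate by $E_s$ with $s\searrow t$. Since $E_t^+=\bigcap_{s>t}E_s$ up to measure zero, lower semicontinuity of perimeter together with $|\partial E_s|$ bounded gives $|\partial E_t^+|-\int_{E_t^+}|h|\le\liminf_{s\searrow t}\left(|\partial E_s|-\int_{E_s}|h|\right)$. Applying (i) to $E_s$ with competitor $F\cup E_s$, and then using $|\partial^*(F\cup E_s)|+|\partial^*(F\cap E_s)|\le|\partial^*F|+|\partial E_s|$, I expect to recover the desired inequality for $E_t^+$. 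The main technical point is precisely this cut-and-paste and limiting step, including checking that $\int_{(F\cup E_s)\setminus F}|h|\to0$.

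\emph{Step 3 (part (iii)).} The plan is to prove two inequalities. First, $E_t^+$ is a competitor for the minimality of $E_t$ in (i), and on $E_t^+\setminus E_t$ we have $u=t$, so $\nabla u=0$ a.e. there. This gives $|\partial E_t|\le|\partial E_t^+|-\int_{E_t^+\setminus E_t}|h|$. Conversely, $E_t$ minimizes $J$ from both sides, and applying the minimality of $E_t^+$ from (\ref{leq-t-weak}) with competitor $E_t$ (an inward competitor, allowed since $E_t^+\setminus E_t\subset\subset M\setminus E_0$ when $t>0$) gives the reverse inequality. For $t=0$, the inward competitor $E_0$ is not admissible. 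I will show that equality there holds exactly when $E_0$ is outward optimizing. If equality holds, then $E_0$ has the same energy as the optimizing set $E_0^+$ and so is optimizing. If $E_0$ is optimizing, then comparing $E_0$ with $E_0^+$ supplies the missing inequality. I expect the main obstacle to be justifying admissibility of the competitors near $\partial E_0$, which I would treat by the $s\searrow 0$ approximation used in Step 2.
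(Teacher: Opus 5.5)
Your arguments for (i), for (ii) with $t>0$, and for (iii) with $t>0$ coincide with the paper's. You compare $E_t$ with $F$ and with $E_t^+$, compare $E_t^+$ with $E_t$, and use $|\nabla u|=0$ a.e.\ on $\{u=t\}$. For (ii) at $t=0$, your $s\searrow 0$ approximation is a legitimate alternative to the paper's direct use of (\ref{leq-t-weak}), because it only needs outward competitors $F\cup E_s$ of $E_s$ with $s>0$, and these stay away from $\bar E_0$. You must combine the inequalities correctly, though. Write $J_0(G):=|\partial^*G\cap K|-\int_{G\cap K}|h|$. Then (i) for $E_s$ plus submodularity gives $J_0(F\cap E_s)\le J_0(F)$, and lower semicontinuity must be applied along $F\cap E_s\to E_t^+$. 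Going through $\liminf_s J_0(E_s)$, as you wrote, would additionally require $\limsup_{s\searrow t}|\partial E_s|\le|\partial E_t^+|$, and boundedness of $|\partial E_s|$ does not give that.

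The genuine gap is the $t=0$ clause of (iii). You declare $E_0$ an inadmissible inward competitor for $E_0^+$, and after that neither direction of ``precisely if'' is established. Suppose $E_0$ is outward optimizing. Comparing $E_0$ with $E_0^+$ then yields only $|\partial E_0|\le|\partial E_0^+|-\int_{E_0^+\setminus E_0}|h|$. The reverse inequality is exactly the one you said needs the forbidden competitor, and the Step~2 approximation cannot supply it: it only produces outward comparisons ($F\cup E_s\supset E_s$), never a comparison of $E_0^+$ with a smaller set. Likewise, ``same energy as the optimizing set $E_0^+$, hence optimizing'' does not follow from outward optimality of $E_0^+$. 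For $F\supset E_0$, submodularity only gives $J_0(F\cap E_0^+)\le J_0(F)$. You still need $J_0(E_0^+)\le J_0(F\cap E_0^+)$, which is an inward comparison with a set lying between $E_0$ and $E_0^+$, and such a set may differ from $E_0^+$ right up to $\partial E_0$. That is precisely the kind of competitor you excluded.

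The paper avoids all of this by reading (\ref{leq-t-weak}) as in \cite{HI01}: $E_t^+$ minimizes in the closed set $M\setminus E_0$, so competitors differing from $E_0^+$ on $\{u=0\}\supset\partial E_0$ are allowed. With that reading, $|\partial E_0^+|-\int_{E_0^+\setminus E_0}|h|\le|\partial E_0|$ holds unconditionally. Equality at $t=0$ then follows when $E_0$ is outward optimizing, and the converse is the chain $J_0(E_0)=J_0(E_0^+)\le J_0(F\cap E_0^+)\le J_0(F)$.

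If you insist that competitors stay away from $\partial E_0$, you need a genuinely inward approximation instead. For example, compare $E_s$ with outer parallel neighbourhoods of $\bar E_0$, which are admissible because they contain $\bar E_0$. Use the $C^2$ regularity of $\partial E_0$ for convergence of their areas, and $\int_{E_s\setminus E_0}|\nabla u|\to 0$ as $s\searrow 0$. This recovers the reverse inequality. The converse direction would still require inward minimality of $E_0^+$ against sets between $E_0$ and $E_0^+$.
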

\begin{proof}
	The proof is a slight modification of \cite[Theorem 1.4]{HI01}. (i) By \ref{dag}, for each $t>0$, for any $F$ such that $E_t \subset F$ and $F \setminus E_t \subset K \subset \subset M \setminus E_t$, it holds that
	\begin{align*}
		|\partial ^* E_t \cap K| \leq |\partial ^* F \cap K| - \int_{F \setminus E_t} \left( |\nabla u| + |h| \right) \leq |\partial ^* F \cap K| - \int_{F \setminus E_t} |h|,
	\end{align*} 
	which proves that $E_t$ is an outward optimizing hull for $t>0$.

	(ii) By (\ref{leq-t-weak}), for each $t \geq 0$, for any $F$ such that  $F \Delta  E_t^+ \subset K \subset \subset M\setminus E_t$, it holds that
\begin{align*}
		|\partial ^* E_t^+ \cap K| \leq |\partial ^* F \cap K| - \int_{F \setminus E_t^+} \left( |\nabla u| + |h| \right) \leq |\partial ^* F \cap K| - \int_{F \setminus E_t^+} |h|,
	\end{align*} 
	which proves that $E_t^+$ is an outward optimizing hull for $t \geq 0$. 

	(iii) For $t>0$, we use $E_t^+$ as a competitor of $E_t$ to obtain
	\begin{align*}
		|\partial E_t \cap K| \leq |\partial E_t^+ \cap K| - \int_{E_t^+ \setminus E_t} |h|,
	\end{align*} 
	which also holds for $t=0$ if $E_0$ is outward optimizing. Using
	\begin{align*}
		|\partial E_t^+ \cap K| - \int_{E_t^+ \setminus E_t} \left( |\nabla u| + |h|\right)  \leq |\partial E_t \cap K|,
	\end{align*} 
	and the fact that $|\nabla u| = 0$ a.e. on $E_t^+ \setminus E_t$, we have the equality.
\end{proof}

\begin{lem}[Lemma \ref{E_t+-vs-E_t'}]
	Suppose that $u$ is a weak solution of the $(|h|g)$-IMCF with initial condition $E_0$ and $M$ has no compact components. Let $\Omega \subset M \setminus \bar{E}_0$ be a domain and $t \geq 0$ such that $E_t ^{+} \subset \Omega $, and assume $E_t$ admits a precompact outward optimizing hull $(E_{t})_{\Omega }'$ in $\Omega $. Then up to a measure zero set,
 $E_t ^+ \subset (E_t)_{\Omega }'$.
\end{lem}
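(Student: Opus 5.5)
The plan is to compare $E_t^+$ with the hull by a cut-and-paste argument, in the spirit of \cite[Proposition 8.3]{HuiskenWolff22} and \cite[Lemma 1.4]{HI01}. Write $E' := (E_t)_\Omega'$, which by assumption is a precompact outward optimizing hull in $\Omega$ containing $E_t$. Two minimizing properties are available. First, by Lemma \ref{lem-optimizing-property}(ii) and (\ref{leq-t-weak}), $E_t^+$ minimizes $J^{|h|}_{u}$, i.e.\ $|\partial^* F| - \int_F(|\nabla u|+|h|)$, among competitors differing compactly from $E_t^+$ in $M\setminus E_0$. Second, $E'$ minimizes $|\partial^*F| - \int_F |h|$ among $F \supset E'$ with $F\setminus E'\subset\subset\Omega$, and it is \emph{strictly} minimizing in this sense. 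The goal is to show $|E_t^+\setminus E'|=0$.

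First I test the minimality of $E'$ with the competitor $E'\cup E_t^+$. This is admissible because $E_t^+\subset\Omega$ and both sets are precompact. It gives
\begin{align*}
|\partial^* E'| - \int_{E'}|h| \le |\partial^*(E'\cup E_t^+)| - \int_{E'\cup E_t^+}|h|.
\end{align*}
Next I test the minimality of $E_t^+$ with the competitor $E'\cap E_t^+$. This set differs from $E_t^+$ only inside $E_t^+\setminus E'$, which lies outside $E_t$ because $E_t\subset E'$. Hence the difference set is contained in $E_t^+\setminus E_t$, where $|\nabla u|=0$ a.e., so
\begin{align*}
|\partial^* E_t^+| - \int_{E_t^+}|h| \le |\partial^*(E'\cap E_t^+)| - \int_{E'\cap E_t^+}|h|.
\end{align*}

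I then add the two inequalities. The bulk terms cancel by inclusion–exclusion. For the perimeters, the standard submodularity $|\partial^*(A\cup B)|+|\partial^*(A\cap B)|\le|\partial^*A|+|\partial^*B|$ turns both inequalities into equalities. In particular, $E'\cup E_t^+$ attains equality in the minimization defining $E'$. Since $E'$ is strictly outward optimizing, this forces $E'\cup E_t^+=E'$ a.e., that is, $E_t^+\subset E'$ up to measure zero. For the final claim: if $E_t'=E_t$, then $E_t\subset E_t^+\subset E_t$ a.e. Since both sets are open (taken as Lebesgue-point representatives), this gives $E_t^+=E_t$.

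The main obstacle is technical admissibility. The set $E_t^+\setminus E'$ must be compactly contained in the region where the minimality of $E_t^+$ is asserted, namely $M\setminus \bar E_0$ (or $M\setminus E_t$). It must also be compactly contained in $\Omega$ for the hull comparison. Both points should follow from precompactness of $E_t^+$ and $E'$ and from $E_t^+\subset\Omega$. A second point to watch is that the a.e.\ vanishing of $|\nabla u|$ on $E_t^+\setminus E_t$ is what removes the $|\nabla u|$ term. If the minimality of $E_t^+$ is only available for $t>0$, the case $t=0$ will be handled through (\ref{leq-t-weak}), which covers $t\ge0$.
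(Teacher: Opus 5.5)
Your proposal is correct and follows essentially the same argument as the paper. You test the minimality of $E_t^+$ against $E_t^+\cap (E_t)'_\Omega$ and drop the $|\nabla u|$ term because $E_t^+\setminus (E_t)'_\Omega\subset E_t^+\setminus E_t$; you then combine this with perimeter submodularity and the optimizing property of $(E_t)'_\Omega$ tested against $E_t^+\cup (E_t)'_\Omega$, and conclude from strictness that $E_t^+\cup (E_t)'_\Omega=(E_t)'_\Omega$ a.e. The admissibility caveats you flag, and the looseness of the final ``$E_t^+=E_t$'' claim, which holds only a.e. or for normalized representatives, are treated just as informally in the paper, so nothing substantive is missing.
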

\begin{proof}
	The proof is a slight modification of \cite[Proposition 8.3]{HuiskenWolff22}. For any locally integrable function $f$ and precompact Caccioppoli sets $E_1, E_2$, we have the general inequality
	\begin{align*}
		|\partial ^*(E_1\cup E_2)| + \int_{E_1 \cup E_2} f + |\partial ^* (E_1\cap E_2)| & + \int_{E_1\cap E_2} f \\
		&\leq |\partial ^* E_1| + \int_{E_1} f + |\partial ^* E_2| + \int_{E_2} f.
	\end{align*}
	By (\ref{leq-t-weak}), for each $t \geq 0$, $E_t ^+$ minimizes $J^{|h|}_{u}$, so 
	\begin{align*}
		|\partial ^* E_t ^+| - \int_{E_t^+} \left( |\nabla u| + |h| \right) \leq |\partial ^*(E_t^+ \cap (E_t)_{\Omega }')| - \int_{E_t^+ \cap (E_t)_{\Omega }'} \left( |\nabla u| + |h| \right) ,
	\end{align*} 
	where we used the fact that $E_t^+ \setminus (E_t)_{\Omega }' \subset E_t^+$ is precompact. Since $|\nabla u| = 0$ a.e. on $E_t^+ \setminus E_t$, we have
	\begin{align}\label{appen-E_t+-minimizing}
		|\partial ^* E_t^+| - \int_{E_t^+}|h| \leq |\partial ^*(E_t^+ \cap (E_t)_{\Omega }')| - \int_{E_t^+ \cap (E_t)_{\Omega }'} |h|.
	\end{align} 
	Choosing $f= - |h|, E_1 = E_t^+, E_2= (E_t)_{\Omega }'$ in above general inequality and using (\ref{appen-E_t+-minimizing}) and the strictly outward optimizing property of $(E_t)_{\Omega }'$, we obtain
	\begin{align*}
		|\partial ^* ( E_t^+ \cup  (E_t)_{\Omega }')| - \int_{ E_t^+ \cup (E_t)_{\Omega }'}|h| &\leq |\partial ^* (E_t)_{\Omega }'| - \int_{(E_t)_{\Omega }'}|h|\\
		&\leq |\partial ^* ( E_t^+ \cup  (E_t)_{\Omega }')| - \int_{ E_t^+ \cup (E_t)_{\Omega }'}|h| .
	\end{align*} 
	Hence, the equality holds, which implies that $(E_t)_{\Omega }' = E_t^+ \cup  (E_t)_{\Omega }'$ up to a set of measure zero. Therefore, $E_t^+ \subset (E_t)_{\Omega }'$ up to a set of measure zero. 
	
\end{proof}

\subsection{Upper bound by smooth solution}
In this section, we prove
\begin{lem}\label{appen-upper-smooth}
	Let $E_0$ be a precompact open set in $M$ such that $\partial E_0$ is $C^2$ with $H- |h|>0$ and $E_0 = E_0'$. Then any weak solution $(E_t)_{0<t<\infty}$ of the $(|h|g)$-IMCF with initial condition $E_0$ is bounded from above by the $C^{2,\alpha }$-classical solution for a short time, provided that $E_t$ remains precompact for a short time.
\end{lem}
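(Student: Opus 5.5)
We adapt the proof of \cite[Theorem 2.2]{HI01}, replacing the mean curvature by $H-|h|$ and using the strict outward optimizing property $E_0=E_0'$ in place of strict outward minimizing. Let $\bar u$ be the classical $C^{2,\alpha}$-solution from Lemma \ref{lem-short-time} on $W=\{0\le \bar u<t_0\}$, where $|\nabla\bar u|\ge c_1>0$ and $\bar u$ satisfies (\ref{*2-eq-h}). We want to show $u\le \bar u$ on $W$, that is $\bar E_t:=\{\bar u<t\}\subset E_t$ (up to measure zero) for $0<t<t_0$. The main tool is the calibration given by the Smooth Flow Lemma \ref{smooth-flow-lem}: $\bar\nu=\nabla\bar u/|\nabla\bar u|$ satisfies $\mathrm{div}\,\bar\nu=|\nabla\bar u|+|h|$ on $W$.

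To get strictness, we first perturb $\bar u$ as in the proof of Theorem \ref{thm-smooth-uniqueness}. Let $\bar u_\tau$ solve $\mathrm{div}(\nabla \bar u_\tau/|\nabla\bar u_\tau|)=|\nabla\bar u_\tau|+|h|-\tau$, which is a slower flow. It is still classical for a uniform short time, since $H-|h|>0$ on $\partial E_0$. We also start it slightly inside by the amount $\tau$ (say from $\{\bar u<-\tau\}$ pushed by a short smooth flow), and we will let $\tau\to0$ at the end. Arguing by contradiction, we suppose $\{u>\bar u_\tau\}\cap W$ is nonempty and take the smallest $t$ at which $\bar E^\tau_t\not\subset E_t$. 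Following \cite{HI01}, we compare $J^{|h|}_u$ on $E_t$ with the competitor $E_t\cup \bar E^\tau_t$. The minimizing property (\ref{dag}) gives
\[
|\partial^*E_t\cap K|-\int_{E_t}(|\nabla u|+|h|)\le|\partial^*(E_t\cup\bar E^\tau_t)\cap K|-\int_{E_t\cup\bar E^\tau_t}(|\nabla u|+|h|).
\]
Integrating $\mathrm{div}\,\bar\nu$ over $\bar E^\tau_t\setminus E_t$ gives
\[
|\partial^*\bar E^\tau_t\setminus \bar E_t|\le|\partial^*E_t\cap\bar E^\tau_t|-\int_{\bar E^\tau_t\setminus E_t}(|\nabla\bar u_\tau|+|h|-\tau).
\]
Combining the two inequalities leaves
\[
\int_{\bar E^\tau_t\setminus E_t}\big(|\nabla \bar u_\tau|-|\nabla u|-\tau\big)\le 0
\]
in place of the sign needed. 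The way around this is the standard trick of integrating in the level parameter. Instead of comparing a single level, we apply the calibration argument to the functions directly, using the weak formulation (\ref{weak-J}) with the competitor $v=\min(u,\bar u_\tau)$ on $\{u>\bar u_\tau\}\subset\subset W$. Precompactness follows because near $\partial E_0$ we have $u\le \bar u_\tau$: the boundary values agree and $E_0=E_0'$ forces $E^+_0=E_0$ by Lemma \ref{E_t+-vs-E_t'}, so $u>0$ immediately outside $E_0$ while $\bar u_\tau<0$ there after the inward shift. We then use the coarea formula together with the identity $\int_{\{u>\bar u_\tau\}}|\nabla\bar u_\tau|\,(\mathrm{div}\bar\nu-|h|)$. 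Expanding $J$ and inserting the divergence identity yields
\[
\int_{\{u>\bar u_\tau\}}\big(|\nabla u|-\langle\nabla u,\bar\nu\rangle\big)+\tau\int_{\{u>\bar u_\tau\}}(u-\bar u_\tau)\le 0,
\]
after the terms $(u-\bar u_\tau)(|\nabla\bar u_\tau|-|\nabla u|)$ cancel against the frozen coefficient via Cauchy--Schwarz, as in HI01's convexity computation. Both terms on the left are nonnegative, so $\{u>\bar u_\tau\}$ has measure zero. Letting $\tau\to0$ gives $u\le\bar u$ on $W$.

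The main obstacle is that the frozen coefficient in $J_{u,\nu}$ is $|\nabla u|+|h|$ rather than $|\nabla \bar u|+|h|$. In \cite{HI01} this mismatch is absorbed by the convexity of $v\mapsto\int|\nabla v|+v|\nabla u|$ combined with the inequality $\int(u-\bar u)(|\nabla \bar u|-|\nabla u|)$ handled via a clever rewriting in terms of $\min$ and $\max$ competitors. Here $|h|$ appears identically in both equations and is independent of $\nu$, since $\sigma=|h|g$ gives $|\nu|^2_\sigma=|h|$, so it cancels exactly. I expect the remaining difficulty to be justifying the compact support of $\{u>\bar u_\tau\}$ near $\partial E_0$ and at $t_0$. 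For the latter we use that $E_t$ stays precompact and that $\bar u_\tau<t_0\le u$ fails only inside $W$, so we cap with $\min(\cdot,t_0-\tau)$. This is the step I would check most carefully.
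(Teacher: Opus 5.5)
\textbf{There is a genuine gap in the central estimate.} Your skeleton is right: it is the same adaptation of \cite[Theorem 2.2]{HI01} that the paper uses, and the slower flow $\bar u_\tau$ is a legitimate alternative to the paper's scaling $u_1^\epsilon=(1-\epsilon)^{-1}(\bar u+\delta)$ as a source of strictness. The problem is the inequality you derive from it. Put $D=\{u>\bar u_\tau\}$. The competitor $\min(u,\bar u_\tau)$ in (\ref{weak-J}), together with $\mathrm{div}\,\bar\nu=|\nabla\bar u_\tau|+|h|-\tau$ integrated against $(u-\bar u_\tau)_+$, yields
\begin{align*}
\int_D\big(|\nabla u|-\langle\nabla u,\bar\nu\rangle\big)+\tau\int_D(u-\bar u_\tau)+\int_D(u-\bar u_\tau)\big(|\nabla u|-|\nabla\bar u_\tau|\big)\le 0 .
\end{align*}
The $|h|$-terms do cancel at this stage. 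The last integral, however, has no sign, and neither Cauchy--Schwarz nor the convexity trick of \cite{HI01} makes it vanish. Those arguments give only a lower bound. You can get it either by integrating the competitors $\bar u_\tau+(u-s-\bar u_\tau)_+$ over $s$, or, since $\bar\nu$ is smooth, from $|\nabla u|-|\nabla\bar u_\tau|\ge\langle\nabla(u-\bar u_\tau),\bar\nu\rangle$ and one more integration by parts:
\begin{align*}
\int_D(u-\bar u_\tau)\big(|\nabla u|-|\nabla\bar u_\tau|\big)\ge-\frac12\int_D(u-\bar u_\tau)^2\big(|\nabla\bar u_\tau|+|h|-\tau\big).
\end{align*}
So a positive term linear in $u-\bar u_\tau$ competes with a negative quadratic one, and ``both terms on the left are nonnegative'' is false. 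Note also that $|h|$ reappears in this error term.

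The missing idea is the two-step ending the paper carries out after (\ref{u2-u1-final}):
\begin{itemize}
\item The inequality forces $|D|=0$ only when $0<\sup_D(u-\bar u_\tau)$ is small compared with the strictness parameter. In the paper this means small compared with $\epsilon$, with $|\nabla\bar u|\ge c_0$ used to absorb the $\frac12(u_2-u_1^\epsilon)^2|h|$ term.
\item The general case is reduced to this one by replacing the barrier with itself plus a constant. This still solves the same equation, and the constant is chosen so that the supremum of the difference is positive but small.
\end{itemize}

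\textbf{Your compact-support argument near $\partial E_0$ has the sign backwards.} ``$u>0$ immediately outside $E_0$ while $\bar u_\tau<0$ there'' gives $u>\bar u_\tau$, so $D$ would reach $\partial E_0$. Also, $\{\bar u<-\tau\}$ is not defined, since $\bar u$ lives only on $W$, outside $E_0$. Starting instead from some interior surface would need an extra stability argument.

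The paper's device is simpler:
\begin{itemize}
\item Compare with $\bar u+\delta$, which solves the same equation. Then $u=0<\delta$ on $\partial E_0$, and continuity keeps $\{u>\bar u+\delta\}$ away from $\partial E_0$.
\item Replace $u$ by $u^{t_1}=\min(u,t_1)$. It still minimizes $J^{|h|}_{u^{t_1}}$, and it satisfies $u^{t_1}\le t_1<\bar u+\delta$ on $\{\bar u=t_1\}$.
\end{itemize}
With these two repairs, and a justification that $\bar u_\tau\to\bar u$ as $\tau\to0$, your $\tau$-version would go through. As written, the proof does not close.
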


\begin{proof}
Let $\bar{u}$ be the $C^{2,\alpha }$-classical solution of the equation 
	\begin{align*}
		\mathrm{div}\left( \frac{\nabla \bar{u}}{|\nabla \bar{u}|} \right) = |\nabla \bar{u}| + |h| \text{ on }  \{0 \leq \bar{u}< t_0\},
	\end{align*} 
	and let $u$ be a weak solution in the sense that $u$ minimizes $J^{|h|}_{u}$ among competitors on $M \setminus E_0$. Assume that $|\nabla \bar{u}| \geq c_0>0$. 

Set $W = \{0 \leq \bar{u}< t_0\} $. Since $E_0 = E_0'$, by Lemma \ref{E_t+-vs-E_t'}, $E_0= E_0 ^{+}$. Since $E_t$ is precompact, $E_t$ converges to $E_0^+$ in Hausdorff distance as $t \searrow 0$, which shows that $E_t \subset \subset W$ for all small $0\leq t \leq t_1$.

	We modify the proof of \cite[Theorem 2.2]{HI01} to show that $u \leq \bar{u}$ in a small neighborhood of $E_0$. Observe that for $u ^{t} = \min(u, t)$, we have $\{u ^{t} < s\} $ minimizes $J^{|h|}_{u ^{t}}$ for all $s$, thus $u ^{t}$ minimizes $J^{|h|}_{u ^{t}}$. For a fixed small $t_1< t_0$, let $\Omega := \{ \bar{u}< t_1\}  \setminus \bar{E}_0$. For each $\delta >0$, we have $\{u ^{t_1} > \bar{u} + \delta \} \subset \subset \Omega $. It's enough to show that $u ^{t_1} \leq \bar{u}+ \delta $ on $\Omega $. 

	Set $u_2 = u ^{t_1}$ and $u_1 = \bar{u}+ \delta $. 
	For any $\epsilon >0$, consider $u_1 ^{\epsilon } := (1-\epsilon )^{-1}u_1$. Then $\{u_2 > u_1^{\epsilon }\} \subset \subset \Omega $. For any Lipschitz function $w \geq u_1 ^{\epsilon }$ with $\{w \neq u_1 ^{\epsilon }\} \subset \subset \Omega $, we have $J^{|h|}_{u_1}(u_1) \leq J^{|h|}_{u_1}( (1-\epsilon ) w)$, i.e.
	\begin{align*}
	\int |\nabla u_1| + u_1\left( |\nabla u_1| +|h| \right) \leq (1-\epsilon )\int |\nabla w| + w \left( |\nabla u_1| + |h| \right) ,
\end{align*} 
which is equivalent to
\begin{align}\label{u1eps-w}
		\int |\nabla u_1 ^{\epsilon }| + u_1 ^{\epsilon }\left( |\nabla u_1 ^{\epsilon }| + |h| \right) + \epsilon \int \left( w- u_1 ^{\epsilon } \right) |\nabla u_1 ^{\epsilon }| \leq \int |\nabla w| + w\left( |\nabla u_1 ^{\epsilon }| + |h| \right) .
	\end{align} 
	Replace $w$ by $u_1 ^{\epsilon } + (u_2- u_1 ^{\epsilon })_{+}$ to obtain
	\begin{align*}
\int_{u_2 > u_1 ^{\epsilon }} |\nabla u_1 ^{\epsilon }| + u_1 ^{\epsilon }\left( |\nabla u_1 ^{\epsilon }| + |h| \right) + \epsilon \int_{u_2> u_1 ^{\epsilon }} \left( u_2- u_1 ^{\epsilon } \right) |\nabla u_1 ^{\epsilon }| \\
\leq \int_{u_2 > u_1 ^{\epsilon }} |\nabla u_2| + u_2\left( |\nabla u_1 ^{\epsilon }| + |h| \right) .
	\end{align*} 
Using $u_2- (u_2- u_1 ^{\epsilon })_{+}$ as a competitor of $u_2$, we obtain
\begin{align*}
	\int_{u_2> u_1 ^{\epsilon }} |\nabla u_2| + u_2\left( |\nabla u_2| + |h| \right) \leq \int_{u_2> u_1 ^{\epsilon }} |\nabla u_1 ^{\epsilon } | + u_1 ^{\epsilon }\left( |\nabla u_2| + |h| \right) .
\end{align*} 
Adding these, we get
\begin{align}\label{u2-u1-ineq1}
	\int_{u_2> u_1 ^{\epsilon }} (u_2 - u_1 ^{\epsilon }) \left( |\nabla u_2| - |\nabla u_1 ^{\epsilon }| \right) + \epsilon \int_{u_2> u_1 ^{\epsilon }} \left( u_2- u_1 ^{\epsilon } \right) |\nabla u_1 ^{\epsilon }| \leq 0.
\end{align} 

Using $u_1 ^{\epsilon } + (u_2 - s - u_1 ^{\epsilon })_{+}, s \geq 0$ as a competitor in (\ref{u1eps-w}), we have
\begin{align*}
	\int_{u_2-s> u_1 ^{\epsilon }} |\nabla u_1 ^{\epsilon }| + u_1 ^{\epsilon }\left( |\nabla u_1 ^{\epsilon }| + |h| \right) \leq \int_{u_2-s> u_1 ^{\epsilon }} |\nabla u_2| + (u_2-s) \left( |\nabla u_1 ^{\epsilon }| + |h| \right), 
\end{align*} 
which implies that
\begin{align*}
	\int_{u_2-s> u_1 ^{\epsilon }} \left( 1+ u_1 ^{\epsilon } - u_2 + s \right) |\nabla u_1 ^{\epsilon }| \leq  \int_{u_2-s> u_1 ^{\epsilon }} |\nabla u_2| + (u_2 - s - u_1 ^{\epsilon } ) |h|.
\end{align*} 
Integrating over $s$ and switching the order of integration, we have
\begin{align*}
	\int_{M} & |\nabla u_1 ^{\epsilon }| \int_{0}^{\infty} \chi _{\{ u_2- u_1 ^{\epsilon }> s\} }  (1+ u_1 ^{\epsilon } - u_2 + s) ds \\
	&\leq \int_{M} |\nabla u_2| \int_{0}^{\infty} \chi _{\{ u_2- u_1 ^{\epsilon }> s\} } + \int_{M} |h| \int_{0}^{\infty} \chi _{\{u_2- u_1 ^{\epsilon }> s\} }(u_2-u_1^{\epsilon }-s) ds,
\end{align*} 
which yields
\begin{align*}
	\int_{u_2> u_1 ^{\epsilon }} \left( (u_2- u_1 ^{\epsilon }) - \frac{1}{2} (u_2-u_1 ^{\epsilon })^2 \right)  |\nabla u_1 ^{\epsilon }| \leq 
	\int_{u_2> u_1 ^{\epsilon }} (u_2- u_1 ^{\epsilon }) |\nabla u_2|+  \frac{1}{2}|h| (u_2- u_1 ^{\epsilon })^2,
\end{align*} 
i.e.
\begin{align*}
	-\frac{1}{2} \int_{u_2> u_1 ^{\epsilon }} (u_2- u_1 ^{\epsilon })^2 \left( |\nabla u_1 ^{\epsilon }| + |h| \right)  \leq \int_{u_2 > u_1 ^{\epsilon }} (u_2 - u_1 ^{\epsilon }) \left( |\nabla u_2| - |\nabla u_1 ^{\epsilon }| \right) .
\end{align*} 
Inserting this into (\ref{u2-u1-ineq1}) gives
\begin{align}\label{u2-u1-final}
	-\frac{1}{2} \int_{u_2> u_1 ^{\epsilon }} (u_2- u_1 ^{\epsilon })^2 \left( |\nabla u_1 ^{\epsilon }| + |h| \right) + \epsilon \int_{u_2> u_1 ^{\epsilon }} \left( u_2- u_1 ^{\epsilon } \right) |\nabla u_1 ^{\epsilon }| \leq 0.
\end{align} 
Since $|\nabla u_1 ^{\epsilon }| \geq c_0>0$, if $u_2< u_1 ^{\epsilon } + \epsilon ^2$ for a small enough $\epsilon $, we have $\frac{1}{2} (u_2- u_1 ^{\epsilon }) |h| \leq C \epsilon ^2 \leq \frac{1}{4} c_0 \epsilon \leq \frac{1}{4} \epsilon |\nabla u_1 ^{\epsilon }| $, which yields a contradiction. We conclude that $u_2 \leq u_1 + \epsilon ^2$ implies $u_2 \leq u_1$. For general $u_2$, by subtracting a constant we can arrange that $0< \sup (u_2- u_1) \leq \epsilon ^2$, contradicting what has just been proved. This proves that $u_2 \leq u_1$ on $\Omega $ for any $\delta >0$. Taking $\delta \to 0$, we have $u \leq \bar{u}$ on $\Omega $.

\end{proof}

\subsection{Asymptotic behavior of weak solutions}\label{appen-asymptotic}
In this subsection, we provide details of the proof of several lemmas concerning the asymptotic behavior of weak solutions.

\begin{lem}[Lemma \ref{lem-blowdown}]
	Suppose that on $\Omega $, $(M, g, h)$ satisfies the asymptotically flat condition
	\begin{align*}
		|g - \delta | = o(1),\ |\bar{\nabla }g| = o (|x| ^{-1}), \ |h| = o(|x| ^{-1}),\ |\bar{\nabla }h| = o(|x| ^{-2}),
	\end{align*} 
	as $|x| \to \infty$. Let $N_t = \partial \{ u< t\} $ be a weak solution of the $(|h| g)$-IMCF for all sufficiently large $t$ so that $\{u=t\} $ is compact. Then for some constants $c_{\lambda } \to \infty$,
	\begin{align*}
		u ^{\lambda } - c_{\lambda } \to 2 \log |y|,
	\end{align*} 
	locally uniformly in $\mathbb{R}^{3} \setminus \{ 0\} $ as $\lambda \to 0$, the standard expanding sphere solution of the standard IMCF on $\mathbb{R}^{3} \setminus \{0\} $.
\end{lem}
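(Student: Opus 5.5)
We follow the blowdown argument of \cite[Lemma 7.1]{HI01}. Rescaling $x\mapsto \lambda x$, $t\mapsto t$, the function $u^\lambda$ is a weak solution of the $(|h^\lambda|g^\lambda)$-IMCF on $\Omega^\lambda$. The hypotheses give, as $\lambda\to 0$ and uniformly on compact subsets $K\subset\mathbb{R}^3\setminus\{0\}$, that $g^\lambda\to\delta$ in $C^0$, $|\bar\nabla g^\lambda|\to 0$, $|h^\lambda|\le \lambda^{-1}o(\lambda/|y|)=o(1)$ and $|\bar\nabla h^\lambda|=o(1)$. In particular the admissible tensors $\sigma^\lambda=|h^\lambda|g^\lambda$ have $C^{0,1}$ norms tending to zero on $K$. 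Moreover $\|\sigma^\lambda\|_{C^0}$ and $\|\nabla\sigma^\lambda\|_{C^0}$ scale so that the constant in Lemma \ref{local-estimate-mean-curv} and in the gradient bound (\ref{u-Lip-bound}) stays uniform. Together with Remark \ref{rmk-choice-eta} (which allows $\eta(x)\ge c|x|$), this gives for large $t$ the scale-invariant estimate $|\nabla u|(x)\le C/|x|$. Consequently $|\nabla u^\lambda|(y)\le C/|y|$ on compact subsets of $\mathbb{R}^3\setminus\{0\}$, uniformly in small $\lambda$.

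Next we normalize. Choose $c_\lambda$ so that $u^\lambda-c_\lambda$ vanishes at some fixed point of $\partial D_1$, or equivalently so that $|N^\lambda_{c_\lambda}|=4\pi$; the area growth $e^{-t}|N_t|$ being monotone (\ref{e-t-Nt-monotone}) makes this choice well defined, and $c_\lambda\to\infty$ since $u\to\infty$ at infinity. The gradient bound then gives a locally uniform Lipschitz bound on $u^\lambda-c_\lambda$. By Arzel\`a--Ascoli we extract a subsequence converging locally uniformly to some $v$ on $\mathbb{R}^3\setminus\{0\}$.

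We now apply the Compactness Lemma \ref{lemma-compactness}. The vector fields $\tilde\nu^\lambda$ converge a.e. along a further subsequence, by the $C^{1,\alpha}$ regularity of Lemma \ref{lem-limit-surface-C1alpha} and \ref{limit-nu-construction}, and $\sigma^\lambda\to 0$ uniformly. Hence $v$ is a weak solution of the standard IMCF (with $\sigma=0$) on $\mathbb{R}^3\setminus\{0\}$. It satisfies $|\nabla v|\le C/|y|$, its level sets are compact (inherited from compactness of $\{u=t\}$ and properness), and $v\to -\infty$ at $0$ and $v\to+\infty$ at infinity. The latter follows because $u^\lambda$ grows like $2\log$ by comparison with expanding-sphere sub/supersolutions, which we construct from $\alpha\log|x|$ barriers as in Remark \ref{rmk-subsolution}, using that $|h|=o(|x|^{-1})$ is negligible against the mean curvature $\sim 2/|x|$. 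The uniqueness theorem of \cite[Proposition 7.2]{HI01} for standard IMCF on $\mathbb{R}^n\setminus\{0\}$ then forces $v=2\log|y|+c$, and the normalization fixes $c=0$. Since the limit is independent of the subsequence, the whole family converges.

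The main obstacle is the last step. The weak theory here lacks a general uniqueness theorem, so we must make sure we land in the setting of standard IMCF, where \cite{HI01} uniqueness is available. Concretely, we need to verify that the limit pair $(v,\nu)$ is a genuine weak IMCF solution in the sense of \cite{HI01}, i.e. that the $\nu$-dependence disappears because $\sigma^\lambda\to 0$. We must also justify the growth conditions on $v$ at $0$ and $\infty$ through the barrier comparison, which uses the maximum principle for minimizers as in Lemma \ref{appen-upper-smooth}.
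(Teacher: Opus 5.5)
Your setup matches the paper: the rescaling, $\sigma^\lambda=|h^\lambda|g^\lambda\to 0$ locally uniformly, the scale-invariant bound $|\nabla u|\le C|x|^{-1}$, Arzel\`a--Ascoli, the Compactness Lemma making the limit $v$ a weak solution of the standard IMCF on $\mathbb{R}^3\setminus\{0\}$, and the final appeal to \cite[Proposition 7.2]{HI01}.

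\textbf{The gap.} You do not establish the hypothesis that proposition needs: every level set $P_t=\partial\{v<t\}$, $t\in\mathbb{R}$, must be nonempty and compact in $\mathbb{R}^3\setminus\{0\}$.

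Compactness of $\{u=t\}$ is not inherited. The rescaled sets $\lambda N_{t+c_\lambda}$ are compact for each $\lambda$, but without uniform scale-invariant control they can collapse onto $0$, escape to infinity, or become arbitrarily eccentric in the limit.

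Your substitute, global comparison with $\alpha\log|x|$ and $\beta\log|x|$ barriers, does not supply that control.
\begin{itemize}
\item To absorb $|h|$ and the metric error you need $\alpha<2<\beta$. This only yields $\alpha\log|x|-C\le u\le\beta\log|x|+C$.
\item After normalizing by $c_\lambda=u(y_0/\lambda)$, the resulting bounds on $u^\lambda-c_\lambda$ at a fixed $y$ carry an error of size $(\beta-\alpha)\log(1/\lambda)\to\infty$, so nothing survives the blow-down. Letting $\alpha,\beta\to 2$ only improves this to $o(\log(1/\lambda))$, which is still unbounded.
\item The supersolution side would also need a comparison principle for weak solutions on unbounded regions. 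The paper has no such principle: Lemma~\ref{appen-upper-smooth} only compares with a classical solution, on a region where $\{u>\bar u+\delta\}$ is precompact.
\item Your two normalizations of $c_\lambda$ (a point value versus $|N^\lambda_{c_\lambda}|=4\pi$) are not equivalent without eccentricity control either.
\end{itemize}

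\textbf{The missing ingredient} is the scale-invariant eccentricity estimate, as in \cite[Lemma 7.1]{HI01}.
\begin{itemize}
\item The sphere subsolution gives $R(N_{t+\tau})\le e^{A\tau}R(N_t)$.
\item With $r=r(N_t)$, the gradient bound controls the oscillation of $u$ on $\partial D_r$, which contains a point of $N_t$. Hence $u>t-C_2$ on $\partial D_r$, and $N_{t-C_2}$ misses $\partial D_r$.
\item Since $u\to\infty$ at infinity, any component of $\{u<t-C_2\}$ outside $D_r$ would be a strict local minimum. Lemma~\ref{lem-connected}(i) excludes this, so $R(N_{t-C_2})\le r(N_t)$.
\item Combining the two gives $R(N_t)\le e^{AC_2}r(N_t)$.
\end{itemize}
Both inequalities are scale invariant, so they pass to $u^\lambda$ and to $v$. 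They show that $v$ is nonconstant, that each nonempty $P_t$ lies in an annulus of bounded ratio, and then that $P_t$ is nonempty and compact for every $t\in\mathbb{R}$. Only at that point does \cite[Proposition 7.2]{HI01} apply. The idea your proposal lacks is pairing the gradient estimate with the absence of strict local minima to get this two-sided, same-time control of $N_t$.
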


\begin{proof}
	1.  Fix a large $t_0$ so that $\{ u = t\}  \subset \Omega $ is compact for all $t \geq t_0$. By the asymptotically flat condition and Remark \ref{rmk-choice-eta}, there is $R_0>0$ such that 
	\begin{align*}
		\eta (x) \geq c|x|,\ \ d(x, \partial E_{t_0}) \geq c |x|,
	\end{align*} 
	for $|x| \geq R_0$. By Lemma \ref{smooth-sol-*-epsilon} with $r = \min(\eta (x), d(x, \partial E_{t_0}) )$, we obtain
	\begin{align}\label{gradient-decay}
		|\nabla u| (x) \leq C |x| ^{-1}, \ \ \forall |x| \geq R_0.
	\end{align}

	For any hypersurface $N$, define the eccentricity $\theta (N) := R(N) / r(N)$, where $[r(N), R(N)]$ is the smallest interval such that $N$ is contained in the annulus $\bar{D}_{R} \setminus D_r$. Under the asymptotically flat condition, there exists $A>0$ and $t_1$ so that $(D_{e ^{A t}})_{t_1 \leq t< \infty}$ provides a subsolution of \ref{epsilon-eq}, see also the proof of Lemma \ref{smooth-sol-*-epsilon}. For any $t \geq t_1$, by constructing such comparison starting from $D_{R(N_t)}$, i.e. $e ^{A t} = R(N_t)$, we obtain $u(x) \geq A ^{-1} \log |x|$ on $\Omega \setminus D_{R(N_t)}$. Thus for any $\tau \geq 0$,
	\begin{align}\label{RNt-growth}
		R(N_{t+ \tau }) \leq e ^{A(t+ \tau )} = e ^{A \tau } R(N_t).
	\end{align}
	Suppose that $r = r(N_t) \geq R_0$. By (\ref{gradient-decay}), $u > t - C_2$ everywhere on $\partial D_r$. Therefore, $N_{t- C_2}$ does not meet $\partial D_r$. By (i) of Lemma \ref{lem-connected}, $N_{t- C_2}$ can not have any components outside of $D_r$, so $R(N_{t- C_2}) \leq r$. Combining with (\ref{RNt-growth}), 
	\begin{align}\label{rNt-growth}
		R(N_t) \leq e ^{A C_2} R(N_{t- C_2}) \leq e ^{A C_2} r(N_t),\quad t \geq t_2,
	\end{align}
	for some $t_2$.

	2. Now consider any sequence $\lambda _i \to 0$. The estimates (\ref{gradient-decay}), (\ref{RNt-growth}) and (\ref{rNt-growth}) are scale-invariant, so they are valid for $N^{\lambda _i}_{t} = \partial \{ u ^{\lambda _i} < t\} $, on the complement of a subset shrinking to $\{0\} $ as $i \to \infty$. Moreover, on the complement of a subset shrinking to $\{0\} $ as $i \to \infty$, we have 
	\begin{align*}
		|h ^{\lambda _i}(y)| \leq o(1) \cdot  |y| ^{-1},
	\end{align*} 
	which is locally uniformly bounded and converges to $0$ locally uniformly.
	By Arzel\`a-Ascoli, there exists a subsequence, which is denoted by $\lambda _j$, numbers $c_j = \max_{\mathbb{S}_1(0)}|u ^{\lambda _j}| \to \infty$, and a locally Lipschitz function $v \in C ^{0,1}_{loc}(\mathbb{R}^3 \setminus \{ 0\} )$, such that
	\begin{align*}
		u ^{\lambda _j} - c_j \to v \quad \text{ locally uniformly in } \mathbb{R}^{3} \setminus \{0\},
	\end{align*}
	with local $C^1$-convergence of the level set, except at most countable jump times. Notice that $|h ^{\lambda _j}| \to 0$ locally uniformly. In view of the Compactness Lemma \ref{lemma-compactness}, with focus only on the $J^{|h ^{\lambda }_i|}_{u_i}$-minimizing property, we know that $v$ is a weak $J^{0}$-solution in the sense of (\ref{weak-J}) in $(\mathbb{R}^{3}\setminus \{0\} , \delta )$, i.e. $v$ is a weak solution of the standard IMCF as studied in \cite{HI01}.

	Define $P_t := \partial \{ v< t\} $ for $-\infty< t< \infty$. By (\ref{rNt-growth}) and approximating $N_{t_0}$ by $t_i \nearrow t_0$ for jump times $t_0$, each nonempty $P_t$ is a compact subset of $\mathbb{R}^{3}\setminus \{0\} $ with
	\begin{align*}
		\theta (P_t) \leq e ^{A C_2}.
	\end{align*} 
	Now (\ref{RNt-growth}) implies that $v$ is not a constant, so some level set, say $P_{t_3}$, is nonempty. Then (\ref{RNt-growth}) and (\ref{rNt-growth}) imply that $P_t$ is nonempty and compact in $\mathbb{R}^{n}\setminus \{ 0\} $ for all $-\infty< t< \infty$. By \cite[Proposition 7.2]{HI01}, we know $v$ is the unique expanding sphere solution of the standard IMCF. 

\end{proof}

\end{appendix}

\bibliographystyle{alpha}
\bibliography{./math}

\end{document}